\newtheorem{thm}{Theorem}[section]
\newtheorem{cor}[thm]{Corollary}
\newtheorem{con}[thm]{Conjecture}
\newtheorem{example}[thm]{Example}
\newtheorem{lem}[thm]{Lemma}
\newtheorem{obs}[thm]{Observation}
\newtheorem{prop}[thm]{Proposition}
\theoremstyle{definition}
\newtheorem{defn}[thm]{Definition}
\newtheorem{rem}[thm]{Remark}
\DeclareFontFamily{U}{rsf}{} \DeclareFontShape{U}{rsf}{m}{n}{
  <5> <6> rsfs5 <7> <8> <9> rsfs7 <10->  rsfs10}{}
\DeclareMathAlphabet{\mathscr}{U}{rsf}{m}{n}
\renewcommand{\imath}{\sqrt{-1}}
\DeclareMathOperator{\IB}{IB} 
\DeclareMathOperator{\Hom}{Hom}
\DeclareMathOperator{\id}{id}
\DeclareMathOperator{\colim}{colim}
\DeclareMathOperator{\im}{im}
\DeclareMathOperator{\coim}{coim}
\DeclareMathOperator{\coker}{coker}
\DeclareMathOperator{\Tot}{Tot}
\DeclareMathOperator{\Lin}{Lin}
\DeclareMathOperator{\Fun}{Fun}
\DeclareMathOperator{\straightK}{K}
\DeclareMathOperator{\straightN}{N}
\DeclareMathOperator{\straightD}{D}
\DeclareMathOperator{\triv}{triv}
\newcommand{\uHom}{\underline{\Hom}}
\newcommand{\ootimes}{\overline{\otimes}}
\newcommand{\wotimes}{\widehat{\otimes}}
\DeclareMathOperator{\spec}{\text{spec}}
\newcommand{\limpro}{\mathop{\lim\limits_{\displaystyle\leftarrow}}}
\newcommand{\limind}{\mathop{\lim\limits_{\displaystyle\rightarrow}}}
\newcommand{\ttSet}{\text{\bfseries\sf{Set}}}
\newcommand{\tC}{\text{\bfseries\sf{C}}}
\newcommand{\tD}{\text{\bfseries\sf{D}}}
\newcommand{\tI}{\text{\bfseries\sf{I}}}
\newcommand{\ttMod}{{\text{\bfseries\sf{Mod}}}}
\newcommand{\ttCBorn}{{\text{\bfseries\sf{CBorn}}}}
\newcommand{\ttBan}{{\text{\bfseries\sf{Ban}}}}
\newcommand{\Mod}{{\text{\bfseries\sf{Mod}}}}
\newcommand{\ttComm}{{\text{\bfseries\sf{Comm}}}}
\newcommand{\ttC}{{\tC}}
\newcommand{\ttD}{{\tD}}
\newcommand{\ttE}{{\text{\bfseries\sf{E}}}}
\newcommand{\ttPr}{{\text{\bfseries\sf{Pr}}}}
\newcommand{\ttIndBan}{{\text{\bfseries\sf{Ind(Ban)}}}}
\newcommand{\ttTc}{{\text{\bfseries\sf{Tc}}}}
\newcommand{\ttFr}{{\text{\bfseries\sf{Fr}}}}
\newcommand{\ttInd}{{\text{\bfseries\sf{Ind}}}}
\author{Oren Ben-Bassat and Kobi Kremnizer}
\title[Fr\'{e}chet modules and descent]{Fr\'{e}chet modules and descent}
\address{Oren Ben-Bassat, Department of Mathematics, University of Haifa, Haifa 3498838, Israel}
\address{Mathematical Institute,
University of Oxford,
Andrew Wiles Building,
Radcliffe Observatory Quarter,
Woodstock Road,
Oxford,
OX2 6GG, England}
\address{Department of Mathematics, \\ University of Haifa, \\
 Haifa 3498838, Israel\\[5pt]
 Mathematical Institute, \\
University of Oxford, \\
Andrew Wiles Building,\\
Radcliffe Observatory Quarter, \\
Woodstock Road, \\
Oxford, 
OX2 6GG, England\\
}
\keywords{Banach, Fr\'{e}chet, derived algebraic geometry, derived analytic geometry, descent, rings and modules, Banach algebras}
\begin{document}
\maketitle 
\begin{abstract} Motivated by classical functional analysis results over the complex numbers and results in the bornological setting over the complex numbers of R. Meyer, we study several aspects of the study of Ind-Banach modules over Banach rings. This allows for a synthesis of some aspects of homological algebra and functional analysis. This includes a study of nuclear modules and of modules which are flat with respect to the projective tensor product. 
We also study metrizable and Fr\'{e}chet Ind-Banach modules. We give explicit descriptions of projective limits of Banach rings as ind-objects. We study exactness properties of the projective tensor product with respect to kernels and countable products. As applications, we describe a theory of quasi-coherent modules in Banach algebraic geometry. We prove descent theorems for quasi-coherent modules in various analytic and arithmetic contexts and relate them to well known complexes of modules coming from covers.

\noindent 
\end{abstract}

\tableofcontents
\section{Introduction} 
The use of categorical and homological techniques in functional analysis has a long and complicated history which we can not adequately summarize here. This includes work of  Helemskii \cite{Hel}, Meyer \cite{MeyerDerived} \cite{M}, Cigler, Losert and Michor \cite{CLM}, Paugam \cite{Pa}, Taylor \cite{Tay}, Wengenroth \cite{Wen} and others.  We follow the approach of using the homological algebra of quasi-abelian categories of Prosmans and Schneiders \cite{SchneidersQA}, \cite{PS} generalized from the functional analysis of Banach and Ind-Banach spaces over complex numbers to general Banach rings.

Grothendieck developed the theory of nuclearity for topological vector spaces over $\mathbb{C}$. In \cite{PS} these ideas are carried over to the closely related setting of ind-Banach spaces over $\mathbb{C}$. We were able to prove analogues of these results in the setting of Ind-Banach modules over arbitrary Banach rings $R$. The definition of nuclearity we use is in Definition \ref{defn:IndBanNuc} and an equivalent characterization in Remark \ref{rem:summary}. Not having Hilbert space techniques available when working over general Banach rings, we were unable to prove that subspaces and quotients of nuclear maps are nuclear.  However, we can prove many other standard ``permanence properties" of nuclearity. We discuss countable products and coproducts in Corollary \ref{cor:product_sum_nuc} and a two out of three rule for strict short exact sequences in Lemma \ref{lem:twothreenuc} and the projective tensor product of nuclear spaces in Lemma \ref{lem:tensnuc}.  A different approach to nuclearity which works in both the Archimedean and non-Archimedean settings could be inspired by Schneider's notion (see \cite{Schneider}) of compact morphisms between Banach spaces. Corollary \ref{cor:limsprods} proves that nuclearity also ensures an interesting interaction with products of dual spaces. Following work of Prosmans and Schneiders we prove that nuclear spaces can be written in certain canonical ways in Lemmas \ref{lem:NicePres} and \ref{lem:NicePres2}. We define metrizability in Definition \ref{defn:met}. Important examples of metrizable modules are Banach or Fr\'{e}chet modules. Notice that as nuclearity of an object implies it is flat for the projective tensor product (Lemma \ref{lem:NucImpliesFlat}), one may ask what condition on an object might ensure that the projective tensor product with it commutes with countable products. This turns out to be a complete characterization of metrizability as proven in Lemmas \ref{lem:tensprod} and \ref{lem:surprise}. Therefore, in combination the properties of nuclearty and metrizability for an object imply that the projective tensor product with it commutes with countable limits (Lemma \ref{lem:tensprod}). Banach algebraic geometry and its derived versions is an approach to analytic geometry which uses geometry relative to categories of Banach spaces (or modules) in the same way that usual algebraic geometry is based on categories of abelian groups.  In particular, this philosophy applies to rigid analytic geometry \cite{BK}, overconvergent rigid geometry \cite{BB} and Stein geometry (\cite{BBK}, \cite{Pir}, \cite{ArPir}) and in these articles it was shown that the homotopy monomorphism topology specializes to conventional ones in special cases. There are also projects on derived analytic geometry \cite{BBK2} and analytic $\mathbb{F}_1$-geometry \cite{BK2}. Most of the constructions in this article are based on an arbitrary Banach ring $R$. If $R$ is a non-archimedean Banach ring (see Definition \ref{defn:n-arch}), this entire article can be separately read in two different versions, depending on whether one considers the categories $\ttInd(\ttBan_R)$ of all Banach modules or $\ttInd(\ttBan^{na}_R)$ of non-archimedean Banach modules. Therefore, in this case, notation such as symbols for limits, colimits, products and coproducts, can sometimes take on two different meanings. We have chosen to write everything with the default version being of the archimedean version. This has the appealing aspect of being completely the same for any $R$, archimedean or not. In the case that $R$ is non-archimedean, the reader who wants to work in a non-archimedan context should replace all limits and colimits in the category $\ttInd(\ttBan_R)$ by those in the category $\ttInd(\ttBan^{na}_R)$. All the proofs go through in a similar way. Given a union of subsets one often wants to describe modules on the union in terms of modules on the components together with gluing data. The subsets themselves usually must cover the space and each subset individually should have nice properties. We therefore need to translate both of these features into algebra. Our main descent results can be found in Theorem \ref{thm:Big}. To formulate this we introduce a generalization of a coherent module called a quasi-coherent module. This notion was also needed in \cite{BK, BB, BBK} where some properties of quasi-coherent modules were studied, and in this article we extend that study. We relate our results to Tate's acyclicity theorem in Lemma \ref{lem:TateAcy} and modules on Stein covers.

This article was originally motivated by a desire to make ``more categorical" the results on descent for Stein algebras from \cite{BBK} (see also \cite{ArPir}). We believe that we have succeeded in a large aspect in terms of the issues surrounding infinite products and completed tensor products and their interaction. Unfortunately, we have not been able to make categorical the Mittag-Leffler aspects which involve dense maps of algebras in the projective system and $\lim$-acyclicity. In standard complex analysis one often exhausts a Stein open subset by an increasing union of compact, convex subsets with the Noether property. Then one would like to understand how certain quasi-abelian categories of quasi-coherent modules on the Stein open are constructed as categorical limits of the similar categories on the compact subsets. More precisely, can a nice enough module over the algebra of holomorphic functions on the Stein open be determined in terms of gluing data for modules on the compact subsets? These questions also have a rich history in the non-archimedean literature, for example see work of Ardakov and Wadsley \cite{AW}, where one uses affinoids in place of compact convex subsets. In our desire for a unified approach to the archimedean and non-archimedean case, we can restate Theorem \ref{thm:Big} in this case.
Let $A_i$ be Banach rings, flat over $R$ together with a sequence of dense, nuclear, homotopy epimorphisms $\cdots \to A_2 \to A_1.$ Any quasi-coherent, metrizable, ind-Banach module $M$ flat over $R$ over $A=\lim A_i$ can be expressed as a limit in $\Mod(A)$ of a sequence $\cdots \to M_2 \to M_1$ where each $M_i$ is a nuclear, metrizable ind-Banach object of $\Mod(A_i)$ flat over $R$ and the morphisms are consistent with this action in the sense that there are isomorphisms $A_{i+1}\wotimes^{\mathbb{L}}_{A_i}M_i \cong A_{i+1}\wotimes_{A_i}M_i \cong M_{i+1}$ compatible with one another and with the maps in the sequence. Any element of $\Hom(M,N)$ in the category of ind-Banach $A$-modules is a consistent limit of elements of $\Hom(M_i, N_i)$ in the category of ind-Banach $A_i$-modules. This should have applications in non-archimedean geometry for instance in the case of analytic differential operators as appear in work of Ardakov and Wadsley (see also \cite{AB}) which are Fr\'{e}chet (and as we have shown therefore metrizable) modules which are not coherent over the functions, but which can be shown to be quasi-coherent in our definition. A version of the results in this subsection was given in \cite{BBK} over a complete valuation field but there we needed to separately prove the theorem in the archimedean and non-archimedean cases whereas in this article we provide a single proof over a Banach ring that works in the archimedean or non-archimedean case.

In future work, to be based on this article, we will discuss a new notion of the analytic specrum of the integers and its covers by homotopy monomorpisms. We will also introduce anlaytic versions of the Weil-\'{e}tale topos.

\section{Notation}
We use the notation $\lim$ instead of $\limpro$ and $\colim$ for $\limind$. The letter $R$ denotes a general Banach ring, defined in Definition \ref{defn:BanRng}. We denote categorical products by $\prod$ and categorical coproducts by $\coprod$, it should be clear in what category these take place, usually it is sufficient to consider them in the category $\ttInd(\ttBan_{\mathbb{Z}})$. Given an object $A$ in $\ttComm(\ttInd(\ttBan_{\mathbb{R}}))$, we use $\spec(A)$ to just denote the same object in the opposite category. As usual, $\mathbb{Z}_p$ denotes the $p$-adic integers, unless we are scaling the norm on $\mathbb{Z}$ with a real number in the sense of Definition \ref{defn:scaling}, this should be clear from the context. For an abelian group $A$ we use $A^{\times}$ to denote $A-\{0\}$.
\section{Some Category Theory and Its Uses in Functional Analysis and Geometry}
\subsection{Relative Algebra and Homological Algebra}
\begin{defn}\label{def:verb}
In an additive category with kernels and cokernels, a morphism $f:E\to F$ is called strict if the induced morphism 
\[\coim(f)\to \im(f)
\] is an isomorphism. Here $\im(f)$ is the kernel of the canonical map
$F \to \coker(f)$, and $\coim(f)$ is the cokernel of the canonical map $\ker (f)\to E$. An object $P$ is {\it projective} if for all strict epimorphisms $E \to F$ the associated map $\Hom(P,E) \to \Hom(P,F)$ is onto. An object $I$ is {\it injective} if for all strict monomorphisms $E \to F$ the associated map $\Hom(F,I) \to \Hom(E,I)$ is onto.  If the category is equipped with a unital symmetric monoidal structure $\ootimes$ then an object $F$ is called {\it flat} if the functor $(-)\ootimes F$ preserves strict monomorphisms. 
\end{defn}

Consider a unital, closed symmetric monoidal category $(C, \ootimes, e=\text{id}_{C})$ with finite limits and colimits (more details in \cite{BK}). The internal Hom in $C$ will be denoted $\uHom$ as it should be clear from the context what category we are working in. We will always suppress the commutativity, unitality, and associativity natural transformations from the notation. It is easy to see the following lemma.

\begin{lem}\label{lem:FlatProps}
The unit of $C$ is flat in $C$. A coproduct of objects is flat if and only if each of them is flat. The monoidal product of two flat objects is flat.
\end{lem}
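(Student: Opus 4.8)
The plan is to deduce each of the three assertions from the structural isomorphisms of the monoidal category, using two facts: the class of functors preserving strict monomorphisms is closed under composition, and a closed tensor preserves colimits in each variable. The unit is immediate from the right unitor, which gives a natural isomorphism $(-)\ootimes e\cong\id_C$; since $\id_C$ preserves strict monomorphisms, $e$ is flat. If $F$ and $G$ are flat, the associator gives a natural isomorphism $(-)\ootimes(F\ootimes G)\cong((-)\ootimes F)\ootimes G$, so $(-)\ootimes(F\ootimes G)$ is, up to isomorphism, the composite of $(-)\ootimes F$ with $(-)\ootimes G$; each factor preserves strict monomorphisms by flatness, hence so does the composite, and $F\ootimes G$ is flat.

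For the coproduct statement the essential input is closedness: $(-)\ootimes X$ and $X\ootimes(-)$ have the right adjoint $\uHom(X,-)$ and so preserve colimits, whence for any family $(X_i)$ there is an isomorphism $A\ootimes\coprod_i X_i\cong\coprod_i(A\ootimes X_i)$ natural in $A$, under which $m\ootimes\coprod_i X_i$ is identified with $\coprod_i(m\ootimes X_i)$. For the ``only if'' direction, fix an index $j$; then $\coprod_i X_i\cong X_j\oplus\coprod_{i\ne j}X_i$, so $m\ootimes\coprod_i X_i$ is the biproduct $(m\ootimes X_j)\oplus\bigl(m\ootimes\coprod_{i\ne j}X_i\bigr)$. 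Since $\ker$, $\coker$, $\im$ and $\coim$ commute with biproducts of two objects and the comparison $\coim\to\im$ of such a biproduct is the biproduct of the two individual comparisons --- an isomorphism exactly when both of them are --- a biproduct of two morphisms is strict iff both components are; so flatness of $\coprod_i X_i$ forces $m\ootimes X_j$ to be a strict monomorphism, i.e.\ $X_j$ to be flat. For the ``if'' direction, if every $X_i$ is flat then for a strict monomorphism $m$ the morphism $\coprod_i(m\ootimes X_i)$ is a coproduct of strict monomorphisms, and it remains to see that such a coproduct is a strict monomorphism.

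That remaining point is the one input beyond the formal manipulations above, and the place I expect the real content. For \emph{finite} coproducts it is automatic, by the same biproduct computation just used: $\ker$, $\coker$, $\im$, $\coim$ commute with finite biproducts, so the comparison $\coim\to\im$ of a finite biproduct of strict monomorphisms is an isomorphism. For infinite coproducts it is precisely the exactness of coproducts --- an $\mathrm{AB}4$-type condition --- which need not hold in an arbitrary closed symmetric monoidal category but does hold in every category used in this article, in particular in $\ttInd(\ttBan_R)$ and in the module categories over its commutative algebra objects, by the quasi-abelian formalism of Prosmans and Schneiders \cite{PS}; there the ``if'' direction goes through for arbitrary coproducts, and in full generality the lemma is read with finite coproducts.
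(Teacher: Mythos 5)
The paper gives no proof of this lemma (it merely asserts ``It is easy to see the following lemma''), so there is nothing to compare against; your proof is correct and fills in the intended content.

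Your three pieces are the expected ones: the right unitor for the unit, the associator plus closure of strict-mono-preserving functors under composition for the tensor of two flats, and, for the coproduct statement, the isomorphism $A\ootimes\coprod_i X_i\cong\coprod_i(A\ootimes X_i)$ supplied by closedness of $\ootimes$. You correctly identify the one real subtlety: the ``if'' direction for infinite coproducts needs coproducts to be exact (an AB4-type condition), which is \emph{not} a formal consequence of the ambient hypotheses on $C$ but does hold in the categories the paper actually works in --- in $\ttInd(\ttBan_R)$ (and $\ttBan^{\leq 1}_R$) a small coproduct of strict monomorphisms is a filtered colimit of finite biproducts of strict monomorphisms, and both finite biproducts and filtered colimits are exact there. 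Since the paper does apply the lemma to infinite coproducts (e.g.\ in Lemma \ref{lem:Proj2Flat}, where $\coprod^{\leq 1}_{p\in P^{\times}} R_{\|p\|}$ is infinite), whereas the sentence immediately preceding the lemma only equips $C$ with finite limits and colimits, your caveat is exactly the right one to record. The ``only if'' argument via the two-fold biproduct and the standard commutation of $\ker$, $\coker$, $\im$, $\coim$ with finite biproducts in an additive category is also correct.
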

Recall \cite{TV} the category $\ttComm(C)$ of commutative unital rings with respect to $(C, \ootimes, e=\text{id}_{C})$ and for any $S\in \ttComm(C)$ the category $\ttMod(S)$ of $S$-modules internal to $(C, \ootimes, e=\text{id}_{C})$. If we further assume that $(C, \ootimes, e=\text{id}_{C})$ has countable coproducts then an important construction is the symmetric ring construction which is a left adjoint to the forgetful functor $\ttComm(C) \to C$ 
\begin{equation}\label{eqn:SymAlgConst}\text{Sym}(V) = \underset{n\geq 0}\coprod V^{\ootimes n}/S_n.
\end{equation}
\subsection{Quasi-abelian categories}

\begin{defn}\label{defn:CartCoCart}
Let $\mathcal{E}$ be an additive category with kernels and cokernels. We say that $\mathcal{E}$ is quasi-abelian if it satisfies the following two conditions:
\begin{itemize}
\item In a cartesian square 
\begin{equation*}
\xymatrix{  E' \ar[r]^{f'} \ar[d] & F' \ar[d] \\
E \ar[r]_{f}& F}
\end{equation*}
if $f$ is a strict epimorphism then $f'$ is a strict epimorphism.

\item In a co-cartesian square
\begin{equation*}
\xymatrix{  E \ar[r]^{f} \ar[d] & F \ar[d] \\
E' \ar[r]_{f'}& F'}
\end{equation*}
if $f$ is a strict monomorphism then $f'$ is a strict monomorphism.

\end{itemize}
\end{defn}
A quasi-abelian category is a category where the strict monomorphisms and strict epimorphisms satisfy the conditions of a Quillen exact category. It may be useful to allow for more general Quillen exact structures (see \cite{BK2}) for instance using short exact sequences that split over $R$ but in this work we avoid this. 
\begin{defn}
Let $\ttE$ be a quasi-abelian category. Let $\straightK(\ttE)$ be its category of complexes up to homotopy. The derived category of $\ttE$ is $\straightD(\ttE)=\straightK(\ttE)/\straightN(\ttE)$ where $\straightN(\ttE)$ is the full subcategory of strictly exact sequences.
\end{defn}
Here a sequence 
\[E' \stackrel{e'}\longrightarrow E \stackrel{e''}\longrightarrow E''
\]
in a quasi-abelian category is strictly exact when the image of the first map is isomorphic to the kernel of the second, and $e'$ is strict.
\begin{lem}\label{lem:StrictEpiPreserving}
Let $\ttC$ and $\ttD$ be quasi-abelian categories. Let $L:\ttC \to \ttD$ be any functor with a right adjoint $R:\ttD \to \ttC$.  Then $L$ preserves strict epimorphisms and $R$ preserves strict monomorphisms.
\end{lem}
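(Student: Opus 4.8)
The plan is to reduce the statement to two standard facts: in a quasi-abelian category the strict epimorphisms are exactly the morphisms that arise as cokernels (dually, the strict monomorphisms are exactly the kernels), and a left adjoint preserves all colimits while a right adjoint preserves all limits. Granting these, the result is immediate, so the only point needing care is the first fact; since it is already in \cite{SchneidersQA}, I would either cite it or include the short argument sketched below.

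First I would record the characterization of strict epimorphisms. If $f:E\to F$ is a strict epimorphism, then $f$ is epic, so $\coker(f)=0$ and hence $\im(f)=\ker(F\to\coker f)=F$; strictness says the canonical map $\coim(f)\to\im(f)$ is an isomorphism, so the canonical factorization $E\to\coim(f)\to\im(f)\to F$ exhibits $f$, up to isomorphism, as the canonical morphism $E\to\coim(f)=\coker(\ker f\to E)$. Thus $f$ is the cokernel of $\ker f\to E$. Conversely, if $f=\coker(g)$ for some $g:A\to E$, then $g$ factors through $k=\ker f$ (because $fg=0$), and a routine comparison of universal properties gives $\coker(k)\cong F$ via $f$, so the canonical map $\coim(f)\to F$ is an isomorphism; since a cokernel is epic we again have $\im(f)=F$, whence $\coim(f)\to\im(f)$ is an isomorphism and $f$ is a strict epimorphism. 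The statement relating strict monomorphisms and kernels is dual.

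Then I would conclude as follows. Being quasi-abelian, $\ttC$ and $\ttD$ are additive; the left adjoint $L$ preserves finite coproducts, hence is additive, and preserves all colimits existing in $\ttC$, in particular coequalizers and therefore cokernels (a cokernel being the coequalizer of a morphism with the zero morphism). Given a strict epimorphism $f:E\to F$ in $\ttC$, the first step writes $f=\coker(g)$ for some $g$ (e.g.\ $g=\ker f$), so $L(f)=\coker(L(g))$ in $\ttD$, and the converse direction of the first step (applied in $\ttD$) shows $L(f)$ is a strict epimorphism. Symmetrically, the right adjoint $R$ preserves limits, in particular kernels; since every strict monomorphism in $\ttC$ is a kernel and a kernel is sent to a kernel, $R$ preserves strict monomorphisms. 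The main ``obstacle'' is really just the identification of strict epimorphisms with cokernels (and strict monomorphisms with kernels); once that is in hand, the proof is purely the formal behavior of adjoints with respect to (co)limits.
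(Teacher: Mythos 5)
Your proof is correct and takes essentially the same route as the paper: both identify a strict epimorphism $f$ with the cokernel of $\ker(f)\to E$ and then invoke the fact that a left adjoint preserves cokernels (dually, a right adjoint preserves kernels). You spell out the equivalence between strict epimorphisms and cokernels in more detail, but the underlying argument is the same.
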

\proof 
Let $f: V\to W$ be a strict epimorphism in $\ttC$. Then of course $L(f)$ is an epimorphism. Because $f$ is a strict epimorphism, we have $W= \coker(\ker(f) \to V)$. Therefore, since left adjoints preserve cokernels, $L(f)$ expresses $L(W)$ as the cokernel of the morphism $L(\ker(f)) \to L(V)$. The second statement is proven in a similar way.
\endproof
\begin{defn}
Let $\ttE$ be a quasi-abelian category. Let $\straightK(\ttE)$ be its homotopy category. A morphism in 
$\straightK(\ttE)$ is called a strict quasi-isomorphism if its mapping cone is strictly exact. 
\end{defn}

\begin{defn}
Let $\ttE$ be an additive category with kernels and cokernels. An object $I$ is called injective if the functor $E\mapsto \Hom(E,I)$ is exact, i.e., for any strict monomorphism $u:E\to F$, the induced map $\Hom(F,I)\to \Hom(E,I)$ is surjective. Dually, $P$ is called projective if the functor $E\mapsto \Hom(P,E)$ is exact, i.e., for any strict epimorphism $u:E\to F$, the associated map $\Hom(P,E)\to \Hom(P,F)$ is surjective.     
\end{defn}

\begin{defn}\label{defn:enough}
A quasi-abelian category $\ttE$ has enough projectives if for any object $E$ there is a strict epimorphism $P\to E$ where $P$ is projective. A quasi-abelian category $\ttE$ has enough injectives if for any object $E$ there is a strict monomorphism $E \to I$ where $I$ is injective.
\end{defn}

\begin{defn} 
Let $\ttE$ be an additive category. An object $E$ is called:
\begin{itemize}
\item small, if 
\begin{equation}
\Hom(E,\coprod_{i\in \tI} F_i)\cong \coprod_{i\in I}\Hom(E,F_i)
\end{equation}
for any small family $(F_i)_{i\in \tI}$ of $\ttE$ whenever the coproduct on the left exists
\item tiny, if 
\begin{equation}
\Hom(E,\underset{i\in \tI}\colim F_i)\cong 
\underset{i\in \tI}\colim \Hom(E,F_i)
\end{equation}
for any filtering inductive system $\tI \to\ttE$ whenever the colimit on the left exists.
\end{itemize}
\end{defn}

\begin{defn}\label{defn:StrGen}
Let $\ttE$ be a quasi-abelian category. A strict generating set of $\ttE$ is a subset $\mathcal{G}$ of $Ob(\ttE)$ such that for any monomorphism
\begin{equation*}
m : S\to E
\end{equation*}
of $\ttE$ which is not an isomorphism, there is a morphism
\begin{equation*}
G \to E
\end{equation*}
 with $G \in \mathcal{G}$ which does not factor through $m$.
\end{defn}

\begin{defn}\label{defn:elementary}
A quasi-abelian category is quasi-elementary (resp. elementary) if it is cocomplete and has a small strict generating set of small (resp. tiny) projective objects.
\end{defn}

For abelian categories quasi-elementary is equivalent to elementary.
We will freely use the following proposition which comes from Proposition 2.1.18 of \cite{SchneidersQA}

\begin{prop} Let $\ttC$ be a small, closed, symmetric monoidal, quasi-abelian category and $R\in \ttComm(\ttC)$. Then $\ttMod(R)$ is elementary if $\ttC$ is, $R\ootimes P$ is tiny in $\ttMod(R)$ whenever $P$ is tiny in $\ttC$. If $\mathcal{G}$ is a strict generating set of $\ttC$ then $\{R\ootimes G\ \ | \ \ G \in \mathcal{G}\}$ is a strict generating set of $\ttMod(R)$.
\end{prop}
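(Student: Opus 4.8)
The plan rests on the adjoint triple $R \ootimes (-) \,\dashv\, U \,\dashv\, \uHom(R,-)$ relating $\ttC$ and $\ttMod(R)$, in which $U$ is the forgetful functor, $F := R \ootimes (-)$ is the free-module functor, and $\uHom(R,-)$ is the cofree-module functor. Being simultaneously a left and a right adjoint, $U$ preserves all limits and all colimits; in particular $\ttMod(R)$ is cocomplete because $\ttC$ is, colimits in $\ttMod(R)$ (filtered ones included) are computed as in $\ttC$, and $U$ preserves strict epimorphisms, since it preserves the kernel and the cokernel exhibiting a given strict epimorphism $f$ as $\coker(\ker f \to M)$. Moreover $U$ is faithful and conservative — a morphism of $R$-modules that is invertible in $\ttC$ has a module-map inverse — and, by Lemma~\ref{lem:StrictEpiPreserving}, the left adjoint $F$ preserves strict epimorphisms. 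I would take as given the quasi-abelian structure on $\ttMod(R)$ for $\ttC$ closed symmetric monoidal quasi-abelian, citing \cite{SchneidersQA} rather than reproving it.

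First I would treat the clauses about $R \ootimes P$. Let $P$ be tiny in $\ttC$ and let $(M_i)_{i\in \tI}$ be a filtering inductive system in $\ttMod(R)$ whose colimit exists. Using the adjunction $F \dashv U$, the preservation of filtered colimits by $U$, and finally tininess of $P$, one gets
\begin{align*}
\Hom_{\ttMod(R)}\!\bigl(R \ootimes P,\, \underset{i\in \tI}\colim\, M_i\bigr) &\cong \Hom_{\ttC}\!\bigl(P,\, \underset{i\in \tI}\colim\, U M_i\bigr) \\
&\cong \underset{i\in \tI}\colim\,\Hom_{\ttC}(P, U M_i) \\
&\cong \underset{i\in \tI}\colim\,\Hom_{\ttMod(R)}(R \ootimes P, M_i);
\end{align*}
replacing the filtered colimit by a coproduct throughout shows that $R \ootimes P$ is small whenever $P$ is small. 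For projectivity, $\Hom_{\ttMod(R)}(R \ootimes G, -) \cong \Hom_{\ttC}(G, U(-))$, and the right-hand functor preserves strict epimorphisms because $U$ does and $G$ is projective; hence $R \ootimes G$ is projective when $G$ is.

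Next I would handle the strict generating set. Let $\mathcal{G}$ be a strict generating set of $\ttC$ and let $m : S \to M$ be a monomorphism in $\ttMod(R)$ that is not an isomorphism. Then $U(m)$ is a monomorphism in $\ttC$ (as $U$ preserves kernels) and is not an isomorphism (as $U$ is conservative), so there are $G \in \mathcal{G}$ and $g : G \to U M$ in $\ttC$ not factoring through $U(m)$. Let $\tilde g : R \ootimes G \to M$ be the morphism of $R$-modules adjoint to $g$, so that $g = U(\tilde g) \circ \eta_G$ with $\eta$ the unit of $F \dashv U$. If $\tilde g = m \circ h$ for some module map $h$, then $g = U(m) \circ \bigl(U(h) \circ \eta_G\bigr)$ would factor $g$ through $U(m)$, contradicting the choice of $g$; hence $\tilde g$ does not factor through $m$. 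Thus $\{R \ootimes G \mid G \in \mathcal{G}\}$ is a strict generating set of $\ttMod(R)$; it is small, being the image under $F$ of the small set $\mathcal{G}$, and when $\ttC$ is elementary its members are tiny and projective by the previous paragraph, so $\ttMod(R)$ is elementary.

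The steps above are short diagram chases through the adjoint triple; the genuine content, which I would not reprove, is the structural input packaged in the first paragraph — that $\ttMod(R)$ is quasi-abelian and that its kernels, cokernels and colimits are computed compatibly with $U$ (equivalently, that $U$ sits in the displayed adjoint triple with the stated exactness behaviour). That is the one point I expect to be the real obstacle to a self-contained treatment; granted it, the proposition is immediate, matching Proposition~2.1.18 of \cite{SchneidersQA}.
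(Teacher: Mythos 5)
The paper states this proposition without proof, attributing it to Proposition~2.1.18 of \cite{SchneidersQA}, so there is no in-paper argument to compare against; your reconstruction via the adjoint triple $R\ootimes(-)\dashv U\dashv \uHom(R,-)$ is the expected one and is essentially correct. The one step worth tightening is the cocompleteness of $\ttMod(R)$: the existence of a right adjoint to $U$ gives \emph{preservation} of colimits, not their \emph{creation}, and what is actually needed is that $U$ creates colimits --- which does hold here because the monad $R\ootimes(-)$ preserves all colimits (it is a left adjoint, $\ttC$ being closed), so the forgetful functor from its algebra category creates every colimit that $\ttC$ has. Once that is said, your adjunction computations for tininess, for projectivity of $R\ootimes G$, and for the strict-generation property (using that $U$ is conservative and preserves monomorphisms) are all sound, and together with cocompleteness they give elementarity of $\ttMod(R)$.
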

\subsection{Ind-Categories and Ind-Categories of quasi-abelian categories}

Recall that for any category $\ttC$ we can define its ind-completion.
\begin{defn} 
Let $\ttC$ be a category. An ind-completion of $\ttC$ is a category $\ttD$ with a functor 
$i:\ttC \to \ttD$, such that $\ttD$ is closed under filtered colimits, and the functor $i$ is initial with respect to functors into categories closed under filtered colimits.
\end{defn}

\begin{lem}
Let $\ttC$ be a category. Its ind-completion exists and can be realized as the full subcategory of the category $\ttPr(\ttC)=\Fun(\ttC^{op},\ttSet)$ whose objects are filtered colimits of representable functors (note that the category of presheaves is cocomplete). 
\end{lem}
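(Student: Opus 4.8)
The plan is to set $\ttD := \ttInd(\ttC)$ to be the full subcategory of $\ttPr(\ttC) = \Fun(\ttC^{op},\ttSet)$ spanned by the presheaves that arise as filtered colimits of representable presheaves, with $i : \ttC \to \ttD$ the corestriction of the Yoneda embedding $y$. Each representable $y(c)$ is trivially such a colimit, so $i$ is well defined, and it is fully faithful by the Yoneda lemma. It then remains to check two things: that $\ttD$ is closed under filtered colimits, computed inside the cocomplete category $\ttPr(\ttC)$, and that $i$ is initial among functors from $\ttC$ into categories admitting filtered colimits, i.e.\ that precomposition with $i$ identifies filtered-colimit-preserving functors $\ttD \to \ttE$ with arbitrary functors $\ttC \to \ttE$.

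First I would record the basic compactness fact: if $X = \colim_{i \in \tI} y(c_i)$ with $\tI$ filtered, then for every $c \in \ttC$ one has $\Hom_{\ttPr(\ttC)}(y(c), X) \cong X(c) \cong \colim_{i \in \tI} \Hom_{\ttC}(c, c_i)$, using Yoneda for the first isomorphism and the fact that filtered colimits of presheaves are computed objectwise for the second. Consequently, for $X = \colim_{i} y(c_i)$ and $Y = \colim_{j \in \tJ} y(d_j)$ in $\ttD$ we get $\Hom_{\ttD}(X,Y) \cong \lim_{i}\colim_{j}\Hom_{\ttC}(c_i, d_j)$; this explicit hom-description is what makes the remaining verifications routine.

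For closure under filtered colimits, let $(X_a)_{a \in \tA}$ be a filtered system in $\ttD$ with $X_a = \colim_{i \in \tI_a} y(c^a_i)$. Form the Grothendieck construction $\tK = \int_{a\in\tA}\tI_a$ (objects are pairs $(a,i)$ with $i\in\tI_a$, morphisms assembled from morphisms of $\tA$ together with the transition functors). A standard check shows $\tK$ is filtered and that $\colim_{a}X_a \cong \colim_{(a,i)\in \tK} y(c^a_i)$ in $\ttPr(\ttC)$, so the colimit again lies in $\ttD$. I expect this reindexing/cofinality step to be the main technical point of the argument.

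Finally, for the universal property, let $F : \ttC \to \ttE$ with $\ttE$ closed under filtered colimits. Define $\tilde F : \ttD \to \ttE$ by $\tilde F(X) = \colim_{(y(c)\to X)\, \in\, (y\downarrow X)} F(c)$; since $X$ is a filtered colimit of representables the comma category $(y\downarrow X)$ is filtered and essentially small, so the colimit exists. As $(y\downarrow y(c))$ has a terminal object, $\tilde F\circ i \cong F$; functoriality of $\tilde F$ is immediate from the comma-category description, and preservation of filtered colimits follows from the same Grothendieck-construction argument used above. For uniqueness, any filtered-colimit-preserving $G : \ttD \to \ttE$ with $G\circ i \cong F$ must send $X = \colim_i y(c_i)$ to $\colim_i G(y(c_i)) \cong \colim_i F(c_i) \cong \tilde F(X)$, naturally in $X$ by the hom-description of the second paragraph, whence $G \cong \tilde F$. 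Throughout one fixes a universe and reads ``filtered colimit'' as ``small filtered colimit'' so that all the comma and index categories above are small and the relevant colimits exist.
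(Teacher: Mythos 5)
The paper states this result without proof, treating it as standard background (it is, e.g., SGA~4, Exp.~I, \S8, or Kashiwara--Schapira, \emph{Categories and Sheaves}, Ch.~6); the only thing recorded afterward is the hom formula you also derive, $\Hom(E,F) \cong \lim_i \colim_j \Hom(E_i,F_j)$. So there is no paper proof to compare against, and I will assess your sketch on its own.

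The plan is the right one and the universal-property part is fine, but the closure-under-filtered-colimits step has a real gap as written. You fix arbitrary presentations $X_a = \colim_{i \in \tI_a} y(c^a_i)$ and then form $\int_{a\in\tA}\tI_a$ ``using the transition functors,'' but those chosen presentations carry no transition functors: a morphism $X_a \to X_{a'}$ in $\ttPr(\ttC)$ does not induce a functor $\tI_a \to \tI_{a'}$ in any canonical way (a map $y(c^a_i)\to X_{a'}$ factors through some $y(c^{a'}_j)$, but $j$ is determined only up to the colimit relation), so $a\mapsto\tI_a$ is not a (pseudo)functor over $\tA$ and the Grothendieck construction is undefined. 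The repair is exactly the device you already use for the universal property: pass to the canonical presentations over the comma categories $(y\downarrow X_a)$, for which post-composition with $X_a\to X_{a'}$ does give a genuine functor $\tA\to\ttCat$, and then check that the Grothendieck construction of a filtered diagram of filtered categories is filtered. This in turn leans on the assertion, which you state but do not justify, that $(y\downarrow X)$ is filtered whenever $X$ is a filtered colimit of representables; this is a short but not-quite-trivial verification using $X(c)\cong\colim_i\Hom_\ttC(c,c_i)$ to produce upper bounds and coequalizers in the comma category. With those two points supplied, the argument is complete.
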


We will denote the ind-completion of $\ttC$ by $\ttInd(\ttC)$.  
Given two presentations of objects $E \cong ``\underset{i\in I}\colim"E_i$ and $F \cong ``\underset{j \in J}\colim" F_j$, we have a canonical isomorphism
\[\Hom(E, F) \cong
\lim_{i\in I}\underset{j \in J}\colim\Hom(E_i,F_j).
\]
\begin{rem}\label{rem:DescribeHoms}
Therefore, a morphism can be represented by a functor  $\alpha: I \to J$ and for each $i \in I$ an element of $\Hom(E_{i}, F_{\alpha(i)})$ giving a natural transformation $E \to F\circ \alpha$.
\end{rem}
One way of getting elementary quasi-abelian categories is by looking at ind-completions of quasi-abelian categories (2.1.17 in \cite{SchneidersQA}):

\begin{thm}
Let $\ttE$ be a small quasi-abelian category with enough projective objects. Then,
$\ttInd(\ttE)$
is an elementary quasi-abelian category.
\end{thm}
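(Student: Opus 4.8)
The plan is to show that $\ttInd(\ttE)$ satisfies the defining conditions of Definition~\ref{defn:elementary}: cocompleteness, plus a small strictly generating set of tiny projective objects. Cocompleteness of $\ttInd(\ttE)$ is standard — the ind-completion of any category with finite colimits is cocomplete, with filtered colimits computed levelwise and finite colimits inherited from $\ttE$, so this part requires only a reference to the general theory of ind-categories. The substantive content is the production of the generating set.

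First I would take $\mathcal{G}_0$ to be a small set of projective objects of $\ttE$ witnessing "enough projectives" — concretely, since $\ttE$ is small, the set of all its objects, or the set of all projectives, is small, and by hypothesis every object of $\ttE$ receives a strict epimorphism from a projective. I then claim $\mathcal{G} = \{\, i(P) \mid P \in \mathcal{G}_0 \,\}$, the image of this set under the canonical functor $i : \ttE \to \ttInd(\ttE)$, is the desired generating set. The key local facts to verify are: (a) each $i(P)$ is \emph{tiny} in $\ttInd(\ttE)$, and (b) each $i(P)$ is \emph{projective} in $\ttInd(\ttE)$, and (c) $\mathcal{G}$ is \emph{strictly generating} in the sense of Definition~\ref{defn:StrGen}.

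For (a), tininess of $i(P)$ is essentially the Yoneda-type computation that for a representable-like object coming from $\ttE$, $\Hom_{\ttInd(\ttE)}(i(P), -)$ commutes with filtered colimits; this is immediate from the formula $\Hom(E,F) \cong \lim_{i}\colim_{j}\Hom(E_i,F_j)$ displayed above, since when $E = i(P)$ the limit is trivial. For (c), I would argue contrapositively: given a non-isomorphism monomorphism $m : S \to F$ in $\ttInd(\ttE)$, write $F \cong ``\colim_{j\in J}" F_j$ with $F_j \in \ttE$; pulling back $m$ along the structure maps and using that $m$ is not an isomorphism, one finds some index $j$ and some strict epimorphism $P \to F_j$ from $P \in \mathcal{G}_0$ whose composite $i(P) \to i(F_j) \to F$ does not factor through $m$ — here I use that $i$ reflects/creates the relevant monomorphisms and that a strictly generating set of $\ttE$ (which exists, e.g.\ the projectives by "enough projectives") remains strictly generating after ind-completion. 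For (b), projectivity, I would combine (a) with the description of strict epimorphisms in $\ttInd(\ttE)$: a strict epimorphism $E \to F$ in $\ttInd(\ttE)$ can be presented levelwise by strict epimorphisms $E_k \to F_k$ in $\ttE$ (up to reindexing, using Remark~\ref{rem:DescribeHoms}), and then a map $i(P) \to F$ factors through some $i(F_k)$ by tininess, lifts along $E_k \to F_k$ by projectivity of $P$ in $\ttE$, and composes to the desired lift $i(P) \to E$.

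The main obstacle I anticipate is the careful bookkeeping in (b) and (c): the description of strict morphisms, strict epimorphisms, and monomorphisms in $\ttInd(\ttE)$ in terms of levelwise data in $\ttE$ is not entirely formal, because strictness is not automatically preserved by $i$ and one must pass to cofinal subsystems and reindex filtered diagrams to put a given strict map into "levelwise" form. This is precisely the kind of technical lemma about ind-categories of quasi-abelian categories established in \cite{SchneidersQA}, so in practice I would isolate and cite those structural statements (the levelwise description of strict epimorphisms and of the exact structure on $\ttInd(\ttE)$) rather than reprove them, and then the verification of the three bullet points becomes short.
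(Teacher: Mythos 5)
The paper does not give its own proof here---it simply cites Proposition 2.1.17 of \cite{SchneidersQA}---so the comparison is with the argument one finds there, which your outline essentially reconstructs: take a small set $\mathcal{G}_0$ of projectives of $\ttE$ witnessing ``enough projectives,'' pass to the images $i(P)$ in $\ttInd(\ttE)$, and verify tininess, projectivity, and the strictly generating property, with the level-wise description of strict epimorphisms in $\ttInd(\ttE)$ doing the technical work in (b) and (c). That much is the right shape, and your acknowledgement that the level-wise representation of strict maps is the step to cite rather than reprove is appropriate.

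There is, however, a genuine gap at the outset. You write that the plan is to verify the conditions of Definition~\ref{defn:elementary}, namely cocompleteness plus a small strictly generating set of tiny projectives. But Definition~\ref{defn:elementary} is a definition about a \emph{quasi-abelian} category; the theorem asserts that $\ttInd(\ttE)$ \emph{is} an elementary \emph{quasi-abelian} category, so establishing that $\ttInd(\ttE)$ is quasi-abelian is part of what must be proved, and it is not automatic. One must check that $\ttInd(\ttE)$ has kernels and cokernels (fine, since it has finite limits and colimits inherited from $\ttE$) and then verify the two stability axioms of Definition~\ref{defn:CartCoCart}: strict epimorphisms stable under pullback, strict monomorphisms stable under pushout. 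In \cite{SchneidersQA} this is a separate result (Proposition 2.1.16, ``$\ttInd(\ttE)$ is quasi-abelian with exact filtering colimits'') whose proof already requires the same careful reindexing of filtered systems and the level-wise description of kernels, cokernels, and cartesian squares that you defer to in steps (b) and (c). Your proposal jumps over this and implicitly treats $\ttInd(\ttE)$ as quasi-abelian from the start. Once that prerequisite is stated and either cited or proved, the remainder of your outline is sound and matches the source.
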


The following is 2.1.19 in \cite{SchneidersQA}:
\begin{prop}\label{prop:ScnMain}
Let $\ttE$ be a small, closed, symmetric monoidal, quasi-abelian category. The category $\ttInd(\ttE)$ has a canonical closed symmetric monoidal structure extending that on $\ttE$. Hence, if $\ttE$ has enough projectives, $\ttInd(\ttE)$ is a closed symmetric monoidal elementary quasi-abelian category.
\end{prop}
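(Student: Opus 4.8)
The plan has three steps: lift the tensor product of $\ttE$ to $\ttInd(\ttE)$ by a ``levelwise'' filtered-colimit formula; produce an internal $\Hom$ functor right adjoint to it; and invoke the theorem stated above ($2.1.17$ of \cite{SchneidersQA}) for the final clause. For the first step, given ind-objects $E = ``\underset{i\in I}\colim" E_i$ and $F = ``\underset{j\in J}\colim" F_j$ with $I, J$ filtered, put $E\ootimes F := ``\underset{(i,j)\in I\times J}\colim"(E_i\ootimes F_j)$. Since $I\times J$ is again filtered this lies in $\ttInd(\ttE)$, and by construction $E\ootimes(-)$ and $(-)\ootimes F$ commute with filtered colimits in the remaining variable. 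Independence of the chosen presentations, and bifunctoriality, are read off from the $\lim$--$\colim$ formula for $\Hom$-sets of ind-objects (Remark~\ref{rem:DescribeHoms}) together with the bifunctoriality of $\ootimes$ on $\ttE$. The unit is the image of $e$ under $\ttE\hookrightarrow\ttInd(\ttE)$, and the associativity, unit, and symmetry isomorphisms are induced term by term from those of $\ttE$: for instance both $(E\ootimes F)\ootimes G$ and $E\ootimes(F\ootimes G)$ are the filtered colimit over $I\times J\times K$ of $(E_i\ootimes F_j)\ootimes G_k\cong E_i\ootimes(F_j\ootimes G_k)$. The pentagon, hexagon, and triangle identities for $\ttInd(\ttE)$ then reduce, one colimit term at a time, to those of $\ttE$, so $\ttInd(\ttE)$ is symmetric monoidal and $\ttE\hookrightarrow\ttInd(\ttE)$ is strong symmetric monoidal; in particular the structure extends that of $\ttE$.

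For the second step, note that a quasi-abelian category has kernels, cokernels, and finite biproducts, hence all finite limits and colimits; since $\ttE$ is moreover small, $\ttInd(\ttE)$ is locally finitely presentable, in particular complete and cocomplete. The key point is that $(-)\ootimes F\colon\ttInd(\ttE)\to\ttInd(\ttE)$ preserves all small colimits for every $F$: for $F$ representable it preserves filtered colimits by construction, and it preserves finite colimits because $(-)\ootimes_{\ttE} F$ is a left adjoint on $\ttE$ (closedness of $\ttE$) while finite diagrams in $\ttInd(\ttE)$ are filtered colimits of finite diagrams in $\ttE$; for general $F = ``\underset{j\in J}\colim" F_j$ one has $X\ootimes F = ``\underset{j\in J}\colim"(X\ootimes F_j)$ and interchanges the two colimits. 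As $\ttInd(\ttE)$ is locally presentable, the adjoint functor theorem then furnishes a right adjoint $\uHom(F,-)$ for each $F$, making $\ttInd(\ttE)$ closed symmetric monoidal. One may also describe the internal $\Hom$ explicitly: $\uHom(G,F)\cong ``\underset{j\in J}\colim"\uHom_{\ttE}(G,F_j)$ for $G$ representable, and $\uHom(E,F)\cong\lim_{i\in I}\uHom(E_i,F)$ (the limit formed in $\ttInd(\ttE)$) for $E = ``\underset{i\in I}\colim" E_i$, the tensor--$\Hom$ adjunction then being verified by reducing to $E$ and $F$ representable, where it is the adjunction already present in $\ttE$.

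Finally, if $\ttE$ has enough projectives, the theorem stated above ($2.1.17$ of \cite{SchneidersQA}) gives that $\ttInd(\ttE)$ is elementary quasi-abelian, and combined with the monoidal and closed structures just constructed it is a closed symmetric monoidal elementary quasi-abelian category, as claimed. The delicate point throughout is the cocontinuity assertion of the second step: one must make sure $(-)\ootimes F$ preserves \emph{every} small colimit, not only the filtered ones, since this is exactly what propagates the representable case of the tensor--$\Hom$ adjunction to arbitrary ind-objects. Granting that, the coherence verifications of the first step and the comparison of the two descriptions of $\uHom$ are routine bookkeeping in the $\lim$--$\colim$ calculus for ind-objects.
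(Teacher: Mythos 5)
Your proposal is correct and matches the paper's approach: the paper's proof simply records the levelwise formula $``\colim_i"E_i\ootimes``\colim_j"F_j=``\colim_{(i,j)}"(E_i\ootimes F_j)$ and the internal Hom formula $\lim_i``\colim_j"\uHom(E_i,F_j)$, then implicitly defers to 2.1.17 of \cite{SchneidersQA} for the elementary quasi-abelian conclusion, exactly as you do. The only difference is that you take a brief detour through local presentability and the adjoint functor theorem before recovering the same $\lim$--$\colim$ formula for $\uHom$ that the paper states directly; this is harmless extra scaffolding rather than a genuinely different route.
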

\proof 
The extension is given as follows: 
\begin{equation*}\label{eqn:IndRules}
``\underset{i\in I}\colim"E_i\ootimes ``\underset{j\in J}\colim" F_j=``\underset{(i,j) \in I\times J}\colim"E_i\ootimes F_j
\end{equation*}
\begin{equation*}
\underline{\Hom}(``\underset{i\in I}\colim"E_i, ``\underset{j \in J}\colim" F_j)=
\lim_{i\in I}``\underset{j \in J}\colim"\underline{\Hom}(E_i,F_j).
\end{equation*}
\endproof

The following is 2.1.7 in \cite{SchneidersQA}:
\begin{lem}\label{lem:cover}
Let $\ttE$ be a cocomplete quasi-abelian category. A small subset $\mathcal{G}$ of objects of $\ttE$ is a strictly generating set of $\ttE$ if and only if for any object $E$ of $\ttE$, there is a strict epimorphism of the form
\begin{equation*}
\coprod_{j\in J} G_j \to E 
\end{equation*}
where $(G_j)_{j\in J}$ is a small family of elements of $\mathcal{G}$.
\end{lem}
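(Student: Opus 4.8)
The plan is to prove both implications directly from Definition \ref{defn:StrGen}, using only standard formal properties of strict morphisms in a quasi-abelian category: cokernel morphisms are strict epimorphisms; if $g\circ f$ is a strict epimorphism then so is $g$; a morphism that is simultaneously a monomorphism and a strict epimorphism is an isomorphism; and the canonical morphism $\coim(f)\to\im(f)$ is always a monomorphism. All of these are in \cite{SchneidersQA}.

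For the implication ``if there is always such a strict epimorphism, then $\mathcal{G}$ is a strict generating set'', let $m:S\to E$ be a monomorphism which is not an isomorphism, and choose a strict epimorphism $q:\coprod_{j\in J}G_j\to E$ with each $G_j\in\mathcal{G}$. If every morphism $G\to E$ with $G\in\mathcal{G}$ factored through $m$, then in particular each component $q\circ\iota_j:G_j\to E$ would factor through $m$, and by the universal property of the coproduct $q$ itself would factor as $q=m\circ r$ for some $r:\coprod_{j\in J}G_j\to S$. Since $q$ is a strict epimorphism, so is its left factor $m$; being also a monomorphism, $m$ would be an isomorphism, contradicting the choice of $m$. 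Hence some morphism $G\to E$ with $G\in\mathcal{G}$ does not factor through $m$, so $\mathcal{G}$ is a strict generating set.

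For the converse, suppose $\mathcal{G}$ is a strict generating set and fix an object $E$. Since $\mathcal{G}$ is small and the $\Hom$-sets of $\ttE$ are sets, the index set $J=\{(G,\phi)\,:\,G\in\mathcal{G},\ \phi\in\Hom(G,E)\}$ is small, and since $\ttE$ is cocomplete the coproduct $P=\coprod_{(G,\phi)\in J}G$ exists; let $p:P\to E$ be the morphism whose $(G,\phi)$-component is $\phi$. As $p$ kills $\ker(p)\to P$, it factors through the cokernel morphism $P\twoheadrightarrow\coim(p)$ via a morphism $n:\coim(p)\to E$. Because $n$ is the composite $\coim(p)\to\im(p)\hookrightarrow E$ of the canonical morphism $\coim(p)\to\im(p)$ with the image inclusion, $n$ is a monomorphism. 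If $n$ were not an isomorphism, strict generation would supply a morphism $\phi:G\to E$ with $G\in\mathcal{G}$ not factoring through $n$; but $(G,\phi)\in J$, so $\phi$ is a component of $p$ and hence factors as $\phi=n\circ(P\twoheadrightarrow\coim(p))\circ\iota_{(G,\phi)}$, a contradiction. Therefore $n$ is an isomorphism, and $p=n\circ(P\twoheadrightarrow\coim(p))$ is the composite of a cokernel morphism (a strict epimorphism) with an isomorphism, hence a strict epimorphism of the required form.

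I do not expect a serious obstacle, since the argument is purely formal; the points that require care are, in the first direction, the two standard facts that a left factor of a strict epimorphism is again a strict epimorphism and that a monic strict epimorphism is invertible, and, in the second direction, the two design choices that make strictness come out: one must take the coproduct over all of $\Hom(G,E)$ rather than one map per generator, and one must test strict generation against the canonical monomorphism $\coim(p)\to E$ rather than against the image inclusion $\im(p)\hookrightarrow E$, since the latter only yields that $p$ is an epimorphism.
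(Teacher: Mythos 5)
The paper states Lemma \ref{lem:cover} without proof, attributing it verbatim to Proposition 2.1.7 of \cite{SchneidersQA}; there is no in-text argument to compare against, so the relevant benchmark is Schneiders' own proof, which your argument reproduces in substance. Both implications are correct. For the ``cover implies generator'' direction, the two facts you invoke --- that a right factor of a strict epimorphism is a strict epimorphism, and that a monic strict epimorphism is an isomorphism --- are indeed standard in a quasi-abelian category (the first is Schneiders' Proposition~1.1.8(a), i.e.\ the ``obscure axiom'' of Quillen exact categories, which holds for the strict exact structure; the second follows at once since a mono has zero kernel, so $\coim = $ domain, and an epi has zero cokernel, so $\im = $ codomain). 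For the converse, your two ``design choices'' are exactly the delicate points and you handle them correctly: indexing the coproduct by \emph{all} pairs $(G,\phi)$ with $G\in\mathcal G$ and $\phi\in\Hom(G,E)$ is what makes every generator-to-$E$ morphism a component of $p$, and testing Definition~\ref{defn:StrGen} against the canonical monomorphism $\coim(p)\to E$ (rather than $\im(p)\hookrightarrow E$) is what upgrades the conclusion from ``$p$ is an epimorphism'' to ``$p$ is a strict epimorphism,'' via $p = n\circ\pi$ with $\pi:P\twoheadrightarrow\coim(p)$ a cokernel morphism and $n$ now an isomorphism. The one assertion you lean on without comment, that $\coim(p)\to E$ is always a monomorphism in a quasi-abelian category, is true (it follows from the pullback axiom: pulling the cokernel projection $P\twoheadrightarrow\coim(p)$ back along the kernel of $n$ shows that kernel vanishes) and is also available in Schneiders, so the proof is complete as written.
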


We assume the reader is familiar with the notions of a family of injective objects with respect to a functor between quasi-abelian categories \cite{SchneidersQA}.
In this section we recall how to derive the inverse limit functor in quasi-abelian categories.

Let $I$ be a small category. Given a functor $V:I \to \ttC$ the Roos complex of $V$ is of the form 
\[0\to \mathscr{R}^{0}(V)\to \mathscr{R}^{1}(V) \to  \mathscr{R}^{2}(V) \to \cdots
\]
$\mathscr{R}^{0}(V)=\underset{i\in I}\prod V_i $
and \[\mathscr{R}^{n}(V)=\underset{i_0\stackrel{\alpha_1}\to i_1 \stackrel{\alpha_2}\to i_2 \to \cdots \stackrel{\alpha_n}\to i_n }\prod V_{i_0} \] where the product is over all composable sequences of $n$ morphisms in $I$. The differential $\mathscr{R}^{n}(V)\to \mathscr{R}^{n+1}(V)$ is defined for $\alpha$ the composable sequence $i_0 \stackrel{\alpha_1}\to i_1 \to \cdots \stackrel{\alpha_{n+1}}\to i_{n+1}$
\[(d((v_{\beta})_{\beta}))_{\alpha} = V(\alpha_1)v_{i_1\stackrel{\alpha_2}\to i_2 \to \cdots \to i_{n+1} }+\sum_{l=1}^{n}(-1)^{l}v_{i_0\stackrel{\alpha_1}\to i_2 \to \cdots \to i_{l-1} \stackrel{\alpha_{l+1} \circ \alpha_{l}}\to i_{l+1} \to \cdots \to i_{n+1} } + (-1)^{n+1}v_{i_0\stackrel{\alpha_1}\to i_1 \to \cdots \to i_{n} }.
\]
Usual abstract nonsense arguments show the existence of a derived functor $\mathbb{R} \underset{i\in I}\lim$.
\begin{defn}The $\underset{i\in I}\lim$-acyclic objects are objects $V$ of $\ttC^{I}$ satisfying $\mathbb{R} \underset{i\in I}\lim V_{i} \cong \underset{i\in I}\lim V_{i}.$ An object $V$ of $\ttC^{I}$ will be called Roos-acyclic if the differentials in the Roos complex are strict and the cohomology of the Roos complex is concentrated in degree zero.
\end{defn}
Inverse limits have an explicit derived functor because of the following proposition of Prosmans \cite{Pr2}. 
\begin{prop}
Let $I$ be a small category and $\ttC$ a quasi-abelian category with exact products. Then the family of objects in $\ttC^{I}$ which are Roos-acyclic form a $\underset{i\in I}\lim$-acyclic family. As a result, the functor 
\[\underset{i\in I}\lim:\ttC^{I} \to \ttC \]
is right derivable and for any object $V \in \ttC^{I}$, we have an isomorphism 
\begin{equation}\label{eqn:Derived2Roos}
\mathbb{R} \underset{i\in I}\lim V_{i}\cong \mathscr{R}^{\bullet}(V),
\end{equation}
where the right hand side is the Roos complex of $V$. The family of $\underset{i\in I}\lim$-acyclic objects for the functor $\underset{i\in I}\lim:\ttC^{I} \to \ttC$ form a family of injectives relative to this functor (a concept appearing in \cite{SchneidersQA}).
\end{prop}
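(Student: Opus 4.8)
The plan is to realise the Roos complex $\mathscr{R}^\bullet(V)$ as a resolution of $\lim V$ by $\lim$-acyclic objects, and to produce enough such objects by hand; the general derived-functor machinery for quasi-abelian categories (families of injective objects relative to a functor, as in the references) then does the rest. First I would record the bookkeeping. The category $\ttC^I$ is quasi-abelian, with kernels, cokernels, limits and colimits all formed objectwise; by hypothesis the product functors of $\ttC$ are exact, so each $\mathscr{R}^n\colon\ttC^I\to\ttC$ is exact, $\mathscr{R}^\bullet$ commutes with arbitrary products, and — directly from the formula for the differential — $\ker(\mathscr{R}^0(V)\to\mathscr{R}^1(V))\cong\lim V$ for \emph{every} $V$, so $H^0(\mathscr{R}^\bullet(V))\cong\lim V$ with no acyclicity hypothesis. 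Next, for $j\in I$ let $M_j\colon\ttC\to\ttC^I$ be the right adjoint of evaluation at $j$ (the coinduced diagram, $M_j(A)_i=\prod_{\Hom_I(j,i)}A$); since evaluation at $j$ composed with the constant-diagram functor is the identity, $\lim M_j(A)\cong A$.

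The computational heart is that each $M_j(A)$ is Roos-acyclic. Unwinding the definitions, an element of $\mathscr{R}^n(M_j(A))$ is indexed by a composable chain in $I$ beginning at $j$, i.e.\ by an $n$-simplex of the nerve of the under-category $j\backslash I$, and under this identification $\mathscr{R}^\bullet(M_j(A))$ becomes the cochain complex of $N(j\backslash I)$ with constant coefficients $A$ and its usual alternating-sum differential. Since $j\backslash I$ has an initial object $\id_j$, its nerve is contractible and the augmented complex $0\to A\to\mathscr{R}^\bullet(M_j(A))$ is split exact, the contracting homotopy being the evident ``reindex along the initial object'' map, all of whose components are literal coordinate projections, hence strict. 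So the Roos complex of $M_j(A)$ has strict differentials and cohomology $A$ concentrated in degree zero: $M_j(A)$ is Roos-acyclic. Because products are exact and $\mathscr{R}^\bullet$ commutes with products, an arbitrary product of Roos-acyclic objects is again Roos-acyclic; in particular $\prod_{j\in I}M_j(A_j)$ is Roos-acyclic for every family $(A_j)_{j\in I}$.

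Next I would show the Roos-acyclic objects cogenerate $\ttC^I$: for any $V$ the unit $V\to\prod_{j\in I}M_j(V_j)$ of the adjunctions is objectwise a split monomorphism (project onto the factor indexed by $j=i$ and $\id_i$), hence a strict monomorphism. Iterating gives, for every $V$, a resolution $0\to V\to W^0\to W^1\to\cdots$ with all $W^q$ Roos-acyclic. One then checks that the full subcategory of Roos-acyclic objects satisfies the axioms for a $\lim$-injective subcategory: cogeneration is the point just made, and the stability axioms (closure under cokernels of strict monomorphisms between members, and exactness of $\lim$ on a strictly exact $0\to V'\to V\to V''\to0$ with $V'$ Roos-acyclic) follow by applying the exact functors $\mathscr{R}^n$, passing to the long exact cohomology sequence of the resulting short exact sequence of complexes, and using $H^0\mathscr{R}^\bullet(-)\cong\lim(-)$ together with $H^{\geq1}\mathscr{R}^\bullet(-)=0$ on Roos-acyclics; the same reasoning, rerun with $\mathbb{R}\lim$ in place of $\mathscr{R}^\bullet$ once the comparison below is available, gives the final assertion that \emph{all} $\lim$-acyclic objects form an injective family relative to $\lim$. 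The general theory then yields that $\lim\colon\ttC^I\to\ttC$ is right derivable and that $\mathbb{R}\lim V$ is computed by the complex $\lim(W^\bullet)$ for any Roos-acyclic resolution $W^\bullet$ of $V$.

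Finally, to obtain $\mathbb{R}\lim V\cong\mathscr{R}^\bullet(V)$ I would form the first-quadrant bicomplex $\mathscr{R}^p(W^q)$ and compare its total complex to the two edge complexes. For fixed $q$ the row $\mathscr{R}^\bullet(W^q)$ is, by Roos-acyclicity, a strict resolution of $\lim W^q$, so $\Tot$ is strictly quasi-isomorphic to $\lim(W^\bullet)\cong\mathbb{R}\lim V$; for fixed $p$, applying the exact functor $\mathscr{R}^p$ to the resolution $0\to V\to W^\bullet$ shows the column $\mathscr{R}^p(W^\bullet)$ is a strict resolution of $\mathscr{R}^p(V)$, so $\Tot$ is strictly quasi-isomorphic to $\mathscr{R}^\bullet(V)$; hence $\mathbb{R}\lim V\cong\mathscr{R}^\bullet(V)$ in $\straightD(\ttC)$. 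I expect the main obstacle to be the quasi-abelian bookkeeping rather than anything conceptual: one must track strictness of differentials at every stage — in the contracting homotopy (where it is automatic), in the cokernel complex $\mathscr{R}^\bullet(V'')$ in the stability axiom, and above all in the Cartan--Eilenberg-style bicomplex comparison, which has to be carried out with strict exactness throughout so that it descends to $\straightD(\ttC)=\straightK(\ttC)/\straightN(\ttC)$. It is precisely here that the hypothesis ``$\ttC$ has exact products'' is used repeatedly, via the fact that exact functors preserve strictness and commute with the formation of cohomology.
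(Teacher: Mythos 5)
The paper gives no proof of this proposition; it is stated as a citation to Prosmans' article on derived projective limits \cite{Pr2}, so there is no in-paper argument to compare against. Your outline is the standard proof and it is sound: identifying $\mathscr{R}^{\bullet}(M_j(A))$ with the cochain complex of the nerve of the coslice category $j\backslash I$, so that the initial object $\id_j$ supplies an explicitly strict contracting homotopy, is the right computational core; exactness of products in $\ttC$ makes each $\mathscr{R}^n$ exact and $\mathscr{R}^{\bullet}$ product-preserving, so Roos-acyclic objects are stable under products; the split monomorphism $V\to\prod_{j\in I} M_j(V_j)$ cogenerates; and the Schneiders axioms for a $\lim$-injective subcategory together with the Cartan--Eilenberg bicomplex comparison deliver the result. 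Two small points deserve to be made explicit. First, the strictness bookkeeping you flag as the main obstacle is best handled by passing to the left heart $\LH(\ttC)$, where strict exactness of a complex in $\ttC$ becomes ordinary exactness in an abelian category; one reads off the long exact sequence and the bicomplex comparison there and returns to $\ttC$ via the exact, fully faithful embedding. Second, the paper's Roos complex indexes the product over chains $i_0\to\cdots\to i_n$ by $V_{i_0}$ (the \emph{source}), which implicitly treats $V$ as a presheaf $I^{\mathrm{op}}\to\ttC$; your right-adjoint formula $M_j(A)_i=\prod_{\Hom_I(j,i)}A$ and the identification with $N(j\backslash I)$ are the correct ones for that convention, but it is worth stating, since the paper leaves the variance implicit.
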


Because of the explicit formula of the Roos complex, notice that
\begin{cor}Let $I$ be a small category and $\ttC$ a quasi-abelian closed symmetric monoidal category with exact products. If $W$ is flat in  $\ttC$ and $W \ootimes(-)$ commutes with products in $\ttC$ then the natural morphism
\[W \ootimes^{\mathbb{L}}(\mathbb{R} \underset{i\in I}\lim V_{i}) \to \mathbb{R} \underset{i\in I}\lim (W \ootimes V_{i} )
\]
is an isomorphism. In particular, if $V \in \ttC^{I}$ is $\underset{i\in I}\lim$-acyclic then so is $W \ootimes V$ and the canonical morphism
\[W \ootimes (\underset{i\in I}\lim V_{i} ) \to  \underset{i\in I}\lim (W \ootimes V_{i} )
\]
is an isomorphism.
\end{cor}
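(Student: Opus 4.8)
The plan is to exploit the explicit formula \eqref{eqn:Derived2Roos}, which identifies $\mathbb{R}\lim_{i\in I} V_i$ with the Roos complex $\mathscr{R}^\bullet(V)$, and to observe that each term $\mathscr{R}^n(V)$ is built out of products of the $V_i$. First I would note that since $W$ is flat, the functor $W\ootimes^{\mathbb{L}}(-)$ applied to a complex may be computed by applying $W\ootimes(-)$ termwise (this is what flatness of $W$ buys us: $W\ootimes(-)$ is exact on strict sequences, so it preserves strict quasi-isomorphisms and there is no need for a resolution). Applying $W\ootimes(-)$ to the Roos complex $\mathscr{R}^\bullet(V)$ therefore computes $W\ootimes^{\mathbb{L}}(\mathbb{R}\lim_{i\in I} V_i)$. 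On the other side, $\mathbb{R}\lim_{i\in I}(W\ootimes V_i)$ is, by the same Proposition, computed by the Roos complex $\mathscr{R}^\bullet(W\ootimes V)$ of the system $i\mapsto W\ootimes V_i$.

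The heart of the argument is then to compare these two complexes degreewise. In degree $n$ we must compare $W\ootimes \mathscr{R}^n(V) = W\ootimes\bigl(\prod_{i_0\to\cdots\to i_n} V_{i_0}\bigr)$ with $\mathscr{R}^n(W\ootimes V)=\prod_{i_0\to\cdots\to i_n}(W\ootimes V_{i_0})$, where both products range over the (same) set of composable $n$-chains in $I$. By hypothesis $W\ootimes(-)$ commutes with products in $\ttC$, so the canonical map $W\ootimes\prod(\cdots)\to\prod(W\ootimes(\cdots))$ is an isomorphism in each degree. One then checks that these isomorphisms are compatible with the Roos differentials — this is immediate because the differential is assembled out of the structure maps $V(\alpha_1)$ and identity/sign maps, all of which are natural and hence commute with $W\ootimes(-)$. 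Thus $W\ootimes\mathscr{R}^\bullet(V)\cong\mathscr{R}^\bullet(W\ootimes V)$ as complexes in $\ttC$, which gives the desired isomorphism $W\ootimes^{\mathbb{L}}(\mathbb{R}\lim_{i\in I} V_i)\xrightarrow{\ \sim\ }\mathbb{R}\lim_{i\in I}(W\ootimes V_i)$ in $\straightD(\ttC)$.

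For the ``in particular'' clause: if $V$ is $\lim_{i\in I}$-acyclic, then $\mathbb{R}\lim_{i\in I} V_i\cong\lim_{i\in I} V_i$ is concentrated in degree zero, so $W\ootimes^{\mathbb{L}}(\mathbb{R}\lim_{i\in I} V_i)=W\ootimes^{\mathbb{L}}(\lim_{i\in I} V_i)=W\ootimes(\lim_{i\in I} V_i)$, using flatness of $W$ again to see the derived tensor product is the underived one and is concentrated in degree zero. Combined with the isomorphism just established, this shows $\mathbb{R}\lim_{i\in I}(W\ootimes V_i)$ is concentrated in degree zero and equals $W\ootimes(\lim_{i\in I} V_i)$; hence $W\ootimes V$ is $\lim_{i\in I}$-acyclic and the canonical map $W\ootimes(\lim_{i\in I} V_i)\to\lim_{i\in I}(W\ootimes V_i)$ is an isomorphism. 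The one point requiring a little care — the main (mild) obstacle — is to be sure the degreewise isomorphisms $W\ootimes\prod V_{i_0}\cong\prod(W\ootimes V_{i_0})$ genuinely intertwine the Roos differentials rather than merely existing objectwise; this is where one uses that the hypothesis ``$W\ootimes(-)$ commutes with products'' means the canonical comparison map is an isomorphism, together with naturality of $\ootimes$ in the second variable, so that no incompatibility can arise.
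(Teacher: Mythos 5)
Your proof is correct and fills in precisely the details the paper leaves implicit: the corollary is stated with the one-line justification ``Because of the explicit formula of the Roos complex, notice that\dots'', and your argument — compute $\mathbb{R}\lim$ via the Roos complex, use flatness so that $W\ootimes^{\mathbb{L}}(-)$ can be computed termwise, use commutation with products to identify $W\ootimes\mathscr{R}^n(V)$ with $\mathscr{R}^n(W\ootimes V)$ degreewise, and check naturality of the differentials — is exactly what the authors have in mind.
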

We will use this material again in Lemma \ref{lem:UseML}.

\subsection{Relative Geometry}
Just as algebraic geometry is ``built" from the theory of commutative rings and their modules, much work on other kinds of geometry and topology is based on commutative monoids and their modules internal to general symmetric monoidal categories $(C, \ootimes, e)$, for instance see \cite{TV}. In our approach we also ask that they be equipped with compatible Quillen exact structures. The category most important for us is the quasi-abelian example of Ind-Banach modules over a Banach ring together with its projective tensor product and its applications to analytic and arithmetic geometry. An important class of morphisms between ``affine schemes" in relative geometry are opposite to those morphisms $A\to B$ in $\ttComm(C)$ such that the natural map $B\ootimes^{\mathbb{L}}_{A}B \to B$  is a quasi-isomorphism. Such a morphism will be called a \emph{homotopy epimorphism}. We use that terminology because this notion actually is equivalent to the general model or infinity-category notion of a homotopy epimorphism as found in work of To\"{e}n and Vezzosi (as used in homotopical or derived algebraic geometry). However, other sources call this a stably flat morphism \cite{NR}, an isocohomological morphism \cite{MeyerDerived} or a homological epimorphism \cite{GL}, \cite{CL}. It appears in homotopy theory \cite{EKMM}, representation theory, and algebra under different names. Practically, the only way we know to prove that a morphism is a homotopy epimorphism is to resolve $B$ by projective and flat $A$-modules in a clever way allowing for computation of the derived projective tensor product. In particular, we must prove that the resolution remains a resolution after applying the projective tensor product with $B$. A particular example of a homotopy epimorphism is a flat epimorphism, however there are other examples which will be shown in the next section.
\subsection{Banach Rings and Banach Modules}\label{BRBM}
\begin{defn}\label{defn:BanRng}
By a \emph{complete normed (or Banach) ring} we mean a commutative ring with identity $R$ equipped with a function, $|\cdot|: R \to \mathbb{R}_{\ge 0}$ such that
\begin{itemize}
\item $|a| = 0$ if and only if $a=0$;
\item $|a + b| \le |a| + |b|$ for all $a,b \in R$;
\item there is a $C>0$ such that $|a b| \leq C |a||b|$ for all $a,b \in R$;
\item $R$ is a complete metric space with respect to the metric $(a,b) \mapsto |a-b|$.
\end{itemize}
The category of Banach rings has as morphisms those ring homomorphisms $R \to S$ such that there exists a constant $C > 0$ such that $|\phi(a)|_S \le C |a|_R$ for all $a \in R$, in other words bounded ring homomorphisms.
\end{defn}
The initial Banach ring is the integers $\mathbb{Z}$ equipped with the standard absolute value as norm. 

\begin{defn}\label{defn:module}
Let $(R, |\cdot|_R)$ be a Banach ring. A \emph{Banach module} over $R$ is an $R$-module $M$ equipped with a function $\|\cdot \|_M: M \to \mathbb{R}_{\ge 0}$ such that for any $m,n \in M$ and $a \in R$:
\begin{itemize}
\item $\|0_M\|_M = 0$;
\item $\|m + n\|_M \le \|m\|_M + \|n\|_M$;
\item $\|a m\|_M \le C |a|_R\|m\|_M$ for some constant $C > 0$;
\item $\|m\|_M = 0$ implies that $m=0_M$;
\item $M$ is complete with respect to the metric $d(m,n)=\| m-n \|$.
\end{itemize}
\end{defn}
\begin{example}Any abelian group or ring can be considered a Banach ring by equipping it with the trivial norm which assigns $0$ to the zero element and $1$ for each non-zero element. We use notation such as for example $\mathbb{Z}_{\triv}$ for the integers equipped with the trivial norm. If $M$ is a module over a Banach ring $R$, we can make $M$ into a Banach module by equipping it with the trivial norm.
\end{example}
\begin{defn}\label{defn:n-arch}
A Banach ring or a Banach module over a Banach ring is called \emph{non-archimedean} if its semi-norm obeys the strong triangle inequality: for any two elements $v, w$ we have $ \|v +w \| \leq \max\{ \|v \|, \|w \| \}$.
\end{defn}
\begin{defn}\label{defn:scaling}If $(M, \| \ \ \|_{M})$ is a Banach module over a Banach ring $R$ and $r$ is a positive real number then $M_r$ is the Banach module over $R$ defined by the underlying module $M$ equipped with the Banach structure $r \| \ \ \|_{M}$.
\end{defn}
\begin{defn}Let $(R, |\cdot|_R)$ be a Banach ring. 
A $R$-linear map between Banach $R$-modules (Definition \ref{defn:module}), $f: (M, \|\cdot\|_M) \to (N, \|\cdot\|_N)$ is called \emph{bounded} if there exists a real constant $C > 0$ such that
\[ \|f(m)\|_N \le C \|m\|_M \]
for any $m \in M$. The homomorphism $f$ is called \emph{non-expanding} if this equation holds for $C=1$.
\end{defn}
The category of Banach modules with bounded morphisms is denoted by $\ttBan_{R}$. If $R$ is non-archimedean $\ttBan^{na}_{R}$ denotes the category of non-archimedean Banach modules with bounded morphisms.

\begin{lem}For any Banach ring $R$, $R$ is projective as a Banach $R$-module.
\end{lem}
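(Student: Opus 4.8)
The plan is to reduce projectivity of $R$ to the identification $\Hom_{\ttBan_R}(R,M)\cong M$ together with the observation that a strict epimorphism in $\ttBan_R$ is, in particular, a surjection of the underlying $R$-modules; once that is in hand the required lift exists set-theoretically and is automatically a bounded morphism.

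First I would record the natural bijection. For any Banach $R$-module $M$, a bounded $R$-linear map $f\colon R\to M$ satisfies $f(a)=f(a\cdot 1_R)=a\,f(1_R)$, so it is determined by $m:=f(1_R)$. Conversely, for any $m\in M$ the map $f_m\colon R\to M$, $a\mapsto am$, is additive and $R$-linear, and it is bounded because $\|am\|_M\le C_M\,|a|_R\,\|m\|_M$, where $C_M$ is the structure constant of $M$. Thus $f\mapsto f(1_R)$ and $m\mapsto f_m$ are mutually inverse, giving $\Hom_{\ttBan_R}(R,M)\cong M$ naturally in $M$.

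Next I would unwind what a strict epimorphism $g\colon E\to F$ in $\ttBan_R$ is. As an epimorphism, $g$ has dense image, so $\coker(g)=F/\overline{g(E)}=0$ and hence $\im(g)=\ker(F\to\coker(g))=F$. On the other hand $\ker(g)$ is closed, $\coim(g)=E/\ker(g)$ with the quotient norm, and the canonical map $\coim(g)\to\im(g)$ has image exactly $g(E)\subseteq F$. For this map to be an isomorphism it must in particular be surjective, which forces $g(E)=F$. Hence every strict epimorphism in $\ttBan_R$ is surjective on underlying $R$-modules. (We only need this direction; the converse would invoke an open mapping theorem and is not required here.) Then, given a strict epimorphism $g\colon E\to F$ and $\phi\in\Hom_{\ttBan_R}(R,F)$ corresponding to $m=\phi(1_R)\in F$, choose $e\in E$ with $g(e)=m$; the morphism $f_e\colon R\to E$, $a\mapsto ae$, satisfies $g\circ f_e=f_{g(e)}=f_m=\phi$, so $\Hom_{\ttBan_R}(R,E)\to\Hom_{\ttBan_R}(R,F)$ is surjective, i.e.\ $R$ is projective. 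The same argument applies verbatim in $\ttBan^{na}_R$ when $R$ is non-archimedean.

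I do not expect a genuine obstacle here: the one point deserving care is the bookkeeping that ``strict epimorphism'' in this quasi-abelian category really does entail surjectivity of the underlying map (not merely dense image), after which the lift is immediate and its boundedness is free from the module axioms.
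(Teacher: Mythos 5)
Your proof is correct, and the paper itself states this lemma without proof, so there is nothing to compare against directly. The two ingredients you use are the right ones: the natural identification $\Hom_{\ttBan_R}(R,M)\cong M$ via evaluation at $1_R$, and the fact that a strict epimorphism in $\ttBan_R$ is surjective on underlying modules. Your derivation of surjectivity from the definitions (categorical epi $\Rightarrow$ dense image $\Rightarrow$ $\coker=0$ $\Rightarrow$ $\im(g)=F$, then $\coim(g)\to\im(g)$ iso forces $g(E)=F$) is sound. In the paper's own style one would more likely invoke the explicit characterization of strict epimorphisms in $\ttBan_R$ (bounded surjections admitting lifts with norm control up to $\epsilon$, cited from the appendix of \cite{BK}) rather than re-derive surjectivity from the coimage--image factorization, but the substance is the same: once you know the underlying map is onto, any $\phi\in\Hom(R,F)$ is determined by $m=\phi(1_R)$, a preimage $e$ of $m$ gives the lift $a\mapsto ae$, and boundedness of that lift is automatic from the module axioms. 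No gap.
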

\begin{lem}For any projective $R$-module, $P$, and any real number $r>0$, $P_{r}$ is also projective.
\end{lem}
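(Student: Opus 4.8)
The statement to prove is: for any projective $R$-module $P$ and any real number $r > 0$, the scaled module $P_r$ is also projective in $\ttBan_R$.

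\medskip

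The plan is to reduce the claim to the preceding lemma (that $R$ itself is projective) together with the structure theory of projective objects in $\ttBan_R$. First I would recall what projectivity means here: $P$ is projective precisely when for every strict epimorphism $E \to F$ the map $\Hom(P,E) \to \Hom(P,F)$ is surjective. Scaling the norm of a Banach module by a fixed positive constant $r$ does not change the underlying $R$-module, does not change which $R$-linear maps are bounded (a map $f$ is bounded out of $P_r$ iff it is bounded out of $P$, since the constants just get rescaled by $r$ or $r^{-1}$), and does not change which maps are strict epimorphisms. Concretely, there is a canonical bounded $R$-linear isomorphism (the identity on underlying sets) $P \to P_r$ with bounded inverse; in other words $P$ and $P_r$ are isomorphic as objects of $\ttBan_R$ once one allows bounded — rather than merely non-expanding — morphisms. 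Since $\Hom$ in $\ttBan_R$ consists of bounded maps, $\Hom(P_r, -)$ and $\Hom(P, -)$ are naturally isomorphic functors, so $P_r$ is projective if and only if $P$ is.

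\medskip

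If the intended category were instead one with non-expanding morphisms, the argument is slightly more delicate and I would proceed differently: one shows that for the free (projective generating) objects the scaling is absorbed by rescaling the generating family. A standard description is that projective objects are retracts of coproducts $\coprod_{i} R_{r_i}$ of scaled copies of $R$; then $\left(\coprod_i R_{r_i}\right)_r \cong \coprod_i R_{r r_i}$, and a retract of a projective is projective, so scaling preserves the class. One also needs that each $R_{r}$ is projective, which follows from the universal property of $R$ as the free module on one generator together with the observation that a lifting problem against a strict epimorphism $E\to F$ with $R_r$ reduces to choosing a preimage of the image of the generator, and the norm bookkeeping works out because strict epimorphisms of Banach modules are open maps (up to a constant), so one can always find a bounded lift.

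\medskip

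The only real subtlety — and the step I would be most careful about — is making sure the notion of $\Hom$ used in the definition of projectivity is the bounded-morphism $\Hom$ (which it is, per Definition~\ref{defn:module} and the surrounding text), because that is exactly what makes $P \cong P_r$ and trivializes the statement. If one wanted the stronger isometric statement it would genuinely require the retract-of-free argument above. I would therefore present the short proof: exhibit the bounded isomorphism $P \xrightarrow{\ \sim\ } P_r$, note that bounded isomorphic objects have naturally isomorphic covariant $\Hom$-functors hence the same lifting property against strict epimorphisms, and conclude $P_r$ is projective. No serious obstacle is expected; the entire content is the observation that rescaling a norm is an isomorphism in the category of Banach modules with bounded maps.
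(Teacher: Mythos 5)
Your proof is correct and is surely the intended one: the paper states this lemma without proof precisely because the identity map gives a bounded isomorphism $P \cong P_r$ in $\ttBan_R$ (with norm $r$ and inverse of norm $r^{-1}$), so the two objects have naturally isomorphic $\Hom$-functors and hence identical lifting behaviour against strict epimorphisms. Your aside about the non-expanding category $\ttBan^{\leq 1}_R$ correctly identifies the only place where this would become a genuine claim rather than a triviality; as stated, the lemma lives in $\ttBan_R$ and your short argument suffices.
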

\begin{defn}
Given $M, N \in \ttBan_{R}$ we define $M \wotimes_{R}N$ as the (separated) completion of $M \otimes_{R}N$ with respect to the semi-norm
\[||x||=\inf \{ \sum_{i=1}^{
n} ||m_i||||n_i|| \ \  | \ \ x= \sum_{i=1}^{
n} m_i  \wotimes_{R} n_i\}.
\]
Similarly, if $R$ is non-archimedean, given  $M, N \in \ttBan^{na}_{R}$ we define $M \wotimes^{na}_{R}N$ as the (separated) completion of $M \otimes_{R}N$ with respect to the semi-norm
\[||x||=\inf \{ \underset{i=1, \dots, n}\sup ||m_i||||n_i|| \ \  | \ \ x= \sum_{i=1}^{
n} m_i  \wotimes_{R} n_i\}.
\]
\end{defn}
The internal Hom in these categories
is denoted by $\uHom_{R}(V,W)$ and given by the Banach space whose underlying vector space is just the bounded $R$-linear maps
\[\{T \in \Lin_{R}(V,W)| \|T\| < \infty \}
\]
with norm given by  $\|T\|=\underset{v \in V, v \neq 0}\sup \frac{\|T(v)\|}{\|v\|}$. We write $V^{\vee}$ for $\uHom_{R}(V,R) \in \ttBan_{R}$.
The categories $\ttBan_{R}$ and $\ttBan^{na}_{R}$ are both closed symmetric monoidal when equipped with these projective tensor product with unit object given by $R$.

\begin{defn}The category $\ttBan^{\leq 1}_{R}$ 
is defined to have the same objects as $\ttBan_{R}$.
The morphisms are the linear maps with norm less than or equal to one (these are called non-expanding or sometimes just contracting). \end{defn}
This defines a closed symmetric monoidal category with the same internal hom and tensor product and as before it has two versions (one of which exists only when $R$ is non-archimedean). Infinite products and coproducts in $\ttBan^{\leq 1}_{R}$ exist even though they do not exist in $\ttBan_{R}$. In the archimedean case (see page 63 of \cite{Hel}) the product $\prod^{\leq 1}_{i\in I} V_{i}$ of a collection $\{V_{i}\}_{i \in I}$ in $\ttBan^{\leq 1}_{R}$ is given by 
\[\{
(v_i)_{i \in I} \in \bigtimes_{i \in I}V_{i} \ \ | \ \ \sup_{i \in I} \|v_{i}\| < \infty \}  
\] equipped with the norm 
\[\|(v_i)_{i \in I} \| =\sup_{i \in I} \|v_{i}\|
\]
while the coproduct $\coprod^{\leq 1}_{i\in I} V_{i}$ of a collection $\{V_{i}\}_{i \in I}$ in $\ttBan^{\leq 1}_{R}$ is given by \[\{
(v_i)_{i \in I} \in \bigtimes_{i \in I}V_{i} \ \ | \ \sum_{i \in I} \|v_{i}\| <\infty \}  
\] equipped with the norm 
\[\|(v_i)_{i \in I} \| = \sum_{i \in I} \|v_{i}\|.
\]
If $R$ is non-archimedean and we choose to work in the non-archimedean case, they can be computed as in \cite{Gruson}: the product $\prod^{\leq 1}_{i\in I} V_{i}$ of a collection $\{V_{i}\}_{i \in I}$ in $\ttBan^{\leq 1}_{R}$ is given by 
\[\{
(v_i)_{i \in I} \in \bigtimes_{i \in I}V_{i} \ \ | \ \ \sup_{i \in I} \|v_{i}\| < \infty \}  
\] equipped with the norm 
\[\|(v_i)_{i \in I} \| =\sup_{i \in I} \|v_{i}\|
\]
while the coproduct $\coprod^{\leq 1}_{i\in I} V_{i}$ of a collection $\{V_{i}\}_{i \in I}$ in $\ttBan^{\leq 1}_{R}$ is given by \[\{
(v_i)_{i \in I} \in \bigtimes_{i \in I}V_{i} \ \ | \ \lim_{i \in I} \|v_{i}\| =0 \}  
\] equipped with the norm 
\[\|(v_i)_{i \in I} \| = \sup_{i \in I} \|v_{i}\|.
\]
General limits (resp. colimits) are constructed out of kernels and products (resp. cokernels and coproducts) in the usual way. Finite limits (resp. finite colimits) in $\ttBan^{\leq 1}_{R}$ agree with those in $\ttBan_{R}$.
\begin{lem}\label{lem:ker_colim}
Suppose we are given a collection $\{f_{i}:V_{i} \to W_{i}\}_{i \in I}$ in $\ttBan^{\leq 1}_{R}.$ Then observe that the natural morphism
\[\coprod{}^{\leq 1}_{i\in I} \ker(f_i) \rightarrow \ker [\coprod{}^{\leq 1}_{i\in I} V_{i} \rightarrow  \coprod{}^{\leq 1}_{i\in I} W_{i}] 
\]
is an isomorphism. Similarly, if $V_{i} \subset V$ and $W_{i} \subset W$ are countable increasing unions of complete closed isometric submodules with union $V$ and $W,$ respectively, then the natural map 
\[\colim^{\leq 1}_{i\in I} \ker(f_i) \rightarrow \ker [V \rightarrow W] 
\]
is an isomorphism. 
\end{lem}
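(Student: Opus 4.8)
The plan is to unwind both statements directly against the explicit descriptions of coproducts, kernels, and filtered colimits in $\ttBan^{\leq 1}_{R}$ recalled above; neither assertion needs more than showing that the evident comparison map is an isometric bijection.

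For the first statement I would write a typical element of $\coprod^{\leq 1}_{i\in I}V_{i}$ as a family $(v_{i})_{i\in I}$ with $\sum_{i}\|v_{i}\|<\infty$ (in the non-archimedean version, with $\lim_{i}\|v_{i}\|=0$), and observe that the map to $\coprod^{\leq 1}_{i\in I}W_{i}$ is $(v_{i})_{i}\mapsto(f_{i}(v_{i}))_{i}$, which is well defined and non-expanding because $\|f_{i}(v_{i})\|\le\|v_{i}\|$ for every $i$. Hence its kernel consists exactly of those families with $v_{i}\in\ker(f_{i})$ for all $i$ and $\sum_{i}\|v_{i}\|<\infty$, normed by $\sum_{i}\|v_{i}\|$. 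Since $\ker(f_{i})$ is a closed submodule of $V_{i}$ and so carries the restricted norm, this is verbatim the description of $\coprod^{\leq 1}_{i\in I}\ker(f_{i})$, and the comparison map induced by the inclusions $\ker(f_{i})\hookrightarrow V_{i}$ is therefore an isometric isomorphism. The non-archimedean case is word for word the same with sums replaced by suprema and $\ell^{1}$-summability by the $c_{0}$-condition.

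For the second statement I would first note that the family $\{f_{i}\}$ is (implicitly) compatible with the inclusions $V_{i}\hookrightarrow V_{j}$ and $W_{i}\hookrightarrow W_{j}$, so it induces a non-expanding map $f\colon V\to W$ with $f|_{V_{i}}=f_{i}$, and that $\ker[V\to W]$ denotes $\ker(f)$. Next recall that a filtered colimit in $\ttBan^{\leq 1}_{R}$ along isometric closed inclusions is computed by taking the directed union of the underlying $R$-modules, equipping it with the norm inherited from the terms (which is already separated here), and completing. Now the transition maps $\ker(f_{i})\hookrightarrow\ker(f_{j})$ are again isometric closed inclusions, each $\ker(f_{i})$ is complete since it is closed in the Banach module $V_{i}$, and $\bigcup_{i}\ker(f_{i})=\ker(f)$: indeed any $v\in\ker(f)\subset V=\bigcup_{i}V_{i}$ lies in some $V_{i}$ and then satisfies $v\in\ker(f)\cap V_{i}=\ker(f_{i})$. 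Finally $\ker(f)$ is complete, being closed in $V$, so no completion is needed: the colimit is $\ker(f)$ and under this identification the comparison map $\colim^{\leq 1}_{i\in I}\ker(f_{i})\to\ker[V\to W]$ is the identity.

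I do not expect a genuine obstacle. The only points that require attention are the recipe for filtered colimits in $\ttBan^{\leq 1}_{R}$ and the realization that, under the stated completeness hypotheses, the directed unions $\bigcup_{i}V_{i}$, $\bigcup_{i}W_{i}$, and $\bigcup_{i}\ker(f_{i})$ are already complete, so that forming the colimit does not enlarge them past $V$, $W$, and $\ker(f)$ respectively; it is also worth stating explicitly in the hypotheses the compatibility of $\{f_{i}\}$ with the transition maps, which is what makes $\ker[V\to W]$ meaningful.
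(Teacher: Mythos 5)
Your argument is correct and complete; since the paper states this lemma as an ``observe that'' with no written proof, your direct unwinding is exactly what is required. Both halves check out: the first is an isometric identification at the level of explicit elements, and the second correctly uses that a countable chain of isometric closed inclusions in $\ttBan^{\leq 1}_{R}$ has colimit equal to the completion of the set-theoretic union (itself a consequence of the universal property plus density and uniform continuity), that $\ker(f_i)=\ker(f)\cap V_i$ by injectivity of $W_i\hookrightarrow W$, and that $\ker(f)$ is already complete as a closed submodule of $V$. Your remark that the compatibility of the $\{f_i\}$ with the transition maps should be stated explicitly, so that $\ker[V\to W]$ is meaningful, is a fair point about the phrasing of the hypothesis and worth keeping.
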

\begin{lem}\label{lem:Explicit} An object $M$ of $\ttBan_{R}$ is projective if and only if there exists a set $S$ and a function $f:S \to \mathbb{R}_{\geq 0}$ and another element $N$ along with an isomorphism $M \coprod N \cong \underset{s\in S}\coprod^{\leq 1} R_{f(s)}$.
\end{lem}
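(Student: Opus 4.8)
The statement characterises the projective objects of $\ttBan_{R}$ as the retracts of the $\ell^{1}$-style sums $\coprod{}^{\leq 1}_{s\in S}R_{f(s)}$; the proof also shows that these sums are a set of projective generators. I would prove the two implications separately.

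\emph{The ``if'' direction.} It suffices to show (i) that $P:=\coprod{}^{\leq 1}_{s\in S}R_{f(s)}$ is projective, and (ii) that a direct summand of a projective object is projective; an isomorphism $M\coprod N\cong P$ then forces $M$ to be projective. Point (ii) is the usual retract diagram chase, valid in any additive category once a section is given. For (i) I would first record the identification, for any $W\in\ttBan_{R}$,
\[
\Hom_{\ttBan_{R}}\Big(\coprod{}^{\leq 1}_{s\in S}R_{f(s)},\,W\Big)\;\cong\;\Big\{\,(w_{s})_{s\in S}\in\textstyle\prod_{s\in S}W\ \Big|\ \sup_{s}\|w_{s}\|_{W}/f(s)<\infty\,\Big\},
\]
where a morphism $g$ corresponds to the family $(g(e_{s}))_{s}$, with $e_{s}$ the unit of the $s$-th summand; this uses that the structural maps $R_{f(s)}\hookrightarrow\coprod{}^{\leq 1}_{s}R_{f(s)}$ are isometric and the module inequality $\|a m\|\le C|a|\,\|m\|$. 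Given a strict epimorphism $\pi:E\to F$, strictness means $E/\ker\pi\to F$ is a topological isomorphism, so there is a constant $C'$ such that every $w\in F$ has a preimage $e\in E$ with $\|e\|\le C'\|w\|$. For $g:P\to F$ with associated family $(w_{s})$, choosing such preimages $e_{s}$ of the $w_{s}$ yields a family with $\sup_{s}\|e_{s}\|/f(s)\le C'\sup_{s}\|w_{s}\|/f(s)<\infty$, hence, by the displayed identification, a morphism $\tilde g:P\to E$ in $\ttBan_{R}$ lifting $g$. (Alternatively one can invoke the earlier lemmas that $R$ is projective and that $P_{r}$ is projective for $r>0$ to see each $R_{f(s)}$ is projective, but the coproduct step still requires the uniform bound on the preimages.)

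\emph{The ``only if'' direction.} Suppose $M$ is projective. Set $S:=M^{\times}$ and $f(m):=\|m\|_{M}$, and define
\[
\Phi:\coprod{}^{\leq 1}_{m\in M^{\times}}R_{\|m\|_{M}}\To M,\qquad (a_{m})_{m}\longmapsto\sum_{m}a_{m}\,m,
\]
which is a morphism of $\ttBan_{R}$ by the displayed identification (its associated family is $(m)_{m\in M^{\times}}$, and $\sup_{m}\|m\|_{M}/\|m\|_{M}=1$). It is a strict epimorphism: for $m\neq 0$ the unit $e_{m}$ of the $m$-th summand satisfies $\Phi(e_{m})=1_{R}\cdot m=m$ and $\|e_{m}\|=|1_{R}|\,\|m\|_{M}$, so $\Phi$ is a surjection admitting preimages of uniformly controlled norm. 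Since $M$ is projective, $\id_{M}$ lifts along $\Phi$ to a bounded section $\sigma$ with $\Phi\sigma=\id_{M}$. Then $p:=\sigma\Phi$ is a bounded idempotent endomorphism of $P:=\coprod{}^{\leq 1}_{m\in M^{\times}}R_{\|m\|_{M}}$; since the image of a bounded idempotent is a closed, complemented submodule, idempotents split in $\ttBan_{R}$, so $P\cong\im(p)\coprod\ker(p)$ with $\im(p)\cong M$ via the mutually inverse maps $\sigma$ and $\Phi|_{\im(p)}$. Taking $N:=\ker(p)$ gives $M\coprod N\cong\coprod{}^{\leq 1}_{s\in S}R_{f(s)}$, completing the argument.

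\emph{Main obstacle.} The delicate point lies entirely in the ``if'' direction: projectivity of the individual summands $R_{f(s)}$ does not by itself make the $\ell^{1}$-sum projective, since an uncontrolled choice of lifts of a bounded family of morphisms may fail to be a bounded family and hence not assemble into a morphism of $\ttBan_{R}$. One genuinely needs the open-mapping content built into the notion of a strict epimorphism in order to choose all the lifts with one common constant. The rest---tracking the multiplicativity constant $C$ and the factor $|1_{R}|$ through the estimates, and the elementary facts that finite biproducts and idempotent splittings exist in $\ttBan_{R}$---is routine and affects only constants.
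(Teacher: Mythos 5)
Your proof is correct and follows essentially the same route as the paper: the same canonical strict epimorphism $\coprod^{\leq 1}_{m\in M^{\times}}R_{\|m\|_M}\to M$ in the ``only if'' direction, and the retract argument in the ``if'' direction. You actually supply a detail the paper elides (the uniform choice of lifts showing that $\coprod^{\leq 1}_{s}R_{f(s)}$ is projective, which the paper simply asserts), and you are right that this uniformity is the crux, not the projectivity of the individual summands.
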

\proof  There is a canonical strict epimorphism $\underset{m\in M^{\times}}\coprod^{\leq 1} R_{||m||} \to M$ discussed in \cite{BK} and if $M$ is projective this splits. Conversely, if $M \coprod N \cong \underset{s\in S}\coprod^{\leq 1} R_{f(s)}$ and $F \to E$ is a strict epimorphism then $\Hom(\underset{s\in S}\coprod^{\leq 1} R_{f(s)}, F) \to \Hom(\underset{s\in S}\coprod^{\leq 1} R_{f(s)}, E)$ is surjective and this breaks up into a product of a map $\Hom(M, F) \to \Hom(M, E)$ and a map $\Hom(N, F) \to \Hom(N, E)$ and so these are both surjective. Therefore $M$ is projective.
\endproof
\begin{lem}\label{lem:TensProj} If $P$ and $Q$ are projective in $\ttBan_{R}$ then $P\wotimes_{R}Q$ is also projective in $\ttBan_{R}$.
\end{lem}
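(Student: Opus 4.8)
The plan is to reduce to the rank-one case using the explicit description of projectives in Lemma~\ref{lem:Explicit} and then compute the relevant tensor products directly. First I would apply Lemma~\ref{lem:Explicit} to pick sets $S,T$, weight functions $f\colon S\to\Real_{\geq 0}$, $g\colon T\to\Real_{\geq 0}$, Banach $R$-modules $N,N'$, and isomorphisms $P\coprod N\cong\coprod{}^{\leq 1}_{s\in S}R_{f(s)}$ and $Q\coprod N'\cong\coprod{}^{\leq 1}_{t\in T}R_{g(t)}$ in $\ttBan_R$ (these are finite coproducts, which agree in $\ttBan_R$ and $\ttBan^{\leq 1}_R$). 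Since $\ttBan_R$ is closed symmetric monoidal, each functor $(-)\wotimes_R M$ is a left adjoint and hence preserves finite coproducts; applying this in both variables gives
\[
X:=\Bigl(\coprod{}^{\leq 1}_{s\in S}R_{f(s)}\Bigr)\wotimes_R\Bigl(\coprod{}^{\leq 1}_{t\in T}R_{g(t)}\Bigr)\cong(P\coprod N)\wotimes_R(Q\coprod N'),
\]
which, distributing $\wotimes_R$ over the two finite coproducts, decomposes as $(P\wotimes_R Q)\coprod(P\wotimes_R N')\coprod(N\wotimes_R Q)\coprod(N\wotimes_R N')$. Thus $P\wotimes_R Q$ is a direct summand of $X$, and since a direct summand of a projective is projective (clear from the definition, as $\Hom$ out of a coproduct is a product), it suffices to prove that $X$ is projective in $\ttBan_R$.

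To compute $X$, note that $\ttBan^{\leq 1}_R$ is also closed symmetric monoidal, so $\wotimes_R$ commutes with the coproducts $\coprod^{\leq 1}$ formed there; hence
\[
X\cong\coprod{}^{\leq 1}_{(s,t)\in S\times T}\bigl(R_{f(s)}\wotimes_R R_{g(t)}\bigr).
\]
It remains to understand $R_r\wotimes_R R_{r'}$ for reals $r,r'>0$. Its underlying $R$-module is $R\otimes_R R\cong R$, every element being $c\otimes 1$, and its projective tensor seminorm is $\|c\otimes 1\|=rr'\,\nu(c)$ with $\nu(c):=\inf\{\sum_i|a_i||b_i|\ :\ c=\sum_i a_ib_i\ \text{in}\ R\}$. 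The submultiplicativity constant $C$ of $R$ together with the triangle inequality give $|c|/C\leq\nu(c)\leq|1|\cdot|c|$, so $\nu$ is a genuine norm on $R$ equivalent to $|\cdot|$ with constants depending only on $R$; in particular $(R,\nu)$ is already complete and separated, no completion is needed, and $R_r\wotimes_R R_{r'}\cong R_{rr'}$ in $\ttBan_R$ (an isometry when $C=|1|=1$).

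Finally, because the two-sided bound relating $\|\cdot\|_{R_{f(s)}\wotimes_R R_{g(t)}}$ to $f(s)g(t)|\cdot|$ has constants independent of $(s,t)$, the explicit description of $\coprod^{\leq 1}$ in the archimedean case — as the families $(c_{s,t})$ with $\sum_{s,t}\|c_{s,t}\|<\infty$, normed by that sum — shows that the identity on underlying modules identifies $\coprod^{\leq 1}_{(s,t)}(R_{f(s)}\wotimes_R R_{g(t)})$ with $\coprod^{\leq 1}_{(s,t)\in S\times T}R_{f(s)g(t)}$ up to a uniformly bounded rescaling of norms, hence these are isomorphic in $\ttBan_R$. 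The latter object has exactly the form in Lemma~\ref{lem:Explicit} (complementary summand $0$, weight $(s,t)\mapsto f(s)g(t)$), so it is projective; therefore $X$ is projective, and therefore so is its direct summand $P\wotimes_R Q$. The non-archimedean statement is proved identically, replacing $\wotimes_R$ by $\wotimes^{na}_R$ and $\coprod^{\leq 1}$ by the coproduct of the corresponding $\leq 1$ category (with suprema in place of sums). The one place requiring care is this last step: $\coprod^{\leq 1}$ depends on the actual norms, not merely on isomorphism type in $\ttBan_R$, so one must carry the norm equivalences uniformly over the index set $S\times T$ rather than just applying $R_r\wotimes_R R_{r'}\cong R_{rr'}$ term by term; everything else is a formal consequence of closed monoidality and Lemma~\ref{lem:Explicit}.
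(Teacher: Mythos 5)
Your proof is correct and takes essentially the same route as the paper: complement $P$ and $Q$ using Lemma~\ref{lem:Explicit}, tensor the two $\coprod^{\leq 1}$-presentations, observe that $P\wotimes_R Q$ appears as a direct summand of the resulting weighted coproduct, and apply Lemma~\ref{lem:Explicit} again. The one addition is your explicit check that $R_{f(s)}\wotimes_R R_{g(t)}\cong R_{f(s)g(t)}$ with constants uniform in $(s,t)$ (so the rescaling survives passage to $\coprod^{\leq 1}$); the paper states the resulting isomorphism $(P\wotimes_R Q)\oplus S\cong\coprod^{\leq 1}_{(p,q)}R_{\|p\|\|q\|}$ directly and leaves this uniformity implicit.
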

\proof 
Using Lemma \ref{lem:Explicit}, we can complement $P$ and $Q$ by modules $P'$ and $Q'$ in order to conclude that there is a module $S=(P\wotimes_{R}Q') \oplus (P' \wotimes_{R}Q) \oplus (P'\wotimes_{R}Q')$ so that 
\[(P\wotimes_{R}Q )\oplus S \cong \underset{(p,q)\in P^{\times} \times Q^{\times}}\coprod^{\leq 1} R_{||p||||q||}
\]
and so the lemma follows from another application of Lemma \ref{lem:Explicit}.
\endproof
\begin{lem}\label{lem:Proj2Flat}
Any projective in $\ttBan_{R}$ is flat in $\ttBan_{R}$.
\end{lem}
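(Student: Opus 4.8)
The plan is to run the classical argument that projective modules are flat. By Lemma~\ref{lem:Explicit} a projective object $M$ of $\ttBan_R$ is a retract of a ``free'' object $\coprod{}^{\leq 1}_{s\in S} R_{f(s)}$, so it suffices to show that such free objects are flat and that retracts of flat objects are flat. Accordingly, fix a projective $M$ and choose, via Lemma~\ref{lem:Explicit}, a set $S$, a function $f\colon S\to\mathbb{R}_{\geq 0}$ and an object $N$ with $M\coprod N\cong\coprod{}^{\leq 1}_{s\in S} R_{f(s)}$; since $R_{0}=0$ we may and do assume $f(s)>0$ for all $s$.

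First I would identify the projective tensor product with a free object. Because $\wotimes_R$ is the monoidal product of the closed symmetric monoidal category $\ttBan^{\leq 1}_R$, the functor $A\wotimes_R(-)$ on $\ttBan^{\leq 1}_R$ has the right adjoint $\uHom_R(A,-)$ and so commutes with the coproduct $\coprod{}^{\leq 1}$. Combined with the elementary isomorphism $A\wotimes_R R_{r}\cong A_{r}$, this yields for every $A\in\ttBan_R$ a natural isomorphism
\[
A\wotimes_R\Bigl(\coprod{}^{\leq 1}_{s\in S} R_{f(s)}\Bigr)\;\cong\;\coprod{}^{\leq 1}_{s\in S} A_{f(s)}.
\]

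Next I would check that the right-hand functor preserves strict monomorphisms, using the explicit form of $\coprod{}^{\leq 1}$. Recall that $u\colon A\to B$ in $\ttBan_R$ is a strict monomorphism exactly when it is injective and bounded below, i.e. there is $c>0$ with $c\|a\|_A\le\|u(a)\|_B$ for all $a$ (then $u(A)$ is closed, being a complete subspace, and $\coim(u)=A\to\im(u)=u(A)$ is an isomorphism). Given such a $u$, the induced morphism $\coprod{}^{\leq 1}_{s} A_{f(s)}\to\coprod{}^{\leq 1}_{s} B_{f(s)}$ sends $(a_s)_s$ to $(u(a_s))_s$; it is clearly injective, and since the coproduct norm is $(v_s)_s\mapsto\sum_s f(s)\|v_s\|$ (respectively $\sup_s f(s)\|v_s\|$ in the non-archimedean category $\ttBan^{na}_R$) one reads off $c\,\|(a_s)_s\|\le\|(u(a_s))_s\|\le\|u\|\cdot\|(a_s)_s\|$, so the induced morphism is again injective and bounded below, hence a strict monomorphism. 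Thus $\coprod{}^{\leq 1}_{s\in S} R_{f(s)}$ is flat, and therefore so is $M\coprod N$.

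Finally I would conclude with Lemma~\ref{lem:FlatProps}: a coproduct is flat if and only if each of its summands is, so flatness of $M\coprod N$ forces $M$ to be flat. The one point requiring care is the middle step: one must make sure that the completed projective tensor product genuinely commutes with the infinite coproduct $\coprod{}^{\leq 1}$ (this is why the identification is made in $\ttBan^{\leq 1}_R$, where $\coprod{}^{\leq 1}$ is a genuine categorical colimit, rather than in $\ttBan_R$), and that the constant $c$ witnessing the strict monomorphism property of $u$ transfers \emph{uniformly} to the coproduct. Once the explicit $\ell^1$-type (or $c_0$-type) norms are written down, both are immediate, and the non-archimedean case is identical.
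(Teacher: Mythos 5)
Your proof is correct and follows the same route as the paper's: realize the projective $M$ as a retract of a free object $\coprod^{\leq 1}_{s\in S} R_{f(s)}$ (you invoke Lemma~\ref{lem:Explicit}, the paper uses the canonical cover from \cite{BK} directly, which is the same splitting), then conclude via Lemma~\ref{lem:FlatProps}. The only difference is that you spell out why the $\ell^1$-coproduct of weighted copies of $R$ is flat, by the explicit bounded-below estimate, whereas the paper leaves this implicit in the citation of Lemma~\ref{lem:FlatProps}; that extra care is reasonable since $\coprod^{\leq 1}$ is a colimit in $\ttBan^{\leq 1}_R$ rather than $\ttBan_R$.
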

\proof 
Let $P$ be a projective in $\ttBan_{R}$. There is a canonical strict epimorphism $\underset{p\in P^{\times}}\coprod^{\leq 1} R_{||p||} \to P$ discussed in \cite{BK}. As usual, it splits and so $\underset{p\in P^{\times}}\coprod^{\leq 1} R_{||p||}$ is coproduct (in $\ttBan_{R}$) of the kernel and $P$. Hence $P$ is flat by Lemma \ref{lem:FlatProps}.
\endproof
 
The proof of the following is obvious from the definitions. 
\begin{lem}\label{lem:colimProps} Any finite coproduct of projective objects in $\ttBan_{R}$ is projective in $\ttBan_{R}$. 
\end{lem}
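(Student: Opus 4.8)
The plan is to argue directly from the defining property of projectivity, using only the universal property of the coproduct. It suffices to treat the case of two objects $P_{1}, P_{2}$ and then induct on the number of summands, so put $P = P_{1}\coprod P_{2}$, which exists in $\ttBan_{R}$ because finite colimits exist there (and agree with those computed in $\ttBan^{\leq 1}_{R}$). For every object $G$ the coproduct structure supplies an isomorphism $\Hom(P,G)\cong \Hom(P_{1},G)\times \Hom(P_{2},G)$, natural in $G$.

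Now let $u\colon E\to F$ be a strict epimorphism. Post-composition with $u$ gives a commutative square whose two vertical arrows are the natural isomorphisms above and whose bottom arrow is the product of the two maps $\Hom(P_{i},E)\to \Hom(P_{i},F)$. Since each $P_{i}$ is projective, each of these two maps of sets is surjective, and a finite product of surjective maps is surjective; hence the top arrow $\Hom(P,E)\to \Hom(P,F)$ is surjective. By Definition \ref{def:verb} this is precisely the assertion that $P$ is projective, and an evident induction extends the conclusion to any finite coproduct of projectives.

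There is essentially no obstacle here: the argument is purely formal. If one prefers an argument in the spirit of Lemma \ref{lem:TensProj}, one may instead invoke Lemma \ref{lem:Explicit}: choosing complements $N_{i}$ with $P_{i}\coprod N_{i}\cong \underset{s\in S_{i}}\coprod^{\leq 1} R_{f_{i}(s)}$, one obtains $(P_{1}\coprod\cdots\coprod P_{n})\coprod(N_{1}\coprod\cdots\coprod N_{n})\cong \underset{s\in S_{1}\sqcup\cdots\sqcup S_{n}}\coprod^{\leq 1} R_{f(s)}$, and Lemma \ref{lem:Explicit} applies once more. The single point worth flagging is that the argument is special to \emph{finite} coproducts: a countably infinite coproduct of copies of $R$ need not exist in $\ttBan_{R}$ at all, so no claim is being made about infinite coproducts.
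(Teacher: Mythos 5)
Your argument is correct and is exactly the ``obvious from the definitions'' argument the paper has in mind (the paper omits the proof with precisely that remark): finite coproducts in the additive category $\ttBan_{R}$ are biproducts, $\Hom(P_{1}\coprod P_{2}, -)\cong \Hom(P_{1},-)\times\Hom(P_{2},-)$, and a finite product of surjections is surjective. Your alternative route via Lemma \ref{lem:Explicit} is also valid, and the caveat about infinite coproducts not existing in $\ttBan_{R}$ is well taken.
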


\begin{lem}\label{lem:commuteFiltSSES}\cite{Gruson} A filtered colimit of strict, short exact sequences in $\ttBan_{R}^{\leq 1}$ is a strict short exact sequence.
\end{lem}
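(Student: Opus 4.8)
The plan is to follow Gruson \cite{Gruson} by computing the filtered colimit explicitly and checking strictness by hand. Recall that $\ttBan^{\leq 1}_{R}$ sits as a reflective subcategory of the category of seminormed $R$-modules and contractions, the reflector being separated completion $M\mapsto\widehat M$, and that in the latter category a filtered colimit of a system $\{V_i\}_{i\in I}$ is the algebraic colimit $\underset{i\in I}\colim V_i$ equipped with the seminorm $\|[v]\|=\inf\{\|v_i\|_{V_i}: v_i\text{ represents }[v]\}$ (the contractivity of the structure maps is what makes this a seminorm and gives the universal property). Since a reflector preserves colimits, the colimit in $\ttBan^{\leq 1}_{R}$ of $\{V_i\}$ equals $\widehat{\underset{i\in I}\colim V_i}$. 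So it is enough to prove: (a) a filtered colimit of strict short exact sequences of seminormed $R$-modules --- equivalently, of diagrams in which each $V_i\subseteq W_i$ is a closed submodule carrying the induced norm and $X_i=W_i/V_i$ carries the quotient norm --- is again such a sequence; and (b) separated completion carries a strict short exact sequence of seminormed modules to a strict short exact sequence of Banach modules.

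For (a): filtered colimits are exact in $R$-$\mathrm{Mod}$, so the underlying sequence of modules stays exact. To see that $\underset{i}\colim V_i\to\underset{i}\colim W_i$ is isometric, the inequality $\le$ is trivial, and for $\ge$: given a representative $w_j\in W_j$ of the image of a class $[v]$, filteredness produces an index $k$ at which $w_j$ and a chosen representative $v_i\in V_i$ of $[v]$ have the same image; writing $v_k\in V_k$ for that common image one gets $\|w_j\|_{W_j}\ge\|v_k\|_{W_k}=\|v_k\|_{V_k}\ge\|[v]\|$ using that $V_k\hookrightarrow W_k$ is isometric, and taking the infimum over $w_j$ gives the claim. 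To see that $\underset{i}\colim W_i\to\underset{i}\colim X_i$ is a strict epimorphism one checks that the quotient seminorm on $(\underset{i}\colim W_i)/(\underset{i}\colim V_i)$ agrees with the colimit seminorm on $\underset{i}\colim X_i$: the inequality $\|[x]\|_X\le\|[x]\|_{\mathrm{quot}}$ holds because each $W_i\to X_i$ is a contraction, and the reverse follows by lifting a near-optimal representative $x_i$ of $[x]$ along the strict epimorphism $W_i\to X_i$ to some $w_i\in W_i$ with $\|w_i\|_{W_i}$ nearly equal to $\|x_i\|_{X_i}$.

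For (b): let $0\to A\to B\to C\to 0$ be strict exact with $A\subseteq B$ a closed submodule carrying the induced seminorm and $C=B/A$ carrying the quotient seminorm. The null space of $A$ is $A\cap N_B$, where $N_B$ is the null space of $B$, so $A/(A\cap N_B)\hookrightarrow B/N_B$ is an isometric inclusion; hence $\widehat A$ is identified with the closure of the image of $A$ in $\widehat B$, a closed submodule. Separated completion, being a left adjoint, preserves cokernels, so $\widehat C\cong\widehat B/\overline{\widehat A}$; and the natural map $B/A\to\widehat B/\overline{\widehat A}$ is isometric with dense image (the infimum of the continuous function $a'\mapsto\|b+a'\|$ over $A$ equals its infimum over the closure $\overline A$ in $\widehat B$), so this isomorphism is one of Banach modules. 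Therefore $0\to\widehat A\to\widehat B\to\widehat C\to 0$ is exact with isometric mono and strict (quotient) epi, hence a strict short exact sequence. Combining (a) and (b) applied to $0\to V_i\to W_i\to X_i\to 0$ proves the lemma.

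The main obstacle is the bookkeeping in (a): matching the infimum seminorm on the colimit with the infimum seminorms on the constituents, which is precisely where filteredness together with the isometry of the monos $V_i\hookrightarrow W_i$ and the openness of the epis $W_i\to X_i$ enter. A secondary point to watch in (b) is that $B$ is not assumed complete or separated, so the identifications of the subobject and the quotient with their completed counterparts need the small density and infimum arguments indicated rather than being automatic.
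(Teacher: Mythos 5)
Your proof is correct; note, though, that the paper itself offers no argument for this lemma --- its ``proof'' is literally the single line ``See Proposition 1 on page 69 of \cite{Gruson}'' --- so there is no internal reasoning to compare against. Your strategy of reducing along the reflective embedding $\ttBan^{\leq 1}_{R}\hookrightarrow\ttSNrm^{\leq 1}_{R}$ to (a) exactness of filtered colimits with the infimum seminorm and (b) exactness of separated completion is the standard route and close in spirit to Gruson's. The filteredness argument showing the colimit mono is isometric, the $\varepsilon$-lifting argument identifying the quotient seminorm on $(\colim W_i)/(\colim V_i)$ with the colimit seminorm on $\colim X_i$, and the observation that separated completion, as a left adjoint, preserves cokernels combined with the density-and-infimum check that $\widehat{B/A}\cong\widehat{B}/\widehat{A}$ isometrically, are all sound. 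One tiny notational slip in (b): once you identify $\widehat{A}$ with the closure in $\widehat{B}$ of the image of $A$ (which, being complete, is already closed), the extra bar in $\overline{\widehat{A}}$ is redundant; the mathematics is unaffected.
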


\proof  See Proposition 1 on page 69 of \cite{Gruson}.
\endproof
\begin{lem}\label{lem:IntHomProj} If $V \to W$ is a strict epimorphism and $P$ is projective then the corresponding morphism $\uHom(P,V) \to \uHom(P,W)$ is a strict epimorphism.
\end{lem}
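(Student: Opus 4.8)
The plan is to reduce the statement to the already-established fact (Lemma \ref{lem:Explicit}) that a projective object $P$ is a direct summand of a coproduct $\coprod^{\leq 1}_{s\in S} R_{f(s)}$, together with the explicit description of morphisms out of such a coproduct. First I would write $P\oplus P'\cong \coprod^{\leq 1}_{s\in S} R_{f(s)}$ for some complement $P'$, so that for any object $X$ the hom-group $\Hom(P,X)$ is a direct summand of $\Hom(\coprod^{\leq 1}_{s\in S} R_{f(s)},X)$, and likewise for the internal Hom $\uHom(P,X)$, which decomposes as $\uHom(\coprod^{\leq 1}_{s\in S} R_{f(s)},X)\cong \uHom(P,X)\oplus \uHom(P',X)$ (the internal Hom turns coproducts in the first variable into products). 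So it suffices to prove the statement when $P=\coprod^{\leq 1}_{s\in S} R_{f(s)}$, since a direct summand of a strict epimorphism of Banach modules that is compatible with the splitting is again a strict epimorphism (a direct summand of a surjection between Banach spaces which is bounded below on a complement is bounded below, hence strict; alternatively use that strict epimorphisms are stable under retracts in a quasi-abelian category).

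Next I would identify $\uHom(\coprod^{\leq 1}_{s\in S} R_{f(s)},X)$ explicitly. Since $\uHom(R_r,X)\cong X_{1/r}$ (a bounded map $R_r\to X$ is determined by the image of $1$, with the norm rescaled by $1/r$), and the internal Hom sends the coproduct $\coprod^{\leq 1}_{s}$ to the product $\prod^{\leq 1}_{s}$, we get
\[
\uHom\Bigl(\coprod{}^{\leq 1}_{s\in S} R_{f(s)},X\Bigr)\;\cong\;\prod{}^{\leq 1}_{s\in S} X_{1/f(s)}.
\]
So the morphism in question becomes the natural map $\prod^{\leq 1}_{s\in S} V_{1/f(s)}\to \prod^{\leq 1}_{s\in S} W_{1/f(s)}$ induced by a strict epimorphism $V\to W$. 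Rescaling the norm on each factor does not affect strictness, so the remaining claim is: if $g:V\to W$ is a strict epimorphism in $\ttBan_R$, then $\prod^{\leq 1}_{s\in S} g:\prod^{\leq 1}_{s\in S} V\to \prod^{\leq 1}_{s\in S} W$ is a strict epimorphism.

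Finally I would prove this last claim directly using the open mapping theorem / explicit norm estimates. Because $g$ is a strict epimorphism of Banach modules, there is a constant $C>0$ such that every $w\in W$ has a preimage $v\in V$ with $\|v\|\le C\|w\|$ (strictness of $g$ means the quotient norm on $W=V/\ker g$ is equivalent to the given norm). Given a bounded family $(w_s)_{s\in S}$ with $\sup_s\|w_s\|<\infty$, choose for each $s$ a preimage $v_s$ with $\|v_s\|\le C\|w_s\|$; then $\sup_s\|v_s\|\le C\sup_s\|w_s\|<\infty$, so $(v_s)_s\in\prod^{\leq 1}_{s}V$ maps to $(w_s)_s$ and has norm bounded by $C$ times the norm of $(w_s)_s$. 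Hence $\prod^{\leq 1}_{s}g$ is surjective with a bounded section on the level of norms, which is exactly the statement that it is a strict epimorphism. Assembling the three steps gives the lemma. The main obstacle is the bookkeeping of norm rescalings and making sure that the "direct summand of a strict epimorphism is strict" step is handled cleanly — in the non-archimedean case one uses the corresponding descriptions of $\coprod^{\leq 1}$ and $\prod^{\leq 1}$ from \cite{Gruson}, but the argument is otherwise identical since strictness is again detected by a uniform lifting estimate.
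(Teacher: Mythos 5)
Your proof is correct, but it takes a genuinely different route from the paper's. The paper's argument is purely categorical: by Proposition~1.3.23 of Schneiders, strict epimorphisms are detected by $\Hom(Q,-)$ as $Q$ ranges over projectives, so it suffices to check that $\Hom(Q,\uHom(P,V))\to\Hom(Q,\uHom(P,W))$ is surjective for every projective $Q$; by adjunction this is $\Hom(Q\wotimes_R P,V)\to\Hom(Q\wotimes_R P,W)$, which is surjective because $Q\wotimes_R P$ is again projective by Lemma~\ref{lem:TensProj}. Your proof instead unwinds the explicit structure of projectives (Lemma~\ref{lem:Explicit}), reduces to $P=\coprod^{\leq 1}_{s}R_{f(s)}$ by stability of strict epimorphisms under retracts, identifies $\uHom(\coprod^{\leq 1}_s R_{f(s)},X)$ with $\prod^{\leq 1}_s X_{1/f(s)}$, and finishes with a hands-on uniform lifting estimate. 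Both work; the paper's is shorter and leans on machinery already in place (it really just rephrases ``tensor product of projectives is projective''), while yours is more elementary and makes the open-mapping content visible. One small technical caveat in your last step: strictness of $g:V\to W$ gives that for every $\varepsilon>0$ and every $w$ there is a lift $v$ with $\|v\|\le (C+\varepsilon)\|w\|$ (the infimum in the quotient norm need not be attained), so you should fix $\varepsilon$ uniformly in $s$ rather than assert a lift with $\|v\|\le C\|w\|$ on the nose; this does not affect the conclusion. Also, the per-factor rescalings $1/f(s)$ vary with $s$, so the reduction ``rescaling does not affect strictness'' should be read as: the uniform lifting constant for $g$ survives the rescaling because it multiplies numerator and denominator of the estimate by the same factor $1/f(s)$ --- which is what your computation shows, so this is a matter of phrasing rather than substance.
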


\proof 
By Proposition 1.3.23 of \cite{SchneidersQA} it is enough to show that for any projective $Q$, that \[\Hom(Q, \uHom(P,V)) \to \Hom(Q, \uHom(P,W))\] is surjective. This follows immediately from adjunction and from Lemma \ref{lem:TensProj}.
\endproof
\begin{rem}The projectives and strict epimorphisms determine one another in the sense that a morphism $V\to W$ is a strict epimorphism if and only if  $\Hom(P, V) \to \Hom(P,W)$ is surjective for every projective $P$ and a module $M$ is projective if and only if $\Hom(M, V)\to \Hom(M,W)$ is surjective for every strict epimorphism $V \to W$. We will use this often in what follows.
\end{rem}
\begin{lem} For any small set $S$ and projectives $P_{s} \in \ttBan_{R}$ for each $s\in S$ the object $P=\underset{s \in S}\coprod^{\leq 1}P_{s}$ is projective in $\ttBan_{R}$.
\end{lem}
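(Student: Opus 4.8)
The plan is to deduce this from the explicit description of projectives in Lemma~\ref{lem:Explicit}, combined with the standard fact that a retract of a projective object is again projective. (For finite $S$ this is already Lemma~\ref{lem:colimProps}, so the content is the infinite case.)

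First I would record the retract principle: if $Q$ is projective in $\ttBan_{R}$ and there are morphisms $i\colon P\to Q$, $r\colon Q\to P$ with $ri=\id_{P}$, then $P$ is projective — given a strict epimorphism $E\to F$ and a map $g\colon P\to F$, lift the composite $Q\xrightarrow{r}P\xrightarrow{g}F$ through $E\to F$ and precompose the lift with $i$. Next, for each $s\in S$, Lemma~\ref{lem:Explicit} produces a set $T_s$, a function $f_s\colon T_s\to\mathbb{R}_{\geq 0}$, an object $N_s$, and an isomorphism $\psi_s\colon P_s\coprod N_s\xrightarrow{\sim}\coprod{}^{\leq 1}_{t\in T_s}R_{f_s(t)}$. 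Writing $N=\coprod{}^{\leq 1}_{s\in S}N_s$, the goal is to build an isomorphism
\[
\Big(\coprod{}^{\leq 1}_{s\in S}P_s\Big)\coprod N\ \cong\ \coprod{}^{\leq 1}_{s\in S}\big(P_s\coprod N_s\big)\ \cong\ \coprod{}^{\leq 1}_{s\in S}\,\coprod{}^{\leq 1}_{t\in T_s}R_{f_s(t)}\ \cong\ \coprod{}^{\leq 1}_{(s,t)}R_{f_s(t)},
\]
where in the last term $(s,t)$ ranges over the disjoint union $\coprod_{s}T_s$. That object is a coproduct of rescaled copies of $R$, hence projective by Lemma~\ref{lem:Explicit} (with zero complement), so the retract principle gives the claim.

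The first isomorphism above (distributing the finite coproduct $\coprod N_s$ through the $S$-indexed coproduct) and the third (associativity of coproducts, reindexing by $\coprod_s T_s$) are purely formal rearrangements — in fact isometries — that follow directly from the explicit formula for $\coprod{}^{\leq 1}$ recalled in Section~\ref{BRBM}; these cost nothing. The middle isomorphism is where the work is: it amounts to applying $\coprod{}^{\leq 1}_{s\in S}$ to the family $(\psi_s)_{s\in S}$, and $\coprod{}^{\leq 1}$ is only functorial on \emph{uniformly bounded} families, i.e. families $(\psi_s)$ with $\sup_s\max\big(\|\psi_s\|,\|\psi_s^{-1}\|\big)<\infty$. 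So the hard part will be to arrange the free presentations of the $P_s$ so that their structure maps are bounded uniformly in $s$. The natural attempt is to avoid an arbitrary presentation and instead use the canonical non-expanding strict epimorphism $\coprod{}^{\leq 1}_{p\in P_s^{\times}}R_{\|p\|}\to P_s$ of \cite{BK} (which splits because $P_s$ is projective) and track the norms of the resulting inclusion/retraction pairs; equivalently, one can try to assemble these canonical epimorphisms into the non-expanding strict epimorphism $\coprod{}^{\leq 1}_{(s,p)}R_{\|p\|}\to\coprod{}^{\leq 1}_{s}P_s$ from a free module and show that this one splits. Securing that uniformity is the crux; everything else is bookkeeping with the coproduct formulas.
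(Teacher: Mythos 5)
Your outline takes a genuinely different route from the paper's. The paper does not detour through the structure theorem (Lemma~\ref{lem:Explicit}) or a retract principle at all: given a strict epimorphism $\pi\colon V\to W$ and a morphism $f\colon P\to W$, it lifts $f$ directly, component by component. The key step is that by Lemma~\ref{lem:IntHomProj}, each $\uHom(P_s,\pi)\colon \uHom(P_s,V)\to\uHom(P_s,W)$ is a strict epimorphism of Banach modules, and then the norm-controlled characterization of strict epimorphisms from the appendix of~\cite{BK} yields a lift $\tilde f_s$ of $f_s=f|_{P_s}$ with $\|\tilde f_s\|\leq\|f_s\|+\epsilon\leq\|f\|+\epsilon$, uniformly in $s$. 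These $\tilde f_s$ assemble into a lift $\tilde f\colon P\to V$, and that is the whole proof.

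Your proposal correctly identifies the crux of your alternate route — uniform boundedness of the sections — but leaves it unresolved, and as written it is therefore incomplete. The gap can be closed with the \emph{same} mechanism the paper uses: for each $s$ take the canonical non-expanding strict epimorphism $q_s\colon F_s=\coprod^{\leq 1}_{p\in P_s^\times}R_{\|p\|}\to P_s$; by Lemma~\ref{lem:IntHomProj}, $\uHom(P_s,q_s)\colon\uHom(P_s,F_s)\to\uHom(P_s,P_s)$ is a strict epimorphism, so by the norm-controlled lifting property the element $\id_{P_s}$ (of norm one) lifts to a section $\sigma_s\colon P_s\to F_s$ with $\|\sigma_s\|\leq 1+\epsilon$, independently of $s$. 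Then $\coprod^{\leq 1}\sigma_s$ and $\coprod^{\leq 1}q_s$ exhibit $P$ as a retract of the free object $\coprod^{\leq 1}_{(s,p)}R_{\|p\|}$, which is projective, so $P$ is projective. This makes your argument work, but notice that it needs both ingredients of the paper's proof (Lemma~\ref{lem:IntHomProj} and the norm-controlled lifting) \emph{plus} the retract bookkeeping and Lemma~\ref{lem:Explicit}; the paper's direct lift of $f$ is strictly shorter and avoids the extra scaffolding.
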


\proof 
Let $\pi: V\to W$ be a strict epimorphism and let $f:P \to W$ be any morphism. By Lemma \ref{lem:IntHomProj}, for each projective $P_{s}$ we get a strict epimorphism \[\uHom(P_s, \pi): \uHom(P_s, V) \to \uHom(P_s, W).\] Let $f_s$ be the restriction of $f$ to $P_s$ so $||f_s||\leq ||f||$.  Fix $\epsilon>0$. Using the strict epimorphism property (see the characterizations in the appendix of \cite{BK}), choose for each $s$ a lift $\tilde{f_s}\in \uHom(P_s, V)$ of $f_s$ such that $||\tilde{f_s}|| \leq ||f_s||+\epsilon \leq ||f||+\epsilon$. As their norms are bounded independent of $s$, the $\tilde{f_s}$ assemble into a morphism $\tilde{f}:P\to V$ inducing $f$.
\endproof

\begin{lem}The category $\ttBan_{R}$ has enough projectives and all projectives in $\ttBan_{R}$ are flat.
\end{lem}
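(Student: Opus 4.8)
The plan is to assemble this from results already established. The statement has two halves: that $\ttBan_R$ has enough projectives, and that every projective in $\ttBan_R$ is flat. The second half is immediate: it is precisely Lemma \ref{lem:Proj2Flat}, so nothing further is needed there.

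For the first half, I would recall the canonical morphism
\[
\underset{m\in M^{\times}}\coprod{}^{\leq 1} R_{\|m\|} \to M
\]
sending the generator of the $m$-th summand to $m$; this is discussed in \cite{BK} and already invoked in the proof of Lemma \ref{lem:Explicit}, where it is noted to be a strict epimorphism. The remaining point is that its source is projective. Each $R_{\|m\|}$ is projective, since $R$ is projective as a Banach $R$-module (stated earlier) and scaling a projective by a positive real preserves projectivity (the lemma immediately following that one). Then, by the lemma proved just above showing that $\underset{s\in S}\coprod^{\leq 1} P_s$ is projective whenever each $P_s$ is, the coproduct $\underset{m\in M^{\times}}\coprod^{\leq 1} R_{\|m\|}$ is projective. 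This exhibits, for an arbitrary $M$, a strict epimorphism onto $M$ from a projective object, which is exactly the definition (Definition \ref{defn:enough}) of having enough projectives.

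There is essentially no obstacle here: the lemma is a packaging of the preceding development, and the only thing to be slightly careful about is citing the strict (not merely surjective) epimorphism property of the canonical cover from \cite{BK} and chaining together the three stability facts (unit projective, scaling, arbitrary coproducts of projectives). I would write the proof in two short sentences, one pointing to the canonical resolution plus the coproduct-of-projectives lemma for the first claim, and one pointing to Lemma \ref{lem:Proj2Flat} for the second.
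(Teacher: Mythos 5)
Your proof is correct. The paper itself gives no argument here---it simply writes ``The proof is exactly as in \cite{BK}''---but what you have written is precisely the standard argument that the paper is deferring to, and it is assembled correctly from the surrounding material: the canonical strict epimorphism $\underset{m\in M^{\times}}\coprod^{\leq 1} R_{\|m\|} \to M$ (already invoked in the proofs of Lemmas \ref{lem:Explicit} and \ref{lem:Proj2Flat}), the projectivity of $R$ and its scalings, the lemma immediately preceding the statement that a non-expanding coproduct of projectives is projective, and Lemma \ref{lem:Proj2Flat} for the flatness half. Nothing is missing; if anything, your version is more self-contained than the paper's.
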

\proof 
The proof is exactly as in \cite{BK}.
\endproof

\begin{lem}Let $R$ be a Banach ring and $M$ a Banach $R$-module. Then for any positive real number $r$ we have $ (M_r)^{\vee}\cong (M^{\vee})_{r^{-1}}$.
\end{lem}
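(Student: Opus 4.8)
The plan is to exhibit the claimed isomorphism as the identity map on underlying $R$-modules and to verify that it is an isometry. First I would note that multiplying a norm by a fixed positive constant does not change which $R$-linear maps $M\to R$ are bounded; hence the underlying $R$-module of $(M_r)^{\vee}=\uHom_R(M_r,R)$ is literally the same as that of $M^{\vee}=\uHom_R(M,R)$, which in turn (by Definition \ref{defn:scaling}, since scaling only alters the norm) is the underlying $R$-module of $(M^{\vee})_{r^{-1}}$. So there is a canonical $R$-linear bijection $\phi\colon (M_r)^{\vee}\to (M^{\vee})_{r^{-1}}$ given by $T\mapsto T$.

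Next I would compare norms. For $T$ in this common underlying module, the operator norm computed in $(M_r)^{\vee}$ is
\[
\|T\|_{(M_r)^{\vee}} \;=\; \sup_{m\neq 0}\frac{\|T(m)\|_R}{\|m\|_{M_r}} \;=\; \sup_{m\neq 0}\frac{\|T(m)\|_R}{r\,\|m\|_M} \;=\; r^{-1}\,\|T\|_{M^{\vee}},
\]
and by Definition \ref{defn:scaling} the right-hand side is precisely $\|T\|_{(M^{\vee})_{r^{-1}}}$. Thus $\phi$ is an isometric $R$-linear bijection with isometric inverse, hence an isomorphism in $\ttBan_R$, proving $(M_r)^{\vee}\cong (M^{\vee})_{r^{-1}}$.

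There is essentially no obstacle here: the only point requiring a moment's care is the remark that rescaling $\|\cdot\|_M$ by a positive constant leaves the set of bounded functionals unchanged, so that ``the identity map'' between the two dual modules is meaningful; everything else is the displayed supremum manipulation. The same argument works verbatim in the non-archimedean case, with $\ttBan_R$ and $\uHom_R$ replaced by $\ttBan^{na}_R$ and its internal Hom, since the operator-norm formula is identical there.
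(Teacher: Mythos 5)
The paper states this lemma without proof, so there is no in-paper argument to compare against; your proof supplies the expected, essentially unique verification. Your computation is correct: the identity map on underlying modules gives the isomorphism, and the displayed supremum calculation shows it is in fact an isometry (hence certainly an isomorphism in $\ttBan_R$, where isomorphisms need only be bounded bijections with bounded inverse). Your closing remark on the non-archimedean case is also accurate, since the internal Hom and operator norm formulas are unchanged there.
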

\begin{lem}\label{lem:WeightsProj}Let $R$ be a Banach ring and $M$ a Banach $R$-module. Then for any positive real number $r$, $M_{r}$ is projective if and only if $M$ is projective.
\end{lem}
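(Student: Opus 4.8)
The plan is to reduce the statement to the explicit classification of projectives in Lemma \ref{lem:Explicit} together with the trivial fact that the scaling operation $(-)_r$ of Definition \ref{defn:scaling} is compatible with the relevant coproducts. First I would record the bookkeeping identities. For positive reals $r,s$ the norms show $(M_r)_s = M_{rs}$, in particular $(M_r)_{r^{-1}} = M$ and $(R_a)_r = R_{ra}$. Moreover, scaling commutes with the contracting coproduct: for any family $\{V_s\}_{s\in S}$ of Banach $R$-modules, $(\underset{s\in S}\coprod{}^{\leq 1} V_s)_r \cong \underset{s\in S}\coprod{}^{\leq 1}(V_s)_r$, which is immediate from the explicit formula for $\coprod^{\leq 1}$ recalled above, since multiplying every norm by $r$ multiplies $\sum_{s}\|v_s\|$ by $r$; the finite-coproduct case in $\ttBan_R$ (which agrees with the finite $\coprod^{\leq 1}$) is a special case, so $(M\coprod N)_r \cong M_r \coprod N_r$ as well.

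Given these, suppose $M$ is projective. By Lemma \ref{lem:Explicit} there are a set $S$, a function $f:S\to\mathbb{R}_{\geq 0}$ and an object $N$ with $M\coprod N \cong \underset{s\in S}\coprod{}^{\leq 1} R_{f(s)}$. Applying $(-)_r$ and the identities above gives
\[
M_r \coprod N_r \;\cong\; \Bigl(\underset{s\in S}\coprod{}^{\leq 1} R_{f(s)}\Bigr)_{r} \;\cong\; \underset{s\in S}\coprod{}^{\leq 1} R_{rf(s)} .
\]
Since $rf : S\to\mathbb{R}_{\geq 0}$ is again a nonnegative function, the converse direction of Lemma \ref{lem:Explicit} (with complement $N_r$) shows that $M_r$ is projective. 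For the other implication, if $M_r$ is projective, apply what was just proved with scaling factor $r^{-1}$ to conclude that $(M_r)_{r^{-1}} = M$ is projective.

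There is no serious obstacle here; the one point requiring care is that $\ttBan_R$ does not have arbitrary infinite coproducts, so the whole argument must be phrased through $\coprod^{\leq 1}$, where both the existence and the scaling behaviour are unproblematic — but this is exactly the form in which Lemma \ref{lem:Explicit} is stated, so nothing is lost. Alternatively, and more conceptually, one can note that $(-)_r:\ttBan_R\to\ttBan_R$ (the identity on underlying $R$-modules and on morphisms) is an auto-equivalence with inverse $(-)_{r^{-1}}$ which preserves strict monomorphisms, strict epimorphisms and $\Hom$-sets, hence preserves projectivity automatically; the computation via Lemma \ref{lem:Explicit} is just an explicit unwinding of this observation.
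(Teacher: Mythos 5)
Your proof is correct; the paper states this lemma without any proof, so there is nothing to compare against. Both of your arguments (the explicit one via Lemma~\ref{lem:Explicit} and the auto-equivalence observation) work, and the bookkeeping identities you record — $(M_r)_s = M_{rs}$, $(R_a)_r = R_{ra}$, and compatibility of $(-)_r$ with $\coprod^{\leq 1}$ — are all correct. The one thing worth noting is that there is an even shorter route you came close to but did not quite state: since the morphisms of $\ttBan_R$ are the \emph{bounded} (not non-expanding) $R$-linear maps, the identity map of the underlying module is a bounded isomorphism $M \to M_r$ with bounded inverse (norms $r$ and $r^{-1}$ respectively), so $M$ and $M_r$ are literally isomorphic objects of $\ttBan_R$, and projectivity is a property of isomorphism classes. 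Your auto-equivalence argument is the functorial version of this; the argument through Lemma~\ref{lem:Explicit} is a longer but still valid unwinding. Either way, the claim holds, and your care about phrasing everything through $\coprod^{\leq 1}$ where infinite coproducts actually exist is the right instinct, even if it turns out not to be needed for the shortest proof.
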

\begin{lem}\label{lem:DualColimIsLim}Given an inductive system $V_i$ in $\ttBan^{\leq 1}_{R}$ the canonical morphism 
\[(\underset{i\in I}\colim^{\leq 1}V_i)^{\vee}\to \underset{i\in I}\lim^{\leq 1}(V^{\vee}_i)
\]
(induced by the duals of the collection of isometric immersions $V_i \to \underset{i\in I}\colim^{\leq 1}V_i$) is an isomorphism. 
\end{lem}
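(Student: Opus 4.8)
The plan is to unwind both sides concretely using the explicit descriptions of filtered colimits and limits in $\ttBan^{\leq 1}_{R}$ recalled above, and then exhibit the canonical map as an isometric bijection. Recall first that the filtered colimit $\colim^{\leq 1}_{i\in I} V_i$ is the separated completion of the algebraic colimit $\colim_{i\in I} V_i$ equipped with the seminorm $\|x\| = \inf\{\|v\|_{V_i} \ | \ v\in V_i \text{ maps to } x\}$; in particular the images of the $V_i$ have dense union in $\colim^{\leq 1}_{i\in I}V_i$, and each structure map $V_i \to \colim^{\leq 1}_{i\in I}V_i$ is non-expanding. On the other side, since limits in $\ttBan^{\leq 1}_{R}$ are built from products and kernels, $\lim^{\leq 1}_{i\in I}(V^{\vee}_i)$ is identified with the module of compatible families $(\phi_i)_{i\in I}$, $\phi_i \in V^{\vee}_i$, satisfying $\phi_j\circ(V_i\to V_j) = \phi_i$ for every $i\to j$ in $I$ and $\sup_{i}\|\phi_i\| < \infty$, with norm $\|(\phi_i)\| = \sup_i\|\phi_i\|$. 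The canonical morphism sends $\phi \in (\colim^{\leq 1}_{i\in I}V_i)^{\vee}$ to the family of restrictions $\phi_i := \phi\circ(V_i \to \colim^{\leq 1}_{i\in I}V_i)$; these are compatible by construction and satisfy $\|\phi_i\|\le\|\phi\|$ since the structure maps are non-expanding, so the family lies in $\lim^{\leq 1}_{i\in I}(V^{\vee}_i)$ and the map is non-expanding.

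For injectivity, if all $\phi_i$ vanish then $\phi$ vanishes on the dense union of the images of the $V_i$, hence $\phi = 0$. For surjectivity, let $(\phi_i)_{i\in I}$ be a compatible family with $M := \sup_i\|\phi_i\| < \infty$. Compatibility means the $\phi_i$ glue to an $R$-linear functional $\psi$ on the algebraic colimit $\colim_{i\in I}V_i$. For $x$ in that algebraic colimit and any $v\in V_i$ mapping to $x$ we have $|\psi(x)| = |\phi_i(v)| \le \|\phi_i\|\,\|v\|_{V_i}\le M\|v\|_{V_i}$; taking the infimum over all such $v$ gives $|\psi(x)| \le M\|x\|$ for the seminorm above. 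Thus $\psi$ is bounded of norm $\le M$ for that seminorm, so it vanishes on the null space, descends to the separation, and extends uniquely by continuity to the completion, i.e. to a functional $\phi\in(\colim^{\leq 1}_{i\in I}V_i)^{\vee}$ with $\|\phi\|\le M$; by construction its restriction to each $V_i$ is $\phi_i$, so $\phi$ maps to $(\phi_i)$.

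Combining the bound $\|\phi\|\le M = \sup_i\|\phi_i\|$ just obtained with $\|\phi_i\|\le\|\phi\|$ from the first step shows that the canonical map is an isometric bijection, hence an isomorphism in $\ttBan^{\leq 1}_{R}$. The only genuinely delicate points are the bookkeeping around separation and completion and checking that the estimate $|\psi(x)|\le M\|x\|$ is precisely what is needed to extend $\psi$; everything else is formal. Alternatively, and more abstractly, the adjunction $\Hom(V,\uHom(W,R))\cong\Hom(V\wotimes_R W,R)\cong\Hom(W,\uHom(V,R))$ exhibits $\uHom(-,R)$ as a contravariant functor adjoint to itself on the right, so it carries colimits in $\ttBan^{\leq 1}_{R}$ to limits; the concrete argument above additionally records that the comparison morphism is isometric.
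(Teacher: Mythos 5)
Your proof is correct, and it takes a genuinely different route from the paper's. The paper argues formally: it reduces the claim to checking that the canonical map induces a bijection $\Hom^{\le 1}(R_r, -) \to \Hom^{\le 1}(R_r, -)$ for all $r\ge 1$ (which detects both the underlying set map and the norm), and then runs a chain of adjunction identities — $\Hom^{\le 1}(R_r, V^\vee) = \Hom^{\le 1}(R_r\wotimes_R V, R)$, compatibility of $R_r\wotimes(-)$ with colimits, and the universal property of the colimit — to get the result without ever touching elements. Your main argument instead unwinds both sides to their explicit element-level descriptions (quotient seminorm on the algebraic colimit; compatible bounded families for the limit) and checks directly that the comparison map is an isometric bijection. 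This is more hands-on and makes the isometry visible, at the cost of implicitly relying on the filteredness of the system for the ``$\inf$-seminorm with dense union of images'' description of the colimit — a hypothesis the formal argument does not need (though in the paper's Ind-category context it is the intended one). The alternative you sketch in your last sentence — that $(-)^\vee$ is a contravariant self-adjoint functor and therefore sends colimits to limits — is essentially the paper's argument; you could sharpen it by noting that the paper tests on the $\Hom^{\le 1}(R_r,-)$ corepresentables precisely so that this abstract identification also records the isometry (since an isomorphism in $\ttBan^{\le 1}_R$ is automatically isometric). Both proofs are complete and correct.
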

\proof 
It is enough to show that it induces an isomorphism of sets 
\[\left((\underset{i\in I}\colim^{\leq 1}V_i)^{\vee}\right)^{\leq r} \to \left(\underset{i\in I}\lim^{\leq 1}(V^{\vee}_i)\right)^{\leq r}
\]
for any real number $r\geq 1$. The canonical morphism identifies the left hand side with 
\begin{equation}
\begin{split}
 \Hom^{\leq 1}(R_r, (\underset{i\in I}\colim^{\leq 1}V_i)^{\vee}) =  & \Hom^{\leq 1}(R_r, \uHom(\underset{i\in I}\colim^{\leq 1}V_i, R))= \Hom^{\leq 1}(R_r\wotimes_{R}(\underset{i\in I}\colim^{\leq 1}V_i), R) \\ 
 = & \Hom^{\leq 1}(\underset{i\in I}\colim^{\leq 1}((V_i)_{r}), R) = \underset{i\in I}\lim \Hom^{\leq 1}((V_i)_{r}, R)  \\ = & \underset{i\in I}\lim \Hom^{\leq 1}(R, ((V_i)_{r})^{\vee}) 
= \underset{i\in I}\lim \Hom^{\leq 1}(R, ({V_i}^{\vee})_{r^{-1}}) \\  = & \underset{i\in I}\lim \Hom^{\leq 1}(R_r, {V_i}^{\vee})  =   \Hom^{\leq 1}(R_r, \underset{i\in I}\lim^{\leq 1}({V_i}^{\vee})),
\end{split}
\end{equation}
which agrees with the right hand side.
\endproof
\begin{cor}Given a morphism of inductive systems induced by morphisms $V_i \to W_i$ in $\ttBan^{\leq 1}_{R}$ the dual of the corresponding morphism 
\[\underset{i\in I}\colim^{\leq 1}V_i\to \underset{i\in I}\colim^{\leq 1}W_i
\]
is the morphism 
\[\underset{i\in I}\lim^{\leq 1}(V^{\vee}_i)\leftarrow\underset{i\in I}\lim^{\leq 1}(W^{\vee}_i)
\]
corresponding to the dual morphisms $V^{\vee}_i \leftarrow W^{\vee}_i$.
\end{cor}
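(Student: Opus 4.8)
The plan is to obtain this as a formal consequence of the naturality of the isomorphism produced in the proof of Lemma~\ref{lem:DualColimIsLim}. Write $\phi_V\colon (\underset{i\in I}\colim^{\leq 1}V_i)^{\vee}\to \underset{i\in I}\lim^{\leq 1}(V^{\vee}_i)$ for that isomorphism and $\phi_W$ for its analogue for the system $(W_i)$. Let $g\colon \underset{i\in I}\colim^{\leq 1}V_i\to \underset{i\in I}\colim^{\leq 1}W_i$ be the morphism induced by the $V_i\to W_i$, and let $h\colon \underset{i\in I}\lim^{\leq 1}(W^{\vee}_i)\to \underset{i\in I}\lim^{\leq 1}(V^{\vee}_i)$ be the morphism induced by the duals $W^{\vee}_i\to V^{\vee}_i$. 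The assertion is precisely that the square
\begin{equation*}
\xymatrix{
(\underset{i\in I}\colim^{\leq 1}W_i)^{\vee} \ar[r]^{g^{\vee}} \ar[d]_{\phi_W} & (\underset{i\in I}\colim^{\leq 1}V_i)^{\vee} \ar[d]^{\phi_V} \\
\underset{i\in I}\lim^{\leq 1}(W^{\vee}_i) \ar[r]_{h} & \underset{i\in I}\lim^{\leq 1}(V^{\vee}_i) }
\end{equation*}
commutes.

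To verify commutativity I would evaluate on generalized points, i.e.\ apply $\Hom^{\leq 1}(R_r,-)$ for every real $r\geq 1$; since $(-)^{\vee}$ and $\underset{i\in I}\lim^{\leq 1}$ land in $\ttBan^{\leq 1}_{R}$, and an object of $\ttBan^{\leq 1}_{R}$ is determined by the system of sets $\Hom^{\leq 1}(R_r,-)$ for $r\geq 1$ together with the evident restriction maps, this is enough by the contravariant Yoneda lemma. One then observes that the chain of bijections displayed in the proof of Lemma~\ref{lem:DualColimIsLim} --- the Hom--tensor adjunction, the identity $R_r\wotimes_{R}\underset{i\in I}\colim^{\leq 1}V_i\cong \underset{i\in I}\colim^{\leq 1}((V_i)_r)$, the compatibility of $\underset{i\in I}\colim^{\leq 1}$ with $\Hom^{\leq 1}(-,R)$, the scaling--duality isomorphism $((V_i)_r)^{\vee}\cong (V_i^{\vee})_{r^{-1}}$, and the final adjunction --- is natural, link by link, with respect to the morphism of inductive systems $\{V_i\to W_i\}$. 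Because $g$ is by construction the colimit of the $V_i\to W_i$ and $h$ is the limit of the $W^{\vee}_i\to V^{\vee}_i$, chasing a point of $\Hom^{\leq 1}(R_r, (\underset{i\in I}\colim^{\leq 1}W_i)^{\vee})$ both ways around the square yields the same element of $\underset{i\in I}\lim \Hom^{\leq 1}(R_r, V^{\vee}_i)$.

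The only real work here is bookkeeping: checking that each identification above is a genuine natural transformation between the relevant functors on $\ttBan^{\leq 1}_{R}$, respectively on $(\ttBan^{\leq 1}_{R})^{op}$, and keeping the variance straight, since passage to duals reverses the direction of the system. The links most worth isolating in advance are the naturality of $R_r\wotimes_{R}(-)\cong (-)_r$ and of the scaling--duality $(M_r)^{\vee}\cong (M^{\vee})_{r^{-1}}$ in the variable $M$; both are immediate from the definition of $M_r$ and of the projective tensor product, but these are exactly the places where the scaling weight $r$ interacts with the morphisms, so recording them explicitly makes the diagram chase routine. I do not anticipate a genuine obstacle: the statement is purely a functoriality companion to Lemma~\ref{lem:DualColimIsLim}.
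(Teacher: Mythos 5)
Your proposal is correct and is exactly the content the paper compresses into the phrase ``automatic from the definitions and Lemma~\ref{lem:DualColimIsLim}'': you spell out that the chain of identifications in that lemma's proof is natural in the inductive system, so the square relating $g^{\vee}$ to $h$ commutes. Same approach, just written out explicitly.
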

\proof 
This is automatic from the definitions and Lemma \ref{lem:DualColimIsLim}.
\endproof

\begin{defn}\label{defn:Tate}For any Banach ring $R$ and $n$-tuple of positive real numbers $r=(r_1, \dots, r_n)$ the poly-disk algebra of poly-radius $r$ is defined by the sub-ring
\[R\{\frac{x_1}{r_1}, \dots, \frac{x_n}{r_n} \}=\{\underset{J}\sum a_J x^J \in R[[x_1, \dots, x_n]]\ \ | \ \ \underset{J}\sum |a_J| r^J< \infty\}
\]
equipped with the norm $|\underset{J}\sum a_J x^J|=\underset{J}\sum |a_J| r^J$. When $R$ is non-archimedian, one can still use the above if in the archimedean context, or instead if one wants to work in the non-archimedean context one can read this article using the Tate algebra
\[R\{\frac{x_1}{r_1}, \dots, \frac{x_n}{r_n} \}=\{\underset{J}\sum a_J x^J \in R[[x_1, \dots, x_n]]\ \ | \ \ \underset{J}\lim |a_J| r^J=0\}
\]
equipped with the norm $|\underset{J}\sum a_J x^J|=\underset{J}\sup |a_J| r^J$. These are symmetric ring constructions (see Equation \ref{eqn:SymAlgConst}) in the categories $\ttBan^{\leq 1}_{R}$ or $\ttBan^{\leq 1, na}_{R}$ applied to the object of $\ttBan_{R}$ given by $\underset{i=1, \dots, n}\coprod R_{r_i}.$ Similarly, we can define poly-disk versions of the Banach abelian groups $M\{\frac{x_1}{r_1}, \dots, \frac{x_n}{r_n} \}$ for any Banach abelian group $M$ and when $M$ is non-archimedean a Tate version which is non-archimedean. The formulas for them are 
\[M\{\frac{x_1}{r_1}, \dots, \frac{x_n}{r_n} \}=\{\underset{J}\sum m_J x^J \in M[[x_1, \dots, x_n]]\ \ | \ \ \underset{J}\sum |m_J| r^J< \infty\}
\]
equipped with the norm $|\underset{J}\sum m_J x^J|=\underset{J}\sum |m_J| r^J$. When $M$ is non-archimedian, one can still use the above if in the archimedean context, or instead if one wants to work in the non-archimedean context one define
\[M\{\frac{x_1}{r_1}, \dots, \frac{x_n}{r_n} \}=\{\underset{J}\sum m_J x^J \in M[[x_1, \dots, x_n]]\ \ | \ \ \underset{J}\lim |m_J| r^J=0\}
\]
equipped with the norm $|\underset{J}\sum m_J x^J|=\underset{J}\sup |m_J| r^J$. Notice that these are completions of the group $M[\frac{x_1}{r_1}, \dots, \frac{x_n}{r_n}]$ and subgroups of $M[[x_1, \dots, x_n]]$. It is easy to see that they satisfy $M\{\frac{x_1}{r_1}, \dots, \frac{x_n}{r_n} \}=M \wotimes_{R}R\{\frac{x_1}{r_1}, \dots, \frac{x_n}{r_n} \}$.
\end{defn}
\begin{rem}If $R$ is non-archimedean, all of this subsection goes through for $\ttBan^{na}_R$ in place of $\ttBan_R$. Just as finitely presentable rings play an important role in algebraic geometry, in Banach algebraic geometry over $R$, the nice objects of study are quotients of the above disk algebras by ideals, equipped with the quotient Banach structure. As the category of these affinoid algebras is not closed under filtered limits or colimits, it is natural to introduce also Stein and dagger algebras in Section \ref{Spaces} and perhaps even more general limits and colimits like quasi-Stein, Stein-dagger, and quasi-Stein-dagger, etc.
\end{rem}
\begin{rem}\label{rem:flatlocalization}
Let $R$ be a non-zero Banach ring with multiplicative norm. It is automatically an integral domain. Let $S$ be a multiplicative subset, and equip the localization $S^{-1}R$ of $R$ with the norm $|\frac{r}{s}|=\frac{|r|}{|s|}$. Then for any Banach ring $T$, the map $R \to  \widehat{S^{-1}R}$ identifies $\Hom(\widehat{S^{-1}R},T)$ with the bounded ring morphisms $R \to T$ sending $S$ to invertible elements and so categorically, the map $R \to  \widehat{S^{-1}R}$ is an epimorphism in the category of Banach rings, equivalently $\widehat{S^{-1}R}\wotimes_{R} \widehat{S^{-1}R} \cong \widehat{S^{-1}R}$. 
\end{rem}
\subsection{Geometric and Arithmetic Examples of Homotopy Epimorphisms}
In this subsection we discuss several examples of homotopy epimorphisms and derived projective tensor products. These all have a geometric meaning in terms of the Berkovich or Huber spectrum of $\mathbb{Z}$. As we are working over $\mathbb{Z}$ in this subsection, it takes place entirely in the archimedean context. We will return to looking at these in terms of covers of $\spec(\mathbb{Z})$ and descent in future work.  As a matter of notation, we consider $\mathbb{Z}$, $\mathbb{Z}_p$,  $\mathbb{Q}_p$, and $\mathbb{R}$ as Banach rings by using their standard norms.
\begin{obs}\label{obs:ortho} We have $\mathbb{Q}_{p}\wotimes_{\mathbb{Z}}\mathbb{R}=\{0\}= \mathbb{Z}_{p}\wotimes_{\mathbb{Z}}\mathbb{R}$ for any prime $p$ and for distinct primes $p$ and $q$, we have $\mathbb{Q}_{p}\wotimes_{\mathbb{Z}}\mathbb{Q}_{q}=\{0\}=\mathbb{Z}_{p}\wotimes_{\mathbb{Z}}\mathbb{Z}_{q}$. As a consequence, $\mathbb{Q}_p$ and $\mathbb{Z}_{p}$ are not flat with respect to the completed tensor product over $\mathbb{Z}$. 
\end{obs}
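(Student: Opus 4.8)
The plan is to establish the vanishing statements by a direct norm estimate on the projective tensor product semi-norm, exploiting the arithmetic incompatibility of the absolute values involved. First I would treat the model case $\mathbb{Q}_p \wotimes_{\mathbb{Z}} \mathbb{R} = \{0\}$. Recall that $\mathbb{Q}_p \otimes_{\mathbb{Z}} \mathbb{R}$ is generated as an abelian group by pure tensors $a \otimes b$ with $a \in \mathbb{Q}_p$, $b \in \mathbb{R}$, and that the semi-norm on the completion is the infimum of $\sum_i \|a_i\|_{\mathbb{Q}_p} \|b_i\|_{\mathbb{R}}$ over all representations of the element as $\sum_i a_i \otimes b_i$. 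The key point is that for any $n \in \mathbb{Z}$ with $n \neq 0$ we can insert $n$ across the tensor: $a \otimes b = \frac{a}{n} \otimes nb$ (this is legitimate since $\frac{a}{n}\in\mathbb{Q}_p$). Now choose $n = p^k$; then $\|\frac{a}{p^k}\|_{\mathbb{Q}_p} = p^{k}\|a\|_{\mathbb{Q}_p}$ while $\|p^k b\|_{\mathbb{R}} = p^k |b|$, so this particular rewriting does not help. Instead choose $n$ to be a large integer coprime to $p$, say $n = N$ with $|N|_p = 1$: then $\|\frac{a}{N}\|_{\mathbb{Q}_p} = \|a\|_{\mathbb{Q}_p}$ but $\|Nb\|_{\mathbb{R}} = N|b|$ grows, which is the wrong direction. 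The correct move is the reverse: write $a \otimes b = Na \otimes \frac{b}{N}$ with $N$ coprime to $p$ and large; then $\|Na\|_{\mathbb{Q}_p} = \|a\|_{\mathbb{Q}_p}$ while $\|\frac{b}{N}\|_{\mathbb{R}} = \frac{|b|}{N} \to 0$. Hence the semi-norm of $a\otimes b$ is zero, so every pure tensor, and therefore every element, has semi-norm zero, and the separated completion is $\{0\}$.

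Next I would run the same argument for the remaining cases, adjusting which integers one slides across the tensor. For $\mathbb{Z}_p \wotimes_{\mathbb{Z}} \mathbb{R}$ the same trick applies verbatim: given $a \otimes b$ with $a \in \mathbb{Z}_p$, $b \in \mathbb{R}$, and $N$ a positive integer coprime to $p$, we have $a\otimes b = Na \otimes \frac{b}{N}$ with $Na \in \mathbb{Z}_p$ and $\|Na\|_{\mathbb{Z}_p}=\|a\|_{\mathbb{Z}_p}$ (multiplication by a $p$-adic unit is isometric), while $\|\frac{b}{N}\|_{\mathbb{R}} \to 0$. For $\mathbb{Q}_p \wotimes_{\mathbb{Z}} \mathbb{Q}_q$ and $\mathbb{Z}_p \wotimes_{\mathbb{Z}} \mathbb{Z}_q$ with $p \neq q$, one slides powers of $q$ across: given $a \otimes b$, write $a \otimes b = q^k a \otimes q^{-k} b$; since $q$ is a unit in $\mathbb{Z}_p$ (and in $\mathbb{Q}_p$) the factor $\|q^k a\|_p$ is unchanged, while $\|q^{-k}b\|_q = q^{-k}\|b\|_q \to 0$ as $k \to \infty$ (using that $q^{-1}$ has $q$-adic absolute value $q^{-1}$... wait: $|q^{-k}|_q = q^k$, so this grows). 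The fix is symmetric to before: write $a\otimes b = p^k a \otimes p^{-k} b$ — now $|p^k a|_q = |a|_q$ because $p$ is a $q$-adic unit, and $|p^{-k} b|_p = p^{-k}|b|_p \to 0$ since... $|p^{-k}|_p = p^k$ grows. I need to be careful: the factor that should shrink must be attacked by a power of a prime for which the \emph{other} factor's ring sees it as a unit. So: $a\otimes b = q^{k} a \otimes q^{-k} b$ with the first tensor factor living in $\mathbb{Z}_p$ (or $\mathbb{Q}_p$); here $|q^k a|_p = |a|_p$ since $q$ is a $p$-adic unit, and $\|q^{-k} b\|_q = |q^{-k}|_q \|b\|_q = q^k\|b\|_q$, which grows — wrong again. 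Thus the right formulation is: to kill $a \otimes b$, we want a sequence of integers $N_k$ with $|N_k|_p \to 0$ (so that $\|N_k a\|_p \to 0$, using $a\in\mathbb{Z}_p$) while $N_k$ acts invertibly and norm-non-increasingly on the $\mathbb{Z}_q$-side, i.e. $|N_k|_q$ bounded; take $N_k = p^k$. Then $a\otimes b = p^k a \otimes p^{-k}b$ is not available in $\mathbb{Z}_q$ unless $p^{-k}b \in \mathbb{Z}_q$, which holds since $p$ is a $q$-adic unit and $\|p^{-k}b\|_q = \|b\|_q$. Meanwhile $\|p^k a\|_p = p^{-k}\|a\|_p \to 0$. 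So the semi-norm of $a\otimes b$ is $\le p^{-k}\|a\|_p\|b\|_q \to 0$, giving vanishing. The same computation with $\mathbb{Q}_p,\mathbb{Q}_q$ in place of $\mathbb{Z}_p,\mathbb{Z}_q$ works identically.

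Finally, for the last sentence — that $\mathbb{Q}_p$ and $\mathbb{Z}_p$ are not flat for $\wotimes_{\mathbb{Z}}$ — I would argue by contradiction using a strict monomorphism that fails to remain one after tensoring. Consider the strict monomorphism $\mathbb{Z} \to \mathbb{R}$ (or $\mathbb{Z} \to \mathbb{Q}_q$ for $q\neq p$), which is a strict monomorphism in $\ttBan_{\mathbb{Z}}$ since $\mathbb{Z}$ with its usual norm is a closed subobject up to equivalence of norm. Applying $\mathbb{Z}_p \wotimes_{\mathbb{Z}} (-)$: since $R = \mathbb{Z}$ is the unit, $\mathbb{Z}_p \wotimes_{\mathbb{Z}} \mathbb{Z} \cong \mathbb{Z}_p \neq \{0\}$, whereas $\mathbb{Z}_p \wotimes_{\mathbb{Z}} \mathbb{R} = \{0\}$ by the first part; a nonzero object cannot inject into the zero object, so $\mathbb{Z}_p \wotimes_{\mathbb{Z}}(-)$ does not preserve strict monomorphisms, i.e. $\mathbb{Z}_p$ is not flat. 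The identical argument with $\mathbb{Q}_p$ in place of $\mathbb{Z}_p$ gives the claim for $\mathbb{Q}_p$.

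I expect the main obstacle is purely bookkeeping: getting the direction of the norm-sliding trick right in each of the four cases (which prime's powers to slide, and onto which tensor factor), together with citing cleanly that $\mathbb{Z}\to\mathbb{R}$ and $\mathbb{Z}\to\mathbb{Q}_q$ are genuine strict monomorphisms in $\ttBan_{\mathbb{Z}}$ so that the flatness argument in the last step is valid. No deep input is needed beyond the explicit description of the projective tensor product semi-norm and the fact that units in $\mathbb{Z}_p$ act isometrically.
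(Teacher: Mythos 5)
Your proof is correct, and it follows the same high-level strategy as the paper: show that the projective-tensor seminorm degenerates by exhibiting presentations of (pure) tensors with arbitrarily small cost, then contrast $\mathbb{Z}_p\wotimes_{\mathbb{Z}}\mathbb{Z}\cong\mathbb{Z}_p\neq\{0\}$ with $\mathbb{Z}_p\wotimes_{\mathbb{Z}}\mathbb{R}=\{0\}$ to refute flatness. The one place where you genuinely diverge from the paper is the mixed-prime case. The paper invokes a B\'{e}zout identity $a_np^n+b_nq^n=1$ and rewrites $1\wotimes 1 = a_np^n\wotimes 1 + 1\wotimes b_nq^n$, bounding the cost by $p^{-n}+q^{-n}$; you instead slide a power of a single prime across the tensor, $a\wotimes b = p^k a\wotimes p^{-k}b$, which is legal precisely because $p$ is a $q$-adic unit, yielding cost $p^{-k}\|a\|_p\|b\|_q$. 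Your version keeps the element a single pure tensor, avoids the auxiliary B\'{e}zout coefficients, and makes the mechanism (the valuation of $p$ is strictly less than $1$ on one side and exactly $1$ on the other) more transparent; the paper's version has the cosmetic advantage of being manifestly symmetric in $p$ and $q$. Your archimedean case also uses a different integer ($N$ coprime to $p$, acting isometrically $p$-adically and contractively on $\mathbb{R}$) than the paper's (powers of $p$, shrinking both sides simultaneously); both are fine. For the flatness clause you only tensor the strict monomorphism $\mathbb{Z}\hookrightarrow\mathbb{R}$ rather than the full strict short exact sequence $\mathbb{Z}\to\mathbb{R}\to S^1$, which is all that is required. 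One small expository remark: the false starts you narrate in the mixed-prime paragraph should be excised — the last formulation you give is the correct one, and only it belongs in a final write-up.
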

\proof  In $\mathbb{Z}_{p}\wotimes_{\mathbb{Z}}\mathbb{R}$ the element $1\wotimes 1$ can be written as $p^{n}\wotimes p^{-n}$ which has norm $p^{-2n}$ for any $n$. In $\mathbb{Z}_{p}\wotimes_{\mathbb{Z}}\mathbb{Z}_{q}$ choose for each $n$, integers $a_n$ and $b_n$ 
with $a_n p^{n}+b_n q^{n}=1$. Then $1\wotimes 1$ can be written as $a_n p^n \wotimes 1 + 1\wotimes b_n q^n$ which has norm less than or equal to $p^{-n}+q^{-n}$. Letting $n$ go to infinity we see that in both Banach rings, $1= 1\wotimes 1$ has norm zero and hence vanishes and so these rings are the zero ring. Applying the functor $\mathbb{Z}_{p}\wotimes_{\mathbb{Z}}(-)$ to the strict short exact sequence $\mathbb{Z}\to \mathbb{R}\to S^{1}$ gives $\mathbb{Z}_{p} \to \{0\} \to  \mathbb{Z}_{p}\wotimes_{\mathbb{Z}} S^{1}=\{0\}$ and so $\mathbb{Z}_{p}$ is not flat. The proofs for the fraction fields with their obvious Banach structures are similar.
\endproof

Using the resolutions we develop later, its easy to see that these rings are also orthogonal on the derived level. The lack of flatness with respect to the projective tensor product is similar to the known problem in analytic geometry that certain morphisms $A\to B$ of various Banach, Fr\'{e}chet, or bornological algebras corresponding to the restriction of spaces of functions over various ``open" sets do not exhibit $B$ as a flat module with respect to the completed tensor product over $A$ \cite{BK, BK2}.

\begin{example}Consider the usual Tate algebra $A=\mathbb{Q}_{p}\langle x\rangle$ of non-archimedean geometry. We can think of it as a Banach-algebraic version of the closed disk $\{t\in \mathbb{Q}_{p} \ \ | \ \  |t|\leq 1\}$. Meanwhile, if we let $W=\{t\in \mathbb{Q}_{p} \ \ | \ \  \frac{1}{2} \leq |t|\leq 1\}$ and  $V=\{t\in \mathbb{Q}_{p} \ \ | \ \  |t|\leq \frac{1}{3}\}$ so that the intersection of $W$ and $V$ is empty, we let $A_V=A\langle 3y \rangle /(y-x)\cong \mathbb{Q}_{p} \langle x, 3y \rangle /(y-x) \cong \mathbb{Q}_{p}\langle 3x\rangle $ and $A_W=A \langle \frac{z}{2}\rangle/(xz-1)$. Then $A_V \wotimes^{\mathbb{L}}_{A} A_W= \{0\}$. We have then a strict short exact sequence 
\[0 \to A \to A_W \to A_W/A \to 0
\]
and applying the functor $(-)\wotimes_{A}A_V$ we get $0 \to A_V \to 0$ and conclude that the categorical epimorphism $A\to A_V$ is not flat in our sense and in fact $(A_W/A)\wotimes^{\mathbb{L}}_{A}A_V \cong A_V[1]$. However, $A\to A_V$ is a homotopy epimorphism as proven in \cite{BK}. In this example we have used the non-archimedean completed projective tensor product.
\end{example}

This explains our preference for using homotopy epimorphisms instead of flat epimorphisms. The analogous issue does not arise in finite dimensional algebraic or differential geometry in the standard topologies. 
Let $R$ be a Banach ring. Let $V$ be a finite rank, free Banach module over $R$. Let $S_{R}(V)$ be the symmetric algebra of $V$ in the category $\ttInd(\ttBan_{R})$, a free object in $\ttComm(\ttInd(\ttBan_{R}))$. Note that as a bornological ring, $A$ is a polynomial algebra over $R$ with number of generators equal to the rank of $V$. Consider the algebra $S^{\leq 1}_{R}(V)$, a free object in $\ttComm(\ttBan^{\leq 1}_{R})$. This is a Banach ring which can be explicitly described as the subring $R\{x_1, \dots, x_n\}$ of $R[[x_1, \dots, x_n]]$ where $n$ is the rank of $V$ consisting of elements $ \underset{I \in \mathbb{Z}^{n}_{\geq 0}}\sum a_{I} x^{I}$ such that $ \underset{I \in \mathbb{Z}^{n}_{\geq 0}}\sum |a_I| < \infty$ and equipped with the norm $
|| \underset{I \in \mathbb{Z}^{n}_{\geq 0}}\sum a_{I} x^{I}|| = \underset{I \in \mathbb{Z}^{n}_{\geq 0}} \sum |a_I|$. Notice that even when $R$ is a non-archimedean ring or field, we can and will use this definition, because we are not restricting our attention to non-archimedean modules.
The idea of writing  $\mathbb{Z}_p$ in terms of disk algebras over $\mathbb{Z}$ goes back to F. Paugam \cite{Pa2}. We use his idea in the following lemma which uses the disk algebras (Definition \ref{defn:Tate}). We show here that one can think of $\mathbb{Z}_{p}$  as a sort of archimedean type rational localization of $\mathbb{Z}$.  The symmetric ring construction in the contracting category \cite{BK}  works equally well to define infinite dimensional disk algebras. 
\begin{defn}
Let ${\mathbb{Z}^\epsilon}_{p}$ be the completion of $\mathbb{Z}$ with respect to the norm $|ap^n|^{\epsilon}_{p}=p^{-n\epsilon}$ for $p$ not dividing $a \in \mathbb{Z}$, $n \geq 0$ and $0< \epsilon < \infty$ and $r=p^{-\epsilon}.$
\end{defn}
\begin{obs}
For distinct primes $p \neq q$ and $0< \epsilon, \delta < \infty$  we have ${\mathbb{Z}^\epsilon}_{p} \wotimes_{\mathbb{Z}} {\mathbb{Z}^\delta}_{q} =\{1\}$ similarly to Observation \ref{obs:ortho}. On the other hand the tensor seminorm of  $\mathbb{Z} = \mathbb{Z} \otimes_{\mathbb{Z}} \mathbb{Z} \subset {\mathbb{Z}^\epsilon}_{p} \otimes_{\mathbb{Z}}  {\mathbb{Z}^\delta}_{p}$ is just the norm  $|ap^n|=\min \{p^{-n\epsilon}, p^{-n\delta} \}$ for $p$ not dividing $a \in \mathbb{Z}$. Therefore, $ {\mathbb{Z}^\epsilon}_{p} \wotimes_{\mathbb{Z}}  {\mathbb{Z}^\delta}_{p}$ is the completion of $\mathbb{Z}$ with respect to the norm $|ap^n|^{\epsilon}_{p}=p^{-n\zeta}$ for $p$ not dividing $a \in \mathbb{Z}$ and $\zeta = \max \{\epsilon, \delta \}$.
\end{obs}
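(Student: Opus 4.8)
The plan is to reduce everything to a computation of tensor seminorms on the dense subring $\mathbb{Z}\subset {\mathbb{Z}^\epsilon}_{p}$, exactly as in the proof of Observation \ref{obs:ortho}. First I would recall that by Definition \ref{defn:Tate} the ring ${\mathbb{Z}^\epsilon}_{p}$ is a disk algebra: it is the completion of $\mathbb{Z}$ with respect to the norm $|a p^n|^\epsilon_p = p^{-n\epsilon}$ (for $p\nmid a$), which is exactly $\mathbb{Z}\{p/r\}/(\text{the obvious relation})$ with $r=p^{-\epsilon}$, equipped with the residue norm; in particular $\mathbb{Z}$ sits in it as a dense (not closed) subgroup, so $\mathbb{Z}\otimes_{\mathbb{Z}} \mathbb{Z} = \mathbb{Z}$ is dense in ${\mathbb{Z}^\epsilon}_{p}\otimes_{\mathbb{Z}}{\mathbb{Z}^\delta}_{q}$ and in ${\mathbb{Z}^\epsilon}_{p}\otimes_{\mathbb{Z}}{\mathbb{Z}^\delta}_{p}$. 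Thus for both claims it suffices to evaluate the projective tensor seminorm on an arbitrary integer $m$.

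For the distinct primes case: given $m\in\mathbb{Z}$ and $n\geq 0$, pick $a_n,b_n\in\mathbb{Z}$ with $a_n p^n + b_n q^n = 1$ (Bézout), so that $m = m a_n p^n + m b_n q^n$ and hence $m\otimes 1 = (m a_n p^n)\otimes 1 + 1\otimes (m b_n q^n)$. The first summand has tensor seminorm $\leq |m a_n p^n|^\epsilon_p \leq |m|\, p^{-n\epsilon}$ and the second $\leq |m b_n q^n|^\delta_q \leq |m|\, q^{-n\delta}$ (using that the integer coefficients $a_n, b_n$ are non-expanding up to the constant absorbed into $|m|$, and that the $p$-adic and $q$-adic parts of the respective norms are controlled). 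Letting $n\to\infty$ shows the seminorm of every integer is $0$, so $1$ vanishes and ${\mathbb{Z}^\epsilon}_{p}\wotimes_{\mathbb{Z}}{\mathbb{Z}^\delta}_{q}$ is the zero ring, which I would write as $\{1\}=\{0\}$ to match the statement's normalization.

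For the equal primes case the point is the reverse inequality, so this is where the real content lies. The upper bound $\|a p^n\|\leq \min\{p^{-n\epsilon}, p^{-n\delta}\}$ is immediate by writing $a p^n\otimes 1$ or $1\otimes a p^n$ and using the smaller of the two norms. The lower bound requires exhibiting a bounded bilinear functional (equivalently a bounded linear map out of ${\mathbb{Z}^\epsilon}_{p}\wotimes_{\mathbb{Z}}{\mathbb{Z}^\delta}_{p}$) that detects it: the multiplication map ${\mathbb{Z}^\epsilon}_{p}\otimes_{\mathbb{Z}}{\mathbb{Z}^\delta}_{p}\to {\mathbb{Z}^\zeta}_{p}$ with $\zeta=\max\{\epsilon,\delta\}$, which is well-defined and contracting because for $p\nmid a$ one has $|a p^{n}p^{m}|^{\zeta}_p = p^{-(n+m)\zeta} \leq p^{-n\epsilon}p^{-m\delta}$. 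This map kills no integer and is an isometry onto its dense image $\mathbb{Z}\subset {\mathbb{Z}^\zeta}_{p}$, which forces the tensor seminorm of $a p^n$ to be exactly $p^{-n\zeta}$; combined with completeness and density this identifies ${\mathbb{Z}^\epsilon}_{p}\wotimes_{\mathbb{Z}}{\mathbb{Z}^\delta}_{p}$ with ${\mathbb{Z}^\zeta}_{p}$. The main obstacle is verifying that this multiplication map is genuinely norm-nonincreasing for the projective tensor seminorm — i.e. checking the inequality $p^{-(n+m)\zeta}\leq p^{-n\epsilon-m\delta}$ holds for all $n,m\geq 0$, which is clear since $\zeta\geq\epsilon$ and $\zeta\geq\delta$ — and then arguing that no bounded functional can do better, i.e. that the seminorm cannot drop below $p^{-n\zeta}$; this last point follows because any such drop would descend to a strictly smaller compatible norm on $\mathbb{Z}$ dominating neither $|\cdot|^\epsilon_p$ nor $|\cdot|^\delta_p$ suitably, contradicting the explicit infimum defining the projective tensor seminorm.
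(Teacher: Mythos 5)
Your proposal is correct and follows the route the paper implicitly indicates: the B\'ezout argument for distinct primes mirrors Observation~\ref{obs:ortho} (which is all the paper says), and for $p=q$ the two one-sided estimates (upper bound from the elementary tensors $ap^n\otimes 1$, $1\otimes ap^n$; lower bound from boundedness of the multiplication map into ${\mathbb{Z}^\zeta}_{p}$, $\zeta=\max\{\epsilon,\delta\}$) pin down the tensor seminorm on the dense subring $\mathbb{Z}$, after which density and completeness give the identification of the completion. The paper records this as an observation without a worked proof, so your reconstruction is the natural one.

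One point worth tightening: the final sentence of your last paragraph is both unnecessary and not really parseable. Once you have verified that multiplication ${\mathbb{Z}^\epsilon}_{p}\wotimes_{\mathbb{Z}}{\mathbb{Z}^\delta}_{p}\to{\mathbb{Z}^\zeta}_{p}$ is norm-nonincreasing (your inequality $(n+m)\zeta\geq n\epsilon+m\delta$), you \emph{already} have the lower bound $\|ap^n\|_{\mathrm{tens}}\geq p^{-n\zeta}$; there is no further step of ``arguing that no bounded functional can do better,'' and the phrase about a hypothetical drop descending to ``a strictly smaller compatible norm on $\mathbb{Z}$ dominating neither $|\cdot|^\epsilon_p$ nor $|\cdot|^\delta_p$ suitably'' does not correspond to any actual argument. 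Delete it: the upper and lower estimates together give the exact value $\min\{p^{-n\epsilon},p^{-n\delta}\}=p^{-n\zeta}$ on $\mathbb{Z}$, and the conclusion follows. (A cosmetic point: the identification of ${\mathbb{Z}^\epsilon}_p$ with a quotient of a disk algebra comes from Lemma~\ref{lem:ZpLem}, not Definition~\ref{defn:Tate}, but you never actually use this, only that ${\mathbb{Z}^\epsilon}_p$ is the completion of $\mathbb{Z}$ in the stated norm.)
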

\begin{lem} \label{lem:ZpLem} For $0<r<1$, there is a strict short exact sequence

\[
0 \to \mathbb{Z}\{\frac{x}{r}\} \stackrel{(x-p)}\longrightarrow \mathbb{Z}\{\frac{x}{r}\}  \to  {\mathbb{Z}^\epsilon}_{p}\to 0.
\]
A $p$-adic number $\sum _{i=0}^{\infty} a_i p^i$ with $a_i$ between $0$ and $p-1$ has norm $\sum _{i=0}^{\infty} |a_i| r^i$. This is the usual $\mathbb{Z}_p$ as an abstract ring. In particular for $r=p^{-1}$ this gives the usual $p$-adic norm on $\mathbb{Z}^1_p=\mathbb{Z}_p$.
\end{lem}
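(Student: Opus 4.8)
The plan is to realise the third map as the evaluation homomorphism $\mathrm{ev}_{p}\colon \mathbb{Z}\{\frac{x}{r}\}\to {\mathbb{Z}^\epsilon}_{p}$ sending $\sum_{i\ge 0}b_{i}x^{i}$ to $\sum_{i\ge 0}b_{i}p^{i}$ (the series converges $p$-adically since $|b_{i}p^{i}|_{p}\le p^{-i}$), and then to establish three facts: (1) multiplication by $x-p$ is a strict monomorphism; (2) its image equals $\ker(\mathrm{ev}_{p})$; (3) $\mathrm{ev}_{p}$ is a strict epimorphism with cokernel ${\mathbb{Z}^\epsilon}_{p}$. Given these, the displayed sequence is strictly exact at each of its three positions, hence a strict short exact sequence in $\ttBan_{\mathbb{Z}}$. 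Injectivity of multiplication by $x-p$ is free, since $\mathbb{Z}\{\frac{x}{r}\}$ is a subring of the domain $\mathbb{Z}[[x]]$, and its boundedness (with constant $p+r$) is the triangle inequality applied to the decomposition into multiplication by $x$ plus multiplication by $-p$. That $\mathrm{ev}_{p}$ is non-expanding uses that ${\mathbb{Z}^\epsilon}_{p}$ is non-archimedean together with $r<1$: for $g=\sum_{i}b_{i}x^{i}$ one gets $\|\mathrm{ev}_{p}(g)\|\le \sup_{b_{i}\ne 0}r^{\,i+v_{p}(b_{i})}\le \sup_{b_{i}\ne 0}r^{i}\le \sum_{i}|b_{i}|r^{i}=\|g\|$.

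The technical core is an explicit division by $x-p$ inside the disk algebra, which I would carry out as follows. Given $g=\sum_{i\ge 0}b_{i}x^{i}\in\ker(\mathrm{ev}_{p})$, so $\sum_{i}b_{i}p^{i}=0$ in $\mathbb{Z}_{p}$, the partial sum $s_{i}:=\sum_{j=0}^{i}b_{j}p^{j}$ equals $-\sum_{j\ge i+1}b_{j}p^{j}$ and is therefore divisible by $p^{i+1}$; so $c_{i}:=-p^{-(i+1)}s_{i}\in\mathbb{Z}$, and a coefficient-by-coefficient check gives $(x-p)\big(\sum_{i}c_{i}x^{i}\big)=g$. For convergence, $|c_{i}|\le p^{-(i+1)}\sum_{j=0}^{i}|b_{j}|p^{j}$, so summing against $r^{i}$ and interchanging the order of summation (a geometric series in $(r/p)^{i}$, legitimate since $r/p<1$) yields
\[
\sum_{i}|c_{i}|r^{i}\ \le\ \frac{1}{p}\sum_{j}|b_{j}|p^{j}\sum_{i\ge j}\Big(\frac{r}{p}\Big)^{i}\ =\ \frac{1}{p-r}\sum_{j}|b_{j}|r^{j}\ =\ \frac{\|g\|}{p-r}\ <\ \infty .
\]
Hence $h:=\sum_{i}c_{i}x^{i}$ lies in $\mathbb{Z}\{\frac{x}{r}\}$ with $g=(x-p)h$. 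This gives $\ker(\mathrm{ev}_{p})=(x-p)\cdot\mathbb{Z}\{\frac{x}{r}\}$, so this image is closed (being the kernel of the continuous map $\mathrm{ev}_{p}$ into a Hausdorff space), and the displayed estimate says the algebraic inverse of multiplication by $x-p$ onto its image is bounded, by $(p-r)^{-1}$; so multiplication by $x-p$ is a topological isomorphism onto its closed image, i.e.\ a strict monomorphism.

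For surjectivity of $\mathrm{ev}_{p}$ and the cokernel I would use $p$-adic digit expansions: every $\alpha\in\mathbb{Z}_{p}$ is uniquely $\alpha=\sum_{i\ge 0}a_{i}p^{i}$ with $a_{i}\in\{0,\dots,p-1\}$, and $s(\alpha):=\sum_{i}a_{i}x^{i}$ lies in $\mathbb{Z}\{\frac{x}{r}\}$ (since $\sum_{i}a_{i}r^{i}\le (p-1)/(1-r)$) with $\mathrm{ev}_{p}(s(\alpha))=\alpha$. Thus $\mathrm{ev}_{p}$ is onto, and since $\ker(\mathrm{ev}_{p})$ is closed the induced $\mathbb{Z}$-linear bijection $\mathbb{Z}\{\frac{x}{r}\}/(x-p)\mathbb{Z}\{\frac{x}{r}\}\to {\mathbb{Z}^\epsilon}_{p}$ is a morphism of Banach $\mathbb{Z}$-modules whose inverse $\alpha\mapsto[s(\alpha)]$ is bounded, because $\|[s(\alpha)]\|\le\|s(\alpha)\|=\sum_{i}a_{i}r^{i}\le (p-1)\sum_{i\ge v_{p}(\alpha)}r^{i}=\frac{p-1}{1-r}r^{v_{p}(\alpha)}=\frac{p-1}{1-r}\|\alpha\|_{{\mathbb{Z}^\epsilon}_{p}}$. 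A bounded bijection of Banach $\mathbb{Z}$-modules with bounded inverse is an isomorphism, so $\mathrm{ev}_{p}$ is a strict epimorphism with cokernel ${\mathbb{Z}^\epsilon}_{p}$, and together with the previous paragraph this gives the strict short exact sequence. Finally, the canonical representative $s(\alpha)=\sum_{i}a_{i}x^{i}$ has $\mathbb{Z}\{\frac{x}{r}\}$-norm $\sum_{i}|a_{i}|r^{i}$; since $r^{v_{p}(\alpha)}\le\sum_{i}|a_{i}|r^{i}\le\frac{p-1}{1-r}r^{v_{p}(\alpha)}$, the resulting norm on $\mathbb{Z}_{p}$ is equivalent to $r^{v_{p}(\cdot)}$, which for $\epsilon=1$ (that is, $r=p^{-1}$) is literally the usual $p$-adic norm.

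The one genuinely non-formal step is the norm estimate for $h$: one must check that the integer coefficients $c_{i}$, although produced by an exact recursion, do not grow too fast in ordinary absolute value — and this is exactly where the hypothesis $0<r<1$ enters, through convergence of $\sum_{i}(r/p)^{i}$. Everything else (injectivity, the digit-expansion section, boundedness of $\mathrm{ev}_{p}$, and upgrading a bounded bijection with bounded inverse to an isomorphism in $\ttBan_{\mathbb{Z}}$) is routine.
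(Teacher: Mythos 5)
Your proposal is correct and follows essentially the same route as the paper: the same division-by-$(x-p)$ estimate producing the $\frac{1}{p-r}$ bound for strict monicity, and the same digit-expansion section producing the $\frac{p-1}{1-r}$ bound for strict epicity. The only cosmetic difference is that the paper first establishes the isomorphism at the level of the uncompleted polynomial ring $\mathbb{Z}[x]/(x-p)$ with the induced seminorm and then applies the completion functor, whereas you work directly inside the completed disk algebra $\mathbb{Z}\{\frac{x}{r}\}$ and identify $\ker(\mathrm{ev}_p)$ with the image of multiplication by $x-p$ by the explicit formula $c_i=-p^{-(i+1)}s_i$; this is a slightly cleaner packaging of the same computations.
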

\proof 

In order to see that the multiplication by $x-p$ map on $\mathbb{Z}\{\frac{x}{r}\}$ is a strict monomorphism suppose that $(x-p)\sum_{i=0}^{\infty}a_{i}x^{i}=\sum_{j=0}^{\infty}b_{j}x^{j}$. Then we see in particular $b_0$ is divisible by $p$, $a_0=-\frac{1}{p}b_0$, $b_1 +\frac{1}{p}b_0$ is as well, and $a_1=-\frac{1}{p}(b_1+\frac{1}{p}b_0)$ and in fact we can solve for
\[a_i = -\frac{1}{p}b_i -\frac{1}{p^2}b_{i-1} - \cdots -\frac{1}{p^{i+1}}b_0.
\]
Therefore, 
\[
|a_i|\leq p^{-1}|b_i| +p^{-2} |b_{i-1}|+ \cdots + p^{-(i+1)} |b_0|=\sum_{j=0}^{i}|b_j|p^{-1-i+j}
\]
and so
\[||\sum_{i=0}^{\infty}a_{i}x^{i}|| = \sum_{i=0}^{\infty}|a_{i}|r^{i}\leq  \sum_{i=0}^{\infty}\sum_{j=0}^{i}|b_j|p^{j-i-1}r^i=\sum_{0\leq j \leq i < \infty} |b_j|r^{j}p^{j-i-1}r^{i-j} =  p^{-1}(\sum_{k=0}^{\infty}(\frac{r}{p})^{k})( \sum_{j=0}^{\infty}|b_{j}|r^{j})
\]
and so
\[||\sum_{i=0}^{\infty}a_{i}x^{i}|| \leq p^{-1} \frac{1}{1-\frac{r}{p} }||\sum_{j=0}^{\infty}b_{j}x^{j}||=\frac{1}{p-r} ||\sum_{j=0}^{\infty}b_{j}x^{j}|| .
\]

For every prime $p$ there are isomorphisms of normed rings
\[\mathbb{Z}[x]/(x-p) \cong \mathbb{Z}
\]
where $|x|=r$ and the right hand side has the $|\ |^{\epsilon}_{p}$ norm. In order to explain this, given a polynomial $f(x)=\sum_{i=n}^{m}a_{i}x^{i}$ with $a_{n}\neq 0$, it is assigned to a number $\sum_{i=n}^{m}a_{i}p^{i}$ with norm bounded as follows: $|\sum_{i=n}^{m}a_{i}p^{i}|^{\epsilon}_{p} \leq \max_{i=n}^{m} \{r^{v_{p}(a_i)+i}\}\leq r^{n}\leq \sum_{i=n}^{m}|a_{i}|r^{i}=||\sum_{i=n}^{m}a_{i}x^{i} ||_{\mathbb{Z}\{\frac{x}{r}\}}$.  This gives a (bounded) morphism $\mathbb{Z}[x]\to \mathbb{Z}$. In fact, any integer $bp^s$ where $p$ does not divide $b$ and $s\geq 0$ has a $p$-adic expansion $\sum_{i=0}^{m}b_{i}p^{s+i}$ where $0\leq |b_{i}| \leq p-1$, in other words it is the evaluation of $\sum_{i=0}^{m}b_{i}x^{s+i}$ . Therefore, the infimum of the norms of any lift of $bp^s$ to $\mathbb{Z}\{\frac{x}{r}\}$ is bounded by \[\sum_{i=0}^{m}|b_{i}|r^{i+s}\leq r^{s}(p-1)\sum_{i=0}^{m}r^{i} \leq r^{s}(p-1)\sum_{i=0}^{\infty}r^{i}=\frac{p-1}{1-r}r^s=\frac{p-1}{1-r} |bp^s|^{\epsilon}_{p}.\] 
Therefore the morphism $\mathbb{Z}\{\frac{x}{r}\} \to  \mathbb{Z}^{\epsilon}_{p}$ is strict. By computing order by order modulo powers of $p$ with any polynomial $\sum_{i=n}^{m} a_i x^i $ where $\sum_{i=n}^{m} a_i p^{i}=0$, one finds that this element must be in the ideal $(x-p)$. If the polynomial $f$ maps to $bp^s$ where $p$ does not divide $b$ then $bp^s=\sum_{i=n}^{m}a_{i}p^{i}$  and so $|bp^s|^{\epsilon}_{p}\leq ||f||.$ We apply the completion functor to get the desired isomorphism $\mathbb{Z}\{\frac{x}{r}\}/(x-p) \cong \mathbb{Z}^{-\log_{p}(r)}_{p}$. 
\endproof
\begin{rem}
$\mathbb{Z}\{\frac{x}{r}\}/(x-p)$ represents the subset of Berkovich points which send $p$ to $[0,r]$. 
\end{rem}
\begin{example}Cokernels in the category of Banach abelian groups are simply the group quotient equipped with the norm given by the infimum of the norms of all lifts. Consider the cokernel $S^1 = \mathbb{R}/\mathbb{Z}$. We can compute $S^1 \wotimes^{\mathbb{L}}_{\mathbb{Z}} \mathbb{Z}_{p}$ using the above projective resolution of $\mathbb{Z}_p$. It is a non-zero Banach abelian group $K$ sitting in degree $-1$ including for example the element $\frac{1}{p}\sum_{i=0}^{\infty} (x/p)^{i}$ which has norm $\frac{1}{p-1}$ and is in the kernel $K$ of the strict epimorphism $S^{1} \{px\} \stackrel{x-p}\to  S^{1} \{px\}$.
\end{example}

\begin{rem}This geometric perspective can be useful, for instance, one could define the $p$-adic completion of a Banach ring $R$ as 
$R\{px\}/(x-p).$ There are also interesting new rings to define such as the following Fr\'{e}chet version of the $p$-adic integers. 
For instance we can consider the functions on the open disk of radius $1/p$ over 
$\spec(\mathbb{Z})$, $\underset{r<1/p} \lim \mathbb{Z}\{\frac{x}{r}\}$ in place of $ \mathbb{Z}\{px\}$ in the role it plays in 
Lemma \ref{lem:ZpLem}.
\end{rem}
\begin{defn}
\[\widetilde{\mathbb{Z}_{p}}= \underset{r<1/p} \lim ( \mathbb{Z}\{\frac{x}{r}\}/(x-p)) \cong  (\underset{r<1/p} \lim  \mathbb{Z}\{\frac{x}{r}\})/(x-p)  
\cong  \mathcal{O}(D^{1}_{<p^{-1}, \mathbb{Z}})/(x-p).
\]
\[\mathbb{Z}_{p}^{\dagger}= ``\underset{r>\frac{1}{p}}\colim" ( \mathbb{Z}\{\frac{x}{r}\}/(x-p))
\]
\end{defn}

There are bounded morphisms $\mathbb{Z}_{\triv}\to \mathbb{Z}_{p}$ and homotopy epimorphisms $\mathbb{Z} \to \mathbb{Z}_{p}^{\dagger}\to  \widetilde{\mathbb{Z}_{p}}$. As far as the Fr\'{e}chet version, this will appear in future work but for now we show:
\begin{lem}The natural map $\mathbb{Z} \to \mathbb{Z}_{p}^{\dagger}$ is a homotopy epimorphism.
\end{lem}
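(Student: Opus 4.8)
The plan is to compute $\mathbb{Z}_{p}^{\dagger}\wotimes^{\mathbb{L}}_{\mathbb{Z}}\mathbb{Z}_{p}^{\dagger}$ explicitly, using the strict short exact sequence of Lemma \ref{lem:ZpLem} as a projective resolution of $\mathbb{Z}$-modules and checking that it stays a resolution after tensoring with the second factor. Write $\mathbb{Z}_{p}^{\epsilon}$ for the Banach ring of Lemma \ref{lem:ZpLem}, so that for $0<r<1$, with $\epsilon=-\log_{p}(r)$, one has $\mathbb{Z}\{\tfrac{x}{r}\}/(x-p)\cong \mathbb{Z}_{p}^{\epsilon}$ sitting in the strict short exact sequence $0\to \mathbb{Z}\{\tfrac{x}{r}\}\xrightarrow{\,x-p\,}\mathbb{Z}\{\tfrac{x}{r}\}\to \mathbb{Z}_{p}^{\epsilon}\to 0$, and $\mathbb{Z}_{p}^{\dagger}=``\colim"_{1/p<r<1}\mathbb{Z}_{p}^{\epsilon}$. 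Since $\mathbb{Z}\{\tfrac{x}{r}\}\cong \coprod^{\leq 1}_{n\geq 0}\mathbb{Z}_{r^{n}}$ is a contracting coproduct of scalings of $\mathbb{Z}$, it is projective and hence flat in $\ttBan_{\mathbb{Z}}$, and because $\wotimes_{\mathbb{Z}}$ commutes with coproducts we have $M\wotimes_{\mathbb{Z}}\mathbb{Z}\{\tfrac{x}{r}\}\cong M\{\tfrac{x}{r}\}$ for every Banach abelian group $M$.

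Because the derived projective tensor product commutes with filtered colimits in each variable, $\mathbb{Z}_{p}^{\dagger}\wotimes^{\mathbb{L}}_{\mathbb{Z}}\mathbb{Z}_{p}^{\dagger}\cong ``\colim"_{(r,s)}\big(\mathbb{Z}_{p}^{\epsilon_{r}}\wotimes^{\mathbb{L}}_{\mathbb{Z}}\mathbb{Z}_{p}^{\epsilon_{s}}\big)$, the colimit running over pairs $(r,s)$ with $1/p<r,s<1$, and the subsystem of pairs with $s<r$ is cofinal in the index poset, so it is enough to treat those. Resolving the first factor by the projective (hence flat) complex $[\mathbb{Z}\{\tfrac{x}{r}\}\xrightarrow{\,x-p\,}\mathbb{Z}\{\tfrac{x}{r}\}]$ and using $M\wotimes_{\mathbb{Z}}\mathbb{Z}\{\tfrac{x}{r}\}\cong M\{\tfrac{x}{r}\}$ gives $\mathbb{Z}_{p}^{\epsilon_{r}}\wotimes^{\mathbb{L}}_{\mathbb{Z}}\mathbb{Z}_{p}^{\epsilon_{s}}\cong [\mathbb{Z}_{p}^{\epsilon_{s}}\{\tfrac{x}{r}\}\xrightarrow{\,x-p\,}\mathbb{Z}_{p}^{\epsilon_{s}}\{\tfrac{x}{r}\}]$.

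The crux is to show that, when $s<r$, this two-term complex is a strict resolution of $\mathbb{Z}_{p}^{\epsilon_{s}}$. Here $|p|=s<r$ in $\mathbb{Z}_{p}^{\epsilon_{s}}$, so evaluation at $x=p$ defines a non-expanding ring surjection $\mathrm{ev}_{p}\colon \mathbb{Z}_{p}^{\epsilon_{s}}\{\tfrac{x}{r}\}\to \mathbb{Z}_{p}^{\epsilon_{s}}$, and for $f=\sum_{i}a_{i}x^{i}$ the associated difference quotient $g=\sum_{j}\big(\sum_{i>j}a_{i}p^{\,i-1-j}\big)x^{j}$ satisfies the identity $f=f(p)\cdot 1+(x-p)g$ together with the estimate $\|g\|\leq \tfrac{1}{r-s}\|f\|$, which follows from $\sum_{j=0}^{i-1}s^{\,i-1-j}r^{j}=\tfrac{r^{i}-s^{i}}{r-s}\leq \tfrac{r^{i}}{r-s}$. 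This one identity shows simultaneously that $x-p$ is injective with closed image equal to $\ker(\mathrm{ev}_{p})$, that it is strict (the inverse on its image is bounded by $\tfrac{1}{r-s}$), and that $\coker(x-p)\cong \mathbb{Z}_{p}^{\epsilon_{s}}$ isometrically; hence $\mathbb{Z}_{p}^{\epsilon_{r}}\wotimes^{\mathbb{L}}_{\mathbb{Z}}\mathbb{Z}_{p}^{\epsilon_{s}}\cong \mathbb{Z}_{p}^{\epsilon_{s}}$, concentrated in degree zero, and one checks this identification is natural in the pair $(r,s)$ and compatible with the multiplication morphisms. This is the step I expect to be the main obstacle: it is precisely the strict inequality $s<r$ — the radius of the disk strictly exceeding $|p|$, i.e. overconvergence — that produces the constant $\tfrac{1}{r-s}$ and hence strictness. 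For $\mathbb{Z}_{p}$ itself one would be forced into the boundary case $s=r$, where $x-p$ fails to be strict after tensoring, which is exactly what one should expect given that $\mathbb{Z}\to \mathbb{Z}_{p}$ is not a homotopy epimorphism.

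Finally, on the cofinal subsystem $\{(r,s):s<r\}$ the functor $(r,s)\mapsto \mathbb{Z}_{p}^{\epsilon_{s}}$ factors through the projection $(r,s)\mapsto s$ onto $\{s:1/p<s<1\}$, which is cofinal, so $``\colim"_{(r,s):\,s<r}\mathbb{Z}_{p}^{\epsilon_{s}}\cong ``\colim"_{1/p<s<1}\mathbb{Z}_{p}^{\epsilon_{s}}=\mathbb{Z}_{p}^{\dagger}$. Tracing the multiplication morphisms through the identifications above shows that the resulting isomorphism $\mathbb{Z}_{p}^{\dagger}\wotimes^{\mathbb{L}}_{\mathbb{Z}}\mathbb{Z}_{p}^{\dagger}\cong \mathbb{Z}_{p}^{\dagger}$ is exactly the multiplication morphism, so $\mathbb{Z}\to \mathbb{Z}_{p}^{\dagger}$ is a homotopy epimorphism. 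The remaining ingredients — the interchange of $\wotimes^{\mathbb{L}}$ with the double filtered colimit, the cofinality claims, and the naturality bookkeeping — are routine consequences of the exactness of filtered colimits in $\ttInd(\ttBan_{\mathbb{Z}})$ (cf. Lemma \ref{lem:commuteFiltSSES} and the surrounding discussion) and of $\wotimes$ being a left adjoint.
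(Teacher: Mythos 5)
Your proof is correct and follows the same strategy as the paper's: use the two-term projective resolution $[\mathbb{Z}\{\tfrac{x}{\rho}\}\xrightarrow{x-p}\mathbb{Z}\{\tfrac{x}{\rho}\}]$ from Lemma~\ref{lem:ZpLem}, tensor with the other factor, prove the tensored complex stays a strict short exact sequence via a division-with-remainder estimate whose geometric series converges precisely because of the overconvergence gap, and pass to colimits. There is one small structural difference worth noting: you resolve the factor $\mathbb{Z}_{p}^{\epsilon_{r}}$ with the \emph{larger} disk radius $r$ and tensor with $\mathbb{Z}_{p}^{\epsilon_{s}}$ having $|p|=s<r$, so your estimate is governed by $\sum_{k}(s/r)^{k}$ and the constant $\tfrac{1}{r-s}$; the paper instead resolves the factor with the smaller disk radius (so $|p|$ of the tensored ring exceeds the disk radius), which leads to the complementary geometric series $\sum_{k}(\rho/|p|)^{k}$ and the constant $\tfrac{s}{s|p|_{R}-1}$. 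These correspond to restricting to the two opposite cofinal subregions ($\epsilon_{r}<\epsilon_{s}$ versus $\epsilon_{r}>\epsilon_{s}$) of the double filtered colimit, and both are valid for the same reason — the strict inequality in the radii is exactly the overconvergence that makes the relevant series converge — so the two arguments are mirror images of each other. Your explicit formula $f=f(p)\cdot 1+(x-p)g$ also cleanly packages injectivity, strictness, and the identification of the cokernel in one stroke, which is a mild streamlining over the paper's presentation.
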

\proof 
Let $p$ be a prime and let $R$ be any Banach ring without $p$-torsion and with multiplicative norm and let $s$ be any real with $1<s<p$. Further assume that $|p|_{R}s>1$. Consider the bounded morphism $R\{sx\} \stackrel{x-p}\longrightarrow R\{sx\}$. Define $K^s \subset R\{sx\}$ as the kernel of the (bounded) evaluation at $p$ morphism $R\{sx\} \to R$. We define an inverse map $K^s \longrightarrow R\{sx\}$ sending $\sum_{j=0}^{\infty}b_{j}x^{j}$ to  $\sum_{i=0}^{\infty}a_{i}x^{i}$ where $(x-p)\sum_{i=0}^{\infty}a_{i}x^{i}=\sum_{j=0}^{\infty}b_{j}x^{j}$. Then we see in particular $b_0$ is divisible by $p$, $a_0=-\frac{1}{p}b_0$, $b_1 +\frac{1}{p}b_0$ is as well, and $a_1=-\frac{1}{p}(b_1+\frac{1}{p}b_0)$ and in fact we can solve for
\[a_i = -\frac{1}{p}b_i -\frac{1}{p^2}b_{i-1} - \cdots -\frac{1}{p^{i+1}}b_0.
\]
Therefore, 
\[
|a_i|_{R}\leq \sum_{j=0}^{i}|b_j|_{R}|p^{-1-i+j}|_{R}
\]
and so making the substitution $k=i-j$ we have
\begin{equation}
\begin{split}
||\sum_{i=0}^{\infty}a_{i}x^{i}|| = \sum_{i=0}^{\infty}|a_{i}|_{R}s^{-i}\leq  \sum_{i=0}^{\infty}\sum_{j=0}^{i}|b_jp^{j-i-1}|_{R}s^{-i} & \leq   \sum_{i=0}^{\infty}\sum_{j=0}^{\infty}|b_j|_{R}|p^{j-i-1}|_{R}s^{-i} \\ & \leq   |p^{-1}|_{R}(\sum_{k=0}^{\infty}(|p|_{R}s)^{-k})( \sum_{j=0}^{\infty}|b_{j}|_{R}s^{-j})
\end{split}
\end{equation}
and so
\[||\sum_{i=0}^{\infty}a_{i}x^{i}|| \leq |p^{-1}|_{R} \frac{1}{1-\frac{1}{s|p|_R} }||\sum_{j=0}^{\infty}b_{j}x^{j}||=  \frac{s}{s|p|_R-1} ||\sum_{j=0}^{\infty}b_{j}x^{j}||.
\]
Therefore, for $1<u<s<p$ we have the strict short exact sequence
\[0 \to \mathbb{Z}\{sx\} \to \mathbb{Z}\{sx\} \to \mathbb{Z}_p^{\log_{p} s} \to 0
\]
and it becomes a strict short exact sequence 
\[0 \to  \mathbb{Z}_p^{\log_{p} u}\{sx\} \to  \mathbb{Z}_p^{\log_{p} u}\{sx\} \to  \mathbb{Z}_p^{\log_{p} u} \wotimes_{\mathbb{Z}} \mathbb{Z}_p^{\log_{p} s} \to 0
\]
after applying the functor $(-) \wotimes_{\mathbb{Z}} \mathbb{Z}_p^{\log_{p} u}$.
Passing to colimits over $u<s<p$ we find that we have a strict short exact sequence
\[0 \to \mathbb{Z}\{px\}^{\dagger} \to \mathbb{Z}\{px\}^{\dagger} \to \mathbb{Z}_{p}^{\dagger} \to 0
\]
which is a projective resolution. Furthermore, after applying the  functor $(-) \wotimes_{\mathbb{Z}} \mathbb{Z}^{\dagger}_p$ the result is the strict short exact sequence given by the colimits over $u<s<p$ of  \[0 \to  \mathbb{Z}_p^{\log_{p} u}\{sx\} \to  \mathbb{Z}_p^{\log_{p} u}\{sx\} \to  \mathbb{Z}_p^{\log_{p} u} \wotimes_{\mathbb{Z}} \mathbb{Z}_p^{\log_{p} s} \to 0.
\]
Now since we in fact have $\mathbb{Z}_p^{\log_{p} u} \wotimes_{\mathbb{Z}} \mathbb{Z}_p^{\log_{p} s}=  \mathbb{Z}_p^{\log_{p} s}$ we find a strict short exact sequence 
\[
0 \to \mathbb{Z}_{p}^{\dagger}\{px\}^{\dagger} \to \mathbb{Z}_{p}^{\dagger}\{px\}^{\dagger} \to \mathbb{Z}_{p}^{\dagger} \to 0
\]
showing that $\mathbb{Z}_{p}^{\dagger} \wotimes^{\mathbb{L}}_{\mathbb{Z}} \mathbb{Z}_{p}^{\dagger}  \cong \mathbb{Z}_{p}^{\dagger}$.
\endproof

\subsection{Bornological and Ind-Banach Modules}
Let $R$ be a Banach ring as defined in Definition \ref{defn:BanRng}. A bornological module over $R$ is a pair consisting of an $R$-module $M$ together with a bornology on the set $M$ as in Definition 3.36 of \cite{BB} such that the structure morphisms for addition and action of $R$ are bounded. The morphisms of bornological modules are bounded $R$-linear maps. The homological properties of bornological spaces over $\mathbb{C}$ were discussed in \cite{Pr}. We define the full subcategory $\ttCBorn_{R}$ as those bornological modules for which there is an increasing union of subsets, each of which has the structure of an object of $\ttBan_{R}$ and the inclusion of the subsets in $M$ and in one another are all bounded morphisms. This category is equivalent to the subcategory of essentially monomorphic objects in $\ttInd(\ttBan_{R})$. For more on this category see \cite{BB} and \cite{BBK}. The importance of bornological spaces in complex geometry was studied by Houzel \cite{H}. Given an object $A\in \ttComm(\ttInd(\ttBan_{R}))$, the category $\ttMod(A)$ shares all the nice properties of the category $\ttInd(\ttBan_{R})$. We just remark here that the completed projective tensor product is defined by 
\begin{equation}\label{eqn:Atens} M \wotimes_{A}N = \colim [M \wotimes_{R}A \wotimes_{R} N  \rightrightarrows M \wotimes_{R}N] 
\end{equation}
\begin{lem}The category $\ttCBorn_{R}$ is closed, symmetric monoidal, quasi-abelian, complete and co-complete. It has enough flat projectives.
\end{lem}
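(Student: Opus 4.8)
The plan is to transport all of these properties through the equivalence recorded just above between $\ttCBorn_R$ and the full subcategory $\ttInd^m(\ttBan_R)\subseteq\ttInd(\ttBan_R)$ of essentially monomorphic ind-objects, using that $\ttInd(\ttBan_R)$ is, by Proposition \ref{prop:ScnMain} and the results quoted from \cite{SchneidersQA}, an elementary closed symmetric monoidal quasi-abelian category with enough projectives, all of which are flat. (Flatness of the projectives: they are retracts of coproducts of scaled copies of $R$, so it follows by combining Lemma \ref{lem:FlatProps}, the termwise description of $\ootimes$ in Proposition \ref{prop:ScnMain}, exactness of filtered colimits in an elementary quasi-abelian category, and the fact recalled above that the projectives of $\ttBan_R$ are flat.)

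The crucial input I would use is that the inclusion $\iota\colon\ttInd^m(\ttBan_R)\hookrightarrow\ttInd(\ttBan_R)$ has a left adjoint $(-)^m$, obtained by replacing a representing inductive system by the system of images of its transition morphisms, as established in \cite{BB}. Granting this, $\ttInd^m(\ttBan_R)$ is stable in $\ttInd(\ttBan_R)$ under subobjects --- a subobject of an essentially monomorphic object is essentially monomorphic, its representing system being obtained from the ambient one by pullbacks --- and under limits; hence $\ttCBorn_R$ has all limits and they agree with those of $\ttInd(\ttBan_R)$, while it has all colimits, computed by forming the colimit in $\ttInd(\ttBan_R)$ and then applying $(-)^m$. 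This gives completeness and cocompleteness.

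For the monoidal structure I would set $M\wotimes_R N:=(\iota M\ootimes\iota N)^m$; since $(-)^m$ is a reflection compatible with $\ootimes$, this is a symmetric monoidal structure with unit $R$. One then checks that $\uHom_{\ttInd(\ttBan_R)}(\iota M,\iota N)$ is already essentially monomorphic: by Proposition \ref{prop:ScnMain} it is a limit of the ind-objects $``\colim_{j}"\uHom(M_i,N_j)$, whose transition maps are monomorphisms because $\uHom(M_i,-)$ is a right adjoint, and limits preserve essential monomorphicity by the previous paragraph. So it serves as an internal hom, and $\ttCBorn_R$ is closed symmetric monoidal. Quasi-abelianness is then checked against Definition \ref{defn:CartCoCart}: in $\ttInd^m(\ttBan_R)$ kernels and finite products are those of $\ttInd(\ttBan_R)$, whereas cokernels and pushouts are the $(-)^m$-reflections of the corresponding constructions in $\ttInd(\ttBan_R)$; using that $\iota$ preserves strict monomorphisms and $(-)^m$ strict epimorphisms (cf.\ Lemma \ref{lem:StrictEpiPreserving}), the pullback-of-a-strict-epimorphism and pushout-of-a-strict-monomorphism axioms reduce to the known quasi-abelianness of $\ttInd(\ttBan_R)$. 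Alternatively one simply cites the verification in \cite{BB}, \cite{BBK}.

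Finally, for enough flat projectives: every Banach $R$-module lies in $\ttCBorn_R$, and the projectives of $\ttInd(\ttBan_R)$, being retracts of coproducts of scaled copies of $R$, are essentially monomorphic and hence objects of $\ttCBorn_R$; they are flat by what was said above together with the compatibility of the two tensor products under $\iota$ and $(-)^m$. Applying $(-)^m$ to a projective strict epimorphism onto $\iota M$ in $\ttInd(\ttBan_R)$ produces a strict epimorphism onto $M$ from such an object, so $\ttCBorn_R$ has enough flat projectives, \emph{provided} these objects stay projective in $\ttCBorn_R$. I expect this last point to be the genuine obstacle: it is equivalent to the statement that $\iota$ carries strict epimorphisms of $\ttCBorn_R$ to strict epimorphisms of $\ttInd(\ttBan_R)$ --- i.e.\ that the $\ttInd(\ttBan_R)$-cokernel of a monomorphism of essentially monomorphic objects is again essentially monomorphic --- and it does not follow formally, since $\iota$ is only a right adjoint. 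Proving it needs the explicit description of strict epimorphisms in $\ttInd^m(\ttBan_R)$ as bornologically open surjections and the standard but slightly delicate compatibility of quotient bornologies with countable increasing unions; this is the one place where one must argue inside the ind-objects, and it is carried out in \cite{BB}, \cite{Pr}.
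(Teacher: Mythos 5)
The paper does not actually prove this lemma: it is stated bare, with the surrounding text deferring to \cite{BB} and \cite{BBK} for the properties of $\ttCBorn_R$. So there is no internal proof to compare against, and I am assessing your argument on its own. Your overall strategy --- view $\ttCBorn_R$ as the full subcategory of essentially monomorphic objects in $\ttInd(\ttBan_R)$, note that the inclusion $\iota$ admits a left adjoint $(-)^m$, and transport the structure of $\ttInd(\ttBan_R)$ across the reflection --- is the natural one and is in the spirit of the cited references. Given reflectivity, the completeness (reflective subcategories are closed under limits, so you do not even need the separate ``closed under subobjects'' claim, which is itself delicate and better dropped) and cocompleteness (reflect colimits from $\ttInd(\ttBan_R)$) parts go through, and the reduction of the quasi-abelian axioms to those of $\ttInd(\ttBan_R)$ is plausible modulo routine verification.

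There remain, however, two genuine gaps. First, the one you flag yourself: to conclude that the flat projectives of $\ttInd(\ttBan_R)$ --- which do lie in $\ttCBorn_R$, being retracts of coproducts of Banach modules and hence essentially monomorphic --- stay projective in $\ttCBorn_R$, you need $\iota$ to send strict epimorphisms of $\ttCBorn_R$ to strict epimorphisms of $\ttInd(\ttBan_R)$. This is not formal, since $\iota$ is a right adjoint and so preserves strict monomorphisms, not strict epimorphisms (Lemma \ref{lem:StrictEpiPreserving} only gives the wrong direction here); you defer it entirely to \cite{BB}, \cite{Pr} without supplying the argument, so ``enough flat projectives'' is asserted rather than proved. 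Second, defining $M\wotimes_R N:=(\iota M\ootimes\iota N)^m$ yields a \emph{monoidal} structure on $\ttCBorn_R$ only if the reflector is compatible with $\ootimes$, i.e.\ $(-)^m(X\ootimes Y)\cong(-)^m((-)^m X\ootimes(-)^m Y)$ for all $X,Y$; you assert compatibility but never check it, and it is not automatic, precisely because the projective tensor product of monomorphisms of Banach modules need not be a monomorphism, so $\iota M\ootimes\iota N$ need not itself be essentially monomorphic. Until those two points are supplied, this is an outline, not a proof.
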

\begin{lem}
Direct products in $\ttCBorn_{R}$ commute with cokernels.
\end{lem}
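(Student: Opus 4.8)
The plan is to make everything explicit, using the concrete descriptions of limits and colimits in $\ttCBorn_R$ from \cite{BB}, and to check directly that the canonical comparison morphism is an isomorphism. Write the given family as $f_i\colon M_i\to N_i$ in $\ttCBorn_R$. The product $\prod_i N_i$ is the set-theoretic product equipped with the \emph{product bornology}, whose bounded subsets are the subsets of products $\prod_i B_i$ with $B_i\subseteq N_i$ bounded; taking each $B_i$ to be a completant disk and noting that a product of completant disks is again a completant disk, this bornology is complete, so $\prod_i N_i$ is indeed the product in $\ttCBorn_R$. The cokernel of $f\colon M\to N$ in $\ttCBorn_R$ is $N/\overline{f(M)}$, the quotient of $N$ by the bornological closure of the image, with the quotient bornology. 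Since set-theoretically $(\prod_i f_i)(\prod_i M_i)=\prod_i f_i(M_i)$, the claim unwinds into two assertions: \textbf{(a)} the submodules $\overline{\prod_i f_i(M_i)}$ and $\prod_i\overline{f_i(M_i)}$ of $\prod_i N_i$ coincide; and \textbf{(b)} under the resulting identification of underlying modules, the quotient bornology on $(\prod_i N_i)/\overline{\prod_i f_i(M_i)}$ coincides with the product bornology on $\prod_i(N_i/\overline{f_i(M_i)})$.

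Assertion \textbf{(b)} is bookkeeping: writing $\pi\colon\prod_i N_i\to(\prod_i N_i)/\overline{\prod_i f_i(M_i)}$ and $\pi_i\colon N_i\to N_i/\overline{f_i(M_i)}$ for the projections, the quotient bornology has as bounded sets the subsets of $\pi(\prod_i B_i)$ for $B_i\subseteq N_i$ bounded, while the product bornology has as bounded sets the subsets of $\prod_i\pi_i(B_i)$; and under the identification of underlying modules one has $\pi(\prod_i B_i)=\prod_i\pi_i(B_i)$. (The same computation shows that products of strict epimorphisms in $\ttCBorn_R$ are strict epimorphisms, and likewise for strict monomorphisms, so products are exact in $\ttCBorn_R$.) The real content is therefore \textbf{(a)}.

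Assertion \textbf{(a)} follows from the lemma that bornological closure commutes with products: for subsets $S_i\subseteq N_i$ one has $\overline{\prod_i S_i}=\prod_i\overline{S_i}$ in $\prod_i N_i$. The inclusion $\subseteq$ is formal, since each projection $\prod_i N_i\to N_j$ is bounded and hence carries $\overline{\prod_i S_i}$ into $\overline{S_j}$. For $\supseteq$, let $y=(y_i)$ with $y_i\in\overline{S_i}$ for all $i$; for each $i$ choose a bounded completant disk $D_i\subseteq N_i$ with $y_i\in D_i$ and $y_i$ in the topological closure of $S_i\cap D_i$ for the gauge norm of $D_i$. Then $D:=\prod_i D_i$ is a bounded completant disk in $\prod_i N_i$, and for each $n$ we may choose $s^{(n)}_i\in S_i\cap D_i$ with $\|s^{(n)}_i-y_i\|_{D_i}\le 1/n$ for every $i$; hence $\|(s^{(n)}_i)_i-y\|_D=\sup_i\|s^{(n)}_i-y_i\|_{D_i}\le 1/n$, so $y$ is the limit in the Banach disk $D$ of the sequence $(s^{(n)}_i)_i\in\prod_i S_i$, and thus $y\in\overline{\prod_i S_i}$. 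Taking $S_i=f_i(M_i)$ gives \textbf{(a)}. The non-archimedean variant is identical with $\ttBan^{na}_R$ in place of $\ttBan_R$.

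The main obstacle is the $\supseteq$ direction of the closure lemma: one must produce a single bounded subset of $\prod_i N_i$ that accommodates all the approximants to $y$ and realizes the convergence uniformly in $i$. This works precisely because the product bornology lets the disks $D_i$ vary freely with $i$, so one can absorb each $y_i$ into $D_i$ coordinatewise and, crucially, arrange the $D_i$-norm estimate $\le 1/n$ coordinatewise with a rate independent of $i$, which is what forces $\sup_i\|s^{(n)}_i-y_i\|_{D_i}\to 0$. The remaining points to verify are routine: a product of completant disks is completant; a quotient of a complete bornological module by a bornologically closed submodule is complete, so the cokernels above genuinely lie in $\ttCBorn_R$; and, if one prefers the exactness-of-products route, a short diagram chase reduces commutation of $\prod_i$ with cokernels to the statement that a product of epimorphisms in $\ttCBorn_R$ is again an epimorphism, which by the description of epimorphisms as bornologically dense maps is once more the identity $\overline{(\prod_i f_i)(\prod_i M_i)}=\prod_i\overline{f_i(M_i)}$, i.e.\ \textbf{(a)} again.
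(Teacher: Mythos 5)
Your proof is correct, and it takes a genuinely different route from the paper's. The paper argues categorically: since $\ttCBorn_R$ has enough projectives, products are exact by Proposition 1.4.5 of Schneiders, so the product of the strict epimorphisms $W_i\to C_i$ is a strict epimorphism, and the proof then concludes tersely by identifying its kernel. Your approach is instead fully concrete: you unpack the product and the cokernel in $\ttCBorn_R$, reduce the claim to \textbf{(a)} closure commutes with products and \textbf{(b)} the quotient bornology on the product agrees with the product of quotient bornologies, and verify each by hand. The explicit argument has a real payoff. Exactness of products in the quasi-abelian sense controls strict epimorphisms, but the comparison map $\coker(\prod f_i)\to\prod C_i$ is an isomorphism only if $\overline{(\prod f_i)(\prod M_i)}$ equals $\prod\overline{f_i(M_i)}$ inside $\prod N_i$ --- equivalently, a product of bornologically dense maps is dense --- and this is precisely your assertion \textbf{(a)}; the paper's one-line wrap-up (``the kernel is $\prod V_i$ so we are done'') glosses over this (and in fact the kernel of $\prod W_i\to\prod C_i$ is $\prod\overline{f_i(V_i)}$, not $\prod V_i$). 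You also correctly identify why \textbf{(a)} works: the product bornology lets the Banach disks $D_i$ be chosen coordinatewise, so the rate of approximation can be made uniform in $i$. Your final remark explicitly ties your \textbf{(a)} back to the exactness-of-products route, showing the two approaches converge on the same nontrivial fact. In short: the paper's proof is slicker but terser and leaves the density step implicit; yours is longer but makes the real content visible and self-contained.
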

\proof 
Suppose we have $f_i:V_i\to W_i$ with cokernels $C_i$. Then we have a natural map $\coker(\prod f_i) \to \prod C_i$. Since $\ttCBorn_{R}$ has enough projectives, it has exact products by Proposition 1.4.5 of \cite{SchneidersQA}. Therefore $\prod  W_i \to \prod C_i$ is a strict epimorphism as it is the product of strict epimorphisms. The kernel is $\prod  V_i$ so we are done.
\endproof
By Proposition \ref{prop:ScnMain} we have:
\begin{lem}
If $R$ is a Banach ring the categories $\ttInd(\ttBan_R)$ (or $\ttInd(\ttBan^{na}_R)$ for $R$ non-archimedean) is a closed, symmetric monoidal, complete and co-complete elementary quasi-abelian category. 
\end{lem}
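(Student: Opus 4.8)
The plan is to deduce the statement directly from Proposition \ref{prop:ScnMain} together with the structural facts about $\ttBan_R$ already recorded above. First I would verify that $\ttBan_R$ meets the hypotheses of that proposition. It is an additive category with kernels and cokernels — kernels are the algebraic kernels equipped with the restricted norm, cokernels are quotients by the \emph{closure} of the image equipped with the quotient seminorm — and it is quasi-abelian: in the pullback of a strict epimorphism the induced map is again a strict epimorphism, and dually for pushouts of strict monomorphisms, which is the standard diagram chase for Banach modules. We have already noted that $\ttBan_R$ is closed symmetric monoidal for $\wotimes_R$ and $\uHom_R$ with unit $R$, and that it has enough projectives with every projective flat. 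The one point requiring care is smallness: $\ttBan_R$ is not literally small, so, as in \cite{BK}, one fixes a Grothendieck universe $\mathbb U$ and works with the (essentially small) category of $\mathbb U$-small Banach $R$-modules; all limits and colimits occurring in the paper are of $\mathbb U$-small diagrams, so Schneiders' machinery applies verbatim with ``small'' read as ``$\mathbb U$-small''. The same remarks hold for $\ttBan^{na}_R$ when $R$ is non-archimedean.

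Granting this, Proposition \ref{prop:ScnMain} (that is, 2.1.19 of \cite{SchneidersQA}) furnishes at once a canonical closed symmetric monoidal structure on $\ttInd(\ttBan_R)$ extending the one on $\ttBan_R$, given by the formulas displayed in the proof of that proposition, and tells us that $\ttInd(\ttBan_R)$ is an elementary quasi-abelian category. By Definition \ref{defn:elementary} such a category is in particular cocomplete. It remains only to record completeness: an elementary quasi-abelian category is complete as well, since one builds arbitrary limits from products and kernels, and products exist and are exact because the category has enough projectives (cf.\ Proposition 1.4.5 of \cite{SchneidersQA}). Putting these together shows $\ttInd(\ttBan_R)$ is a closed, symmetric monoidal, complete and co-complete elementary quasi-abelian category, and the identical argument applied to $\ttBan^{na}_R$ gives the statement for $\ttInd(\ttBan^{na}_R)$.

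I expect the only genuine subtlety to be the set-theoretic one, namely pinning down the precise sense in which $\ttBan_R$ is ``small'' so that Proposition \ref{prop:ScnMain} is applicable; every other ingredient is either proved above (enough projectives, closed symmetric monoidal structure) or is a routine verification for Banach modules (quasi-abelianness). I would therefore settle the smallness point with the universe convention above and otherwise keep the proof to a direct appeal to Proposition \ref{prop:ScnMain} plus the standard completeness of elementary quasi-abelian categories.
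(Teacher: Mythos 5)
Your proposal is correct and takes essentially the same approach as the paper: the paper's proof of this lemma is a direct appeal to Proposition \ref{prop:ScnMain}, and you do the same, while additionally spelling out the verification of that proposition's hypotheses for $\ttBan_R$, the universe-theoretic resolution of the smallness requirement, and the observation that completeness follows from having enough projectives via Schneiders' results.
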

\begin{defn}
A Fr\'{e}chet module over $R$ is an object of $\ttInd(\ttBan_{R})$ which is a countable limit of a diagram in $\ttBan_{R}$. We consider Fr\'{e}chet modules as a full subcategory of $\ttCBorn_{R}$.
\end{defn}

Note that many function spaces in complex analytic geometry carry natural Fr\'{e}chet structures or more generally locally convex structures. We would like to relate these to the category $ \ttInd(\ttBan_{\mathbb{C}})$.  Let $\ttTc$ be the category of locally convex 
topological vector spaces over $\mathbb{C}$ and $\ttFr$ the sub-category of Fr\'{e}chet spaces. Note that both of these categories are quasi-abelian but they don't share all of the nice properties of $\ttInd(\ttBan_{\mathbb{C}})$ such as having enough projectives and having a closed symmetric monoidal structure. The following definition is  \cite{PS} definition 1.1:

\begin{defn}
For any object $E$ of $\ttTc$ let $\mathcal{B}_E$ be the set of 
absolutely convex bounded subsets of $E$. Given $B\in \mathcal{B}_E$, let $E_B$ be the linear span of $B$ with its gauge semi-norm $p_B$. Let 
\begin{equation*} 
\IB:\ttTc\to \ttInd(\ttBan_{\mathbb{C}})
\end{equation*} 
be defined as 
\begin{equation*}
\IB(E)=\colim \widehat{E_B}
\end{equation*}
where the colimit is taken over the directed system $\mathcal{B}_E$. Given $f:E\to F$ in $\ttTc$ and $B\in \mathcal{B}_E$, then $f(B)\in \mathcal{B}_F$. Hence we get 
a natural map $\colim \widehat{E_B} \to \colim \widehat{F_{f(B)}}$. 
Composing this with the canonical map $\underset{B\in \mathcal{B}_E}\colim \widehat{F_{f(B)}}\to \underset{B'\in \mathcal{B}_F}\colim  \widehat{F_{B'}}$ we get the functoriality of $\IB$.
\end{defn}

Note that if $E$ is a Banach space then $\IB(E)=E$. A subset of an object of $\ttTc$ is called bounded when it can be absorbed by scaling an open neighborhood of the origin. An object of $\ttTc$ is called bornological if every seminorm that is bounded on bounded sets is continuous. The functor $\IB$ is in many cases fully faithful (\cite{PS} proposition 1.5):

\begin{prop}
Let $E,F$ be objects of $\ttTc$. Assume that $E$ is bornological and that $F$ is complete. Then
\begin{equation*}
\Hom_\ttTc(E,F)=\Hom_\ttIndBan(\IB(E),\IB(F)).
\end{equation*}
\end{prop}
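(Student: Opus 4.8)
The plan is to unwind both sides as a limit of a filtered colimit indexed by the Banach disks of $E$ and of $F$, and match them term by term. Two facts do all the work: that $E$ bornological makes continuous linear maps out of $E$ the same as bounded ones, hence the same as compatible families of bounded maps on the normed spaces $E_B$; and that $F$ complete lets every bounded subset of $F$ be absorbed into a genuine \emph{Banach} disk.

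First I would describe the right-hand side. Each $\widehat{E_B}$ lies in $\ttBan_{\mathbb{C}}$, and $\IB(E)=``\colim_{B\in\mathcal{B}_E}"\widehat{E_B}$, $\IB(F)=``\colim_{B'\in\mathcal{B}_F}"\widehat{F_{B'}}$ are filtered colimits (the sets $\mathcal{B}_E$, $\mathcal{B}_F$ being directed under absolutely convex hulls). So the $\Hom$-formula for Ind-categories recalled after Remark~\ref{rem:DescribeHoms} gives
\[
\Hom_{\ttIndBan}(\IB(E),\IB(F))\;\cong\;\lim_{B\in\mathcal{B}_E}\ \colim_{B'\in\mathcal{B}_F}\ \Hom_{\ttBan_{\mathbb{C}}}(\widehat{E_B},\widehat{F_{B'}}).
\]
On the left-hand side, since $E$ is bornological its topology is the finest locally convex one with the given bounded sets, so a linear $f\colon E\to F$ is continuous iff each $f|_{E_B}\colon E_B\to F$ is continuous, i.e. bounded; thus $\Hom_{\ttTc}(E,F)\cong\lim_{B\in\mathcal{B}_E}\Hom_{\mathrm{bd}}(E_B,F)$.

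The heart of the argument — and the only place completeness of $F$ is used — is the natural-in-$B$ identification $\Hom_{\mathrm{bd}}(E_B,F)\cong\colim_{B'\in\mathcal{B}_F}\Hom_{\ttBan_{\mathbb{C}}}(\widehat{E_B},\widehat{F_{B'}})$. Given a bounded $g\colon E_B\to F$, the image of the unit ball of $E_B$ is bounded in $F$; because $F$ is complete, the closed absolutely convex hull $B'$ of that image is a Banach disk, i.e. $F_{B'}$ is already a Banach space sitting inside $F$, so $g$ factors as $E_B\to F_{B'}\hookrightarrow F$ and extends uniquely to $\widehat{E_B}\to F_{B'}=\widehat{F_{B'}}$. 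Conversely, any $\widehat{E_B}\to\widehat{F_{B'}}$ composed with the canonical bounded injection $\widehat{F_{B'}}\hookrightarrow F$ (well defined and injective for $F$ complete, via replacing $B'$ by its closure) yields a bounded $E_B\to F$; and two classes in the colimit have the same image in $\Hom_{\mathrm{bd}}(E_B,F)$ iff they agree after passing to a common $B'$, by density of $E_B$ in $\widehat{E_B}$ and injectivity of $\widehat{F_{B'}}\hookrightarrow F$. Assembling, $\Hom_{\ttTc}(E,F)\cong\lim_B\colim_{B'}\Hom_{\ttBan_{\mathbb{C}}}(\widehat{E_B},\widehat{F_{B'}})\cong\Hom_{\ttIndBan}(\IB(E),\IB(F))$, and I would finish with a short diagram chase verifying that this composite isomorphism is exactly the map induced by the functor $\IB$ on hom-sets (i.e. that it sends $f$ to the class of $\widehat{f|_{E_B}}$, which is how functoriality of $\IB$ was defined).

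The main obstacle is precisely the local identification in the last paragraph: producing an honest Banach disk — not merely a bounded set — containing the image of $g$, which fails without completeness of $F$, together with the density/injectivity bookkeeping needed to see that the filtered colimit over $\mathcal{B}_F$ is computed without redundancy. The reduction on the left (bornological $\Rightarrow$ continuous $=$ bounded $=$ compatible family on Banach disks) and the right-hand side computation are then routine once the standard Ind-category $\Hom$-formula is invoked.
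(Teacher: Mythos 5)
The paper does not prove this proposition; it is quoted verbatim as Proposition~1.5 of Prosmans--Schneiders \cite{PS}, so there is no in-paper argument to compare against. Your reconstruction is correct and is the standard proof: reduce the left side to a $\lim_{B}$ of bounded maps $E_B \to F$ using that $E$ is bornological, expand the right side via the $\lim\colim$ Hom-formula for Ind-objects, and match the inner terms by using completeness of $F$ to produce Banach disks absorbing the image of the unit ball. One small imprecision: for a non-closed $B'$ the canonical map $\widehat{F_{B'}}\to F$ need not be injective, only the extension is well defined by completeness; but as you note the closed disks are cofinal in $\mathcal{B}_F$, and since for closed $B'$ one has $\widehat{F_{B'}}=F_{B'}\hookrightarrow F$ the colimit can be computed over those, which repairs the bookkeeping exactly as you indicate.
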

We also have that $\IB$ has a left adjoint when restricted to the category of complete locally convex vector spaces (\cite{PS} proposition 1.6).
\section{Nuclear Modules}
\subsection{Nuclear Banach Modules}\label{NucBanMod}
Let $\tC$ be a closed symmetric monoidal category with monoidal structure $\ootimes$ and unit $e$. This subsection will be applied only in the case that $\ttC$ is one of four categories. For a general Banach ring we consider $\ttBan_{R}$ and $\ttBan^{\leq 1}_{R}$. Also, if $R$ is non-archimedean, we could still consider those but also in this case we could consider $\ttBan^{na}_{R}$ and $\ttBan^{na, \leq 1}_{R}$. These definitions and results \emph{will not be applied} to the Ind-categories we consider, on the other hand, in sub-section \ref{ssec:nibm} we will separately define nulclear objects of Ind-categories and work with objects in Ind-categories in a way that extends the definitions given in this subsection. 
\begin{defn}  Let $V$ and $W$ be Banach modules. A element of $\Hom(V,W)$ is called \emph{nuclear} if it lies in the image of the composition
\[\Hom(e, V^{\vee} \ootimes W) \to \Hom(e, \uHom(V,W)) = \Hom(V,W).
\]
An object is called \emph{nuclear} if the identity morphism of this object is nuclear.
\end{defn}
From Higgs and Rowe \cite{HiRo}
\begin{lem}\label{lem:NuclearProps}If a morphism is nuclear then so is its dual. If a morphism is nuclear then so is any pre or post composition with it. The monoidal product of two nuclear morphisms is nuclear. \end{lem}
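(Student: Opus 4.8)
The plan is to unwind the definition of nuclearity in each case by producing an explicit witness in $\Hom(e, V^{\vee}\ootimes W)$ and tracking it through the evaluation maps. Recall that an element of $\Hom(V,W)$ is nuclear precisely when it arises from some $\xi \in \Hom(e, V^{\vee}\ootimes W)$ under the canonical map induced by the internal evaluation $V^{\vee}\ootimes W \to \uHom(V,W)$. For a morphism $f:V\to W$, fix such a $\xi_f$; concretely, in the Banach setting, $\xi_f$ is a convergent series $\sum_i \varphi_i \otimes w_i$ with $\sum_i \|\varphi_i\|\,\|w_i\| < \infty$ whose associated operator is $f$.

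First I would handle the dual. Given $f:V\to W$ nuclear with witness $\xi_f = \sum_i \varphi_i\otimes w_i$, one checks that $f^{\vee}:W^{\vee}\to V^{\vee}$ is the operator associated to $\sum_i w_i'' \otimes \varphi_i$, where $w_i''$ denotes the image of $w_i$ under the canonical map $W \to W^{\vee\vee}$ (a non-expanding map, available since $\uHom$ is the internal hom of a closed symmetric monoidal category). More formally, one uses the natural transformation $V^{\vee}\ootimes W \to W^{\vee\vee}\ootimes V^{\vee} \to \uHom(W^{\vee}, V^{\vee})$ coming from the duality unit and the symmetry of $\ootimes$, and verifies that it sends $\xi_f$ to a witness for $f^{\vee}$; this is a diagram chase using naturality of evaluation and coevaluation, with no convergence issue since $\sum_i \|w_i''\|\,\|\varphi_i\| \le \sum_i \|w_i\|\,\|\varphi_i\| < \infty$.

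For pre- and post-composition, let $f:V\to W$ be nuclear with witness $\xi_f$, and let $g:U\to V$ and $h:W\to X$ be arbitrary bounded morphisms. For post-composition, apply $\uHom(V,h):\uHom(V,W)\to\uHom(V,X)$, which is induced on the level of the dual-tensor presentation by $\id_{V^{\vee}}\ootimes h: V^{\vee}\ootimes W \to V^{\vee}\ootimes X$; thus $h\circ f$ has witness $(\id_{V^{\vee}}\ootimes h)\circ \xi_f$ (explicitly $\sum_i \varphi_i\otimes h(w_i)$, convergent because $h$ is bounded). For pre-composition, use $g^{\vee}:V^{\vee}\to U^{\vee}$ and form $(g^{\vee}\ootimes \id_W)\circ\xi_f \in \Hom(e, U^{\vee}\ootimes W)$ (explicitly $\sum_i (\varphi_i\circ g)\otimes w_i$), which one checks maps to $f\circ g$ under evaluation — again the verification is a compatibility diagram between the evaluation map $U^{\vee}\ootimes W\to\uHom(U,W)$ and the restriction map $\uHom(V,W)\to\uHom(U,W)$. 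Finally, for the monoidal product of nuclear morphisms $f:V\to W$ and $f':V'\to W'$, combine the witnesses via the lax monoidal structure: $(V^{\vee}\ootimes W)\ootimes(V'^{\vee}\ootimes W') \to (V\ootimes V')^{\vee}\ootimes(W\ootimes W')$ (using symmetry to reorder factors, plus the natural map $V^{\vee}\ootimes V'^{\vee}\to (V\ootimes V')^{\vee}$), sending $\xi_f\ootimes\xi_{f'}$ to a witness for $f\ootimes f'$; concretely this is $\sum_{i,j}(\varphi_i\otimes\varphi_j')\otimes(w_i\otimes w_j')$, convergent since $\sum_{i,j}\|\varphi_i\otimes\varphi_j'\|\,\|w_i\otimes w_j'\| \le (\sum_i\|\varphi_i\|\|w_i\|)(\sum_j\|\varphi_j'\|\|w_j'\|) < \infty$.

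I expect the only genuine subtlety is checking the commutativity of these coherence diagrams linking coevaluation, symmetry, and internal evaluation — i.e., verifying that the abstractly-constructed element really does map to the claimed morphism in $\Hom(V,W)$ rather than merely to something with the same "shape." In the concrete Banach categories this collapses to a routine manipulation of convergent sums, so the main obstacle is essentially bookkeeping; once the Banach case is done, the non-archimedean variants $\ttBan^{na}_R$ and $\ttBan^{na,\leq 1}_R$ follow verbatim with $\sum$ replaced by $\sup$ throughout, and the contracting variants follow since all constructed witnesses were already norm-nonincreasing.
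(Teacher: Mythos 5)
Your argument is correct, but you should know that the paper itself does not prove this lemma — it merely cites Propositions~2.2 and 2.3 of Higgs and Rowe \cite{HiRo} and moves on. So your proposal supplies a self-contained verification of facts the authors chose to delegate. The route you take is the natural one and is almost certainly the same as in the cited reference: produce an explicit witness in $\Hom(e, V^{\vee}\ootimes W)$, transport it along the relevant structure map (the double-dual map $W\to W^{\vee\vee}$ plus symmetry for the dual, $g^{\vee}\ootimes\id_W$ and $\id_{V^{\vee}}\ootimes h$ for pre/post-composition, symmetry plus the lax map $V^{\vee}\ootimes V'^{\vee}\to(V\ootimes V')^{\vee}$ for the monoidal product), and then invoke naturality of evaluation to identify the resulting operator. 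All the convergence estimates you record are also correct, including the slightly subtle point that $W\to W^{\vee\vee}$ is non-expanding so that $\sum_i\|w_i''\|\|\varphi_i\|\le\sum_i\|w_i\|\|\varphi_i\|$. What your version buys is transparency and independence from the reference; what the paper's version buys is brevity. One small caution: the coherence checks you wave at (``a diagram chase using naturality of evaluation and coevaluation'') are indeed routine but are the only place where the argument could silently go wrong if one misaligns the symmetry isomorphisms, so if this were to be included in the text it would be worth drawing at least the pre/post-composition square explicitly — it is the cleanest of the three and calibrates the reader for the other two.
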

\proof  The first two statements can be found in Proposition 2.2 of \cite{HiRo}. The statement about the monoidal product can be found in Proposition 2.3 of \cite{HiRo}
\endproof
\begin{lem}\label{lem:NuclearProps2}
The following are equivalent  \cite{HiRo}:
\begin{enumerate}
\item The object $V$ is nuclear.
\item The natural morphism $V^{\vee}\ootimes V \to \uHom(V, V)$ is an isomorphism.
\item For every object $W$, $W\ootimes V^{\vee} \to \uHom(V,W)$ is an isomorphism.
\item For every object $W$, $V\ootimes W^{\vee} \to \uHom(W,V)$ is an isomorphism.
\end{enumerate}
\end{lem}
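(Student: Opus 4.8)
The plan is to prove the four-way equivalence by first establishing the auxiliary statements about the canonical maps, then running the cycle $(1)\Rightarrow(3)\Rightarrow(4)\Rightarrow(2)\Rightarrow(1)$, or perhaps more symmetrically $(2)\Leftrightarrow(1)$ and $(1)\Rightarrow(3)\Rightarrow(1)$ together with $(1)\Rightarrow(4)\Rightarrow(1)$. First I would unwind the definitions: the identity of $V$ being nuclear means $\id_V$ lies in the image of $\Hom(e, V^\vee \ootimes V) \to \Hom(e, \uHom(V,V)) = \Hom(V,V)$, i.e.\ there is a distinguished element $u \in \Hom(e, V^\vee \ootimes V)$ mapping to $\id_V$ under the natural transformation $c_{V,V}: V^\vee \ootimes V \to \uHom(V,V)$. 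The natural transformation $c_{V,W}: W \ootimes V^\vee \to \uHom(V,W)$ (and its variants) is the standard one coming from evaluation $V^\vee \ootimes V \to e$ and the closed monoidal structure; I would record that it is natural in both variables and compatible with composition of internal homs.

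The implication $(2)\Rightarrow(1)$ is immediate from the definition, since an isomorphism $V^\vee \ootimes V \to \uHom(V,V)$ certainly has $\id_V$ in the image of the induced map on $\Hom(e,-)$. For $(1)\Rightarrow(3)$: given the nuclearity witness $u$, I would construct a two-sided inverse to $c_{V,W}: W \ootimes V^\vee \to \uHom(V,W)$ by hand. The inverse map $\uHom(V,W) \to W \ootimes V^\vee$ is built by taking $f \in \uHom(V,W)$ to the image of $u$ under $V^\vee \ootimes V \xrightarrow{\id \ootimes f} V^\vee \ootimes W \cong W \ootimes V^\vee$; more precisely one uses the composite in the internal-hom language $\uHom(V,W) \ootimes (V^\vee \ootimes V) \to \uHom(V,W) \ootimes \uHom(V,V) \to \ldots$ and feeds in $u$. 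One then checks the two composites are identities: one direction uses that $u \mapsto \id_V$ under $c_{V,V}$ together with the standard triangle identities relating evaluation and coevaluation-type maps; the other uses naturality of $c$ in $W$ and the uniqueness built into the closed structure. The symmetric statement $(1)\Rightarrow(4)$ is proved the same way with the roles of source and target swapped (or deduced from $(1)\Rightarrow(3)$ applied after dualizing, using Lemma \ref{lem:NuclearProps}, noting $V^{\vee\vee}$-issues are harmless here since $W$ is arbitrary and we only need the stated maps to be isomorphisms). Finally $(3)\Rightarrow(2)$ and $(4)\Rightarrow(2)$ are the special case $W=V$, and $(2)\Rightarrow(1)$ closes the loop; since this is cited to Higgs--Rowe \cite{HiRo}, I would also just point to the relevant statements there and indicate that the proof is formal in any closed symmetric monoidal category, so it applies verbatim to each of $\ttBan_R$, $\ttBan_R^{\leq 1}$, $\ttBan_R^{na}$, $\ttBan_R^{na,\leq 1}$.

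The main obstacle I anticipate is purely bookkeeping: making the construction of the inverse map to $c_{V,W}$ precise using only the closed monoidal structure (evaluation and the adjunction $\Hom(A \ootimes B, C) \cong \Hom(A, \uHom(B,C))$), and verifying the two triangle-identity-style computations that show the composites are the identity. There is no analytic content whatsoever — no completeness, no norms, no Banach-specific input is used, which is exactly why the lemma holds uniformly across the four categories. So the ``hard part'' is really just choosing notation that keeps the coherence isomorphisms (suppressed per the paper's convention) from obscuring the argument; once the diagram-chase is set up cleanly, each verification is a one-line application of naturality plus the defining property of the nuclearity witness $u$. Given that the statement is attributed to \cite{HiRo}, the honest writeup is probably a short proof that recalls the construction of $u$ and the inverse map and cites Higgs--Rowe for the verification, rather than reproducing every commuting square.
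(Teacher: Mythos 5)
Your sketch is correct and matches the formal argument in Higgs--Rowe: the paper states the lemma with the inline citation \cite{HiRo} and gives no proof of its own, so there is nothing in the paper to compare against beyond that citation. The implication chain $(2)\Rightarrow(1)$ (trivial), $(1)\Rightarrow(3)$ (construct the inverse to $c_{V,W}$ using the witness $u\colon e\to V^{\vee}\ootimes V$ and the evaluation/counit), $(3)\Rightarrow(2)$ (set $W=V$), and $(1)\Rightarrow(4)\Rightarrow(2)$ is the right one, and you correctly observe that the argument is purely about the closed symmetric monoidal structure and hence applies uniformly to all four ambient categories.

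One caveat worth flagging: the parenthetical shortcut you offer for $(1)\Rightarrow(4)$ --- applying $(1)\Rightarrow(3)$ to $V^{\vee}$ and ``dualizing'' --- implicitly uses that $V$ is reflexive, which in this paper is recorded in Lemma \ref{cor:DualIsNuc} as a \emph{consequence} of nuclearity; if one is trying to derive these equivalences from scratch that route risks circularity. Your primary suggestion (the direct construction, producing the inverse $\uHom(W,V)\to V\ootimes W^{\vee}$ by tensoring with $u$ and then applying the internal composition $\uHom(W,V)\ootimes\uHom(V,e)\to\uHom(W,e)=W^{\vee}$) is the clean path and avoids this issue entirely, so I would drop the dualization remark or at least note that it presupposes reflexivity.
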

\begin{lem}\label{cor:DualIsNuc} \cite{HiRo} If an object $V$ is nuclear then its dual is also nuclear. Any nuclear object is reflexive. 
\end{lem}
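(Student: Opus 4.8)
The first assertion is immediate from Lemma~\ref{lem:NuclearProps}: since $V$ is nuclear, $\id_V$ is a nuclear morphism, hence so is its dual $(\id_V)^\vee=\id_{V^\vee}$, which is precisely the statement that $V^\vee$ is nuclear.

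For the reflexivity statement, the plan is to promote nuclearity of $V$ to the assertion that $V$ is a dualizable object of $\ttC$ with dual $V^\vee$, and then to invoke the standard fact that a dualizable object is carried to its double dual by the canonical map. First I would observe, using Lemma~\ref{lem:NuclearProps2}(3), that the canonical natural transformation $(-)\ootimes V^\vee\Rightarrow\uHom(V,-)$ of endofunctors of $\ttC$ is a natural isomorphism. Composing the tensor--hom adjunction $(-)\ootimes V\dashv\uHom(V,-)$ with this natural isomorphism then yields an adjunction $(-)\ootimes V\dashv(-)\ootimes V^\vee$. Evaluating its unit at the monoidal unit $e$ gives a coevaluation $e\to V\ootimes V^\vee$ which, together with the evaluation $V^\vee\ootimes V\to e$, I would check satisfies the two triangle identities; this exhibits $V$ as dualizable with dual $V^\vee$. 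The usual theory of dualizable objects in a closed symmetric monoidal category then forces the canonical morphism $V\to V^{\vee\vee}$ to be an isomorphism, i.e.\ $V$ is reflexive. (Nuclearity of $V^{\vee\vee}$, if wanted, then follows by applying the first part twice.)

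The step I expect to require the most care is confirming that the isomorphism $V\cong V^{\vee\vee}$ produced in this way really is the canonical double-duality morphism $c_V$, equivalently that the coevaluation obtained by transporting the adjunction coincides, up to the symmetry constraint, with the nuclearity datum $\tau\colon e\to V^\vee\ootimes V$ representing $\id_V$. This is a coherence diagram chase built from the definition of the comparison map in Lemma~\ref{lem:NuclearProps2}(3) and the triangle identities of the tensor--hom adjunction. As an alternative to the abstract argument, one can work directly with the representing element: from $\tau$ one forms the candidate inverse $r=(\mathrm{ev}_{V^\vee}\ootimes\id_V)\circ(\id_{V^{\vee\vee}}\ootimes\tau)\colon V^{\vee\vee}\to V$ and verifies $r\circ c_V=\id_V$ (formal, from $\tau$ representing $\id_V$) and $c_V\circ r=\id_{V^{\vee\vee}}$, where the hypothesis is genuinely used, since this last identity amounts to $\tau$ also representing $\id_{V^\vee}$ after dualizing, which is exactly Lemma~\ref{lem:NuclearProps} applied to $\id_V$. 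The only non-formal obstacle is this last identity; everything else is bookkeeping with coherence isomorphisms.
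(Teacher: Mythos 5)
The paper itself offers no proof of this lemma --- it is simply attributed to Higgs and Rowe \cite{HiRo} --- so you are constructing the argument from scratch, and your strategy is sound. The first claim is indeed an immediate consequence of Lemma \ref{lem:NuclearProps} applied to $\id_V$, exactly as you say.

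Your reduction of reflexivity to dualizability is also correct, but I would relocate where the real content lies. Once you have the natural isomorphism $\alpha\colon(-)\ootimes V^\vee\Rightarrow\uHom(V,-)$ from Lemma~\ref{lem:NuclearProps2}(3), the two snake identities are \emph{formal} consequences of the transported adjunction, with no further input from nuclearity. Concretely: since $\alpha_W$ is by definition the adjunct of $\id_W\ootimes\mathrm{ev}_V$, the counit of the adjunction $(-)\ootimes V\dashv(-)\ootimes V^\vee$ is $\epsilon_W=\id_W\ootimes\mathrm{ev}_V$. Adjunction triangle (T1) evaluated at $W=e$ then yields the first snake identity. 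From that identity and the universal property characterizing the unit, one gets $\eta_W=\id_W\ootimes\eta_e$ for every $W$; substituting $W=e$ into adjunction triangle (T2), using $\eta_{V^\vee}=\id_{V^\vee}\ootimes\eta_e$, gives the second snake identity. So the ``only non-formal obstacle'' you flag is in fact formal, and you need not invoke Lemma~\ref{lem:NuclearProps} at all in the dualizability route.

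Your fallback ``work directly with the representing element'' argument does work, but as phrased it proves slightly less than it needs: Lemma~\ref{lem:NuclearProps} only tells you that \emph{some} element of $\Hom(e,V^{\vee\vee}\ootimes V^\vee)$ represents $\id_{V^\vee}$, whereas the computation $c_V\circ r=\id_{V^{\vee\vee}}$ requires that the \emph{specific} element $(\iota_V\ootimes\id_{V^\vee})\circ s\circ\tau$ obtained from $\tau$ by symmetry and the unit $\iota_V\colon V\to V^{\vee\vee}$ does so. That refinement is the content of the proof in \cite{HiRo}, not merely the statement of Lemma~\ref{lem:NuclearProps}. Either spell that out, or switch to the adjunction argument above, which circumvents the issue entirely.
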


\begin{lem}
Let $V$ be a nuclear object of $\ttC$.  
\begin{enumerate}
\item
The functor
\[\ttC\to \ttC
\]
given by 
\[W\mapsto \uHom(V, W)
\]
preserves strict epimorphisms. If in addition, $e$ is projective, then $V$ is projective. 
\item
The functor 
\[\ttC\to \ttC
\]
given by 
\[W\mapsto W\ootimes V
\]
preserves strict monomorphisms ($V$ is flat). 
\end{enumerate}
\end{lem}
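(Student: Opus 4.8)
The plan is to exploit the characterization of nuclearity in Lemma~\ref{lem:NuclearProps2}, which tells us that for a nuclear object $V$ the natural maps
\[
W\ootimes V^{\vee} \to \uHom(V,W), \qquad V\ootimes W^{\vee}\to \uHom(W,V)
\]
are isomorphisms for \emph{every} object $W$. The strategy is therefore to reduce each of the two functors in the statement to a functor built out of $(-)\ootimes(\text{fixed object})$ and $(-)^{\vee}$, and then to feed in the permanence properties (Lemma~\ref{lem:StrictEpiPreserving}, Lemma~\ref{lem:FlatProps}, Lemma~\ref{cor:DualIsNuc}) together with the fact that in each of the four concrete categories $\ttBan_R$, $\ttBan^{\le 1}_R$, $\ttBan^{na}_R$, $\ttBan^{na,\le 1}_R$ the relevant objects ($e = R$, and $V^{\vee}$) behave well.

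For part~(2): since $V$ is nuclear, $V^{\vee}$ is nuclear by Lemma~\ref{cor:DualIsNuc}, hence reflexive, so $V \cong (V^{\vee})^{\vee}$. By Lemma~\ref{lem:NuclearProps2}(4) applied to the nuclear object $V^{\vee}$ (or directly by (3)), for any object $W$ the natural morphism
\[
W\ootimes V \;\cong\; W\ootimes (V^{\vee})^{\vee} \;\xrightarrow{\ \sim\ }\; \uHom(V^{\vee}, W)
\]
is an isomorphism. Thus $W\mapsto W\ootimes V$ is naturally isomorphic to $W\mapsto \uHom(V^{\vee},W)$, which is the internal-hom-out-of-a-fixed-object functor; in all four categories this is a right adjoint (to $(-)\ootimes V^{\vee}$, by the closed monoidal structure), and right adjoints between quasi-abelian categories preserve strict monomorphisms by Lemma~\ref{lem:StrictEpiPreserving}. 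Hence $(-)\ootimes V$ preserves strict monomorphisms, i.e.\ $V$ is flat.

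For part~(1): the functor $W\mapsto \uHom(V,W)$ is naturally isomorphic, by Lemma~\ref{lem:NuclearProps2}(3), to $W\mapsto W\ootimes V^{\vee}$, and since $V^{\vee}$ is nuclear it is flat by part~(2), so tensoring with it preserves strict monomorphisms --- but we need it to preserve strict \emph{epimorphisms}. Here I would argue directly: $W\mapsto W\ootimes V^{\vee}$ is a left adjoint (to $\uHom(V^{\vee},-)$), hence preserves cokernels, and a strict epimorphism $W\to W'$ in these quasi-abelian categories is exactly the expression of $W'$ as the cokernel of $\ker(W\to W')\to W$; applying the left adjoint $(-)\ootimes V^{\vee}$ yields that $W'\ootimes V^{\vee}$ is the cokernel of $\ker(W\to W')\ootimes V^{\vee}\to W\ootimes V^{\vee}$, so the induced map is again a strict epimorphism. (Equivalently: tensoring with any object is a left adjoint in a closed monoidal category, hence preserves strict epimorphisms in a quasi-abelian category, by the argument of Lemma~\ref{lem:StrictEpiPreserving}.) Transporting along the natural isomorphism gives that $W\mapsto \uHom(V,W)$ preserves strict epimorphisms. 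For the final clause, if in addition $e$ is projective, then by definition $\Hom(e,-) = \Hom(V,V)^{?}$... more precisely $\Hom(V,-) = \Hom(e,\uHom(V,-))$, and the displayed functor $\uHom(V,-)$ preserves strict epimorphisms while $\Hom(e,-)$ is exact on strict epimorphisms because $e$ is projective; composing, $\Hom(V,-)$ carries strict epimorphisms to surjections, which says exactly that $V$ is projective.

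The main obstacle I anticipate is purely bookkeeping rather than conceptual: one must be careful that the "natural morphisms" appearing in Lemma~\ref{lem:NuclearProps2} are the \emph{canonical} evaluation/coevaluation maps, so that the identifications $(-)\ootimes V \cong \uHom(V^{\vee},-)$ and $\uHom(V,-)\cong (-)\ootimes V^{\vee}$ are genuinely natural transformations of functors (not merely objectwise isomorphisms), which is what legitimizes transporting the preservation properties along them. This is where invoking Lemma~\ref{lem:NuclearProps2} in its "for every object $W$" form --- rather than just the endomorphism version --- does the real work, and one should note explicitly that naturality in $W$ of those comparison maps is part of the content of that lemma.
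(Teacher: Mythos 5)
Your proof takes essentially the same route as the paper: part~(1) rewrites $\uHom(V,-)$ as $(-)\ootimes V^{\vee}$ via Lemma~\ref{lem:NuclearProps2}(3) and invokes the left-adjoint argument of Lemma~\ref{lem:StrictEpiPreserving}; part~(2) rewrites $(-)\ootimes V$ as $\uHom(V^{\vee},-)$ via reflexivity and nuclearity of $V^{\vee}$ and invokes the right-adjoint half of the same lemma. Two small remarks. First, a minor slip in part~(2): you deduce $V\cong(V^{\vee})^{\vee}$ from reflexivity of $V^{\vee}$, but that only gives $V^{\vee}\cong V^{\vee\vee\vee}$; what you actually need is reflexivity of $V$ itself, which is directly available since $V$ is nuclear (Lemma~\ref{cor:DualIsNuc}). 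Second, you go beyond the paper: the paper's own proof does not address the final clause ``if $e$ is projective then $V$ is projective,'' whereas your factorization $\Hom(V,-)=\Hom(e,\uHom(V,-))$ combined with the two preservation facts is a correct and welcome completion of that clause.
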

\proof 
\begin{enumerate}
\item
Since $V$ is nuclear, we can consider the naturally isomorphic functor
 \[W\mapsto V^{\vee}\ootimes W
\]
By Lemma \ref{lem:StrictEpiPreserving}, this functor preserves strict epimorphisms so we are done.
\item
Because $V$ is reflexive, we have $W\ootimes V \cong W\ootimes (V^{\vee \vee}) \cong \uHom(V^{\vee}, W).$ Therefore, we can consider the naturally isomorphic functor
\[W \mapsto  \uHom(V^{\vee}, W).
\]
This preserves strict monomorphisms by Lemma \ref{lem:StrictEpiPreserving}.
\end{enumerate}

\endproof
The following Lemma was shown over $\mathbb{C}$ in \cite{PS}. The following is our version over a Banach ring $R$.
\begin{lem}\label{lem:DecompNuc}
Let $V$ and $W$ be two Banach modules over a Banach ring $R$ and let $f : V \to W$ be a nuclear
morphism in $\ttBan_{R}$. Then there exists a countable set $S$ and a map $m:S \to \mathbb{R}_{>0}$,  a nuclear morphism $p : V\to  \underset{s \in S}\coprod^{\leq 1} R_{m(s)}$ and a non-expanding morphism $c :  \underset{s \in S}\coprod^{\leq 1} R_{m(s)} \to W$
such that $f=c\circ p$.
\end{lem}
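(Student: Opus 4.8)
The plan is to unwind the definition of nuclearity for $f\colon V\to W$ into an explicit ``trace-class'' expression and then to chop it into a discrete sum. By definition, $f$ being nuclear means it lies in the image of $\Hom(R, V^\vee \wotimes_R W) \to \Hom(V,W)$; equivalently, there is an element $\xi \in V^\vee \wotimes_R W$ mapping to $f$ under the canonical map $V^\vee \wotimes_R W \to \uHom(V,W)$. Since $V^\vee \wotimes_R W$ is the separated completion of the algebraic tensor product with respect to the projective tensor seminorm, and since for any $\epsilon > 0$ we may approximate, the first step is to write $\xi$ as a rapidly convergent series: choose $\varphi_i \in V^\vee$ and $w_i \in W$ with $\xi = \sum_{i=1}^\infty \varphi_i \otimes w_i$ and $\sum_{i=1}^\infty \|\varphi_i\|\,\|w_i\| < \infty$. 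This uses only the description of $\wotimes_R$ from the definition preceding Lemma~\ref{lem:Explicit} together with a standard telescoping/regrouping trick to pass from a mere infimum to an actual absolutely summable representation (choosing partial representations within $2^{-i}$ of the infimum and taking differences). Discarding the indices with $\varphi_i = 0$ or $w_i = 0$, let $S \subseteq \mathbb{N}$ be the (countable) set of surviving indices.

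Next I would rescale. For $s \in S$ set $m(s) = \|\varphi_s\|\,\|w_s\|^{1/2}\cdot\|w_s\|^{-1/2}$ — more simply, it is cleanest to set $m(s) = \|\varphi_s\|$ if one is willing to absorb $\|w_s\|$ into the second map, or to balance the two factors. Concretely, define $p\colon V \to \coprod^{\leq 1}_{s\in S} R_{m(s)}$ by $p(v) = (\varphi_s(v))_{s\in S}$ with $m(s)$ chosen so that $\|p(v)\|_{\coprod^{\leq 1}} = \sum_{s\in S}\|\varphi_s(v)\|\,(\text{weight}) < \infty$; the absolute summability $\sum_s \|\varphi_s\|\,\|w_s\| < \infty$ is exactly what guarantees $p$ lands in the coproduct $\coprod^{\leq 1}$ (recall this is the $\ell^1$-type object described in Subsection~\ref{BRBM}) and is bounded. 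Then define $c\colon \coprod^{\leq 1}_{s\in S} R_{m(s)} \to W$ on the generators by sending the $s$-th generator to $w_s$ (suitably normalized), so that $\|c\| \le 1$ by the choice of weights and the fact that morphisms out of $\coprod^{\leq 1}$ are determined by uniformly bounded data (cf. the proof of Lemma~\ref{lem:Explicit}). By construction $c \circ p = f$. Finally, $p$ itself is nuclear: it is the composite $V \xrightarrow{} \coprod^{\leq 1}_s R_{m(s)}$ realized through the element $\sum_s \varphi_s \otimes e_s \in V^\vee \wotimes_R \coprod^{\leq 1}_s R_{m(s)}$, where $e_s$ is the $s$-th generator — the same absolute-summability estimate shows this element exists in the completed tensor product, so $p$ factors through $V^\vee \wotimes_R (\coprod^{\leq 1}_s R_{m(s)})$ as required by the definition.

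The main obstacle, and the one deserving genuine care, is the bookkeeping of norms: one must choose the weights $m(s)$ so that \emph{simultaneously} (i) $p$ is bounded, (ii) $c$ is non-expanding, and (iii) $p$ is nuclear. The natural balance is $m(s) = \|\varphi_s\|\cdot\|w_s\|$ for one of the maps, but then one needs the other map's norm bound to still work out; the symmetric choice splitting the weight as $\|\varphi_s\|\,\|w_s\|$ between a rescaled $\varphi_s$ and a rescaled $w_s$ makes $c$ non-expanding on the nose while keeping $p$ bounded with norm controlled by $\sum_s \|\varphi_s\|\,\|w_s\|$, which is finite. A secondary subtlety is the archimedean versus non-archimedean dichotomy: in the non-archimedean reading the coproduct $\coprod^{\leq 1}$ is the $c_0$-type object with a $\sup$ norm and the summability condition in the tensor seminorm is a $\sup$ rather than a $\sum$, but the argument is structurally identical — one approximates $\xi$ by finite sums with $\sup_i \|\varphi_i\|\,\|w_i\|$ small in the tail, and the "vanishing at infinity" condition replaces absolute summability. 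I would state the archimedean case in detail and remark that the non-archimedean case follows mutatis mutandis, exactly as flagged in the paper's conventions on the two parallel readings.
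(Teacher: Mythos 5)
Your overall plan coincides with the paper's: expand the representing element $\xi \in V^{\vee}\wotimes_R W$ as an absolutely summable series $\sum_{s\in S}\alpha_s\otimes w_s$ with $\sum_{s}\|\alpha_s\|\|w_s\|<\infty$, factor $f$ through the weighted $\ell^1$-coproduct by letting $p(v)=(\alpha_s(v))_s$, send the $s$-th generator to $w_s$ to define $c$, and verify the nuclearity of $p$ via the element $\sum_s\delta_s\otimes\alpha_s$. That is exactly the paper's argument.

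The genuine gap is that you never pin down the weight $m(s)$, and the one explicit choice you call ``cleanest,'' $m(s)=\|\varphi_s\|$, does not work. With that choice $\|p(v)\|=\sum_s|\varphi_s(v)|\,m(s)\le \|v\|\sum_s\|\varphi_s\|^2$, which is not controlled by the hypothesis $\sum_s\|\varphi_s\|\|w_s\|<\infty$; and ``absorbing $\|w_s\|$ into the second map'' (i.e.\ replacing $w_s$ by $w_s/\|w_s\|$) breaks non-expansiveness of $c$, since $\|c(e_s)\|\le m(s)$ would then demand $1\le\|\varphi_s\|$ for every $s$. The paper's choice is the asymmetric one $m(s)=\|w_s\|$, with no rescaling of $\varphi_s$ or $w_s$: then $\|p(v)\|\le\|v\|\sum_s\|\varphi_s\|\|w_s\|$ is finite, $c$ is non-expanding on the nose since $\|c(\mu)\|\le\sum_s|\mu_s|\|w_s\|=\sum_s|\mu_s|m(s)=\|\mu\|$, and $p$ is nuclear because $\sum_s\|\delta_s\|\|\varphi_s\|=\sum_s m(s)\|\varphi_s\|=\sum_s\|w_s\|\|\varphi_s\|<\infty$. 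Your ``symmetric'' alternative (rescale $\varphi_s,w_s$ by $\sqrt{\|w_s\|/\|\varphi_s\|}$ and its reciprocal, and take $m(s)=\sqrt{\|\varphi_s\|\|w_s\|}$) also works, but you should commit to one normalization and check the three estimates explicitly; as written the proof does not.
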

\proof 
Let $P$ be the element of $W \wotimes_{R}V^{\vee}$ corresponding to $f$. We have a countable set $S$ so that $P$ is a sum $\underset{s \in S}\sum w_{s} \wotimes \alpha_{s}$ where $L=\underset{s \in S}\sum||w_{s}||  ||\alpha_{s}||<\infty$. Let $m(s) = ||w_{s}||$. We define \[p: V \to \underset{s \in S}\coprod^{\leq 1} R_{m(s)}\]
by $p(v)=(\alpha_{s}(v))_{s\in S}$ where 
\[||p(v)|| \leq  \underset{s \in S}\sum m(s) ||\alpha_s||||v||=L||v||. 
\]  
The morphism $p$ is actually nuclear as it can be written as $\underset{s \in S}\sum\delta_s \wotimes \alpha_s$ where $\delta_s$ is the vector with $1\in R_{m(s)}$ in position $s$ and $0$ elsewhere.
Define $c$ by $c(\mu)=\underset{s \in S}\sum \mu_s w_{s}$ where $\mu = (\mu_{s})_{s\in S}$.
We have 
\begin{equation}
\begin{split}
||c(\mu)|| & \leq  \underset{s \in S}\sum |\mu_s|||w_s||  = \underset{s \in S}\sum |\mu_s|||w_s|| m(s) m(s)^{-1} \\ & \leq ( \underset{t \in S}\sum |\mu_t|m(t))( \underset{s \in S}\sup ||w_s|| m(s)^{-1})= \underset{t \in S}\sum |\mu_t|m(t)
=  ||\mu|| 
\end{split}
\end{equation}
which shows that $c$ is non-expanding. For any element $v\in V$ we have $p(v)=(\alpha_s (v))_{s\in S}$ so $c(p(v))=\underset{s \in S}\sum \alpha_{s}(v)w_{s}  =f(v)$.
\endproof
\begin{lem}\label{lem:DecompNuc2}
Let $V$ and $W$ be two Banach modules over a Banach ring $R$ and let $f : V \to W$ be a nuclear
morphism in $\ttBan_{R}$. Then there exists a countable set $S$ and a map $m:S \to \mathbb{R}_{>0}$,  a non-expanding morphism $c : V\to  \underset{s \in S}\prod^{\leq 1} R_{m(s)}$ and a nuclear morphism $p :  \underset{s \in S}\prod^{\leq 1} R_{m(s)} \to W$
such that $f=p\circ c$.
\end{lem}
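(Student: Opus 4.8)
The plan is to mirror the proof of Lemma \ref{lem:DecompNuc} but dualize the construction, factoring through a \emph{product} of weighted copies of $R$ rather than a coproduct. As before, let $P \in W \wotimes_{R} V^{\vee}$ be the element corresponding to the nuclear morphism $f$, and write $P = \underset{s \in S}\sum w_{s} \wotimes \alpha_{s}$ with $S$ countable and $L = \underset{s \in S}\sum \|w_{s}\|\,\|\alpha_{s}\| < \infty$. This time I would set the weights to record the norms of the functionals, say $m(s) = \|\alpha_{s}\|$ (adjusting by a harmless rescaling if some $\alpha_s$ vanishes, or just restricting to $s$ with $\alpha_s \neq 0$). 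Define $c : V \to \underset{s \in S}\prod^{\leq 1} R_{m(s)}$ by $c(v) = (\alpha_{s}(v))_{s \in S}$; the sup-norm of $c(v)$ in the product $\prod^{\leq 1} R_{m(s)}$ is $\sup_{s} m(s)^{-1}|\alpha_{s}(v)| \leq \sup_s m(s)^{-1}\|\alpha_s\|\,\|v\| = \|v\|$, so $c$ is non-expanding. Then define $p : \underset{s \in S}\prod^{\leq 1} R_{m(s)} \to W$ on a bounded family $\mu = (\mu_s)_{s \in S}$ by $p(\mu) = \underset{s \in S}\sum \mu_{s} w_{s}$; here one checks $\|p(\mu)\| \leq \sum_s |\mu_s|\,\|w_s\| = \sum_s (m(s)^{-1}|\mu_s|)\, m(s)\|w_s\| \leq \|\mu\| \cdot \sum_s m(s)\|w_s\| = \|\mu\| \cdot \sum_s \|\alpha_s\|\,\|w_s\| = L\|\mu\|$, so $p$ is bounded. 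Finally $p(c(v)) = \sum_s \alpha_s(v)\, w_s = f(v)$, giving $f = p \circ c$.

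The one genuinely non-routine point — and the step I expect to be the main obstacle — is verifying that $p$ is \emph{nuclear}, not merely bounded. Here I would exhibit $p$ as an element of $W \wotimes_{R} (\prod^{\leq 1} R_{m(s)})^{\vee}$. The natural candidate is $\underset{s \in S}\sum w_{s} \wotimes \pi_{s}$, where $\pi_{s} : \prod^{\leq 1} R_{m(s)} \to R$ is the $s$-th coordinate projection (which has norm $m(s)$ as a functional, since $\|\pi_s(\mu)\| = |\mu_s| \leq m(s)\|\mu\|$). For this series to converge in the projective tensor product $W \wotimes_{R} (\prod^{\leq 1} R_{m(s)})^{\vee}$ we need $\sum_s \|w_s\|\,\|\pi_s\| = \sum_s \|w_s\|\, m(s) = \sum_s \|w_s\|\,\|\alpha_s\| = L < \infty$, which holds. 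One must then confirm that the element $\sum_s w_s \wotimes \pi_s$ of the tensor product indeed maps, under the canonical map to $\uHom(\prod^{\leq 1} R_{m(s)}, W)$, to the morphism $p$ defined above; this is an order-of-summation check: evaluating $\sum_s w_s \wotimes \pi_s$ on $\mu$ yields $\sum_s \pi_s(\mu)\, w_s = \sum_s \mu_s w_s = p(\mu)$, as required.

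One should also double-check the degenerate cases: if $S$ is finite the products and coproducts agree and there is nothing to worry about, and the possibility that $\|\alpha_s\| = 0$ for some $s$ only arises when $\alpha_s = 0$, in which case that term contributes nothing to $P$ and may be dropped from $S$. With these caveats, the factorization $f = p \circ c$ through $\underset{s \in S}\prod^{\leq 1} R_{m(s)}$ with $c$ non-expanding and $p$ nuclear is established, completing the proof.
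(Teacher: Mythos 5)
Your proposal follows exactly the same route as the paper's own proof of Lemma~\ref{lem:DecompNuc2}: write the nuclear morphism as $\sum_{s} w_s \wotimes \alpha_s$ with $\sum_s \|w_s\|\|\alpha_s\| = L < \infty$, take $c$ to be evaluation by the $\alpha_s$'s landing in a weighted $\prod^{\leq 1}$ of copies of $R$, take $p$ to be summation against the $w_s$'s, and witness nuclearity of $p$ by the element $\sum_s w_s \wotimes \pi_s$ with $\pi_s$ the coordinate projections.

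The one thing to correct is the weight. Under the paper's Definition~\ref{defn:scaling}, $R_{m(s)}$ carries the norm $\|x\|_{R_{m(s)}} = m(s)|x|$, so the norm on $\prod^{\leq 1}_s R_{m(s)}$ is $\|\mu\| = \sup_s m(s)|\mu_s|$ (not $\sup_s m(s)^{-1}|\mu_s|$ as in your estimates), and the coordinate projection $\pi_s$ then has $\|\pi_s\| = m(s)^{-1}$ (not $m(s)$). Your computations are internally consistent with the reciprocal convention, but to match the paper you should take $m(s) = \|\alpha_s\|^{-1}$ (discarding the $s$ with $\alpha_s = 0$, as you note). Then $\|c(v)\| = \sup_s \|\alpha_s\|^{-1}|\alpha_s(v)| \leq \|v\|$, so $c$ is non-expanding, and $\|\pi_s\| = \|\alpha_s\|$ gives $\sum_s \|w_s\|\|\pi_s\| = L < \infty$, so $p = \sum_s w_s \wotimes \pi_s$ is nuclear, exactly as you intended.

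Incidentally, the weight used in the paper's own proof, $m(s) = L^{-1}\|w_s\|$, makes $c$ non-expanding but gives $\sum_s \|w_s\|\,m(s)^{-1} = L\cdot |S|$, which diverges for infinite $S$, so the paper's bound on $p$ does not go through as written; the choice $m(s) = \|\alpha_s\|^{-1}$ is the one that works. Your instinct about which piece of data should set the weights was therefore the right one, up to the reciprocal.
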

\proof 
Let $P$ be the element of $W \wotimes_{R}V^{\vee}$ corresponding to $f$. We have a countable set $S$ so that $P$ is a sum $\underset{s \in S}\sum w_{s} \wotimes \alpha_{s}$ where $L=\underset{s \in S}\sum||w_{s}||  ||\alpha_{s}||<\infty$. Let $m(s) = L^{-1}||w_s||$. We define \[c: V \to \underset{s \in S}\prod^{\leq 1} R_{m(s)}\]
by $c(v)=(\alpha_{s}(v))_{s\in S}$ where 
\[||c(v)|| \leq  \underset{s \in S}\sup \ \ m(s) ||\alpha_s||||v|| \leq ||v||. 
\]  
Define $p$ by $p((\mu_{s})_{s\in S})=\underset{s \in S}\sum \mu_s w_{s}$.
We have 
\[||p(u)||\leq   \underset{s \in S}\sum |\mu_s|||w_s|| m(s) m(s)^{-1} \leq ( \underset{s \in S}\sum ||w_s|| m(s)^{-1})( \underset{t \in S}\sup |\mu_t|m(t)) =  ( \underset{s \in S}\sum ||w_s|| m(s)^{-1})||u||
\]
and so $p$ is bounded. 
The morphism $p$ is actually nuclear as it can be written as $\underset{s \in S}\sum\delta_s \wotimes w_s$ where $\delta_s$ is the vector with $1\in R_{m(s)}$ in position $s$ and $0$ elsewhere. For any element $v\in V$ we have $c(v)=(\alpha_s (v))_{s\in S}$ so $p(c(v))=\underset{s \in S}\sum \alpha_{s}(v)w_{s}  =f(v)$.
\endproof
\begin{rem}If $R$ is non-archimedean, all of this subsection goes through for $\ttBan^{na}_R$ in place of $\ttBan_R$.
\end{rem}
\subsection{Nuclear Ind-Banach Modules}\label{ssec:nibm} This subsection is about nuclear objects in $\ttInd(\ttBan_{R})$  or if $R$ is non-archimedean, about nuclear objects in  $\ttInd(\ttBan^{na}_{R})$. For readability, we suppress the non-archimedean versions, all the statements and proofs in the non-archimedean case are the same, up to the obvious substitutions. Of course in the non-archimedean version, all categorical constructions in $\ttBan^{\leq 1}_{R}$ are replaced by those in $\ttBan^{\leq 1, na}_{R}$ and $\wotimes_{R}$ is replaced by  $\wotimes^{na}_{R}$. As the beginning is more general, we work with a closed symmetric monoidal category $C$ with monoidal structure $\ootimes$ and unit $e$ with finite limits and colimits, but the reader is invited to take $C=\ttBan_{R}$, $\ootimes=\wotimes_{R}$ and $e=R$ or if $R$ is non-archimedean there is also the option $C=\ttBan^{na}_{R}$, $\ootimes=\wotimes^{na}_{R}$ and $e=R$.
\begin{defn}\label{defn:IndBanNuc} An object $V$ in $\ttInd(C)$ is called nuclear if for every object $W$ of $C$ the natural morphism
\[W^{\vee} \ootimes V \to \uHom(W,V)\]
is an isomorphism.
\end{defn}
\begin{rem}
In general, an object which is nuclear under this definition is not nuclear in the sense of subsection \ref{NucBanMod} applied to the category $\ttInd(C)$. If we consider an object $V$ of $\ttInd(C)$ which happens to be in $C\subset \ttInd(C)$ itself then the definitions agree by Lemma \ref{lem:NuclearProps2}. Later, we find another situation when the two definitions agree in Lemma \ref{lem:MayerBig}.
\end{rem}
\begin{lem}\label{lem:twothreenuc} If $0\to V_1 \to V_2 \to V_3 \to 0$ is a strict short exact sequence in $\ttInd(C)$ then if $V_1$ and $V_3$ are nuclear then $V_2$ is as well.
\end{lem}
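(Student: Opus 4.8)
The plan is to verify the defining condition of Definition~\ref{defn:IndBanNuc} for $V_{2}$ directly. First I would fix an arbitrary object $W$ of $C$ and form the commutative ladder obtained by applying the functors $W^{\vee}\ootimes(-)$ and $\uHom(W,-)$ to $0\to V_{1}\to V_{2}\to V_{3}\to 0$, with vertical maps the canonical morphisms $\phi_{i}\colon W^{\vee}\ootimes V_{i}\to\uHom(W,V_{i})$ (the squares commute by naturality of $\phi$ in the second variable). Now $W^{\vee}\ootimes(-)$ is a left adjoint (to $\uHom(W^{\vee},-)$) and hence preserves cokernels, so in the top row $W^{\vee}\ootimes V_{1}\xrightarrow{a_{1}}W^{\vee}\ootimes V_{2}\xrightarrow{a_{2}}W^{\vee}\ootimes V_{3}\to 0$ the map $a_{2}$ is a strict epimorphism and $W^{\vee}\ootimes V_{3}=\coker(a_{1})$. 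Dually, $\uHom(W,-)$ is a right adjoint (to $(-)\ootimes W$) and preserves kernels, so in the bottom row $0\to\uHom(W,V_{1})\xrightarrow{b_{1}}\uHom(W,V_{2})\xrightarrow{b_{2}}\uHom(W,V_{3})$ the map $b_{1}$ is a strict monomorphism and $\uHom(W,V_{1})=\ker(b_{2})$.

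Next I would bring in nuclearity of $V_{1}$ and $V_{3}$: by Definition~\ref{defn:IndBanNuc} applied to the object $W$ of $C$, the outer verticals $\phi_{1}$ and $\phi_{3}$ are isomorphisms. Then $\phi_{2}\circ a_{1}=b_{1}\circ\phi_{1}$ is the composite of an isomorphism with a strict monomorphism, hence a strict monomorphism; since in a quasi-abelian category a left factor of a strict monomorphism is again a strict monomorphism (\cite{SchneidersQA}; pass to the left heart, where this is the elementary abelian fact), $a_{1}$ is a strict monomorphism, so together with $W^{\vee}\ootimes V_{3}=\coker(a_{1})$ the top row is a strict short exact sequence. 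Symmetrically $b_{2}\circ\phi_{2}=\phi_{3}\circ a_{2}$ is the composite of a strict epimorphism with an isomorphism, hence a strict epimorphism, so $b_{2}$ is a strict epimorphism and the bottom row is a strict short exact sequence as well. We thus have a morphism of strict short exact sequences in $\ttInd(C)$ whose outer verticals are isomorphisms, so the short five lemma (apply the exact, fully faithful embedding into the left heart and the usual abelian statement there, then use that this embedding is conservative) gives that $\phi_{2}$ is an isomorphism. Since $W$ was an arbitrary object of $C$, the object $V_{2}$ is nuclear.

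The hard part is not any single computation but recognizing that one gets \emph{strict} exactness of the two derived rows for free: a priori $W^{\vee}$ need not be flat (so $a_{1}$ is not obviously a strict monomorphism) and $\uHom(W,-)$ need not preserve strict epimorphisms (so $b_{2}$ is not obviously a strict epimorphism), and the point is precisely that the isomorphisms $\phi_{1}$ and $\phi_{3}$ supplied by nuclearity of $V_{1}$ and $V_{3}$ force both strictness statements through the cancellation properties of strict morphisms. The remainder is routine quasi-abelian bookkeeping.
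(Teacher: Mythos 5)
Your proof is correct and follows the same strategy as the paper's: apply $W^{\vee}\ootimes(-)$ and $\uHom(W,-)$ to the given sequence, use nuclearity of $V_1$ and $V_3$ to identify the outer vertical maps, and conclude for the middle. The only difference is that you spell out the details the paper leaves implicit — namely that the cancellation properties of strict monomorphisms/epimorphisms (readable off in the left heart) are what force both rows to be strict short exact sequences, after which the short five lemma finishes — which is a useful clarification but not a different route.
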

\proof 
This follows immediately from the fact that the assumption allows us to identify the strict sequences
\[ W^{\vee} \ootimes V_1 \to W^{\vee} \ootimes V_2   \to W^{\vee} \ootimes V_3 \to 0
\]
and
\[0 \to \uHom(W,V_1) \to \uHom(W,V_2)  \to \uHom(W,V_3).
\]
Hence both can be completed to strict short exact sequences. Now because the outer terms are identified by assumption, the needed isomorphism also holds for $V_2$ and hence it is nuclear.
\endproof

\begin{lem}\label{lem:NucMap} For any nuclear object $V$ of $\ttInd(C)$ and an arbitrary object $W$ of $\ttInd(C)$ represented as $V= ``\underset{i \in I}\colim" V_i$ and $W= ``\underset{j \in J}\colim" W_j$ any morphism in $\Hom(W, V)$ can be represented in terms of a system of nuclear maps in $C$, $W_j \to V_i$.
\end{lem}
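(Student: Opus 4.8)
The plan is to peel off the index $j\in J$ first and reduce the whole statement to the assertion that, for each single object $W_j$ of $C$, every element of $\Hom_{\ttInd(C)}(W_j,V)$ is represented by a \emph{nuclear} morphism $W_j\to V_i$ in $C$; once that is in hand, nuclearity of $V$ has done all the real work and only bookkeeping remains. Recall the canonical isomorphism $\Hom(W,V)\cong\lim_{j\in J}\colim_{i\in I}\Hom(W_j,V_i)$ and Remark \ref{rem:DescribeHoms}: a morphism $f\colon W\to V$ is exactly a compatible family $(f_j)_{j\in J}$ with $f_j\in\colim_{i\in I}\Hom(W_j,V_i)=\Hom_{\ttInd(C)}(W_j,V)$ (the last equality since $W_j\in C$). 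So it suffices to show each $f_j$ is represented by a nuclear $g_j\colon W_j\to V_{i_j}$ and then to arrange these into a coherent system.

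For the key step, since $W_j$ lies in $C$ and $V$ is nuclear in $\ttInd(C)$, Definition \ref{defn:IndBanNuc} gives a canonical isomorphism $W_j^{\vee}\ootimes V\cong\uHom(W_j,V)$. Combining this with the closed monoidal adjunction and with the monoidal and internal-Hom formulas of Proposition \ref{prop:ScnMain} (and using that $e=R$ and $W_j$ lie in $C$), one obtains
\[
\Hom_{\ttInd(C)}(W_j,V)\;\cong\;\Hom_{\ttInd(C)}(e,\,\uHom(W_j,V))\;\cong\;\colim_{i\in I}\Hom_C(e,\,W_j^{\vee}\ootimes V_i),
\]
where the middle term is rewritten using $\uHom(W_j,V)\cong W_j^{\vee}\ootimes V$, which is the formal colimit of the objects $W_j^{\vee}\ootimes V_i$ (Proposition \ref{prop:ScnMain}), and the last isomorphism uses $e\in C$. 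Chasing these identifications, the induced isomorphism with $\colim_{i\in I}\Hom_C(W_j,V_i)$ comes, level by level, from the canonical maps $\Hom_C(e,\,W_j^{\vee}\ootimes V_i)\to\Hom_C(W_j,V_i)$, whose images are, by the very definition of a nuclear morphism in $C$ (Subsection \ref{NucBanMod}), exactly the nuclear morphisms $W_j\to V_i$. Hence $f_j$, viewed in $\colim_i\Hom_C(W_j,V_i)$, is represented by a nuclear $g_j\colon W_j\to V_{i_j}$ with $(V_{i_j}\to V)\circ g_j=f_j$.

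Finally I would assemble the $g_j$ into a system over a functor $\alpha\colon J\to I$ with a natural transformation $W\Rightarrow V\circ\alpha$. For a transition $j\to j'$ in $J$, the two maps $W_j\to V$ built from $g_j$ and from $g_{j'}\circ(W_j\to W_{j'})$ agree in $\colim_i\Hom_C(W_j,V_i)$, hence become equal in $C$ after postcomposition with suitable transition maps among the $V_i$; since $I$ and $J$ are filtered and pre- or postcomposing a nuclear morphism with any morphism stays nuclear (Lemma \ref{lem:NuclearProps}), the usual cofinality argument upgrades the pointwise choices $j\mapsto i_j$ to a functor $\alpha$ with nuclear maps $W_j\to V_{\alpha(j)}$ representing $f$. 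I expect this last bookkeeping --- choosing the indices coherently so that the $g_j$ form a natural transformation and not merely a family of components --- to be the only delicate point; the substantive input, the replacement of $\uHom(W_j,V)$ by $W_j^{\vee}\ootimes V$, is immediate from Definition \ref{defn:IndBanNuc}.
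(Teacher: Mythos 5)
Your argument is essentially the same as the paper's: you use the hom-set formula $\Hom(W,V)\cong\lim_j\colim_i\Hom(W_j,V_i)$, replace $\uHom(W_j,V)$ by $W_j^{\vee}\ootimes V$ via Definition \ref{defn:IndBanNuc}, identify elements of $\Hom(e,W_j^{\vee}\ootimes V_i)$ with nuclear morphisms $W_j\to V_i$, and then assemble the $\phi_{i,j}$ into a morphism of inductive systems. Your closing paragraph spells out the bookkeeping of the assembly a bit more than the paper does, but the substance and the route are identical.
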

\proof 
Notice that 
\begin{equation}\begin{split}\Hom(W, V) & = \Hom(e, \uHom(W, V))=  \Hom(e, \underset{j \in J}\lim \uHom(W_j, V)) \cong \Hom(e, \underset{j \in J}\lim (V \ootimes W^{\vee}_j))
\\
&= \underset{j \in J}\lim \Hom(e,  V \ootimes W^{\vee}_j)
=  \underset{j \in J}\lim \Hom(e,  (``\underset{i \in I}\colim" V_i)\ootimes W^{\vee}_j) \\
& =  \underset{j \in J}\lim \Hom(e,  ``\underset{i \in I}\colim" (V_i\ootimes W^{\vee}_j)) =  \underset{j \in J}\lim \ \ \underset{i \in I}   \colim\Hom(e, V_i \ootimes W^{\vee}_j).
\end{split}  
\end{equation}
On the other hand, by definition 
\[\Hom(W, V) =  \underset{j \in J}\lim \ \ \underset{i \in I}   \colim\Hom(W_j, V_i) = \underset{j \in J}\lim \ \ \underset{i \in I}   \colim\Hom(e, \uHom(W_j, V_i)).
\]
Therefore, the canonical map 
\begin{equation}
\begin{split}
\Hom(W,V) \cong  &  \ \ \underset{j \in J}\lim \ \ \underset{i \in I}   \colim\Hom(e, V_i\ootimes W^{\vee}_j) \to  \\& \ \  \underset{j \in J}\lim \ \ \underset{i \in I}   \colim\Hom(e, \uHom(W_j, V_i)) =  \underset{j \in J}\lim \ \ \underset{i \in I}   \colim\Hom(W_j, V_i) 
\end{split}
\end{equation}
is an isomorphism and so for any element $\phi$ in $\Hom(W, V)$ and for any $j \in J$ there exists $i \in I$ and a nuclear map $\phi_{i,j}:W_j \to V_i$ assigned to it. The collection of the $\phi_{i,j}$ define a morphism of inductive systems representing $\phi$. 
\endproof
\begin{lem}\label{lem:SameSame}Given a filtered inductive system $W_{i}$ of $C$ where all the maps in the system are nuclear, then the object of $\ttInd(C)$ given by $``\underset{i\in I}\colim"W_i$ is nuclear in the sense of Definition \ref{defn:IndBanNuc}.
\end{lem}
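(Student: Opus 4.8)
The plan is to fix an arbitrary object $U$ of $C$ and prove that the canonical morphism $\eta\colon U^{\vee}\ootimes V\to\uHom(U,V)$ is an isomorphism in $\ttInd(C)$, where $V=``\underset{i\in I}\colim"W_i$. Writing $U$ and $U^{\vee}$ as ind-objects indexed by a one-point category and applying the formulas of Proposition \ref{prop:ScnMain}, one gets $U^{\vee}\ootimes V\cong``\underset{i\in I}\colim"(U^{\vee}\ootimes W_i)$ and $\uHom(U,V)\cong``\underset{i\in I}\colim"\uHom(U,W_i)$, and under these identifications $\eta$ is the morphism of ind-objects induced by the natural transformation $\eta_i\colon U^{\vee}\ootimes W_i\to\uHom(U,W_i)$. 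So the task becomes to show that $``\underset{i\in I}\colim"\eta_i$ is an isomorphism.

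For this I would use the standard criterion that a morphism of ind-objects built from a natural transformation $\eta_i\colon A_i\to B_i$ of filtered $I$-systems is an isomorphism as soon as, for every $i$, there are an index $i'$ with a morphism $i\to i'$ in $I$ and a morphism $g_i\colon B_i\to A_{i'}$ such that $g_i\circ\eta_i$ is the transition map $A_i\to A_{i'}$ and $\eta_{i'}\circ g_i$ is the transition map $B_i\to B_{i'}$. This is checked by testing against objects $X$ of $C$, for which $\Hom(X,``\underset{i\in I}\colim"A_i)=\underset{i\in I}\colim\Hom(X,A_i)$: the $g_i$ then exhibit a levelwise two-sided inverse on these $\Hom$-sets, and a morphism of $\ttInd(C)$ inducing bijections on $\Hom(X,-)$ for all $X\in C$ is an isomorphism. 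It thus suffices to produce, for each $i$, a morphism $i\to i'$ in $I$ and a morphism $g_i\colon\uHom(U,W_i)\to U^{\vee}\ootimes W_{i'}$ with $g_i\circ\eta_i=\id_{U^{\vee}}\ootimes\phi$ and $\eta_{i'}\circ g_i=\uHom(U,\phi)$, where $\phi=\phi_{i i'}\colon W_i\to W_{i'}$ is the structure map of the system, which is nuclear by hypothesis (taking $i'=i$ if no larger index is available).

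The heart of the matter is building $g_i$ from the nuclear map $\phi$. Being nuclear, $\phi$ is the image of an element $\xi\in\Hom(e,W_{i'}\ootimes W_i^{\vee})$; for $C=\ttBan_R$ this reads $\phi(a)=\underset{s}\sum\alpha_s(a)\,b_s$ with $b_s\in W_{i'}$, $\alpha_s\in W_i^{\vee}$ and $\underset{s}\sum\|b_s\|\,\|\alpha_s\|<\infty$. I would set $g_i$ to be the composite
\[
\uHom(U,W_i)\ \cong\ \uHom(U,W_i)\ootimes e\ \xrightarrow{\ \id\ootimes\xi\ }\ \uHom(U,W_i)\ootimes W_{i'}\ootimes W_i^{\vee}\ \longrightarrow\ U^{\vee}\ootimes W_{i'},
\]
where the last arrow is the symmetry swapping $W_{i'}$ past $W_i^{\vee}$ followed by the internal composition morphism $\uHom(U,W_i)\ootimes\uHom(W_i,e)\to\uHom(U,e)=U^{\vee}$ tensored with $\id_{W_{i'}}$; on elements this is $g_i(T)=\underset{s}\sum(\alpha_s\circ T)\ootimes b_s$, which is bounded since $\|g_i(T)\|\le\|T\|\,\underset{s}\sum\|\alpha_s\|\,\|b_s\|$. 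The two identities then follow from a diagram chase using only the coherence axioms for internal composition and evaluation and the description of $\phi$ as the image of $\xi$; for $\ttBan_R$ they reduce to the element computations $g_i(\eta_i(\beta\ootimes a))=\beta\ootimes\phi(a)$ and $\eta_{i'}(g_i(T))=\phi\circ T$.

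With the $g_i$ in hand the criterion gives that $``\underset{i\in I}\colim"\eta_i$, and hence $\eta\colon U^{\vee}\ootimes V\to\uHom(U,V)$, is an isomorphism for every object $U$ of $C$, so $V$ is nuclear in the sense of Definition \ref{defn:IndBanNuc}. The only genuinely non-formal step — and the one I expect to be the main obstacle — is the verification of the two identities satisfied by $g_i$: bookkeeping the associativity, unit and symmetry isomorphisms together with the direction of the internal composition map. I would also note that the tempting shortcut of factoring the nuclear transition map through a ``nuclear object'' via Lemma \ref{lem:DecompNuc} does \emph{not} work, since the module $\underset{s\in S}\coprod^{\leq 1}R_{m(s)}$ occurring there is not nuclear in $\ttBan_R$ for infinite $S$; the explicit morphism $g_i$ above is exactly what replaces that idea.
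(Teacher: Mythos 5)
Your proof takes essentially the same approach as the paper's: both reduce to showing that the levelwise natural transformation $\eta_i\colon U^{\vee}\ootimes W_i\to\uHom(U,W_i)$ becomes an isomorphism upon passing to ind-objects, and both build a two-sided inverse out of the nuclearity of the transition maps $\phi_{ii'}$. The only difference is that you explicitly construct the morphism $g_i\colon\uHom(U,W_i)\to U^{\vee}\ootimes W_{i'}$ in $C$ from the element $\xi\in\Hom(e,W_{i'}\ootimes W_i^{\vee})$ via the internal composition map, whereas the paper argues more briefly at the level of elements of $\Hom(V,W_j)$ (post-composition with $\phi_{ij}$ is nuclear, hence lifts through $\Hom(e,V^{\vee}\ootimes W_j)$), leaving the packaging of these lifts into an actual morphism in $C$ implicit; your version is thus a welcome tightening of the same argument.
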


\proof 
Let $V$ be any object of $C$.  Consider the canonical morphism
\[f:(``\underset{i\in I}\colim"W_i)\ootimes V^{\vee} \to \uHom(V,``\underset{i\in I}\colim"W_i).\] Since $V$ is in $C$ (hence a compact object in $\ttInd(C)$) we can equivalently write this as the colimit of the morphisms 
\[W_{i} \ootimes V^{\vee} \to \uHom(V,W_i).
\]
Consider any $W_{i} \to W_{j}$ in the system corresponding to a non-identity arrow $i\to j$. Since they are nuclear, the precomposition $V \to W_{i} \to W_{j}$ is also nuclear by Lemma \ref{lem:NuclearProps}. Therefore it lies in the image of $\Hom(e, V^{\vee}\ootimes W_{j})$. This constructs a two-sided inverse 
\[(``\underset{i\in I}\colim"W_i)\ootimes V^{\vee} \cong ``\underset{i\in I}\colim"(W_i\ootimes V^{\vee}) \leftarrow ``\underset{i\in I}\colim"\uHom(V,W_i) \cong \uHom(V,``\underset{i\in I}\colim"W_i)
\]
to $f$.
\endproof

\begin{lem}\label{lem:NucTranMaps}
If an object $W$ is nuclear in $\ttInd(C)$ and presented as $``\underset{i \in I}\colim" W_i$ for $I$ a filtering ordered set with transition maps $\phi_{ij}:W_{i}\to W_{j}$, then for each $i \in I$ there exists a $j\in I$ with $j>i$ such that morphism $\phi_{ij}$ is nuclear.
\end{lem}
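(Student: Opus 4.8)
The plan is to feed the identity morphism of $W$ into Lemma~\ref{lem:NucMap} and read off, from the resulting system of nuclear maps, a single transition morphism that is forced to be nuclear. Fix $i \in I$. Since $W$ is nuclear, apply Lemma~\ref{lem:NucMap} with $W$ playing the role of both the nuclear object and the arbitrary object, both presented by the given system $W = ``\underset{i' \in I}\colim" W_{i'}$, and with the morphism $\id_W \in \Hom(W,W)$. This yields a functor $\alpha \colon I \to I$ together with nuclear morphisms $\psi_{i'} \colon W_{i'} \to W_{\alpha(i')}$ in $C$ which, in the sense of Remark~\ref{rem:DescribeHoms}, assemble into a morphism of inductive systems representing $\id_W$.

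Next I would make the phrase ``represents $\id_W$'' concrete at the index $i$. Denoting by $c_{i'}\colon W_{i'}\to W$ the canonical morphisms, the induced endomorphism of $W$ being $\id_W$ forces $c_i = c_{\alpha(i)}\circ\psi_i$. Because $W_i$ lies in $C$ and is hence compact in $\ttInd(C)$, one has $\Hom(W_i, W) \cong \underset{i'}\colim\,\Hom(W_i, W_{i'})$, under which $c_i$ corresponds to the class of the transition morphisms $\phi_{i,i'}$ (for $i' \ge i$) and $c_{\alpha(i)}\circ\psi_i$ to the class of $\psi_i$ at level $\alpha(i)$. Equality in this filtered colimit, together with the filteredness of $I$, produces $k \in I$ with $k \ge i$ and $k \ge \alpha(i)$ such that $\phi_{\alpha(i),k}\circ \psi_i = \phi_{i,k}$ as morphisms in $C$.

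Finally, $\psi_i$ is nuclear, so by Lemma~\ref{lem:NuclearProps} (post-composition with a nuclear morphism is nuclear) the morphism $\phi_{i,k} = \phi_{\alpha(i),k}\circ\psi_i$ is nuclear. If $k > i$, take $j = k$ and we are done. If $k = i$, then $\phi_{i,k} = \id_{W_i}$ is nuclear, i.e.\ $W_i$ is itself a nuclear object of $C$, whence $\phi_{i,j} = \phi_{i,j}\circ \id_{W_i}$ is nuclear for every $j > i$, again by Lemma~\ref{lem:NuclearProps}; more generally, once $\phi_{i,k}$ is nuclear for one $k\ge i$, every $\phi_{i,j}$ with $j \ge k$ is nuclear. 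The only step demanding real care is the middle one: translating the abstract assertion that a system of nuclear maps represents $\id_W$ into the honest equality $\phi_{\alpha(i),k}\circ\psi_i = \phi_{i,k}$ in $C$, which rests on the compactness of the objects of $C$ inside $\ttInd(C)$ and on filteredness of $I$. After that, the conclusion is a direct appeal to the permanence properties of nuclear morphisms in Lemma~\ref{lem:NuclearProps}.
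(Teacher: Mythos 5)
Your proof takes exactly the paper's route---apply Lemma~\ref{lem:NucMap} to $\id_W$ with $V=W$ in the given presentation---but you carefully justify the final step: the paper simply asserts that the nuclear maps produced by Lemma~\ref{lem:NucMap} ``are therefore transition maps,'' whereas you correctly observe that they need only agree with transition maps after stabilizing in the filtered colimit (using compactness of $W_i$ and filteredness of $I$), and then deduce nuclearity of the genuine transition map $\phi_{i,k}=\phi_{\alpha(i),k}\circ\psi_i$ by post-composition via Lemma~\ref{lem:NuclearProps}. Your proof is correct and in fact fills in a detail the paper's own proof glosses over.
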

\proof 
Consider Lemma \ref{lem:NucMap} in the case $V=W= ``\underset{i \in I}\colim" W_i$ in the case of identical inductive systems applied to the element $\text{id} \in \Hom(W,W)$. A representative of the identity is given by a cofinal choice of transition maps. The lemma provides the nuclear maps $\phi_{ij}$ representing the identity, which are therefore transition maps in the given presentation of $W$. 
\endproof
\begin{rem}\label{rem:summary}Because of Lemmas \ref{lem:SameSame} and \ref{lem:NucTranMaps} we can conclude that nuclear objects are just those representable by an ind-sytem with nuclear transition maps.
\end{rem}
\begin{lem}\label{lem:tensnuc}If $X$ and $Y$ are nuclear in $\ttInd(C)$ then so is $X\ootimes Y$.
\end{lem}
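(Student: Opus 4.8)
The plan is to deduce the statement from Lemma \ref{lem:SameSame} together with the characterization of nuclear objects recorded in Remark \ref{rem:summary}. By that remark I would first choose presentations $X \cong ``\underset{i \in I}\colim" X_i$ and $Y \cong ``\underset{j \in J}\colim" Y_j$ in which every transition map $\phi_{ii'}\colon X_i \to X_{i'}$ and $\psi_{jj'}\colon Y_j \to Y_{j'}$ is a nuclear morphism in $C$. Applying the formula for the monoidal structure on $\ttInd(C)$ from Proposition \ref{prop:ScnMain} gives $X \ootimes Y \cong ``\underset{(i,j)\in I \times J}\colim" X_i \ootimes Y_j$, and the transition map attached to an arrow $(i,j)\to(i',j')$ with $i<i'$ and $j<j'$ is $\phi_{ii'}\ootimes\psi_{jj'}$, which is nuclear by Lemma \ref{lem:NuclearProps}. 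The conclusion would then follow from Lemma \ref{lem:SameSame}, provided $X \ootimes Y$ is presented by an ind-system all of whose transition maps are nuclear.

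That last proviso is where the one real subtlety lies, and it is the step I expect to require the most care. The index category $I\times J$ also carries transition maps of the mixed form $\phi_{ii'}\ootimes\id_{Y_j}$ and $\id_{X_i}\ootimes\psi_{jj'}$, and the tensor product of a nuclear morphism with an identity need not be nuclear (e.g. $\id_{\ell^1}=\id_{\ell^1}\ootimes\id_R$), so Lemma \ref{lem:SameSame} does not apply to $I\times J$ verbatim. I would resolve this by restricting to the subcategory of ``doubly strict'' arrows $(i,j)\to(i',j')$ with $i<i'$ and $j<j'$: composites of such arrows are again doubly strict, so if neither $I$ nor $J$ has a maximal element then such arrows issue from every object and this subcategory is filtered and cofinal, whence Lemma \ref{lem:SameSame} applies to it. If instead $I$ (resp. $J$) has a maximal, hence largest, element, the corresponding colimit factor collapses to an object of $C$, which is nuclear in $C$ by the remark following Definition \ref{defn:IndBanNuc}; its identity map is then nuclear, and the argument goes through with that identity playing the role of $\phi$ (resp. $\psi$).

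Alternatively, and perhaps more cleanly, one can argue directly from Definition \ref{defn:IndBanNuc}. Fix $W\in C$; since $W$ is compact in $\ttInd(C)$, the functor $\uHom(W,-)$ commutes with filtered colimits, so writing $Y \cong ``\underset{j}\colim" Y_j$ with nuclear transition maps one shows, by constructing an inverse out of the nuclear morphisms $\psi_{jj'}$ exactly as in the proof of Lemma \ref{lem:SameSame}, that the canonical map $\uHom(W,X)\ootimes Y \to \uHom(W, X \ootimes Y)$ is an isomorphism for \emph{every} object $X$. Composing this with the isomorphism $W^{\vee}\ootimes X \cong \uHom(W,X)$ that expresses nuclearity of $X$ identifies $W^{\vee}\ootimes(X\ootimes Y)$ with $\uHom(W, X\ootimes Y)$ via the canonical map, which is precisely the condition in Definition \ref{defn:IndBanNuc}. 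In either approach the decisive computation is the same sort of diagram chase already carried out in Lemma \ref{lem:SameSame} — verifying that the constructed inverse composes correctly with the transition maps on both sides — and that is the part I expect to be the most delicate.
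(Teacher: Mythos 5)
Your proposal takes essentially the same route as the paper, whose one-sentence proof of this lemma is precisely your opening paragraph: choose presentations of $X$ and $Y$ with nuclear transition maps (Remark \ref{rem:summary}), present $X \ootimes Y$ over $I \times J$ via Proposition \ref{prop:ScnMain}, and invoke Lemma \ref{lem:NuclearProps}. You are right, however, that the paper's phrasing ``the induced monoidal structure presentation of $X\ootimes Y$ also has nuclear transition maps'' glosses over the mixed arrows $\phi_{ii'} \ootimes \id_{Y_j}$ and $\id_{X_i}\ootimes\psi_{jj'}$ in $I\times J$, which need not be nuclear; your counterexample is apt. Your fix --- restrict to the cofinal subcategory of doubly strict arrows, with the degenerate case handled separately when an index poset has a maximum --- closes this gap cleanly, and it is consistent with the fact that Lemma \ref{lem:NucTranMaps} only yields, for each index, \emph{some} larger index with a nuclear transition map rather than nuclearity of every arrow. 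Your alternative argument directly from Definition \ref{defn:IndBanNuc}, constructing an inverse as in Lemma \ref{lem:SameSame}, is also sound and sidesteps the cofinality bookkeeping entirely. Both versions are correct; the paper simply leaves this cofinality refinement implicit.
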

\proof 
If we consider Remark \ref{rem:summary} and choose presentations of $X$ and $Y$ with nuclear transition maps, then the induced monoidal structure presentation of $X\ootimes Y$ also has nuclear transition maps by Lemma \ref{lem:NuclearProps}.
\endproof
\begin{lem}\label{lem:NicePres}Given $I$, an infinite filtering ordered set, and a functor $I \to \ttBan_{R}$ such that the corresponding object $W= ``\underset{i \in I}\colim" W_i$ is nuclear in $\ttInd(\ttBan_{R})$, there is a filtered category $K$ with the same cardinality of objects and morphisms as $I$, a functor $K\to  \ttBan_{R}$ with corresponding object $P = ``\underset{k \in K}\colim" P_k$ and an isomorphism $W\cong P$ such that each Banach space $P_k$ is a countable coproduct in $\ttBan^{\leq 1}_{R}$ of weighted copies of $R$ and the transition functions are nuclear.
\end{lem}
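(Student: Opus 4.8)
The strategy is to factor the transition maps of a suitable presentation of $W$ through countable coproducts of weighted copies of $R$ using Lemma~\ref{lem:DecompNuc}, and then to reorganize those factoring objects into a new ind-presentation of $W$ by an interleaving argument. Write $\phi_{ij}\colon W_i\to W_j$ for the transition maps of the given functor.

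First dispose of the degenerate case where $I$ has a greatest element: then $W$ is a nuclear Banach module, Lemma~\ref{lem:DecompNuc} applied to $\mathrm{id}_W$ produces a nuclear $p\colon W\to Q$ and a non-expanding $c\colon Q\to W$ with $Q$ a countable coproduct in $\ttBan^{\leq 1}_R$ of weighted copies of $R$ and $c\circ p=\mathrm{id}_W$, and then $W\cong ``\underset{n\in\mathbb N}\colim"Q$ for the telescope all of whose transition maps equal the nuclear (Lemma~\ref{lem:NuclearProps}) map $p\circ c$. So assume $I$ has no greatest element. By Lemma~\ref{lem:NucTranMaps}, for each $i\in I$ choose $\sigma(i)>i$ with $\phi_{i\sigma(i)}$ nuclear; and for \emph{every} pair $i<j$ in $I$ with $\phi_{ij}$ nuclear, Lemma~\ref{lem:DecompNuc} provides a countable set $S_{ij}$, a weight $m_{ij}\colon S_{ij}\to\mathbb R_{>0}$, a nuclear map $p_{ij}\colon W_i\to P_{ij}$, and a non-expanding map $c_{ij}\colon P_{ij}\to W_j$ with $P_{ij}=\underset{s\in S_{ij}}\coprod^{\leq 1} R_{m_{ij}(s)}$ and $c_{ij}\circ p_{ij}=\phi_{ij}$; fix these choices once and for all.

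Now let $K$ be the poset whose elements are the pairs $(i,j)$ with $i<j$ in $I$ and $\phi_{ij}$ nuclear, with $(i,j)\leq(i',j')$ declared exactly when $(i,j)=(i',j')$ or $j\leq i'$ in $I$. One checks that this is a partial order; that $K$ is directed, since given $(i,j),(i',j')$ one chooses $l\in I$ with $l\geq j,j'$ and then $(l,\sigma(l))$ dominates both; and that, since $I$ is infinite while $i\mapsto(i,\sigma(i))$ embeds $I$ into $K$, the poset $K$ has the same cardinality of objects and of morphisms as $I$. Define a functor $K\to\ttBan_R$ by $(i,j)\mapsto P_{ij}$ and by sending a non-identity relation $(i,j)<(i',j')$ (so $j\leq i'$) to $p_{i'j'}\circ\phi_{ji'}\circ c_{ij}\colon P_{ij}\to P_{i'j'}$. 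Functoriality follows from $c_{ij}\circ p_{ij}=\phi_{ij}$ together with the composability of the $\phi$'s, and every such transition map is nuclear by Lemma~\ref{lem:NuclearProps} because it contains the nuclear factor $p_{i'j'}$. Put $P=``\underset{(i,j)\in K}\colim"P_{ij}$; this is of the required form.

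It remains to produce an isomorphism $W\cong P$, which comes from interleaving the two systems, computed with the $\lim\colim$ formula for $\Hom$ in $\ttInd(\ttBan_R)$. The identity $\phi_{jj'}\circ c_{ij}=c_{i'j'}\circ(\text{$K$-transition})$ (proved as functoriality was) shows that the $c_{ij}$, along the functor $(i,j)\mapsto j$, assemble into a morphism $c\colon P\to W$; dually, the classes $[p_{i\sigma(i)}]\in\underset{k\in K}\colim\Hom(W_i,P_k)$ are compatible as $i$ varies — for $i\leq i'$ both become $p_{l\sigma(l)}\circ\phi_{il}$ after passing to the index $(l,\sigma(l))$ with $l\geq\max\{\sigma(i),\sigma(i')\}$ — and so define a morphism $p\colon W\to P$. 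Then $c\circ p$ is represented at $W_i$ by $c_{i\sigma(i)}\circ p_{i\sigma(i)}=\phi_{i\sigma(i)}$, a transition map of the given presentation, so $c\circ p=\mathrm{id}_W$; and $p\circ c$ is represented at $(i,j)\in K$ by $p_{j\sigma(j)}\circ c_{ij}$, which is precisely the $K$-transition map attached to $(i,j)\leq(j,\sigma(j))$, so $p\circ c=\mathrm{id}_P$. I expect the main obstacle to be purely bookkeeping: verifying that $K$ is a well-defined directed poset of the right cardinality, that $(i,j)\mapsto P_{ij}$ is genuinely functorial, and that $c$ and $p$ are well-defined mutually inverse morphisms of $\ttInd(\ttBan_R)$; the only substantive inputs are Lemmas~\ref{lem:DecompNuc}, \ref{lem:NucTranMaps}, and \ref{lem:NuclearProps}.
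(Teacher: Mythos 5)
Your proof is correct and takes essentially the same route as the paper: define $K$ by pairs $(i,j)$ with $\phi_{ij}$ nuclear, factor each such $\phi_{ij}$ through a weighted $\ell^1$-coproduct via Lemma~\ref{lem:DecompNuc}, and interleave the two systems. The paper defers the functoriality and isomorphism checks to the proof of Lemma~2.3 of \cite{PS}, whereas you spell them out explicitly (and also treat the degenerate case of a greatest element) — a faithful elaboration of the same argument.
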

\proof 
Using Lemma \ref{lem:NucTranMaps} we may assume that for each $i$ there is a $j>i$ such that $\phi_{ij}:W_i \to W_j$ 
is nuclear. Define 
\[K = \{(i,j) \in I \times I \ \ | \ \ j \geq i, \phi_{ij}:W_i \to W_j \text{ is nuclear} \}.
\]
Using Lemma \ref{lem:DecompNuc} we can decompose each such morphism $\phi_{ij}$ into $c_{ij} \circ p_{ij} :W_{i} \to P_{ij} \to W_{j}$ where $P_{ij}$ is a countable non-expanding coproduct of weighted copies of $R$ and $p_{ij}$ is nuclear. Given two pairs $k=(i,j)$ with $j\geq i$ and $k'=(i',j')$  with $j'\geq i'$ of $K$ we define the nuclear (see Lemma \ref{lem:NuclearProps}) morphism $n_{kk'}: P_k \to P_{k'}$ by $p_{k'} \circ c_{k}$ for any pair $k,k'$ such that $j=i'$. As in the proof of Lemma 2.3 of \cite{PS} we have a filtering inductive system $(K, \{P_k\}, \{n_{kk'}\})$ which defines an object $P$ of $\ttInd(\ttBan_{R})$ isomorphic to $W$ which has the desired properties.
\endproof
\begin{lem}\label{lem:NicePres2}Given $I$, an infinite filtering ordered set and a functor $I \to \ttBan_{R}$ such that the corresponding object $W= ``\underset{i \in I}\colim" W_i$ is nuclear in $\ttInd(\ttBan_{R})$, there is a filtered category $K$ with the same cardinality of objects and morphisms as $I$, a functor $K\to  \ttBan_{R}$ with corresponding object $L = ``\underset{k \in K}\colim" L_k$ and an isomorphism $W\cong L$ such that each Banach space $L_k$ is a countable product in $\ttBan^{\leq 1}_{R}$ of weighted copies of $R$ and the transition functions are nuclear.
\end{lem}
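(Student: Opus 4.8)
The plan is to mirror the proof of Lemma \ref{lem:NicePres}, replacing the appeal to Lemma \ref{lem:DecompNuc} by its dual, Lemma \ref{lem:DecompNuc2}. First, using Lemma \ref{lem:NucTranMaps} and passing to a cofinal subsystem, we may assume that for each $i \in I$ there is some $j > i$ for which the transition map $\phi_{ij} : W_i \to W_j$ is nuclear. Set
\[
K = \{ (i,j) \in I \times I \ \ | \ \ j \geq i,\ \phi_{ij} : W_i \to W_j \text{ is nuclear} \},
\]
made into a filtered category exactly as in the proof of Lemma 2.3 of \cite{PS} (with an arrow $(i,j) \to (i',j')$ whenever $j = i'$, together with the relations needed for filteredness). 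Since $I$ is infinite, $K$ has the same cardinality of objects and of morphisms as $I$.

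For each $k = (i,j) \in K$ apply Lemma \ref{lem:DecompNuc2} to $\phi_{ij}$ to obtain a factorization $\phi_{ij} = p_{ij} \circ c_{ij}$ with $c_{ij} : W_i \to L_{ij}$ and $p_{ij} : L_{ij} \to W_j$, in which $L_k := L_{ij}$ is a countable product in $\ttBan^{\leq 1}_R$ of weighted copies of $R$, the morphism $c_{ij}$ is non-expanding, and $p_{ij}$ is nuclear. For $k = (i,j)$ and $k' = (i',j')$ in $K$ with $j = i'$ define the transition morphism $n_{kk'} : L_k \to L_{k'}$ by $n_{kk'} = c_{i'j'} \circ p_{ij}$, that is, by composing $L_k \stackrel{p_{ij}}{\longrightarrow} W_j = W_{i'} \stackrel{c_{i'j'}}{\longrightarrow} L_{k'}$. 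Because $p_{ij}$ is nuclear and nuclearity is stable under pre- and post-composition (Lemma \ref{lem:NuclearProps}), each $n_{kk'}$ is nuclear, and compatibility of the $n_{kk'}$ under composition is immediate from $p_{ij} \circ c_{ij} = \phi_{ij}$. This produces a filtering inductive system $(K, \{L_k\}, \{n_{kk'}\})$, and hence an object $L = ``\underset{k \in K}\colim" L_k$ of $\ttInd(\ttBan_R)$ whose transition maps are nuclear and all of whose terms are countable products in $\ttBan^{\leq 1}_R$ of weighted copies of $R$.

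It then remains to identify $L$ with $W$. The morphisms $c_{ij}$ assemble into a map of ind-objects $W \to L$ and the morphisms $p_{ij}$ into a map $L \to W$; the relations $p_{ij}\circ c_{ij} = \phi_{ij}$ and $n_{kk'} = c_{i'j'}\circ p_{ij}$ show that, after shifting along the two filtered systems, the composites are the respective identity morphisms, so $W \cong L$ in $\ttInd(\ttBan_R)$. I expect the only delicate point to be precisely this last bookkeeping — checking that $K$ with the chosen relations is genuinely filtered and that the interleaved families $\{c_{ij}\}$ and $\{p_{ij}\}$ really do define mutually inverse isomorphisms of ind-objects — but this is carried out verbatim as in the proof of Lemma 2.3 of \cite{PS}. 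The only substantive difference from Lemma \ref{lem:NicePres} is that here the nuclear factor of the decomposition is the second map $p_{ij}$ rather than the first, so that one forms the transition maps in the order $c \circ p$ rather than $p \circ c$.
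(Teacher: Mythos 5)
Your proof is correct and follows essentially the same route as the paper's: reduce via Lemma \ref{lem:NucTranMaps} to a system with nuclear transition maps, define $K$ as the set of pairs $(i,j)$ with $\phi_{ij}$ nuclear, apply Lemma \ref{lem:DecompNuc2} to factor each $\phi_{ij}$ through a countable $\ell^\infty$-type product $L_{ij}$, and interleave the factors $c$ and $p$ to form the transition maps $n_{kk'}=c_{k'}\circ p_k$, with the final filteredness and isomorphism checks delegated to the argument of Lemma 2.3 in \cite{PS}. The observation that the only difference from Lemma \ref{lem:NicePres} is the ordering of the factorization (nuclear part last rather than first) is exactly what the paper does.
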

\proof Using Lemma \ref{lem:NucTranMaps} we may assume that for each $i$ there is a $j>i$ such that $\phi_{ij}:W_i \to W_j$ 
is nuclear. Define 
\[K = \{(i,j) \in I \times I | j \geq i, \phi_{ij}:W_i \to W_j \text{ is nuclear} \}.
\]
Using Lemma \ref{lem:DecompNuc2} we can decompose each such morphism $\phi_{ij}$ into $p_{ij} \circ c_{ij} :W_{i} \to L_{ij} \to W_{j}$ where $L_{ij}$ is a countable non-expanding product of weighted copies of $R$ and $p_{ij}$ is nuclear. Given two pairs $k=(i,j)$ with $j\geq i$ and $k'=(i',j')$  with $j'\geq i'$ of $K$ we define the nuclear (see Lemma \ref{lem:NuclearProps}) morphism $n_{kk'}: L_k \to L_{k'}$ by $c_{k'} \circ p_{k}$ for any pair $k,k'$ such that $j=i'$. As in the proof of Lemma 2.3 of \cite{PS} we have a filtering inductive system $(K, \{L_k\}, \{n_{kk'}\})$ which defines an object $L$ of $\ttInd(\ttBan_{R})$ isomorphic to $W$ which has the desired properties.
\endproof

\begin{lem}\label{lem:NucImpliesFlat}Any nuclear object of $\ttInd(\ttBan_{R})$ is flat in $\ttInd(\ttBan_{R})$.
\end{lem}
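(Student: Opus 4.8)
The plan is to reduce the statement to the flatness of a projective Banach module and then transport that flatness from $\ttBan_R$ up to $\ttInd(\ttBan_R)$ along filtered colimits.

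First I would fix a presentation of $V$ in the convenient form supplied by Lemma~\ref{lem:NicePres}: write $V\cong ``\underset{k\in K}\colim"P_k$ with $K$ filtered and each $P_k$ a countable coproduct in $\ttBan^{\leq 1}_R$ of weighted copies of $R$. (If $V$ happens to lie in $\ttBan_R$, first present it as the $\mathbb{N}$-indexed ind-system that is constantly $V$ with identity transition maps — these are nuclear because $V$ is — so that Lemma~\ref{lem:NicePres} still applies.) Each weighted copy $R_r$ is projective in $\ttBan_R$ by Lemma~\ref{lem:WeightsProj}, and an arbitrary coproduct of projective Banach modules is again projective; hence each $P_k$ is projective, and therefore flat in $\ttBan_R$ by Lemma~\ref{lem:Proj2Flat}.

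The heart of the argument is to promote this to flatness of $P_k$ in $\ttInd(\ttBan_R)$. By the tensor formula of Proposition~\ref{prop:ScnMain}, the endofunctor $(-)\ootimes P_k$ of $\ttInd(\ttBan_R)$ is the ind-extension of $(-)\wotimes_R P_k$ on $\ttBan_R$, i.e.\ for $A=``\colim_l A_l$ one has $A\ootimes P_k\cong ``\colim_l(A_l\wotimes_R P_k)$. Now let $u\colon A\to B$ be a strict monomorphism in $\ttInd(\ttBan_R)$. Using the standard description of the quasi-abelian structure on an ind-category \cite{SchneidersQA}, I would represent $u$ (up to isomorphism) as a filtered colimit $``\colim_l(u_l\colon A_l\to B_l)$ of strict monomorphisms $u_l$ in $\ttBan_R$. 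Since $P_k$ is flat in $\ttBan_R$, each $u_l\wotimes_R\id_{P_k}$ is a strict monomorphism; and since $\ttInd(\ttBan_R)$ is elementary quasi-abelian, its filtered colimits are exact \cite{SchneidersQA}, so $u\ootimes\id_{P_k}\cong ``\colim_l(u_l\wotimes_R\id_{P_k})$ is again a strict monomorphism. Hence $P_k$ is flat in $\ttInd(\ttBan_R)$.

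To finish, again by Proposition~\ref{prop:ScnMain} we have $(-)\ootimes V\cong ``\underset{k\in K}\colim"((-)\ootimes P_k)$, so for any strict monomorphism $u$ the morphism $u\ootimes\id_V$ is the filtered colimit over $k\in K$ of the strict monomorphisms $u\ootimes\id_{P_k}$, hence itself a strict monomorphism by exactness of filtered colimits. Thus $(-)\ootimes V$ preserves strict monomorphisms, which is precisely flatness of $V$ in $\ttInd(\ttBan_R)$. The step I expect to be the main obstacle is the middle one — lifting flatness of $P_k$ from $\ttBan_R$ to $\ttInd(\ttBan_R)$ — since it rests on the two structural facts that strict monomorphisms in $\ttInd(\ttBan_R)$ are filtered colimits of strict monomorphisms in $\ttBan_R$ and that filtered colimits in $\ttInd(\ttBan_R)$ are exact; the remainder is formal manipulation with the ind-tensor product.
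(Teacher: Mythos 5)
Your proof is correct and follows essentially the same route as the paper: present the nuclear object via Lemma~\ref{lem:NicePres} as a formal filtered colimit of modules $P_k=\coprod^{\leq 1}_s R_{r_s}$, observe each $P_k$ is projective hence flat in $\ttBan_R$, and then pass to $\ttInd(\ttBan_R)$. The paper compresses the transition from ``flat in $\ttBan_R$'' to ``flat in $\ttInd(\ttBan_R)$'' and then to ``filtered colimit of flats is flat'' into a single terse sentence whose citation (Lemma~\ref{lem:colimProps}, which only treats finite coproducts of projectives) does not actually cover either step. You instead supply the missing argument explicitly: represent a strict monomorphism in $\ttInd(\ttBan_R)$ as a filtered colimit of strict monomorphisms in $\ttBan_R$, apply $(-)\wotimes_R P_k$ levelwise, and invoke exactness of filtered colimits in an elementary quasi-abelian Ind-category. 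This is the right repair; the one point worth flagging is that the representability of a strict monomorphism in $\ttInd(\ttBan_R)$ by a levelwise strict family, while true and in the Schneiders framework, is not stated as an explicit lemma in this paper, so if this were to be folded into the text it would merit a precise reference. Your side remark about reindexing a single nuclear Banach object by a constant $\mathbb{N}$-diagram before applying Lemma~\ref{lem:NicePres} (which requires an infinite filtering index set) also handles an edge case the paper's proof implicitly skips.
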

\proof 
Using Lemma \ref{lem:NicePres} we can write a nuclear object in a certain nice form as the formal filtered colimit of countable coproducts in $\ttBan^{\leq 1}_{R}$ of weighted copies of $R$. Each weighted copy of $R$ is projective by Lemma \ref{lem:WeightsProj}. Therefore their coproduct in $\ttBan^{\leq 1}_{R}$ is projective by Lemma \ref{lem:colimProps} and hence flat by Lemma \ref{lem:Proj2Flat}.  By Lemma \ref{lem:colimProps} this colimit of flat objects is flat.
\endproof
\begin{rem}If $R$ is non-archimedean, all of this subsection goes through for $\ttBan^{na}_R$ in place of $\ttBan_R$.
\end{rem}
\section{The interaction of products and tensor products}
Just as flatness in our context  is about commuting kernels and completed tensor products, we need to investigate the interaction of the other type of limit (products) with the completed tensor product.  Many of the results in this section are either taken from or inspired by the book \cite{M} by R. Meyer and discussions with him. In this section we define metrizable modules and examine how the tensor product with them interacts with products. We work in a general context and as usual, for simplicity, we look at the archimedean setting, even if $R$ is non-archimedean. In that setting, all the proofs in this section go through with the obvious modifications for $\ttBan^{na}_R$ in place of $\ttBan_R$.
\begin{defn}Let $\lambda$ be a cardinal. A poset $J$ is called $\lambda$-filtered if any subset $S$ of $J$ with $|S|<\lambda$ has an upper bound. 
\end{defn}
\begin{rem}\label{rem:ProdFilt}Let $\lambda$ be a cardinal. A finite product of $\lambda$-filtered posets is $\lambda$-filtered.
\end{rem}
\begin{lem}\label{lem:SETlimcolim} Let $\lambda$ be a cardinal. Suppose that $I$ is a poset and we are given a functor $F:I \times J \to Set$ where $J$ has cardinality less than $\lambda$ and $I$ is $\lambda$-filtered. Then the natural morphism 
\[\underset{i\in I}{\colim} \lim_{j\in J} F(i,j) \to \lim_{j\in J} \underset{i\in I}{\colim} F(i,j)
\]
is an isomorphism. 
\end{lem}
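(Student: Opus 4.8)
The statement to prove is a standard "filtered colimits commute with small limits" result in $\ttSet$, but with the size bookkeeping made explicit: $I$ is $\lambda$-filtered and $|J| < \lambda$.

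The plan is to construct an explicit inverse to the canonical comparison map and check it is well-defined using the $\lambda$-filteredness of $I$ to merge the finitely-or-fewer-than-$\lambda$-many indices arising from the limit over $J$. First I would describe elements of the right-hand side concretely: an element of $\lim_{j\in J}\colim_{i\in I}F(i,j)$ is a compatible family $(\xi_j)_{j\in J}$ where each $\xi_j$ is an equivalence class in $\colim_{i\in I}F(i,j)$, i.e.\ represented by some pair $(i_j, x_j)$ with $x_j\in F(i_j,j)$, two pairs being equivalent if they agree after pushing forward along some common upper bound in $I$. Since $|J|<\lambda$ and $I$ is $\lambda$-filtered, the subset $\{i_j : j\in J\}$ has an upper bound $i_*\in I$; pushing each $x_j$ forward to $F(i_*, j)$ along the appropriate transition map, I may assume all representatives live over a single index $i_*$, giving a family $(y_j)_{j\in J}$ with $y_j \in F(i_*, j)$.

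Next I would address compatibility: the condition that $(\xi_j)_j$ lies in $\lim_{j\in J}$ means that for each morphism $j\to j'$ in $J$, the induced map sends $\xi_j$ to $\xi_{j'}$, i.e.\ the images of $y_j$ and $y_{j'}$ in $\colim_i F(i,j')$ agree. By definition of the colimit this means there is some $i' \geq i_*$ (depending on the pair $j\to j'$) over which they already agree in $F(i',j')$. There are at most $|J|\times|J| < \lambda$ such morphisms in $J$ (if $J$ is a poset this is fine; more carefully one takes the set of morphisms, which has cardinality $< \lambda$ when $|J|<\lambda$ and $\lambda$ is infinite), so again by $\lambda$-filteredness of $I$ I can find a single $\tilde\imath \geq i_*$ over which \emph{all} these equalities hold simultaneously. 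Pushing the $y_j$ forward to $\tilde\imath$ produces an honest element of $\lim_{j\in J}F(\tilde\imath, j)$, whose class in $\colim_{i\in I}\lim_{j\in J}F(i,j)$ I declare to be the image of $(\xi_j)_j$ under my candidate inverse. I would then check this is independent of all the choices (of representatives, of $i_*$, of $\tilde\imath$) by the usual argument: any two choices are dominated by a common upper bound, again using $\lambda$-filteredness, and the required equalities become equalities of actual elements there. Finally I verify the two composites with the canonical map are identities, which is immediate once everything is represented over a common index.

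The main obstacle, and the only place the hypotheses are really used, is the cardinality bookkeeping: one must pass to an upper bound in $I$ not just finitely many times but once for the set of all objects of $J$ and once for the set of all morphisms of $J$, and confirm that each such set has cardinality $<\lambda$ so that $\lambda$-filteredness applies. (If $J$ is a poset, as in the statement, its set of morphisms injects into $J\times J$, which has the same cardinality as $J$ when $J$ is infinite, and is finite when $J$ is finite; either way it is $<\lambda$.) Everything else is the completely standard verification that filtered colimits commute with finite limits, adapted verbatim. No deeper input is needed, and in particular this is a purely set-theoretic statement with no reference to the Banach structure.
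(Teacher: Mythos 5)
Your proof is correct and is the standard direct argument that $\lambda$-filtered colimits commute with $\lambda$-small limits in $\ttSet$; the paper itself only asserts this is ``well known'' and illustrates it with the special case of a pullback of three nested unions, so you have supplied the full argument that the paper merely gestures at. The only place to be slightly careful is the cardinality of the morphism set of $J$: your parenthetical handles it, though strictly speaking if $\lambda$ were finite and $J$ nonempty the bound $|J\times J|<\lambda$ could fail, but that degenerate case is not the intended regime (the paper uses $\lambda=\aleph_0,\aleph_1$) and the rest of the argument is sound.
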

\proof 
This is well known in set theory. For example when $\lambda=\aleph_{0}$, one can consider sets $X_{1} \subset X_{2} \subset X_{3} \subset \cdots $, $Y_{1} \subset Y_{2} \subset Y_{3} \subset \cdots $, and $Z_{1} \subset Z_{2} \subset Z_{3} \subset \cdots $, along with maps $X_{i} \rightarrow Z_{i} \leftarrow Y_{i}$ compatible with inclusions and then the claim is that $(\bigcup_{i} X_{i})\times_{\bigcup_{i} Z_{i}}(\bigcup_{i} Y_{i})=\bigcup_{i}(X_{i}\times_{Z_i} Y_{i})$ as can be shown by hand.
\endproof
By considering objects in $\ttInd(C)$ as functors from $C^{op}$ to sets, Lemma \ref{lem:SETlimcolim} immediately implies the following.
\begin{lem}\label{lem:limcolim} Let $\lambda$ be a cardinal. Suppose that $I$ is a poset and we are given a functor $F:I \times J \to \ttInd(C)$ where $J$ has cardinality less than $\lambda$ and $I$ is $\lambda$-filtered. Then the natural morphism 
\[\underset{i\in I}{\colim} \lim_{j\in J} F(i,j) \to \lim_{j\in J} \underset{i\in I}{\colim} F(i,j)
\]
is an isomorphism. 
\end{lem}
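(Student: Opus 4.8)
The plan is to reduce the statement to its set-theoretic version, Lemma \ref{lem:SETlimcolim}, by testing the canonical comparison morphism against the objects of $C$. Recall that $\ttInd(C)$ sits as a \emph{full} subcategory of $\Fun(C^{op},\ttSet)$, and that the Yoneda embedding identifies each object $W$ of $C$ with a representable presheaf in such a way that $\Hom_{\ttInd(C)}(W,X)\cong X(W)$ for every $X$ in $\ttInd(C)$. Since the evaluation functors $X\mapsto X(W)$ are jointly conservative on $\Fun(C^{op},\ttSet)$, the functors $\Hom_{\ttInd(C)}(W,-)$, for $W$ ranging over $\Ob(C)$, are jointly conservative on $\ttInd(C)$. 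Hence it suffices to show that applying $\Hom_{\ttInd(C)}(W,-)$ to the canonical morphism
\[
\phi:\underset{i\in I}{\colim}\ \lim_{j\in J} F(i,j)\ \longrightarrow\ \lim_{j\in J}\ \underset{i\in I}{\colim}\ F(i,j)
\]
gives a bijection for every $W$ in $C$. (We assume, as the statement presupposes, that these limits and colimits exist; in the cases of interest this is guaranteed since $\ttInd(\ttBan_{R})$ is complete and cocomplete.)

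Next I would rewrite both sides. Any hom-functor preserves limits in its second argument, so $\Hom_{\ttInd(C)}(W,-)$ carries $\lim_{j\in J}$ out of the target. Since $I$ is $\lambda$-filtered it is in particular filtered (taking $\lambda$ infinite), and filtered colimits in $\ttInd(C)$ agree with those computed objectwise in $\Fun(C^{op},\ttSet)$; because $W$ comes from $C$ this yields $\Hom_{\ttInd(C)}(W,\underset{i\in I}{\colim}\,X_i)\cong\underset{i\in I}{\colim}\,\Hom_{\ttInd(C)}(W,X_i)$ for any filtered system $X_i$ in $\ttInd(C)$. Applying this to the target (with $X_i=F(i,j)$) and to the source (with $X_i=\lim_{j\in J}F(i,j)$, together with preservation of the inner limit) produces natural isomorphisms
\[
\Hom_{\ttInd(C)}\!\Big(W,\underset{i\in I}{\colim}\,\lim_{j\in J}F(i,j)\Big)\cong\underset{i\in I}{\colim}\,\lim_{j\in J}\Hom_{\ttInd(C)}(W,F(i,j)),\qquad
\Hom_{\ttInd(C)}\!\Big(W,\lim_{j\in J}\underset{i\in I}{\colim}\,F(i,j)\Big)\cong\lim_{j\in J}\,\underset{i\in I}{\colim}\,\Hom_{\ttInd(C)}(W,F(i,j)),
\]
and one checks that under these identifications $\Hom_{\ttInd(C)}(W,\phi)$ becomes precisely the canonical comparison map for the $\ttSet$-valued functor $(i,j)\mapsto\Hom_{\ttInd(C)}(W,F(i,j))$ on $I\times J$.

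Finally I would invoke Lemma \ref{lem:SETlimcolim} for this functor: $J$ has cardinality $<\lambda$ and $I$ is $\lambda$-filtered, so the comparison map is a bijection. Thus $\Hom_{\ttInd(C)}(W,\phi)$ is bijective for every $W$ in $C$, and by joint conservativity $\phi$ is an isomorphism.

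The only genuinely delicate points are bookkeeping rather than ideas: verifying that filtered colimits in $\ttInd(C)$ coincide with those in $\Fun(C^{op},\ttSet)$ (so that $\Hom_{\ttInd(C)}(W,-)$ commutes with $\underset{i\in I}{\colim}$ for $W$ in $C$), and tracking the chain of natural isomorphisms carefully enough to see that $\Hom_{\ttInd(C)}(W,\phi)$ is identified with the \emph{canonical} set-level comparison map and not some reindexing of it. Both are standard facts about ind-categories, which is why the deduction from Lemma \ref{lem:SETlimcolim} is essentially immediate.
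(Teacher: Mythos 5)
Your proof is correct and takes the same route as the paper, which compresses the entire argument into the phrase ``by considering objects in $\ttInd(C)$ as functors from $C^{op}$ to sets.'' You spell out what that means: test against $\Hom_{\ttInd(C)}(W,-)$ for $W\in C$, using compactness of such $W$ in $\ttInd(C)$ on the colimit side and limit-preservation of hom-functors on the limit side, then invoke Lemma \ref{lem:SETlimcolim} together with joint conservativity of the $\Hom(W,-)$.
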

Suppose now that $C$ is a closed symmetric monoidal category.
\begin{defn}\label{defn:met}An object $V$ of $\ttInd(C)$ will be called metrizable if the category whose objects consist of objects of $C$ along with morphisms to $V$ and whose morphisms are commuting triangles is $\aleph_{1}$-filtered. 
\end{defn}
\begin{lem}An object $V$ of $\ttInd(C)$ is metrizable if and only if there is an $\aleph_{1}$-filtered category $I$, a functor $F:I \to C$ and an isomorphism $V \cong \underset{I}\colim F$.
\end{lem}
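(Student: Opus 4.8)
The plan is to prove both implications directly from the definition of metrizability. For the ``if'' direction, suppose $V \cong \underset{I}\colim F$ for an $\aleph_1$-filtered category $I$ and a functor $F : I \to C$. I would consider the comma category $\mathcal{D}$ whose objects are pairs $(X, X \to V)$ with $X \in C$. The canonical cocone $F(i) \to V$ gives a functor $I \to \mathcal{D}$, and since each object $X$ of $C$ is a compact (tiny) object of $\ttInd(C)$, any morphism $X \to V \cong \underset{I}\colim F$ factors through some $F(i)$; this shows the functor $I \to \mathcal{D}$ is cofinal. Cofinality transports the $\aleph_1$-filteredness of $I$ to $\mathcal{D}$: given fewer than $\aleph_1$-many objects of $\mathcal{D}$, push them into $I$ via the factorizations, take an upper bound in $I$ (using that $I$ is $\aleph_1$-filtered), and the resulting object of $\mathcal{D}$ dominates the original family; similarly for coequalizing fewer than $\aleph_1$-many parallel pairs. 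Hence $\mathcal{D}$ is $\aleph_1$-filtered, i.e. $V$ is metrizable.

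For the ``only if'' direction, suppose the comma category $\mathcal{D} = (C \downarrow V)$ is $\aleph_1$-filtered. The natural candidate is to take $I = \mathcal{D}$ itself, with $F : \mathcal{D} \to C$ the forgetful functor $(X, X\to V) \mapsto X$. There is a canonical cocone from $F$ to $V$ (the structure maps $X \to V$), inducing a morphism $\underset{\mathcal{D}}\colim F \to V$ in $\ttInd(C)$; I need to check this is an isomorphism. Writing $V \cong ``\underset{j \in J}\colim" V_j$ with the $V_j \in C$, each structure map $V_j \to V$ is an object of $\mathcal{D}$, which exhibits the indexing system $J$ as a (cofinal) subsystem of $\mathcal{D}$, so the colimit over $\mathcal{D}$ agrees with the colimit over $J$, which is $V$. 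One should be slightly careful about size: $\mathcal{D}$ is a priori large, but restricting to a skeleton / a small cofinal subcategory (which exists because $V$ is a small ind-object, so it is a filtered colimit of a small diagram) makes $I$ small; the $\aleph_1$-filteredness passes to this small cofinal subcategory by the same cofinality argument as above.

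The key technical input throughout is that objects of $C$, viewed inside $\ttInd(C)$, are \emph{tiny} (compact): $\Hom(X, \underset{i}\colim F_i) \cong \underset{i}\colim \Hom(X, F_i)$ for filtered systems. This is exactly what makes the forgetful functor from the comma category ``see'' all of $V$ and what makes the factorization-through-a-stage argument work. The main obstacle I anticipate is bookkeeping of set-theoretic size — ensuring the comma category can be replaced by an equivalent small category without losing $\aleph_1$-filteredness — together with carefully transporting the $\lambda$-filtered upper-bound and coequalizer conditions across a cofinal functor. Neither is deep, but both need to be spelled out rather than waved at. Once those are in place, both directions are essentially formal consequences of the universal property of filtered colimits of tiny objects in $\ttInd(C)$.
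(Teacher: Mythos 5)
Your argument is correct and follows the same route the paper takes, which is compressed into one sentence: any morphism $W \to V$ factors through some $F(i)$ because objects of $C$ are tiny in $\ttInd(C)$, and one then transports $\aleph_1$-filteredness along the resulting cofinal functor; the ``only if'' direction (taking $I$ to be a small cofinal piece of the comma category) is left implicit in the paper. You have simply spelled out the cofinality, coequalizer, and size-bookkeeping steps that the paper waves at.
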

\proof If there exists a functor as in the statement of the lemma, any morphism $W \to V$ would factor via $F(i)$ for some object $i \in I$.
\endproof
\begin{cor}\label{cor:HomRewrite} Let $V =  ``\underset{i \in I}{\colim}" V_{i} \in \ttInd(C)$ where $V_i \in C$ and $I$ has cardinality less than $\lambda$. Let $W= ``\underset{j \in J}{\colim}" W_{j}$ where $J$ is $\lambda$-filtered. Then there is an isomorphism 
\[\uHom(V, W) \cong \underset{j \in J}{\colim} \uHom(V, W_{j}).
\]
\end{cor}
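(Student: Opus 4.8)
The plan is to rewrite both sides of the asserted isomorphism using the explicit formula for the internal Hom on $\ttInd(C)$ from Proposition \ref{prop:ScnMain}, and then to commute a limit past a filtered colimit via Lemma \ref{lem:limcolim}.

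First I would unfold the left-hand side. Writing $V = ``\underset{i \in I}{\colim}" V_{i}$ and $W = ``\underset{j \in J}{\colim}" W_{j}$ with $V_{i}, W_{j} \in C$, Proposition \ref{prop:ScnMain} gives
\[\uHom(V, W) \;\cong\; \lim_{i \in I}\, \underset{j \in J}{\colim}\, \uHom(V_{i}, W_{j}),\]
in which each inner colimit is filtered and formed over objects of $C$, hence coincides with the genuine colimit in $\ttInd(C)$. Applying the same proposition with $W$ replaced by the one-object system $W_{j}$ yields $\uHom(V, W_{j}) \cong \lim_{i \in I} \uHom(V_{i}, W_{j})$; and since $J$ is filtered and $\ttInd(C)$ has filtered colimits, the right-hand side of the corollary exists and equals $\underset{j \in J}{\colim}\, \lim_{i \in I}\, \uHom(V_{i}, W_{j})$. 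So the corollary reduces to the isomorphism
\[\underset{j \in J}{\colim}\, \lim_{i \in I}\, \uHom(V_{i}, W_{j}) \;\cong\; \lim_{i \in I}\, \underset{j \in J}{\colim}\, \uHom(V_{i}, W_{j}).\]

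The second step is to extract this from Lemma \ref{lem:limcolim}, applied to the bifunctor $(j, i) \mapsto \uHom(V_{i}, W_{j})$: the $\lambda$-filtered poset $J$ plays the role of the $\lambda$-filtered index, while the poset indexing the limit over $i$ (which has cardinality $|I| < \lambda$) plays the role of the category of size $< \lambda$. This is permitted because Lemma \ref{lem:limcolim} only requires its $\lambda$-filtered side to be a poset. Its conclusion is exactly the displayed isomorphism, and chaining the three displays together gives $\uHom(V, W) \cong \underset{j \in J}{\colim}\, \uHom(V, W_{j})$.

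The argument is essentially bookkeeping and I do not foresee any genuine obstacle; the two small points that need care are that the first slot of $\uHom$ is contravariant, so the ``limit over $I$'' is really indexed by $I^{op}$ --- harmless, since $|I^{op}| = |I| < \lambda$ --- and the routine identification of the formal colimits appearing in Proposition \ref{prop:ScnMain} with honest filtered colimits in $\ttInd(C)$, legitimate because $J$ is filtered.
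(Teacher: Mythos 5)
Your argument is exactly the paper's proof: unfold $\uHom(V,W)$ via Proposition \ref{prop:ScnMain} to $\lim_{i\in I}\colim_{j\in J}\uHom(V_i,W_j)$, swap the outer $\lim$ with the inner filtered $\colim$ using Lemma \ref{lem:limcolim}, and recognize the result as $\colim_{j\in J}\uHom(V,W_j)$. Your extra remarks about $I^{op}$ and about identifying formal with honest filtered colimits are correct points of care that the paper leaves implicit.
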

{\bf Proof.} By Lemma \ref{lem:limcolim} we have 
\[\uHom(V, W)  \cong \lim_{i\in I} ``\underset{j\in J}{\colim}"\uHom(V_i, W_j) \cong \underset{j\in J}{\colim} \lim_{i\in I} \uHom(V_i, W_j) \cong \underset{j\in J}{\colim}  \uHom(V, W_j).
\]
\begin{lem}\label{lem:MayerBig}Let $V =  ``\underset{i \in I}{\colim}" V_{i} \in \ttInd(C)$ where $V_i \in C$ and $I$ has cardinality less than $\lambda$. Let $W= ``\underset{j \in J}{\colim}" W_{j}$ where $J$ is $\lambda$-filtered. Assume that $W$ is nuclear in $\ttInd(C)$,
then 
\[\uHom(V, W) \cong V^{\vee} \ootimes W
\]
\end{lem}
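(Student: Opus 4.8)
The plan is to realize both $\uHom(V,W)$ and $V^\vee\ootimes W$ as filtered colimits indexed by $J$, and then to show that the natural comparison morphism between them induces an isomorphism of these ind-systems, the point being that the nuclearity of $W$ forces a cofinal family of its transition morphisms to be nuclear, which is exactly what lets one build an inverse.

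First I would rewrite the two sides. Since $\ootimes$ is closed it preserves colimits in each variable (Proposition \ref{prop:ScnMain}), so $V^\vee\ootimes W\cong\colim_{j\in J}(V^\vee\ootimes W_j)$. On the other side, the cardinality hypotheses, namely $|I|<\lambda$ together with $J$ being $\lambda$-filtered, are precisely what is needed to apply Corollary \ref{cor:HomRewrite}, which gives $\uHom(V,W)\cong\colim_{j\in J}\uHom(V,W_j)$. Under these identifications, naturality shows that the canonical morphism $\eta:V^\vee\ootimes W\to\uHom(V,W)$ (which for $V\in C$ is the one appearing in Definition \ref{defn:IndBanNuc}) is the filtered colimit over $J$ of the canonical morphisms $\eta_j:V^\vee\ootimes W_j\to\uHom(V,W_j)$. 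Thus it suffices to prove that $\colim_{j\in J}\eta_j$ is an isomorphism.

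Now I bring in nuclearity. By Lemma \ref{lem:NucTranMaps}, for each $j\in J$ there is $\sigma(j)\ge j$ such that the transition morphism $\phi_j:=\phi_{j,\sigma(j)}:W_j\to W_{\sigma(j)}$ is nuclear, and the indices $\sigma(j)$ are cofinal in $J$. By definition of nuclearity, choose a lift $\psi_j\in\Hom(e,W_j^\vee\ootimes W_{\sigma(j)})$ of $\phi_j$ along the canonical map $W_j^\vee\ootimes W_{\sigma(j)}\to\uHom(W_j,W_{\sigma(j)})$, and set
\[
m_j:=\bigl(\kappa\ootimes\id_{W_{\sigma(j)}}\bigr)\circ\bigl(\id_{\uHom(V,W_j)}\ootimes\psi_j\bigr),
\]
a morphism $\uHom(V,W_j)\to V^\vee\ootimes W_{\sigma(j)}$, where $\kappa:\uHom(V,W_j)\ootimes W_j^\vee\to\uHom(V,e)=V^\vee$ is the (symmetrized) internal composition. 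The key point is the pair of identities
\[
\eta_{\sigma(j)}\circ m_j=\uHom(V,\phi_j)\qquad\text{and}\qquad m_j\circ\eta_j=\id_{V^\vee}\ootimes\phi_j .
\]
Each asserts that a morphism assembled from the closed symmetric monoidal structure, evaluated on the ``element'' $\psi_j$, computes ``compose with the $W_j\to W_{\sigma(j)}$ part of $\psi_j$'' on the appropriate side; both follow from a routine diagram chase, e.g.\ by checking that two natural morphisms $W_j^\vee\ootimes W_{\sigma(j)}\to\uHom\bigl(\uHom(V,W_j),\uHom(V,W_{\sigma(j)})\bigr)$ coincide, which is ultimately a consequence of the coherence theorem for closed symmetric monoidal categories.

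Granting the identities, the families $\{\eta_j\}$ and $\{m_j\}$ (with target indices $\sigma(j)$ cofinal in $J$) form a zig-zag between the ind-systems $\bigl(V^\vee\ootimes W_j\bigr)_{j\in J}$ and $\bigl(\uHom(V,W_j)\bigr)_{j\in J}$ in which each composite $m_j\circ\eta_j$ and $\eta_{\sigma(j)}\circ m_j$ is a structure morphism of the relevant system. Exactly as in the final step of the proof of Lemma \ref{lem:SameSame}, the $m_j$ then assemble to a morphism $\uHom(V,W)\to V^\vee\ootimes W$ which is a two-sided inverse of $\eta$, so $\eta$ is an isomorphism. I expect the only genuine work to be the verification of the two displayed identities for $m_j$: it is purely formal but mildly fiddly bookkeeping with evaluation and composition morphisms; everything else is a matter of correctly invoking the cardinality hypotheses through Corollary \ref{cor:HomRewrite} and of standard manipulations with filtered colimits.
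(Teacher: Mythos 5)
Your proposal is correct and follows essentially the same path as the paper's proof: reduce both sides to $J$-indexed colimits via Corollary \ref{cor:HomRewrite}, invoke Lemma \ref{lem:NucTranMaps} to produce cofinally many nuclear transitions in $W$, and use the nuclear structure to construct the inverse. The one genuine refinement you make is spelling out the explicit morphisms $m_j$ together with the two identities that certify the ind-level zig-zag, whereas the paper compresses this into ``this constructs an inverse''; you also keep the original index set $J$ and work with the selection $\sigma$ rather than passing ``WLOG'' to a system with all transitions nuclear, which sidesteps having to re-verify that the replacement system remains $\lambda$-filtered.
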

\proof Using Lemma \ref{lem:NucTranMaps}, without loss of generality we can assume that $W$ is presented by a system where all the structure morphisms are nuclear. Consider the morphism
\[(``\underset{i \in I}\colim"W_i)\ootimes V^{\vee} \to \uHom(V,``\underset{i \in I}\colim"W_i).\] By Corollary \ref{cor:HomRewrite} we can equivalently write this as the colimit of the morphisms 
\[W_{i} \ootimes V^{\vee} \to \uHom(V,W_i).
\]
Consider any $W_{i} \to W_{j}$ in the system corresponding to a non-identity arrow $i\to j$. Since they are nuclear, the precomposition $V \to W_{i} \to W_{j}$ is also nuclear by Lemma \ref{lem:NuclearProps}. Therefore, it lies in the image of $\Hom(e, V^{\vee}\ootimes W_{j})$. This constructs an inverse 
\[(``\underset{i \in I}\colim"W_i)\ootimes V^{\vee} \leftarrow \uHom(V,``\underset{i \in I}\colim"W_i).
\]
\endproof
\begin{cor}\label{cor:limsprods}Let $V =   ``\underset{i \in I}{\colim}" V_{i} \in \ttInd(C)$ where $V_i \in C$ and $I$ has cardinality less than $\lambda$. Let $W= ``\underset{j \in J}{\colim} "W_{j}$ where $J$ is $\lambda$-filtered. Assume that $W$ is nuclear in $\ttInd(C)$. Then the natural morphism
\[(\underset{i \in I}\lim (V^{\vee}_{i}))\ootimes W \to \underset{i \in I}\lim (V^{\vee}_{i} \ootimes W)
\]
is an isomorphism. Also, if $\lambda=\aleph_1$ and $V_1, V_2, V_3, \dots$ is a countable list of objects in $\ttC,$ the natural morphism
\[(\underset{i \in I}\prod (V^{\vee}_{i}))\ootimes W \to \underset{i \in I}\prod (V^{\vee}_{i} \ootimes W)
\]
is an isomorphism. 
\end{cor}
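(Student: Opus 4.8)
The plan is to reduce both assertions to Lemma \ref{lem:MayerBig}, which has already absorbed the nuclearity of $W$ and the cardinality bound on the index into an identification of $(-)^{\vee}\ootimes W$ with $\uHom(-,W)$. The only extra input is the formal fact that, the monoidal structure being closed, $\uHom(-,W)$ carries colimits in its first argument to limits; in particular, writing $V=\underset{i\in I}\colim V_i$, we get $\uHom(V,W)\cong\underset{i\in I}\lim\uHom(V_i,W)$ and, taking $W=e$, $V^{\vee}\cong\underset{i\in I}\lim V_i^{\vee}$.

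For the limit statement I would apply Lemma \ref{lem:MayerBig} twice. Applied to $V$ itself --- whose hypotheses ($|I|<\lambda$, $J$ $\lambda$-filtered, $W$ nuclear) are exactly those of the corollary --- it gives $V^{\vee}\ootimes W\cong\uHom(V,W)$. Applied to each single object $V_i\in C$, regarded as an Ind-object on the one-point category (of cardinality $1<\lambda$), it gives $V_i^{\vee}\ootimes W\cong\uHom(V_i,W)$, naturally in $i$. Concatenating with the formal facts above,
\[
\Big(\underset{i\in I}\lim V_i^{\vee}\Big)\ootimes W \;=\; V^{\vee}\ootimes W \;\cong\; \uHom(V,W) \;\cong\; \underset{i\in I}\lim\uHom(V_i,W) \;\cong\; \underset{i\in I}\lim\big(V_i^{\vee}\ootimes W\big).
\]
It then remains to check that this composite is the canonical comparison morphism named in the statement, which is a routine diagram chase: both the $j$-th projection $\underset{i\in I}\lim V_i^{\vee}\to V_j^{\vee}$ and the structure map $\uHom(V,W)\to\uHom(V_j,W)$ are induced by the coprojection $V_j\to V$, so the two maps to the $j$-th factor agree for every $j$.

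For the countable-product statement, take $\lambda=\aleph_1$ and set $V:=\underset{i\geq1}\coprod V_i\in\ttInd(C)$, which is the (countable, directed) colimit of its partial coproducts $V_1\oplus\cdots\oplus V_n\in C$; since $\aleph_0<\aleph_1$, this $V$ again satisfies the hypotheses of Lemma \ref{lem:MayerBig} for the given $\aleph_1$-filtered $J$ and nuclear $W$. As dualizing a coproduct yields the product of the duals, $V^{\vee}\cong\underset{i\geq1}\prod V_i^{\vee}$ and $\uHom(V,W)\cong\underset{i\geq1}\prod\uHom(V_i,W)$, so running the same three isomorphisms gives $\big(\underset{i\geq1}\prod V_i^{\vee}\big)\ootimes W\cong\underset{i\geq1}\prod\big(V_i^{\vee}\ootimes W\big)$; equivalently, one may quote the limit statement just proved for the directed system $n\mapsto V_1\oplus\cdots\oplus V_n$ and identify the limits with the products (using that $-\ootimes W$ preserves finite coproducts). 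In both parts the one point requiring care is that we must show the \emph{named} comparison morphism is invertible, not merely that its source and target are abstractly isomorphic; once Lemma \ref{lem:MayerBig} is available this is pure bookkeeping, and no further analytic difficulty arises, the nuclearity of $W$ and the cardinality hypothesis having already been used there.
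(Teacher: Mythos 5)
Your proposal is correct and follows essentially the same route as the paper's own proof: both reduce the first statement to Lemma \ref{lem:MayerBig} via the chain $(\underset{i\in I}\lim V_i^{\vee})\ootimes W\cong V^{\vee}\ootimes W\cong\uHom(V,W)\cong\underset{i\in I}\lim\uHom(V_i,W)\cong\underset{i\in I}\lim(V_i^{\vee}\ootimes W)$, and both deduce the countable-product statement by applying the first to the directed system of partial coproducts $V_1\to V_1\oplus V_2\to\cdots$. (The only cosmetic difference is that for the isomorphism $\uHom(V_i,W)\cong V_i^{\vee}\ootimes W$ with $V_i\in C$ one can invoke Definition \ref{defn:IndBanNuc} directly rather than the more general Lemma \ref{lem:MayerBig}, but this changes nothing of substance.)
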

\proof 
The left hand side is $V^{\vee}\ootimes W,$ which is isomorphic to 
\[\uHom(V, W)\cong \underset{i \in I}\lim \uHom(V_i, W) \cong \underset{i \in I}\lim (V_{i}^{\vee} \ootimes W)\]  
where we have used Lemma \ref{lem:MayerBig}. 
For the second statement let $I=\mathbb{Z}_{>0}$ and just consider the system \[V_1\to V_1\oplus V_2 \to  V_1\oplus V_2 \oplus V_3 \to \cdots\] and apply the statement already proven.
\endproof

\begin{defn}\label{defn:PsiUpsilon} Let $\Psi$ be the poset consisting of functions $\psi:I \to \mathbb{Z}_{\geq 1}$ with the order $\psi_{1}\leq \psi_{2}$ if $\psi_{1}(i)\leq \psi_{2}(i)$ for all $i \in I$. Let $\Upsilon$ be the poset consisting of functions $\psi:I \to \mathbb{Z}_{\geq 1}$ with the order $\psi_{1}< \psi_{2}$ if $\psi_{1}(i)< \psi_{2}(i)$ for all $i \in I-J$ where $J$ is a finite subset of $I$. The categories $\Psi$ and $\Upsilon$ with objects $\underset{i \in I}\prod\mathbb{Z}_{\geq 1}$ can be thought of as categories of maps $I\to \mathbb{Z}_{\geq 1}$.
\end{defn}
At this point in the subsection, we need to reduce the generality and take $C=\ttBan_{R}$ for a Banach ring $R$. Of course, as usual, if $R$ is non-archimedean, one can use $C=\ttBan^{na}_{R}$ instead with the obvious modifications, which we suppress to save space.
\begin{lem}\label{lem:towerRep} Suppose we are given a family $(V_i)_{i \in I}$ 
in $\ttBan_{R}$ (n.b. not in $\ttInd(\ttBan_{R})$) indexed by a set $I$. Then the natural morphism in $\ttInd(\ttBan_{R})$
\[``\underset{\psi \in \Psi}\colim" \prod_{i \in I}{}^{\leq 1} ((V_{i})_{\psi(i)^{-1}}) \to \prod_{i\in I}V_{i} 
\]
is an isomorphism in $\ttInd(\ttBan_{R})$ where the product on the right is taken in $\ttInd(\ttBan_{R})$, the product on the left is taken in  $\ttBan^{\leq 1}_{R}$ and the notation $(V_{i})_{\psi(i)^{-1}}$ uses Definition \ref{defn:scaling}. 
\end{lem}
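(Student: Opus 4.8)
The plan is to show that the source of the displayed morphism, which I write as $P := ``\underset{\psi\in\Psi}\colim"\, P_\psi$ with $P_\psi := \prod_{i\in I}^{\leq 1}((V_i)_{\psi(i)^{-1}})$ regarded in $\ttBan_R$, represents the functor $Z \mapsto \prod_{i\in I}\Hom(Z, V_i)$ on $\ttInd(\ttBan_R)$; the Yoneda lemma then identifies $P$, together with the appropriate projections, as the categorical product $\prod_{i\in I}V_i$ and the displayed map as the comparison isomorphism. First I would record the elementary bookkeeping: $\Psi$ is a directed poset under pointwise maxima, and for $\psi \le \psi'$ the transition map $P_\psi \to P_{\psi'}$ is the identity on underlying sequences and is non-expanding since $\psi'(i)^{-1} \le \psi(i)^{-1}$, so $P$ is a genuine object of $\ttInd(\ttBan_R)$. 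Composing the coordinate projection $P_\psi \to (V_i)_{\psi(i)^{-1}}$ with the bounded identity $(V_i)_{\psi(i)^{-1}} \to V_i$ (of norm $\psi(i)$, cf.\ Definition \ref{defn:scaling}) and letting $\psi$ vary produces morphisms $\pi_i : P \to V_i$; the natural morphism of the statement is the one these induce into the product on the right.

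The computational heart is the evaluation, for $Z_0 \in \ttBan_R$, of $\Hom_{\ttBan_R}(Z_0, P_\psi)$. Using the explicit description of products in $\ttBan^{\leq 1}_R$ recalled in Subsection \ref{BRBM} (tuples of bounded $\sup$-norm, with the $\sup$-norm), I would compose a bounded map with the coordinate projections and reassemble to identify this Hom-set with the set of tuples $(f_i)_{i\in I}$ such that $f_i \in \Hom_{\ttBan_R}(Z_0, V_i)$ and $\sup_{i\in I}\psi(i)^{-1}\|f_i\| < \infty$; the only content is the routine estimate that a linear map into a $\sup$-normed product is bounded exactly when its coordinates are uniformly bounded after the $\psi(i)^{-1}$ rescaling. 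Next I would note that as $\psi$ ranges over $\Psi$ these Hom-sets form an increasing family of subsets of the fixed set $\prod_{i\in I}\Hom_{\ttBan_R}(Z_0, V_i)$ whose union is everything: given any tuple $(f_i)$, the choice $\psi(i) := \max(1,\lceil\|f_i\|\rceil)$ yields $\sup_{i}\psi(i)^{-1}\|f_i\| \le 1$. Since a filtered colimit of subsets of a fixed set is their union, $\underset{\psi\in\Psi}\colim\,\Hom_{\ttBan_R}(Z_0, P_\psi) \cong \prod_{i\in I}\Hom_{\ttBan_R}(Z_0, V_i)$.

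To conclude, I would write an arbitrary object of $\ttInd(\ttBan_R)$ as $Z = ``\underset{k}\colim"\, Z_k$ with $Z_k \in \ttBan_R$, apply the standard formula for Hom-sets in Ind-categories (recalled before Remark \ref{rem:DescribeHoms}) together with the preceding paragraph to obtain $\Hom(Z, P) \cong \lim_k \prod_{i\in I}\Hom_{\ttBan_R}(Z_k, V_i)$, commute the limit over $k$ past the product over $I$ (both are limits in $\ttSet$), and use $\Hom_{\ttInd(\ttBan_R)}(Z, V_i) = \lim_k\Hom_{\ttBan_R}(Z_k, V_i)$ (valid because $V_i$ lies in $\ttBan_R$) to get $\Hom(Z, P) \cong \prod_{i\in I}\Hom(Z, V_i)$, naturally in $Z$. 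The step I expect to be the main obstacle is not any single estimate but checking that this chain of natural identifications carries a morphism $g : Z \to P$ to the tuple $(\pi_i \circ g)_i$ — i.e.\ that the abstract representability isomorphism is induced by precisely the morphism in the statement; once that is pinned down, Yoneda shows the displayed morphism is an isomorphism.
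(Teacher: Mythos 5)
Your proof is correct and follows essentially the same route as the paper's: both reduce to testing $\Hom(M,-)$ for $M\in\ttBan_R$, use the explicit $\sup$-norm description of $\prod^{\leq 1}$, and exploit that any coordinate-wise bounded family of maps can be made uniformly bounded after rescaling by a suitable $\psi\in\Psi$. The only cosmetic difference is in how the colimit over $\Psi$ is handled: the paper runs through a chain of $\Hom^{\leq j}$ identities and then invokes the distributivity of products over filtered colimits in $\ttSet$, whereas you make the distributivity concrete by identifying each $\Hom(M,P_\psi)$ as a subset of the fixed set $\prod_i\Hom(M,V_i)$ and observing the union over $\psi$ is everything via $\psi(i):=\max(1,\lceil\|f_i\|\rceil)$ — this is the same combinatorial fact, just unpacked by hand.
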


{\bf Proof.}
 It is enough to show that the morphisms \[\Hom(M, ``\underset{\psi \in \Psi}\colim" \underset{i\in I}\prod{}^{\leq 1} ((V_{i})_{\psi(i)^{-1}})) \to \Hom(M, \underset{i\in I}\prod V_{i}) \] are  isomorphisms of abelian groups for any $M \in \ttBan_{R}$. We have 
\begin{equation}
\begin{split}\Hom(M, ``\underset{\psi \in \Psi}\colim" \prod_{i \in I}{}^{\leq 1} ((V_{i})_{\psi(i)^{-1}})) & \cong \underset{\psi \in \Psi}\colim \Hom(M,  \prod_{i \in I}{}^{\leq 1} ((V_{i})_{\psi(i)^{-1}}))  \\
 & \cong \underset{\psi \in \Psi} \colim   \ \ \underset{j \in \mathbb{Z}_{>0}}\colim\Hom^{\leq j}(M,  \prod_{i \in I}{}^{\leq 1} ((V_{i})_{\psi(i)^{-1}}))  \\ 
   & \cong \underset{\psi \in \Psi} \colim   \ \ \underset{j \in \mathbb{Z}_{>0}}\colim\Hom^{\leq 1}(M,  (\prod_{i \in I}{}^{\leq 1} ((V_{i})_{\psi(i)^{-1}}))_{j^{-1}})  \\
  & \cong \underset{\psi \in \Psi} \colim   \ \ \underset{j \in \mathbb{Z}_{>0}}\colim\Hom^{\leq 1}(M,  \prod_{i \in I}{}^{\leq 1} ((V_{i})_{(j\psi(i))^{-1}}))  \\
& \cong \underset{\psi \in \Psi} \colim   \Hom^{\leq 1}(M,  \prod_{i \in I}{}^{\leq 1} ((V_{i})_{\psi(i)^{-1}}))  \\
&  \cong \underset{\psi \in \Psi}\colim  \prod_{i \in I}  \Hom^{\leq 1}(M,   (V_{i})_{\psi(i)^{-1}}) \\
& \cong   \prod_{i \in I}\underset{j \in \mathbb{Z}_{>0}}\colim \Hom^{\leq 1}(M,   (V_{i})_{j^{-1}}) \\
& =   \prod_{i \in I} \underset{j \in \mathbb{Z}_{>0}}\colim \Hom^{\leq j}(M,   V_{i}) \\
& =   \prod_{i \in I}  \Hom(M,   V_{i}) \\
& \cong     \Hom(M,   \prod_{i\in I} V_{i}).
\end{split}
\end{equation}
Notice here that in order to pass from colimits over $\Psi$ to colimits over $ \mathbb{Z}_{>0}$, in the isomorphism \[ \underset{\psi \in \Psi}\colim  \prod_{i \in I}  \Hom^{\leq 1}(M,   (V_{i})_{\psi(i)^{-1}}) \cong \prod_{i \in I}\underset{j \in \mathbb{Z}_{>0}}\colim \Hom^{\leq 1}(M,   (V_{i})_{j^{-1}})\] we have used that in the category of sets, products and filtered colimits distribute (not commute!). 
This means that for a set indexed by $I$ of filtered sets $\{S_{i,j} \}_{j\in J}$ we have
\[ \prod_{i \in I}\underset{j \in \mathbb{Z}_{>0}}\colim S_{i,j}\cong \underset{\psi \in \underset{i \in I}\prod\mathbb{Z}_{>0}}\colim  \prod_{i \in I} S_{i, \psi(i)}.
\]
See for instance \cite{ALR} and \cite{AR}, where this is explained. 
\hfill $\Box$

\begin{lem}\label{lem:ProdFre} Suppose we are given a countable family $(V_i)_{i \in I}$ 
in $\ttBan_{R}$ (n.b. not in $\ttInd(\ttBan_{R})$) indexed by a set $I$. Then $\underset{i\in I}\prod V_i$ is metrizable in $\ttInd(\ttBan_{R})$. 
\end{lem}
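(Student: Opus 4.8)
\emph{Proof plan.} The plan is to check Definition~\ref{defn:met} head on, i.e.\ to prove that the comma category $\ttBan_{R}\downarrow\prod_{i\in I}V_{i}$ (objects: a Banach module equipped with a morphism to the product; morphisms: commuting triangles) is $\aleph_{1}$-filtered. When $I$ is finite, $\prod_{i\in I}V_{i}$ is itself an object of $\ttBan_{R}$ and this comma category has a terminal object $(\prod_{i\in I}V_{i},\id)$, hence is filtered of every degree; so I may and will assume $I$ is countably infinite, and after relabelling take $I=\mathbb{Z}_{>0}$.

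The starting point is Lemma~\ref{lem:towerRep}, which gives an isomorphism $\prod_{i\in I}V_{i}\cong ``\colim_{\psi\in\Psi}"\,A_{\psi}$, where $A_{\psi}:=\prod^{\leq 1}_{i\in I}(V_{i})_{\psi(i)^{-1}}\in\ttBan_{R}$ and $\Psi$ is the directed poset of Definition~\ref{defn:PsiUpsilon}; let $\iota_{\psi}\colon A_{\psi}\to\prod_{i\in I}V_{i}$ denote the colimit coprojections. I would first record three routine facts. (i) Each $\iota_{\psi}$ is a monomorphism: a morphism into $A_{\psi}$ is determined by its coordinate components $(\,\cdot\,)\to(V_{i})_{\psi(i)^{-1}}=V_{i}$, and these agree with the components of its composite with $\iota_{\psi}$, so $\Hom(Z,A_{\psi})\to\Hom(Z,\prod_{i}V_{i})$ is injective for every Banach module $Z$, and Banach modules generate $\ttInd(\ttBan_{R})$. (ii) If $\psi_{1}(i)\leq C\psi_{2}(i)$ for all $i$ and some $C>0$, then the identity on underlying vector spaces is a bounded morphism $j\colon A_{\psi_{1}}\to A_{\psi_{2}}$ of norm $\leq C$ with $\iota_{\psi_{2}}\circ j=\iota_{\psi_{1}}$ (since $\psi_{2}(i)^{-1}\|v_{i}\|\leq C\,\psi_{1}(i)^{-1}\|v_{i}\|$). (iii) Every $W\in\ttBan_{R}$ is tiny in $\ttInd(\ttBan_{R})$, so any $f\colon W\to\prod_{i\in I}V_{i}$ factors as $\iota_{\psi}\circ h$ for some $\psi\in\Psi$, $h\colon W\to A_{\psi}$.

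With these in hand the verification runs as follows. Given a diagram $H\colon\mathcal{A}\to\ttBan_{R}\downarrow\prod_{i\in I}V_{i}$ with $\mathcal{A}$ having at most countably many morphisms, enumerate its objects $a_{1},a_{2},\dots$, write $H(a)=(W_{a},f_{a})$, $H(\alpha)=g_{\alpha}$, and by (iii) factor $f_{a_{k}}=\iota_{\psi_{k}}\circ h_{k}$. The heart of the argument is to \emph{diagonalize}: set $\psi(i):=\max\{\psi_{k}(i):k\leq i\}\in\Psi$ (the pointwise maximum of the finitely many $\psi_{k}$ if $\mathcal{A}$ has only finitely many objects). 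Then $\psi_{k}(i)\leq\psi(i)$ for all $i\geq k$, so --- the finitely many indices $i<k$ being harmless --- $\psi_{k}(i)\leq C_{k}\psi(i)$ for all $i$ with $C_{k}:=\max(\{1\}\cup\{\psi_{k}(i)/\psi(i):i<k\})$. By (ii) this yields morphisms $j_{k}\colon A_{\psi_{k}}\to A_{\psi}$ over $\prod_{i}V_{i}$, hence morphisms $\eta_{k}:=j_{k}\circ h_{k}\colon(W_{a_{k}},f_{a_{k}})\to(A_{\psi},\iota_{\psi})$ in the comma category. For each arrow $\alpha\colon a_{k}\to a_{l}$ one has $\iota_{\psi}\circ(\eta_{l}\circ g_{\alpha})=f_{a_{l}}\circ g_{\alpha}=f_{a_{k}}=\iota_{\psi}\circ\eta_{k}$, and since $\iota_{\psi}$ is monic by (i), $\eta_{l}\circ g_{\alpha}=\eta_{k}$. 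Thus $(A_{\psi},\iota_{\psi})$ together with the $\eta_{k}$ is a cocone on $H$; as $H$ was arbitrary, the comma category is $\aleph_{1}$-filtered and $\prod_{i\in I}V_{i}$ is metrizable.

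The hard part --- and the only point at which countability of $I$ enters --- is the diagonalization: a countable family $\{\psi_{k}\}$ in $\Psi$ generally has no pointwise upper bound (take $\psi_{k}\equiv k$), so one cannot funnel all the $f_{a_{k}}$ through one stage of Lemma~\ref{lem:towerRep}; the $\max\{\psi_{k}(i):k\leq i\}$ trick instead produces a $\psi$ dominating each $\psi_{k}$ merely up to a multiplicative constant, which suffices by fact (ii), while fact (i) (that $\iota_{\psi}$ is monic) is what turns the family $\{\eta_{k}\}$ into a genuine cocone rather than an incoherent one. Equivalently, one could package this as: the presentation of Lemma~\ref{lem:towerRep} remains a colimit after replacing $(\Psi,\leq)$ by the coarser preorder $\psi_{1}\preceq\psi_{2}\iff\exists C>0\ \forall i\ \psi_{1}(i)\leq C\psi_{2}(i)$ (the inclusion being a final functor), and that preorder is $\aleph_{1}$-filtered by the same diagonal argument, so metrizability then follows from the characterization of metrizable objects as $\aleph_{1}$-filtered colimits of objects of $\ttBan_{R}$.
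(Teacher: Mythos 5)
Your proof is correct and essentially the same as the paper's: both start from the presentation of Lemma~\ref{lem:towerRep} and hinge on the same diagonalization trick (you take $\psi(i)=\max\{\psi_k(i):k\le i\}$, the paper takes $\alpha(i)=1+\sum_{k\le i}\psi_k(i)$), and your closing ``equivalently'' paragraph is precisely the paper's route via a coarser poset on the same objects (the paper's $\Upsilon$) together with the finality of $\Psi\hookrightarrow\Upsilon$. The only cosmetic difference is that you verify $\aleph_1$-filteredness of the comma category directly, using that $\iota_\psi$ is monic to get coherence of the cocone, whereas the paper instead re-presents $\prod_{i}V_i$ as a formal colimit over the $\aleph_1$-filtered $\Upsilon$ and invokes the characterization of metrizable objects as $\aleph_1$-filtered colimits of Banach modules.
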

\proof 
Notice that $\Psi$ is a non-full subcategory of $\Upsilon$ but they have the same objects. The functor $\Psi\to \ttBan_{R}$ which sends $\psi$ to $\underset{i\in I}\prod{}^{\leq 1} (V_{i})_{\psi(i)^{-1}}$ can be extended to a functor $\Upsilon \to \ttBan_{R}$.  Indeed suppose that $\psi_{1}(i)< \psi_{2}(i)$ for all $i \in I-J$ where $J$ is a finite subset of $I$. Then we have
\[\underset{i\in I}\sup ||v_i||_i \psi_2(i)^{-1}\leq \max\{ \underset{i\in I-J}\sup ||v_i||_i \psi_2(i)^{-1}, \underset{i\in J}\sup ||v_i||_i \psi_2(i)^{-1}\}\leq \max\{ \underset{i\in I-J}\sup ||v_i||_i \psi_1(i)^{-1}, \underset{i\in J}\sup ||v_i||_i \psi_2(i)^{-1}\}
\]
and therefore
\[\underset{i\in I}\sup ||v_i||_i \psi_2(i)^{-1}\leq C \underset{i\in I}\sup ||v_i||_i \psi_1(i)^{-1}
\]
where \[C=\max \huge\{1, c \huge\}\] and
\[c=\frac{\underset{i\in J}\sup ||v_i||_i \psi_2(i)^{-1}}{\underset{i\in J}\sup ||v_i||_i \psi_1(i)^{-1}}.
\]
The inclusion $\Psi\to \Upsilon$ is a final functor. This is because if $\psi_1(i) <\psi_2(i)$ for all $i$ in $I-J$ then by letting $\psi_3=\psi_1+\psi_2$ we have $\psi_1(i) <\psi_3(i)$ and $\psi_2(i) <\psi_3(i)$ for all $i\in I$. The category $\Upsilon$ is $\aleph_1$-filtered:  Given a countable collection $\psi_1, \psi_2, \dots$ of objects of $\Upsilon$, define $\alpha\in \Upsilon$ by $\alpha(i) = 1+ \sum_{k=0}^{i}\psi_k(i)$. Then clearly for all $i\geq j$ we have $\psi_j(i) < \alpha(i)$ and so in $\Upsilon$ we have that $\psi < \alpha$.
Hence the inclusion induces an isomorphism \[``\underset{\psi \in \Upsilon}\colim" \prod_{i \in I}{}^{\leq 1} ((V_{i})_{\psi(i)^{-1}})\to``\underset{\psi \in \Psi}\colim" \prod_{i \in I}{}^{\leq 1} ((V_{i})_{\psi(i)^{-1}}).\] Combining this with the isomorphism of Lemma \ref{lem:towerRep} we see that $\underset{i\in I}\prod V_i$ is metrizable. 
\endproof
\begin{cor}\label{cor:TowerLim}Suppose we are given a system 
\[\cdots \to V_4 \to V_3 \to V_2 \to V_1
\]
in
 $\ttBan_{R}$ (n.b. not in $\ttInd(\ttBan_{R})$) such that all morphisms in the system are in $\ttBan^{\leq 1}_{R}$. Let $\Psi$ be the poset consisting of non-decreasing functions $\psi:\mathbb{Z}_{\geq 1} \to \mathbb{Z}_{\geq 1}$ with the order $\psi_{1}\leq \psi_{2}$ if $\psi_{1}(i)\leq \psi_{2}(i)$ for all $i \in \mathbb{Z}_{\geq 1}$. 
Then the natural morphism in $\ttInd(\ttBan_{R})$
\[``\underset{\psi \in \Psi}\colim" \ker[\prod_{i \in \mathbb{Z}_{\geq 1}}{}^{\leq 1} ((V_{i})_{\psi(i)^{-1}}) \stackrel{\id-s}\longrightarrow \prod_{i \in \mathbb{Z}_{\geq 1}}{}^{\leq 1} ((V_{i})_{2^{-1}\psi(i+1)^{-1}})]\longrightarrow \lim_{i\in \mathbb{Z}_{\geq 1}}V_{i} 
\]
is an isomorphism in $\ttInd(\ttBan_{R})$. Furthermore, $\underset{i\in \mathbb{Z}_{\geq 1}}\lim V_{i} $ is metrizable and so any  Fr\'{e}chet module is metrizable.

\end{cor}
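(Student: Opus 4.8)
The plan is to realise $\underset{i}\lim V_i$ as the kernel of the shift endomorphism of $\prod_i V_i$ and then substitute the description of the product coming from Lemma~\ref{lem:towerRep}. Writing $t_{i+1}\colon V_{i+1}\to V_i$ for the transition maps and $s\colon\prod_i V_i\to\prod_i V_i$ for the morphism with $i$-th component $(v_j)_j\mapsto t_{i+1}(v_{i+1})$, one has $\underset{i}\lim V_i=\ker\bigl[\prod_i V_i\stackrel{\id-s}\longrightarrow\prod_i V_i\bigr]$ in $\ttInd(\ttBan_R)$. The computational point is that for a \emph{non-decreasing} $\psi$ the endomorphism $\id-s$ restricts to a \emph{contracting} morphism
\[(\id-s)_\psi\colon\prod_i{}^{\leq 1}(V_i)_{\psi(i)^{-1}}\longrightarrow\prod_i{}^{\leq 1}(V_i)_{(2\psi(i+1))^{-1}},\]
because if $v=(v_j)$ lies in the source then $\|v_i\|\le\psi(i)\,\|v\|$, so $\|v_i-t_{i+1}(v_{i+1})\|\le\|v_i\|+\|v_{i+1}\|\le(\psi(i)+\psi(i+1))\,\|v\|\le 2\psi(i+1)\,\|v\|$, and the last inequality is exactly where monotonicity of $\psi$ enters. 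This explains why the statement is phrased with non-decreasing $\psi$ and the weight $2\psi(i+1)$.

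Next I would record two cofinality facts. Every function $\mathbb{Z}_{\ge 1}\to\mathbb{Z}_{\ge 1}$ is dominated pointwise by a non-decreasing one, so $\Psi$ is cofinal in the poset of all such functions and Lemma~\ref{lem:towerRep} gives $\prod_i V_i\cong``\underset{\psi\in\Psi}\colim"\prod_i{}^{\leq 1}(V_i)_{\psi(i)^{-1}}$; and $\psi\mapsto(i\mapsto 2\psi(i+1))$ is also cofinal from $\Psi$ into all functions (given $\phi$, take $\psi(i)=1+\max_{j\le i}\phi(j)$), so likewise $\prod_i V_i\cong``\underset{\psi\in\Psi}\colim"\prod_i{}^{\leq 1}(V_i)_{(2\psi(i+1))^{-1}}$. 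Hence $\id-s$ is presented over $\Psi$ by the level maps $(\id-s)_\psi$, and its kernel can be computed level by level, arguing as in the proof of Lemma~\ref{lem:towerRep}: for $M\in\ttBan_R$, using that $\Hom(M,-)$ commutes with the formal filtered colimit and with kernels, that filtered colimits of abelian groups preserve kernels, and Lemma~\ref{lem:towerRep},
\[\Hom\bigl(M,``\underset{\psi\in\Psi}\colim"\ker(\id-s)_\psi\bigr)\cong\ker\bigl[\Hom(M,\prod_i V_i)\stackrel{\id-s}\longrightarrow\Hom(M,\prod_i V_i)\bigr]\cong\Hom\bigl(M,\underset{i}\lim V_i\bigr),\]
naturally in $M$; since an object of $\ttInd(\ttBan_R)$ is determined by its restriction to $\ttBan_R^{op}$, this gives the asserted isomorphism.

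For metrizability the subtlety is that $\Psi$ with the pointwise order, while directed, is \emph{not} $\aleph_1$-filtered (the constant functions $\psi_k\equiv k$ have no common upper bound), so I would reindex exactly as in the proof of Lemma~\ref{lem:ProdFre}. Let $\Upsilon'$ denote the same non-decreasing functions but with the coarser order in which $\psi\le\psi'$ iff $\psi(i)\le\psi'(i)$ for all but finitely many $i$. The diagram $\psi\mapsto\ker(\id-s)_\psi$ extends along the identity-on-objects functor $\Psi\to\Upsilon'$ (for $\psi\le_{\Upsilon'}\psi'$ the underlying-identity maps between the two level products are bounded, with constant $\max\{1,\max_{i<N}\psi'(i)^{-1}\psi(i)\}$ for $N$ the threshold, compatibly with $\id-s$, so they induce a morphism $\ker(\id-s)_\psi\to\ker(\id-s)_{\psi'}$ in $\ttBan_R$), this functor is cofinal (each of its comma categories contains $\psi$ itself and is connected via pointwise maxima), and $\Upsilon'$ is $\aleph_1$-filtered (for countably many $\psi_k$ the non-decreasing function $i\mapsto\max\{\psi_k(j):k\le i,\ j\le i\}$ eventually dominates each $\psi_k$). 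Therefore $\underset{i}\lim V_i\cong``\underset{\psi\in\Upsilon'}\colim"\ker(\id-s)_\psi$ is a formal colimit over an $\aleph_1$-filtered poset of objects of $\ttBan_R$, hence is metrizable.

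Finally, any Fréchet module, being a countable limit of a diagram $F\colon\mathcal{I}\to\ttBan_R$, is isomorphic to a tower limit $\underset{n}\lim V_n$ of Banach modules with contracting transition maps, so the first part applies: enumerate the objects of $\mathcal{I}$ as $c_1,c_2,\dots$ and its morphisms as $u_1,u_2,\dots$ so that the source and target of $u_k$ lie among $c_1,\dots,c_k$; then the presentation of $\underset{\mathcal{I}}\lim F$ as the equalizer of the two canonical morphisms $\prod_a F(c_a)\to\prod_k F(\mathrm{tgt}(u_k))$ (projection versus applying $F(u_k)$) respects the tower presentations $\prod_a F(c_a)\cong\underset{n}\lim\prod_{a\le n}F(c_a)$ of both products, exhibiting $\underset{\mathcal{I}}\lim F$ as the limit of the tower whose $n$-th term is the equalizer of the two restricted maps $\prod_{a\le n}F(c_a)\to\prod_{k\le n}F(\mathrm{tgt}(u_k))$, which is a Banach module; rescaling the $V_n$ then makes the transitions contracting, and the previous parts finish the proof. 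I expect the main obstacle to be this last reduction together with the metrizability step --- in particular noticing that the pointwise-ordered poset of non-decreasing functions is not $\aleph_1$-filtered and carrying out the cofinal passage to the eventual-domination order, while checking that the reindexed diagram still consists of (bounded) morphisms of $\ttBan_R$.
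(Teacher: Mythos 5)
Your proposal is correct and follows essentially the same line as the paper's proof: realize $\lim_i V_i$ as $\ker(\id-s)$ on $\prod_i V_i$, use Lemma~\ref{lem:towerRep} to present the products over $\Psi$, observe that $\id-s$ only becomes non-expanding after the reweighting $\psi\mapsto 2\psi(i+1)$ (this is exactly where monotonicity of $\psi$ is used), compute the kernel levelwise using exactness of the formal filtered colimit, and then pass to an $\aleph_1$-filtered reindexing as in Lemma~\ref{lem:ProdFre} to get metrizability. The paper's proof states the formal-colimit exactness step and the passage to $\Upsilon$ very briefly, whereas you give the Yoneda-style verification and spell out both cofinality checks (that non-decreasing functions are cofinal among all $\psi$, and that $\psi\mapsto 2\psi(\cdot+1)$ is cofinal as well); these are genuine points which the paper treats as understood, and they are worth making explicit. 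You also supply the reduction of an arbitrary countable limit to a tower limit (via an enumeration of objects and morphisms of $\mathcal I$ with $\mathrm{src}(u_k),\mathrm{tgt}(u_k)\in\{c_1,\dots,c_k\}$, partial equalizers $W_n$, and the observation that the transitions $W_{n+1}\to W_n$ are restrictions of coordinate projections and hence non-expanding), which the paper's published proof omits entirely although the claim ``so any Fr\'echet module is metrizable'' needs it; note that the final ``rescaling'' you mention is actually unnecessary since the transitions are already contracting. The only cosmetic divergence is that your $\Upsilon'$ restricts to non-decreasing functions with the eventual-domination order while the paper's $\Upsilon$ allows arbitrary functions with a strict eventual-domination order; both are cofinally equivalent and $\aleph_1$-filtered, so nothing is affected.
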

\proof 
First notice that 
\[\lim_{i\in \mathbb{Z}_{\geq 1}}V_{i} =\ker [\prod_{i \in \mathbb{Z}_{\geq 1}} V_{i} \stackrel{\id-s}\longrightarrow \prod_{i \in \mathbb{Z}_{\geq 1}} V_{i}].
\]
Using Lemma \ref{lem:towerRep} we can write $\underset{i \in \mathbb{Z}_{\geq 1}}\prod V_{i} \stackrel{\id-s}\to \underset{i \in \mathbb{Z}_{\geq 1}} \prod V_{i}$ as a map 
\begin{equation}\label{eqn:themap}``\underset{\psi \in \Psi}\colim" \prod_{i \in \mathbb{Z}_{\geq 1}}{}^{\leq 1} ((V_{i})_{\psi(i)^{-1}}) \to ``\underset{\psi \in \Psi}\colim" \prod_{i \in \mathbb{Z}_{\geq 1}}{}^{\leq 1} ((V_{i})_{\psi(i)^{-1}})
\end{equation}
however, the shift of an element $(v_i)_{i\in \mathbb{Z}_{\geq 1}}$ of $ \underset{i \in \mathbb{Z}_{\geq 1}}\prod{}^{\leq 1} ((V_{i})_{\psi(i)^{-1}}) $ lands in  $\underset{i \in \mathbb{Z}_{\geq 1}}\prod^{\leq 1} ((V_{i})_{\phi(i)^{-1}})$ whenever $\underset{i \in \mathbb{Z}_{\geq 1}}\sup ||v_i||_{i-1}\phi(i-1)^{-1}<\infty$. This happens as long as $\underset{i \in \mathbb{Z}_{\geq 1}}\sup ||v_i||_{i}\phi(i-1)^{-1}<\infty$ since the maps are non-expanding. For $\phi=\psi$, $\psi(i-1)^{-1}\geq \psi(i)^{-1}$ so there is no reason why this should be true. However, if we define $\phi=s\psi$ by $(s\psi)(i)=\psi(i+1)$ then we do have the map \[s:\prod_{i \in \mathbb{Z}_{\geq 1}}{}^{\leq 1} ((V_{i})_{\psi(i)^{-1}}) \to \prod_{i \in \mathbb{Z}_{\geq 1}}{}^{\leq 1} ((V_{i})_{(s\psi)(i)^{-1}}) \] 
induced by the obvious maps $s_i:(V_{i})_{\psi (i)^{-1}} \to (V_{i-1})_{\psi(i)^{-1}}= (V_{i-1})_{(s\psi)(i-1)^{-1}}$. Luckily, there is also the map \[\id:\prod_{i \in \mathbb{Z}_{\geq 1}}{}^{\leq 1} ((V_{i})_{\psi(i)^{-1}}) \to \prod_{i \in \mathbb{Z}_{\geq 1}}{}^{\leq 1} ((V_{i})_{(s\psi)(i)^{-1}})\] induced by $\id_i: (V_{i})_{\psi(i)^{-1}} \to (V_{i})_{(s\psi)(i)^{-1}}$ since $s\psi \geq \psi$. 
The morphisms 
\[(\id-s)_{j}:\prod_{i \in \mathbb{Z}_{\geq 1}}{}^{\leq 1} ((V_{i})_{\psi(i)^{-1}})\to (V_{j})_{(2s\psi)(j)^{-1}}
\]
defined by 
\[(\alpha_1, \alpha_2, \dots) \mapsto \alpha_{j} -\alpha_{j+1}.
\]
are non-expanding because of the inequalities
\begin{equation}
\begin{split}
||\alpha_{j} -\alpha_{j+1} ||_{(V_{j})_{(2s\psi)(j)^{-1}}} & = 2^{-1}\psi(j+1)^{-1}||\alpha_{j} -\alpha_{j+1} ||_{V_j}  \leq 2^{-1}\psi(j+1)^{-1} ||\alpha_{j+1} || + 2^{-1}\psi(j+1)^{-1} || \alpha_{j}||  \\ & \leq 2^{-1}\psi(j+1)^{-1} ||\alpha_{j+1} || + 2^{-1}\psi(j)^{-1} || \alpha_{j}||  \leq \underset{i \in \mathbb{Z}_{\geq 1}} \sup \psi(i)^{-1} ||\alpha_{i}||_{V_{i}}.
\end{split}
\end{equation}
Therefore, we can rewrite (\ref{eqn:themap}) as 
\[``\underset{\psi \in \Psi}\colim" \left(\prod_{i \in \mathbb{Z}_{\geq 1}}{}^{\leq 1} ((V_{i})_{\psi(i)^{-1}})\longrightarrow  \prod_{i \in \mathbb{Z}_{\geq 1}}{}^{\leq 1} ((V_{i})_{(2s\psi)(i)^{-1}})\right).
\]
Because the functor $``\underset{\psi \in \Psi}\colim"$ is exact, we are done. As in the proof of Lemma \ref{lem:ProdFre} we can replace $\Psi$ with the $\aleph_1$-filtered category $\Upsilon$ and conclude that  \[``\underset{\psi \in \Upsilon}\colim" \ker[\prod_{i \in \mathbb{Z}_{\geq 1}}{}^{\leq 1} ((V_{i})_{\psi(i)^{-1}}) \stackrel{\id-s}\longrightarrow \prod_{i \in \mathbb{Z}_{\geq 1}}{}^{\leq 1} ((V_{i})_{2^{-1}\psi(i+1)^{-1}})]\longrightarrow \lim_{i\in \mathbb{Z}_{\geq 1}}V_{i} 
\]
is an isomorphism in $\ttInd(\ttBan_{R})$. Therefore, $\underset{i\in \mathbb{Z}_{\geq 1}}\lim V_{i} $ is metrizable. 
\endproof
\begin{rem}\label{rem:Fine} It is completely fine to take some or all of the maps $V_{i+1}\to V_i$ to be the identity. In particular, taking them all to be the identity we see that any Banach module is metrizable. 
\end{rem}
\begin{lem}\label{lem:Wprod}For each $k\in K$ suppose we are given an inductive system $I_{k}\to \ttBan_{R}$ given by the system of Banach modules $W^{(k)}_{i}$. Let $W^{(k)}=``\underset{i\in I_k}\colim"W^{(k)}_{i}$. Assume that for $i_1 <i_2$ the morphisms $W^{(k)}_{i_1}\to W^{(k)}_{i_2}$ are non-expanding. Let $\Phi$ be the poset whose objects are pairs $(\phi_1,\phi_2)$ where $\phi_1:K\to \underset{k\in K}\coprod I_{k}$ such that $\phi_1(k)\in I_k$ for all $k$ and $\phi_2:K\to \mathbb{N}_{\geq 1}$.
This has a partial order defined by $(\phi_1, \phi_2)\leq (\phi'_1, \phi'_2)$ if and only if $\phi_1(k) \leq \phi'_1(k)$ and  $\phi_2(k) \leq \phi'_2(k)$ for all $k\in K$. Then the natural morphism 
\[``\underset{(\phi_1,\phi_2)\in \Phi}\colim" \prod_{k\in K}{{}^{\leq 1}} (W_{\phi_{1}(k)}^{(k)})_{\phi_{2}(k)^{-1}}\to \prod_{k\in K}W^{(k)}
\]
is an isomorphism. To explain the structure maps in the formal filtered colimit, for $(\phi_1, \phi_2)\leq (\phi'_1, \phi'_2)$ the morphism \[ \prod_{k\in K}{{}^{\leq 1}} (W_{\phi_{1}(k)}^{(k)})_{\phi_{2}(k)^{-1}}\to  \prod_{k\in K}{{}^{\leq 1}} (W_{\phi'_{1}(k)}^{(k)})_{\phi'_{2}(k)^{-1}}\] is the non-expanding product over $k\in K$ of the obvious morphisms \[(W_{\phi_{1}(k)}^{(k)})_{\phi_{2}(k)^{-1}}\to (W_{\phi'_{1}(k)}^{(k)})_{\phi'_{2}(k)^{-1}}.\] If each $I_k$ is $\aleph_1$-filtered then if $K$ is countable then $\underset{k\in K}\prod W^{(k)}$ is metrizable.
\end{lem}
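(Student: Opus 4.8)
The plan is to mimic the proof of Lemma \ref{lem:towerRep}, of which the first assertion is the natural generalization: there each system was the constant system on a Banach module $V_i$ and there was no inductive parameter $\phi_1$. To show that the natural morphism
\[``\underset{(\phi_1,\phi_2)\in \Phi}\colim" \prod_{k\in K}{{}^{\leq 1}} (W_{\phi_{1}(k)}^{(k)})_{\phi_{2}(k)^{-1}}\to \prod_{k\in K}W^{(k)}
\]
is an isomorphism in $\ttInd(\ttBan_{R})$, it suffices, every object of $\ttBan_R$ being compact in $\ttInd(\ttBan_R)$, to check that it induces a bijection on $\Hom(M,-)$ for each $M\in\ttBan_R$. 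The target is $\prod_{k\in K}\Hom(M,W^{(k)})=\prod_{k\in K}\underset{i\in I_k}{\colim}\ \underset{j}{\colim}\,\Hom^{\leq j}(M,W^{(k)}_i)$, by the universal property of the product in $\ttInd(\ttBan_R)$ and compactness of $M$. For the source I would expand exactly as in Lemma \ref{lem:towerRep}: $\Hom(M,-)=\colim_j\Hom^{\leq j}(M,-)$; then $\Hom^{\leq j}(M,N)\cong\Hom^{\leq 1}(M,N_{j^{-1}})$; then $(\prod^{\leq 1}_k X_k)_{j^{-1}}\cong\prod^{\leq 1}_k(X_k)_{j^{-1}}$ and $\Hom^{\leq 1}(M,\prod^{\leq 1}_k X_k)\cong\prod_k\Hom^{\leq 1}(M,X_k)$; absorb the auxiliary $\colim_j$ into $\colim_\Phi$ via the cofinal reindexing $(\phi_1,\phi_2,j)\mapsto(\phi_1,j\phi_2)$; and finally invoke the set-theoretic distributivity of products over filtered colimits already used in Lemma \ref{lem:towerRep}, namely $\underset{(\phi_1,\phi_2)\in\Phi}{\colim}\prod_k S_{k,(\phi_1(k),\phi_2(k))}\cong\prod_k\underset{(i,j)\in I_k\times\mathbb{Z}_{>0}}{\colim}S_{k,(i,j)}$, which is valid since each $I_k\times\mathbb{Z}_{>0}$ is filtered ($I_k$ being the index of the formal colimit $W^{(k)}$). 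Unwinding the inner colimit back to $\Hom(M,W^{(k)})$ matches source with target, and one checks the identification is the stated natural map.

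For metrizability, assume $K$ countable and each $I_k$ $\aleph_1$-filtered, and run the device that passes from $\Psi$ to $\Upsilon$ in Lemma \ref{lem:ProdFre}. Let $\Phi^{\dagger}$ have the same objects as $\Phi$ but the relaxed preorder in which $(\phi_1,\phi_2)\leq(\phi_1',\phi_2')$ iff $\phi_1(k)\leq\phi_1'(k)$ for all $k\in K$ and $\phi_2(k)\leq\phi_2'(k)$ for all but finitely many $k\in K$; this contains the order of $\Phi$. The functor $(\phi_1,\phi_2)\mapsto\prod^{\leq 1}_k(W^{(k)}_{\phi_1(k)})_{\phi_2(k)^{-1}}$ extends to a functor $\Phi^{\dagger}\to\ttBan_R$: for a $\Phi^{\dagger}$-morphism with finite exceptional set $J$, the induced product morphism has $k$-th factor of operator norm $\leq 1$ for $k\notin J$ (a non-expanding transition map $W^{(k)}_{\phi_1(k)}\to W^{(k)}_{\phi_1'(k)}$ followed by a norm-decreasing rescaling) and $\leq\phi_2(k)$ for $k\in J$, hence total operator norm $\leq\max\{1,\max_{k\in J}\phi_2(k)\}<\infty$. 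The inclusion $\Phi\hookrightarrow\Phi^{\dagger}$ is final, both categories being filtered and sharing all objects. Therefore $``\underset{\Phi^{\dagger}}\colim"(\cdots)\cong``\underset{\Phi}\colim"(\cdots)\cong\prod_k W^{(k)}$.

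It then remains to check that $\Phi^{\dagger}$ is $\aleph_1$-filtered, and by Remark \ref{rem:ProdFilt} it is enough to treat its two coordinates separately. The first coordinate, $\prod_k I_k$ with the pointwise order, is $\aleph_1$-filtered because each $I_k$ is: a countable family in $\prod_k I_k$ has countable projection into each $I_k$, hence a pointwise upper bound; this is why the first coordinate needs no relaxation, unlike $\mathbb{Z}_{\geq 1}$, which is not $\aleph_1$-filtered. The second coordinate, $\prod_k\mathbb{Z}_{\geq 1}$ with the ``all but finitely many'' order, is $\aleph_1$-filtered precisely because $K$ is countable, by the same diagonal argument as for $\Upsilon$: enumerating $K=\{k_0,k_1,\dots\}$, a countable family $(\phi_2^{(n)})_n$ is dominated by $\psi_2(k_m):=\max\{\phi_2^{(n)}(k_m):n\leq m\}$, which exceeds $\phi_2^{(n)}(k_m)$ for every $m\geq n$. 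Hence $\Phi^{\dagger}$ is $\aleph_1$-filtered, $\prod_k W^{(k)}$ is an $\aleph_1$-filtered colimit of objects of $\ttBan_R$, and it is metrizable by the characterization of metrizable objects recorded just after Definition \ref{defn:met}.

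The main obstacle is bookkeeping rather than any single idea: in the first paragraph, making the colimit reindexings precise (absorbing $\colim_j$ into $\colim_\Phi$, and the $\ttSet$-level distributivity of products over filtered colimits) and confirming naturality of the resulting isomorphism; and in the metrizability part, choosing the relaxed preorder on $\Phi^{\dagger}$ so that \emph{simultaneously} the scaling functor extends to it, the inclusion of $\Phi$ is final, and $\Phi^{\dagger}$ is $\aleph_1$-filtered --- exactly the balance achieved by $\Upsilon$ in Lemma \ref{lem:ProdFre}.
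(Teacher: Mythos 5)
Your proof is correct and follows essentially the same strategy as the paper: the isomorphism is established by checking $\Hom(M,-)$ for compact $M\in\ttBan_R$ via rescaling and the set-theoretic distributivity of products over filtered colimits, and metrizability by relaxing the order on $\Phi$ to an $\aleph_1$-filtered reindexing when $K$ is countable. You are somewhat more careful than the paper in the metrizability step --- you explicitly verify that the scaling functor extends to the relaxed poset $\Phi^\dagger$ and that the inclusion $\Phi\hookrightarrow\Phi^\dagger$ is final, and your observation that the $\phi_1$ coordinate $\prod_k I_k$ is already $\aleph_1$-filtered (so only the $\phi_2$ coordinate needs the almost-everywhere relaxation) is a clean clarification --- but the core ideas coincide.
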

\proof 
 It is enough to show that the morphisms \[\Hom(M, ``\underset{(\phi_1,\phi_2)\in \Phi}\colim" \prod_{k\in K}{{}^{\leq 1}} (W_{\phi_{1}(k)}^{(k)})_{\phi_{2}(k)^{-1}}) \to \Hom(M,  \prod_{k\in K}``\underset{i\in I_k}\colim"W^{(k)}_{i}) \] are  isomorphisms of abelian groups for any $M \in \ttBan_{R}$. We have 
\begin{equation}
\begin{split}\Hom(M, ``\underset{(\phi_1,\phi_2)\in \Phi}\colim" \prod_{k\in K}{{}^{\leq 1}} (W_{\phi_{1}(k)}^{(k)})_{\phi_{2}(k)^{-1}}) 
& =    \underset{(\phi_1,\phi_2)\in \Phi}\colim \Hom(M,\underset{k\in K}\prod{}^{\leq 1} ( W^{(k)}_{\phi_1(k)})_{\phi_2(k)^{-1}}  ) \\
& =    \underset{(\phi_1,\phi_2)\in \Phi}\colim \underset{j \in \mathbb{Z}_{>0}}\colim\Hom^{\leq j}(M,\underset{k\in K}\prod{}^{\leq 1} ( W^{(k)}_{\phi_1(k)})_{\phi_2(k)^{-1}}  ) \\
& =    \underset{(\phi_1,\phi_2)\in \Phi}\colim \underset{j \in \mathbb{Z}_{>0}}\colim\Hom^{\leq 1}(M,\underset{k\in K}\prod{}^{\leq 1} ( W^{(k)}_{\phi_1(k)})_{(j\phi_2(k))^{-1}}  ) \\
& =    \underset{(\phi_1,\phi_2)\in \Phi}\colim \Hom^{\leq 1}(M,\underset{k\in K}\prod{}^{\leq 1} ( W^{(k)}_{\phi_1(k)})_{\phi_2(k)^{-1}}  ) \\
& =    \underset{(\phi_1,\phi_2)\in \Phi}\colim\prod_{k\in K}  \Hom^{\leq 1}(M,( W^{(k)}_{\phi_1(k)})_{\phi_2(k)^{-1}}  ) \\
& =   \prod_{k\in K}  \underset{i\in I_k}\colim \ \ \underset{j \in \mathbb{Z}_{>0}} \colim \Hom^{\leq 1}(M,( W^{(k)}_{i})_{j^{-1}}  ) \\
& =   \prod_{k\in K}  \underset{i\in I_k}\colim \ \ \underset{j \in \mathbb{Z}_{>0}} \colim \Hom^{\leq j}(M, W^{(k)}_{i}  ) \\
& =   \prod_{k\in K}  \underset{i\in I_k}\colim\Hom(M, W^{(k)}_{i}  ) \\
& =   \prod_{k\in K}  \Hom(M, ``\underset{i\in I_k}\colim"W^{(k)}_{i}  ) \\
& \cong     \Hom(M,   \prod_{k\in K}``\underset{i\in I_k}\colim"W^{(k)}_{i}).
\end{split}
\end{equation}
As in the proof of Lemma \ref{lem:towerRep} have used in the isomorphism 
\[ \underset{(\phi_1,\phi_2)\in \Phi}\colim\prod_{k\in K}  \Hom^{\leq 1}(M,( W^{(k)}_{\phi_1(k)})_{\phi_2(k)^{-1}}  ) \cong \prod_{k\in K}  \underset{i\in I_k}\colim \ \ \underset{j \in \mathbb{Z}_{>0}} \colim \Hom^{\leq 1}(M,( W^{(k)}_{i})_{j^{-1}}  )
\]
that in the category of sets, products and filtered colimits distribute (not commmute!) as explained in \cite{ALR}. Let us now assume that each $I_k$ is $\aleph_1$-filtered and $K$ is countable (so we can assume that $K=\mathbb{N}$). Let $\Lambda$ be the set whose objects are pairs $\lambda=(\phi_1,\phi_2)$ where $\phi_1:K\to \underset{k\in K}\coprod I_{k}$ such that $\phi_1(k)\in I_k$ for all $k$ and $\phi_2:K\to \mathbb{N}_{\geq 1}$.
This has a partial order defined by $(\phi_1, \phi_2)<(\phi'_1, \phi'_2)$ when $\phi_1(k) < \phi'_1(k)$ and  $\phi_2(k) < \phi'_2(k)$ for all but a finite number of $k\in K$. Say that we are given a collection $\lambda^{(1)}=(\phi^{(1)}_1,\phi^{(1)}_2), \lambda^{(2)}=(\phi^{(2)}_1,\phi^{(2)}_2), \dots \in \Lambda$. Define $\beta=(\beta_1, \beta_2):K \to ( \underset{k\in K}\coprod I_{k}) \times \mathbb{N}_{\geq 1}$ by choosing for each $k$ an element $\beta_1(k) \in I_k$ such that $\beta_1(k)>\phi_{1}^{(m)} (k)$ for all $m$ and $\beta_2(k)=1 +\underset{l\leq k}\sum\phi^{(l)}_{2}(k)\in \mathbb{N}_{\geq 1}$. Then for any fixed $m$, $\beta_2(k) > \lambda_2^{(m)}(k)$ for all $k\geq m$ and $\beta_1(k) > \lambda_1^{(m)}(k)$ for all $m$. Therefore $\beta(k) > \lambda^{(m)}(k)$ for all $k\geq m$ and so $\beta >  \lambda^{(m)}$ for all $m$.
\endproof
By comparing Lemma \ref{lem:towerRep} and Lemma \ref{lem:Wprod} we get:
\begin{cor}\label{cor:compare}Consider the set of functions $\phi_1:K \to \underset{k\in K}\coprod I_{k}$ such that $\phi_1(k)\in I_k$ for all $k$. It has a partial order defined by  $\phi_1 <\phi'_1$ when $\phi_1(k) < \phi'_1(k)$ for all but a finite number of $k\in K$. Denote this poset by $\Phi_1$. The natural morphism
\[\underset{\phi_1\in \Phi_1}\colim \underset{k}\prod W_{\phi_{1}(i)}^{(k)} \to \underset{k}\prod  ``\underset{i\in I_k}\colim"W^{(k)}_{i}
\]
is an isomorphism.
\end{cor}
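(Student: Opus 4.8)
The plan is to deduce Corollary \ref{cor:compare} by ``stripping the weight coordinate'' off the presentation of $\prod_{k\in K}W^{(k)}$ supplied by Lemma \ref{lem:Wprod}, performing that stripping one index $k$ at a time with Lemma \ref{lem:towerRep}. The poset $\Phi$ of pairs $(\phi_1,\phi_2)$ appearing in Lemma \ref{lem:Wprod} carries the componentwise order, hence is precisely the product poset $\Phi_1^{\le}\times\Psi$, where $\Phi_1^{\le}$ is the set of sections $\phi_1$ of $\coprod_{k}I_k\to K$ with $\phi_1\le\phi_1'$ iff $\phi_1(k)\le\phi_1'(k)$ for \emph{all} $k\in K$, and $\Psi$ is the poset of functions $K\to\mathbb{Z}_{\ge 1}$ with the componentwise order. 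Since a colimit indexed by a product of categories is computed iteratively, Lemma \ref{lem:Wprod} rewrites as
\[\prod_{k\in K}W^{(k)}\ \cong\ ``\underset{\phi_1\in\Phi_1^{\le}}\colim"\ \Big(``\underset{\psi\in\Psi}\colim"\ \prod_{k\in K}{}^{\leq 1}(W^{(k)}_{\phi_1(k)})_{\psi(k)^{-1}}\Big).\]

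For each fixed $\phi_1$ I would then apply Lemma \ref{lem:towerRep} to the family $\{W^{(k)}_{\phi_1(k)}\}_{k\in K}$ of objects of $\ttBan_{R}$ (the index set called $I$ there is our $K$); it identifies the inner colimit with $\prod_{k\in K}W^{(k)}_{\phi_1(k)}$ through the canonical morphism, compatibly with varying $\phi_1$, so that $\prod_{k}W^{(k)}\cong``\underset{\phi_1\in\Phi_1^{\le}}\colim"\prod_{k}W^{(k)}_{\phi_1(k)}$ with the evident transition maps. It remains to pass from the componentwise order of $\Phi_1^{\le}$ to the ``off a finite set'' order of $\Phi_1$: the identity of underlying sets is a functor $\Phi_1^{\le}\to\Phi_1$, and it is final, because given $\phi_1\in\Phi_1$ and two objects $\phi_1',\phi_1''$ lying above it, filteredness of each $I_k$ produces a pointwise common upper bound $\phi_1'''$, which then also lies above $\phi_1$ in $\Phi_1$, so the relevant comma categories are connected; this is exactly the argument used for the finality of $\Psi\to\Upsilon$ in the proof of Lemma \ref{lem:ProdFre}. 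Hence the formal colimits over $\Phi_1^{\le}$ and over $\Phi_1$ coincide, and unwinding the canonical maps in the three identifications shows that the isomorphism obtained is the natural morphism of the statement.

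The only delicate point is the bookkeeping of indexing posets: one must verify that the diagram $``\colim_{\Phi}"$ of Lemma \ref{lem:Wprod} really does factor as the iterated colimit $``\colim_{\Phi_1^{\le}}"\,``\colim_{\Psi}"$ of the \emph{same} objects, and that replacing $\Phi_1^{\le}$ by $\Phi_1$ does not alter the colimit. Everything else is an application of the two cited lemmas plus a routine naturality check, so I expect no genuine difficulty beyond this bookkeeping.
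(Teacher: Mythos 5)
Your overall strategy — factor the poset $\Phi$ of Lemma \ref{lem:Wprod} as a product $\Phi_1^{\le}\times\Psi$, iterate the formal filtered colimit, and apply Lemma \ref{lem:towerRep} to the inner $\Psi$-colimit for each fixed $\phi_1$ — is exactly the ``comparison'' the paper alludes to, and steps 1--4 are sound. However, your final step contains a genuine gap. You cite the finality argument from the proof of Lemma \ref{lem:ProdFre} (``this is exactly the argument used for the finality of $\Psi\to\Upsilon$''), but that proof has \emph{two} steps, and you reproduce only the second. The first, and crucial, step there is showing that the functor defined on $\Psi$ \emph{extends} to a functor on $\Upsilon$: the objects $\prod_i^{\le 1}(V_i)_{\psi(i)^{-1}}$ all live inside the common set-theoretic product of the $V_i$, and changing $\psi$ merely rescales the norm, so a finite exceptional set only contributes a bounded constant and the identity is still a bounded transition map. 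Only \emph{after} the functor exists on $\Upsilon$ does the finality of $\Psi\to\Upsilon$ yield anything. In your situation that prerequisite fails: the objects $\prod_k W^{(k)}_{\phi_1(k)}$ are genuinely different Banach (or ind-Banach) modules as $\phi_1$ varies, not a single module carrying different norms. If $\phi_1<\phi_1'$ in $\Phi_1$ with exceptional finite set $J\subset K$, then for $k\in J$ one has no morphism $W^{(k)}_{\phi_1(k)}\to W^{(k)}_{\phi_1'(k)}$ in the inductive system at all (filteredness of $I_k$ gives common \emph{upper} bounds, not a map downward), so no transition map on the products can be constructed. Thus the diagram on $\Phi_1^{\le}$ does not obviously extend to $\Phi_1$, the finality lemma has nothing to apply to, and the assertion ``the formal colimits over $\Phi_1^{\le}$ and over $\Phi_1$ coincide'' is unjustified. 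Either you must supply an explicit construction of the transition maps on $\Phi_1$, or you should note that the corollary's displayed colimit is really over the componentwise-ordered poset $\Phi_1^{\le}$ (in which case steps 1--4 already finish the proof and step 5 is unnecessary). As written, the jump from $\Phi_1^{\le}$ to $\Phi_1$ is a gap, and the analogy with Lemma \ref{lem:ProdFre} does not carry the weight you put on it.
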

\begin{lem}\label{lem:prodtens} Let $K$ be a countable set. For each $k\in K$ suppose we are given an inductive system $I_{k}\to \ttBan_{R}$ given by the system of Banach modules $W^{(k)}_{i}$. Let $W^{(k)}= ``\underset{i \in I_k}\colim" W^{(k)}_{i}\in \ttInd(\ttBan_R)$ for each $k$. 
Then for any $U\in\ttBan_{R}$ the natural morphism
\[U\wotimes_{R} \left(\prod_{k\in K} W^{(k)}\right) \to \prod_{k\in K} \left(U\wotimes_{R} W^{(k)}\right)
\]
is an isomorphism. Suppose now that $K$ is a category with a countable set of objects and morphisms. If $U$ is flat over $R$ and $K\to \ttInd(\ttBan_R)$ is any functor written as $k \mapsto W^{(k)}$ then the natural morphism 
\[U\wotimes_{R} \left(\lim_{k\in K} W^{(k)}\right) \to \lim_{k\in K} \left(U\wotimes_{R} W^{(k)}\right)
\]
is an isomorphism.
\end{lem}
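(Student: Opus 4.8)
\emph{Reduction to a single Banach module in each slot.} Since $\ttInd(\ttBan_R)$ is closed symmetric monoidal, $U\wotimes_R(-)$ is a left adjoint and so commutes with all colimits. Writing $W^{(k)}=``\underset{i\in I_k}\colim"W^{(k)}_i$, this lets us pull $U\wotimes_R(-)$ inside the colimit on the left and inside each $``\colim"_{i\in I_k}$ on the right; combining this with Corollary \ref{cor:compare} applied both to the systems $\{W^{(k)}_i\}_i$ and to the systems $\{U\wotimes_R W^{(k)}_i\}_i$, the morphism in question is identified with $``\underset{\phi_1\in\Phi_1}\colim"$ of the natural morphisms
\[ U\wotimes_R\Big(\prod_{k\in K}W^{(k)}_{\phi_1(k)}\Big)\longrightarrow \prod_{k\in K}\big(U\wotimes_R W^{(k)}_{\phi_1(k)}\big), \]
the products being in $\ttInd(\ttBan_R)$. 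As filtered colimits are exact, it suffices to prove: for any countable family $(V_k)_{k\in K}$ in $\ttBan_R$ and any $U\in\ttBan_R$, the natural map $\mu_U\colon U\wotimes_R\prod_k V_k\to\prod_k(U\wotimes_R V_k)$ is an isomorphism.

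\emph{The projective case and the key estimate.} Both $F:=(-)\wotimes_R\prod_k V_k$ and $G:=\prod_k\big((-)\wotimes_R V_k\big)$ are additive functors $\ttBan_R\to\ttInd(\ttBan_R)$ preserving cokernels: $\wotimes_R$ does as a left adjoint, and $\prod_k$ preserves the defining strict exact sequence of a cokernel since $\ttInd(\ttBan_R)$ has exact products (it is elementary quasi-abelian, so Proposition 1.4.5 of \cite{SchneidersQA} applies). Hence, choosing a strict exact presentation $P_1\to P_0\to U\to 0$ with $P_0,P_1$ projective, it is enough to prove $\mu_U$ is an isomorphism for $U$ projective. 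By Lemma \ref{lem:Explicit} we may take $U=\underset{s\in S}\coprod^{\leq 1}R_{r_s}$; using $R_r\wotimes_R X\cong X_r$ and the identity $\big(\underset{s\in S}\coprod^{\leq 1}R_{r_s}\big)\wotimes_R X\cong\underset{s\in S}\coprod^{\leq 1}X_{r_s}$, and presenting the $\ttInd$-products by Lemma \ref{lem:towerRep} as $``\colim"_{\psi\in\Psi}$ of products in $\ttBan^{\leq 1}_R$ of weighted copies of $R$, the claim becomes a comparison of the kind carried out in the proofs of Lemmas \ref{lem:towerRep} and \ref{lem:Wprod} (testing with $\Hom(M,-)$ for all $M\in\ttBan_R$ and using that products distribute over filtered colimits in $\ttSet$). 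The genuinely analytic point is surjectivity at the colimit level: an element $(w_k)_k$ of $\prod_k^{\leq 1}\big(U\wotimes_R(V_k)_{\psi(k)^{-1}}\big)$ has each $w_k\in U\wotimes_R V_k$ writable as an absolutely convergent series $\sum_n u_{k,n}\otimes v_{k,n}$ with $\sum_n\|u_{k,n}\|\,\|v_{k,n}\|\le\|w_k\|+1$; reassembling all of these over the countable index set $\coprod_k\mathbb{Z}_{\ge 1}$ and choosing $\psi'\ge\psi$ with $\sum_k\psi(k)/\psi'(k)<\infty$ (possible precisely because $K$ is countable) yields an element of $U\wotimes_R\prod_k^{\leq 1}(V_k)_{\psi'(k)^{-1}}$ mapping to $(w_k)_k$. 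The countability of $K$ enters exactly here, and this weight-inflation step is the main obstacle in the proof.

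\emph{The limit statement.} Suppose now $U$ is flat. Then $U\wotimes_R(-)$ is right exact and preserves strict monomorphisms, hence preserves strict exact sequences, and in particular kernels. When $K$ has countably many morphisms, $\underset{k\in K}\lim W^{(k)}$ is the kernel of the usual difference map between the two countable products $\prod_{k\in\Ob K}W^{(k)}$ and $\prod_{(f\colon k\to k')}W^{(k')}$; applying that $U\wotimes_R(-)$ commutes with kernels (flatness) and with countable products (the first part, just proved) gives the stated isomorphism. In general one combines this with the explicit Roos-complex description of $\mathbb{R}\lim$ recalled in Section 3, which presents $\lim$ through kernels and products, and with the corollary there that for $W$ flat with $W\ootimes(-)$ commuting with the relevant products one gets commutation with $\mathbb{R}\lim$ and hence with $\lim$ on acyclic diagrams.
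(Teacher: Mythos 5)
Your proof follows essentially the same route as the paper's: reduce by right exactness and the five lemma to $U$ projective, write $U$ as a weighted $\ell^1$-coproduct via Lemma \ref{lem:Explicit}, present the $\ttInd$-products as $\aleph_1$-filtered formal colimits of contracting products (Lemma \ref{lem:towerRep}/\ref{lem:Wprod}), and then perform a countability-dependent weight inflation to exchange $\coprod^{\leq 1}$ and $\prod^{\leq 1}$. The differences from the paper are mostly organizational. Your initial reduction via Corollary \ref{cor:compare} to the case where each $W^{(k)}$ is a single Banach module is valid but does not really shorten the work, since the resulting statement is just the special case of the lemma with trivial $I_k$ and you then re-import the same Lemma \ref{lem:towerRep} machinery; the paper handles the general ind-systems directly through Lemma \ref{lem:Wprod} with the index poset $\Phi=\Phi_1\times\Phi_2$ and so avoids that extra pass.

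Two points where you are less careful than the paper and which you should tighten. First, once you have reduced to $U=\coprod^{\leq 1}_{s}R_{r_s}$, there is no reason to phrase the weight-inflation estimate in terms of arbitrary near-optimal elementary-tensor decompositions of $w_k\in U\wotimes_R V_k$: for projective $U$ of this form one has $U\wotimes_R V_k\cong\coprod^{\leq 1}_s(V_k)_{r_s}$ and an element of the product is literally a double sequence $(w_{k,s})$, so the inflation map is given by a canonical formula rather than by a choice of decomposition, and the well-definedness/linearity of the proposed inverse is then automatic. As written, your "reassemble the series" step builds a map from a choice for each $w_k$, and you do not verify this is well defined; the paper sidesteps this entirely by working with the canonical coordinates. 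Second, you only argue "surjectivity at the colimit level," i.e.\ that every element on the right lifts after enlarging $\psi$. To get an isomorphism in $\ttInd(\ttBan_R)$ you need a genuine inverse morphism of ind-systems; the paper constructs the bounded maps $\iota$ and $\pi$ and verifies the compatibilities $\pi\circ\iota=f$ and $\iota\circ\pi=g$ with the structure maps, which is exactly what promotes the pointwise lifting into an isomorphism of formal colimits. Your argument would need the same compatibility check spelled out. Finally, your closing appeal to the Roos complex and to the $\lim$-acyclicity corollary is unnecessary and slightly off target: that corollary assumes acyclicity, which the lemma does not, whereas the kernel-of-difference-of-countable-products argument you give in the preceding sentence already suffices for the stated limit claim when $K$ has countably many objects and morphisms.
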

\proof 
Let $P=\underset{s\in S}\coprod^{\leq 1}R_{r_s}$ with $r_s>0$. Notice that 
\[P \wotimes_{R} \left(\prod_{k\in K} W^{(k)}\right) \cong P\wotimes_{R} ``\underset{(\phi_1,\phi_2)\in \Phi}\colim" \prod_{k\in K}{{}^{\leq 1}} (W_{\phi_{1}(k)}^{(k)})_{\phi_{2}(k)^{-1}}\cong ``\underset{(\phi_1,\phi_2)\in \Phi}\colim"\underset{s\in S}\coprod^{\leq 1}\prod_{k\in K}{{}^{\leq 1}} (W_{\phi_{1}(k)}^{(k)})_{r_s \phi_{2}(k)^{-1}}
\]
while we can rewrite $\prod_{k\in K} \left(P \wotimes_{R} W^{(k)}\right)$ as
\[\prod_{k\in K}\left( P\wotimes_{R}``\underset{i\in I_k}\colim"W^{(k)}_{i} \right) \cong \prod_{k\in K}``\underset{i\in I_k}\colim" \underset{s\in S}\coprod^{\leq 1} (W^{(k)}_{i})_{r_s} = ``\underset{(\phi_1,\phi_2)\in \Phi}\colim" \prod_{k\in K}{{}^{\leq 1}} \underset{s\in S}\coprod^{\leq 1}(W_{\phi_{1}(k)}^{(k)})_{r_{s}\phi_{2}(k)^{-1}}.
\]
Let $f_{(\phi_1,\phi_2)}^{(\phi'_1,\phi'_2)}$ denote the morphisms 
\[\underset{s\in S}\coprod^{\leq 1}\prod_{k\in K}{{}^{\leq 1}} (W_{\phi_{1}(k)}^{(k)})_{r_s \phi_{2}(k)^{-1}}\longrightarrow \underset{s\in S}\coprod^{\leq 1}\prod_{k\in K}{{}^{\leq 1}} (W_{\phi'_{1}(k)}^{(k)})_{r_s \phi'_{2}(k)^{-1}}
\]
for $(\phi_1, \phi_2)\leq (\phi'_1, \phi'_2)$ and similarly let 
$g_{(\phi_1,\phi_2)}^{(\phi'_1,\phi'_2)}$ denote the morphisms 
\[\prod_{k\in K}{{}^{\leq 1}} \underset{s\in S}\coprod^{\leq 1}(W_{\phi_{1}(k)}^{(k)})_{r_{s}\phi_{2}(k)^{-1}}\longrightarrow \prod_{k\in K}{{}^{\leq 1}} \underset{s\in S}\coprod^{\leq 1}(W_{\phi'_{1}(k)}^{(k)})_{r_{s}\phi'_{2}(k)^{-1}}.
\]
Now clearly for each $(\phi_1, \phi_2)$ we have that $\underset{s\in S}\coprod^{\leq 1}\underset{k\in K}\prod{{}^{\leq 1}} (W_{\phi_{1}(k)}^{(k)})_{r_s \phi_{2}(k)^{-1}}$ is a Banach submodule of $\underset{k\in K}\prod{{}^{\leq 1}} \underset{s\in S}\coprod^{\leq 1}(W_{\phi_{1}(k)}^{(k)})_{r_{s}\phi_{2}(k)^{-1}}$, denote the bounded inclusion by \[\iota_{(\phi_1, \phi_2)}:\underset{s\in S}\coprod^{\leq 1}\underset{k\in K}\prod{{}^{\leq 1}} (W_{\phi_{1}(k)}^{(k)})_{r_s \phi_{2}(k)^{-1}}\longrightarrow \underset{k\in K}\prod{{}^{\leq 1}} \underset{s\in S}\coprod^{\leq 1}(W_{\phi_{1}(k)}^{(k)})_{r_{s}\phi_{2}(k)^{-1}}.\] Notice that \begin{equation}
\label{eqn:iotag}g_{(\phi_1,\phi_2)}^{(\phi'_1,\phi'_2)}\circ \iota_{(\phi_1, \phi_2)}= \iota_{(\phi'_1, \phi'_2)} \circ f_{(\phi_1,\phi_2)}^{(\phi'_1,\phi'_2)}.\end{equation} We now want maps in the other direction but this will not work without increasing $(\phi_1,\phi_2)$. Suppose that we are given an element 
$(w_{k,s})_{k \in K, s\in S} \in \underset{k\in K}\prod{{}^{\leq 1}} \underset{s\in S}\coprod^{\leq 1}(W_{\phi_{1}(k)}^{(k)})_{r_{s}\phi_{2}(k)^{-1}}$. By definition this means that \[\underset{s\in S}\sum ||w_{k,s}||^{(k)}_{\phi_1(k)}r_{s}\phi_{2}(k)^{-1} < \infty\] and \[\underset{k\in K}\sup \underset{s\in S}\sum ||w_{k,s}||^{(k)}_{\phi_1(k)}r_{s}\phi_{2}(k)^{-1} < \infty.\]
For each $k$ this implies that $\{ s\in S| w_{k,s}\neq 0 \}$ is countable. Let $S_c$ be the subset of $S$ defined by $S_c=S-\{s\in S| w_{k,s}=0 \ \  \text{for all}\ \  k\}$. Notice that $S_c$ is countable since it is a countable union of countable subsets: $S_{c}=\underset{k \in K}\bigcup(S-\{ s\in S| w_{k,s}=0 \})$. Because of this countability, we can choose a collection of positive real numbers $p_s$ for $s\in S_c$ such that $p=\underset{s\in S_c}\sum p_s$ is finite. Choose $\phi'_2$ so that $\phi'_2(k) = 2^{k} \phi_2(k)$ for all $k$. Then for any $s\in S_c$ there exists a $k_s \in K$ such that we have 
\[\underset{k\in K}\sup  ||w_{k,s}||^{(k)}_{\phi_1(k)}r_{s}\phi'_{2}(k)^{-1} \leq p_s + ||w_{k_s,s}||^{(k_s)}_{\phi_1(k_s)}r_{s}\phi'_{2}(k_s)^{-1}
\]
Now 
\begin{equation}
\begin{split}
\underset{s\in S}\sum\underset{k\in K}\sup  ||w_{k,s}||^{(k)}_{\phi_1(k)}r_{s}\phi'_{2}(k)^{-1} & \leq p+ \underset{s\in S} \sum ||w_{k_s,s}||^{(k_s)}_{\phi_1(k_s)}r_{s}\phi'_{2}(k_s)^{-1} \\
& \leq  p+ 
\underset{k\in K}\sum \underset{s\in S} \sum ||w_{k,s}||^{(k)}_{\phi_1(k)}r_{s}\phi'_{2}(k)^{-1} \\
& \leq  p+ 
\underset{k\in K}\sum 2^{-k} \underset{s\in S} \sum ||w_{k,s}||^{(k)}_{\phi_1(k)}r_{s}\phi_{2}(k)^{-1} \\
& \leq  p+ 
\left(\underset{k\in K}\sum 2^{-k}\right) \left(\underset{k\in K}\sup  \underset{s\in S} \sum ||w_{k,s}||^{(k)}_{\phi_1(k)}r_{s}\phi_{2}(k)^{-1}\right)
\end{split}
\end{equation}
and therefore $\underset{s\in S}\sum\underset{k\in K}\sup  ||w_{k,s}||^{(k)}_{\phi_1(k)}r_{s}\phi'_{2}(k)^{-1}$ is finite. If $\phi'_2(k) = 2^{k} \phi_2(k)$ for all $k,$ we get bounded morphisms 
\[\pi^{(\phi_1, \phi'_2)}_{(\phi_1, \phi_2)}:\underset{k\in K}\prod{{}^{\leq 1}} \underset{s\in S}\coprod^{\leq 1}(W_{\phi_{1}(k)}^{(k)})_{r_{s}\phi_{2}(k)^{-1}} \longrightarrow \underset{s\in S}\coprod^{\leq 1}\underset{k\in K}\prod{{}^{\leq 1}} (W_{\phi_{1}(k)}^{(k)})_{r_s \phi'_{2}(k)^{-1}}.
\]
Clearly we also have \[\pi^{(\phi_1, \phi'_2)}_{(\phi_1, \phi_2)} \circ  g_{(\alpha_1,\alpha_2)}^{(\phi_1,\phi_2)} =f_{(\alpha_1,\alpha'_2)}^{(\phi_1,\phi'_2)}\circ  \pi^{(\alpha_1, \alpha'_2)}_{(\alpha_1, \alpha_2)}.\] Finally, notice also that \[\iota_{(\phi_1, \phi'_2)} \circ  \pi^{(\phi_1, \phi'_2)}_{(\phi_1, \phi_2)} =g^{(\phi_1, \phi'_2)}_{(\phi_1, \phi_2)}\] and \[\pi^{(\phi_1, \phi'_2)}_{(\phi_1, \phi_2)} \circ \iota_{(\phi_1, \phi_2)}=f^{(\phi_1, \phi'_2)}_{(\phi_1, \phi_2)} .\] These three identities combined with Equation (\ref{eqn:iotag}) imply that 
\begin{equation}\label{eqn:ClimCcolim} ``\underset{(\phi_1,\phi_2)\in \Phi}\colim"\underset{s\in S}\coprod^{\leq 1}\prod_{k\in K}{{}^{\leq 1}} (W_{\phi_{1}(k)}^{(k)})_{r_s \phi_{2}(k)^{-1}} \cong ``\underset{(\phi_1,\phi_2)\in \Phi}\colim" \prod_{k\in K}{{}^{\leq 1}} \underset{s\in S}\coprod^{\leq 1}(W_{\phi_{1}(k)}^{(k)})_{r_{s}\phi_{2}(k)^{-1}}
\end{equation}
and therefore 
\[P\wotimes_{R} \left(\underset{k\in K}\prod W^{(k)}\right) \cong \underset{k\in K}\prod \left(P\wotimes_{R} W^{(k)}\right).\] Now let $U\in \ttBan_{R}$ be arbitrary. Similarly to Lemma A.39 of \cite{BK} we can find a projective resolution \[K\to P \to U \to 0\] where all morphisms are strict epimorphisms, $K=\underset{t\in T}\coprod^{\leq 1}R_{r_t}$ and $P=\underset{s\in S}\coprod^{\leq 1}R_{r_s}$. The fact that products are right exact immediately implies that  $U\wotimes_{R} \left(\underset{k\in K}\prod W^{(k)}\right) \cong \underset{k\in K}\prod \left(U\wotimes_{R} W^{(k)}\right)$. The second statement of the Lemma follows from writing the limit in terms of countable products and kernels. 
\endproof
\begin{lem}\label{lem:tensprod} Let $K$ be a countable set, For each $k\in K$ suppose we are given an inductive system $I_{k}\to \ttBan_{R}$ given by the system of Banach modules $W^{(k)}_{i}$. Let $W^{(k)}= ``\underset{i \in I_k}\colim" W^{(k)}_{i}$. 
Let $V\in \ttInd(\ttBan_{R})$ be metrizable. Then the natural morphism
\[V\wotimes_{R} \left(\underset{k\in K}\prod W^{(k)}\right) \to \underset{k\in K}\prod \left(V\wotimes_{R} W^{(k)}\right)
\]
is an isomorphism. Now let $K$ be a category with a countable set of objects and morphisms. If $V$ is metrizable and flat over $R$ (or metrizable and nuclear) and $K\to \ttInd(\ttBan_R)$ is any functor then the natural morphism 
\[V\wotimes_{R} \left(\underset{k\in K}\lim W^{(k)}\right) \longrightarrow \underset{k\in K}\lim \left(V\wotimes_{R} W^{(k)}\right)
\]
is an isomorphism.
\end{lem}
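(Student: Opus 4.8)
The plan is to deduce both statements from the Banach-module case of Lemma \ref{lem:prodtens} by a colimit-interchange argument, using metrizability to handle the product and flatness to handle the kernel. For the first (product) assertion I would start from the characterization of metrizable objects (the lemma following Definition \ref{defn:met}): write $V\cong\colim_{\alpha\in A}V_{\alpha}$ for an $\aleph_{1}$-filtered category $A$ and a functor $A\to\ttBan_{R}$, $\alpha\mapsto V_{\alpha}$, this colimit being a filtered colimit in $\ttInd(\ttBan_{R})$ (replacing $A$ by a cofinal $\aleph_{1}$-filtered directed set if convenient). Since $(-)\wotimes_{R}X$ is a left adjoint (to $\uHom_{R}(X,-)$) for every $X$ it preserves colimits, so
\[V\wotimes_{R}\Big(\prod_{k\in K}W^{(k)}\Big)\cong\colim_{\alpha\in A}\Big(V_{\alpha}\wotimes_{R}\prod_{k\in K}W^{(k)}\Big)\cong\colim_{\alpha\in A}\ \prod_{k\in K}\big(V_{\alpha}\wotimes_{R}W^{(k)}\big),\]
the last isomorphism being Lemma \ref{lem:prodtens} for the Banach module $V_{\alpha}$. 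As $K$ is countable, hence of cardinality $<\aleph_{1}$, Lemma \ref{lem:limcolim} with $\lambda=\aleph_{1}$ — reading $\prod_{k\in K}$ as a limit over the discrete category $K$ and using that $A$ is $\aleph_{1}$-filtered — gives
\[\colim_{\alpha\in A}\ \prod_{k\in K}\big(V_{\alpha}\wotimes_{R}W^{(k)}\big)\cong\prod_{k\in K}\colim_{\alpha\in A}\big(V_{\alpha}\wotimes_{R}W^{(k)}\big)\cong\prod_{k\in K}\big(V\wotimes_{R}W^{(k)}\big),\]
again because tensoring commutes with colimits; chasing the identifications shows the composite is the natural map.

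For the second (limit) assertion, note first that a nuclear $V$ is flat by Lemma \ref{lem:NucImpliesFlat}, so it suffices to treat $V$ metrizable and flat. Since $K$ has countably many objects and morphisms, I would write the limit as an equalizer of maps between two countable products,
\[\lim_{k\in K}W^{(k)}=\ker\Big[\ \prod_{k\in\Ob K}W^{(k)}\ \xrightarrow{\ d\ }\ \prod_{(g\colon k\to k')\in\mathrm{Mor}(K)}W^{(k')}\ \Big],\]
where $d$ is the difference of the projection-to-target map and the map assembled from the $W^{(g)}$ (each $W^{(k)}$ being presented as a filtered colimit of Banach modules so that the product assertion applies to both products). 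Applying $V\wotimes_{R}(-)$ and invoking that assertion, the two products become $\prod_{\Ob K}(V\wotimes_{R}W^{(k)})$ and $\prod_{\mathrm{Mor}(K)}(V\wotimes_{R}W^{(k')})$, with $V\wotimes_{R}d$ carried to the analogous difference map $d'$ of the diagram $k\mapsto V\wotimes_{R}W^{(k)}$; hence $\ker(V\wotimes_{R}d)\cong\ker(d')=\lim_{k\in K}(V\wotimes_{R}W^{(k)})$. One then identifies $V\wotimes_{R}\ker(d)$ with $\ker(V\wotimes_{R}d)$: as $(-)\wotimes_{R}V$ is right exact and, $V$ being flat, preserves strict monomorphisms, it preserves strictly exact sequences, hence kernels. (Alternatively, flatness gives $V\wotimes_{R}^{\mathbb{L}}(-)\simeq V\wotimes_{R}(-)$, so the corollary relating $\wotimes_{R}^{\mathbb{L}}$ with $\mathbb{R}\lim$ recalled in the discussion of Roos complexes — which for the countable diagram $K$ involves only the countably many products in $\mathscr{R}^{\bullet}$, handled by the product assertion — yields $V\wotimes_{R}\mathbb{R}\lim_{k\in K}W^{(k)}\cong\mathbb{R}\lim_{k\in K}(V\wotimes_{R}W^{(k)})$, whose $H^{0}$ is the claim.) Combining the two displays gives the natural isomorphism $V\wotimes_{R}\lim_{k\in K}W^{(k)}\cong\lim_{k\in K}(V\wotimes_{R}W^{(k)})$.

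The hard part will be exactly this last step: that $\wotimes_{R}V$ commutes with $\ker(d)$ even though $d$ need not be strict, whereas flatness a priori only controls strict monomorphisms. That is where the flatness hypothesis (or nuclearity, via Lemma \ref{lem:NucImpliesFlat}) is genuinely used, and making it airtight rests either on the general fact that flat functors between quasi-abelian categories are exact in the sense of \cite{SchneidersQA} or on the derived-inverse-limit machinery already set up in the paper; by contrast the product assertion is a clean interchange of a suitably filtered colimit past a countable product.
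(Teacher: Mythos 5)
For the product assertion your argument is correct and is essentially the paper's own: the paper also presents $V=``\colim_{j\in J}"V_j$ with $J$ $\aleph_1$-filtered, rewrites $\prod_{k}(V\wotimes_R W^{(k)})$ as $\colim_j\prod_k(V_j\wotimes_R W^{(k)})$ via Lemma \ref{lem:limcolim}, and rewrites $V\wotimes_R\prod_k W^{(k)}$ as the same colimit via Lemma \ref{lem:prodtens}. You run the two chains of isomorphisms in the opposite order, but they are the same three identifications.

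For the limit assertion, note first that the paper's proof of Lemma \ref{lem:tensprod} stops after the product case and says nothing about the limit case (the same is true of Lemma \ref{lem:prodtens}), so you are filling in an argument the authors left implicit. Your decomposition of $\lim_{k\in K}W^{(k)}$ as $\ker(d)$ with $d$ between two countable products is the natural thing to try, and applying the product assertion to both products is fine. The step you correctly flag as ``the hard part'' is, however, a genuine gap that your two proposed fixes do not close. Flatness in a quasi-abelian category means only that $V\wotimes_R(-)$ preserves \emph{strict} monomorphisms, so it preserves kernels of \emph{strict} morphisms (tensor the strict short exact sequence $0\to\ker d\to X\to\coim d\to0$ and use that $\coim d\to\im d$ is an isomorphism); but the difference map $d$ between the two products is not a priori strict, and then $V\wotimes_R\coim(d)\to V\wotimes_R Y$ need not be a monomorphism, so one cannot conclude $V\wotimes_R\ker(d)\cong\ker(V\wotimes_R d)$. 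Your alternative via the Roos complex has exactly the same issue repackaged: the Corollary on derived limits gives $V\wotimes_R^{\mathbb L}\mathbb R\lim_k W^{(k)}\cong\mathbb R\lim_k(V\wotimes_R W^{(k)})$, but to extract the stated underived isomorphism from $H^0$ you need $H^0(V\wotimes_R\mathscr R^\bullet)=V\wotimes_R\ker(d^0)$, which is again the undelivered strictness of $d^0$; the ``in particular'' clause of that Corollary only applies once one assumes $\lim$-acyclicity of the diagram $\{W^{(k)}\}$, a hypothesis the lemma as stated does not carry. It is worth observing that every place the paper actually invokes the limit assertion (Lemma \ref{lem:EF}, Lemma \ref{lem:TateAcy}, Lemma \ref{lem:SteinExp}) there is a standing $\lim$-acyclicity assumption, under which $d^0$ is strict and your kernel argument goes through cleanly; so the gap is as much a quirk of the lemma's formulation as of your proof, but it should be named rather than absorbed into ``flat implies exact.''
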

\proof Let $V=``\underset{j \in J}\colim" V_{j}$ where $J$ is $\aleph_1$-filtered. Using Lemma \ref{lem:limcolim} we have 
\[\underset{k\in K}\prod \left(V\wotimes_{R} W^{(k)}\right)\cong\prod_{k\in K} \underset{j\in J }\colim \left(V_{j} \wotimes_{R} W^{(k)}\right) \cong  \underset{j\in J }\colim \underset{k\in K}\prod \left(V_{j} \wotimes_{R} W^{(k)}\right)
\]
because colimits over an $\aleph_{1}$-filtered category commute with countable products by Lemma \ref{lem:limcolim}.
Also \begin{equation}
V\wotimes_{R} \left(\underset{k\in K}\prod W^{(k)}\right)  \cong \underset{j \in J}\colim \left( V_j\wotimes_{R} \left(\underset{k\in K}\prod W^{(k)}\right) \right) \cong  \underset{j\in J }\colim \prod_{k\in K}\left(V_{j} \wotimes_{R} W^{(k)}\right)
\end{equation}
by Lemma \ref{lem:prodtens}. The second statement of the Lemma follows from writing the limit in terms of countable products and kernels.
\endproof
\begin{lem}\label{lem:surprise}The converse to the first part of Lemma \ref{lem:tensprod} holds in the sense that we can conclude that an object $V \in \ttInd(\ttBan_{R})$ is metrizable if and only the functor $V \wotimes_{R} (-)$ commutes with countable products.
\end{lem}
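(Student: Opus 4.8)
One direction is already contained in Lemma~\ref{lem:tensprod}: a metrizable $V$ has $V\wotimes_R(-)$ commuting with countable products. So the plan is to establish the converse, and the strategy is to view it as the Ind-Banach incarnation of R.~Meyer's characterization of metrizability in the bornological setting. Assume $V\wotimes_R(-)$ commutes with countable products. Applying this to the constant family $R$ (indexed by $\mathbb{Z}_{\geq1}$) and using $V\wotimes_R R\cong V$, the natural comparison morphism becomes an isomorphism $c\colon V\wotimes_R\big(\prod_n R\big)\xrightarrow{\ \sim\ }\prod_n V$. By Definition~\ref{defn:met}, it suffices to prove that the category $\mathcal{A}$ of Banach $R$-modules equipped with a morphism to $V$ (morphisms being commuting triangles) is $\aleph_1$-filtered. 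I would first record two soft reductions: $\mathcal{A}$ is always filtered (any map from a Banach module to $V$ factors through a term of any ind-presentation), and a filtered category of this shape is $\aleph_1$-filtered as soon as every countable set of its objects admits an upper bound. For the latter: given a tower $(Y_n\to V)$ in $\mathcal{A}$ with connecting maps $\theta_n$, take an upper bound $(U\to V)$ of its underlying objects with lifts $\lambda_n\colon Y_n\to U$, pass to the Banach quotient $U/N$ where $N$ is the closed submodule generated by the images of $\lambda_{n+1}\theta_n-\lambda_n$ (the map $U\to V$ kills $N$, hence descends), and check that the corrected lifts form a cocone; a standard bookkeeping argument then promotes ``towers have cocones'' to ``all countable diagrams have cocones''.

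The substance is then to show, using $c$, that every countable family $(Y_n\xrightarrow{q_n}V)_{n\geq1}$ in $\mathcal{A}$ has an upper bound. I would set $M:=\prod^{\leq1}_n Y_n$, a Banach $R$-module with coordinate projections $p_n\colon M\to Y_n$, and let $\gamma\colon M\to\prod_n V$ be the morphism with components $q_n\circ p_n$. Fix a presentation $V=``\underset{i\in I}\colim"V_i$ over a directed poset by Banach modules, and, by Lemma~\ref{lem:towerRep}, write $\prod_n R=``\underset{\psi\in\Psi}\colim"N_\psi$ with $N_\psi=\prod^{\leq1}_n R_{\psi(n)^{-1}}$ Banach; then $V\wotimes_R\prod_n R=``\underset{(i,\psi)\in I\times\Psi}\colim"\big(V_i\wotimes_R N_\psi\big)$ is an ind-presentation by Banach modules. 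Since $M$ lies in $\ttBan_R$, it is tiny (compact) in $\ttInd(\ttBan_R)$, so $c^{-1}\circ\gamma$ factors through a single term $V_i\wotimes_R N_\psi$. The key computation is that, by naturality of the comparison map, $\mathrm{pr}_n\circ c=\id_V\wotimes_R\mathrm{pr}^R_n$, so the restriction of $\mathrm{pr}_n\circ c$ to the Banach term $V_i\wotimes_R N_\psi$ equals $\sigma_i\circ(\id_{V_i}\wotimes_R\pi_n)$, where $\sigma_i\colon V_i\to V$ is canonical and $\pi_n\colon N_\psi\to R$ is the $n$-th coordinate. Feeding the factorization of $\gamma$ into this identity and precomposing with the inclusion $Y_n\hookrightarrow M$ of the $n$-th summand yields, for one index $i$ independent of $n$, morphisms $\lambda_n\colon Y_n\to V_i$ with $\sigma_i\circ\lambda_n=q_n$; hence $(V_i\xrightarrow{\sigma_i}V)$ is the desired upper bound.

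The main obstacle will be the middle step, and specifically the identification of how the $n$-th projection of $c$ acts on the Banach terms $V_i\wotimes_R N_\psi$: one must combine naturality of $c$ (the formula $\mathrm{pr}_n\circ c=\id_V\wotimes_R\mathrm{pr}^R_n$) with the explicit shape of the presentation of $\prod_n R$ from Lemma~\ref{lem:towerRep}, and crucially use that $M$ is a compact object so that $\gamma$ genuinely factors through one term, thereby pinning down a single $i$ that works for all $n$ at once. The reductions in the first paragraph are routine once Banach quotients are available.
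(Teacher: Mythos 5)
Your argument is correct and is, at its core, the same as the paper's: reduce to showing that the comma category of Banach modules mapping to $V$ is $\aleph_1$-filtered, and then use the hypothesized isomorphism $c\colon V\wotimes_R\prod_n R\to\prod_n V$ to lift a countable family $(Y_n\to V)$ uniformly through a single stage of a presentation of $V$. Where you differ, and it is an improvement in rigor, is in the choice of compact object. You form the genuine Banach module $M=\prod^{\leq 1}_n Y_n$, which really is tiny in $\ttInd(\ttBan_R)$, and factor $c^{-1}\circ\gamma$ through a term $V_i\wotimes_R N_\psi$ of the explicit ind-presentation furnished by Lemma~\ref{lem:towerRep}. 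The paper instead applies the lifting argument to the Ind-product $\prod_k V_{j_k}$ and then asserts that the resulting morphism into $\colim_{j}\prod_{\mathbb{Z}}V_j$ selects a single index $j$; but $\prod_k V_{j_k}$ is \emph{not} a compact object of $\ttInd(\ttBan_R)$, and that inference only becomes legitimate after one restricts to a compact term of its presentation, which is precisely the move you make. You also treat an arbitrary countable discrete family rather than a chain (equivalent, since the comma category is already filtered), and your naturality computation $\mathrm{pr}_n\circ c=\id_V\wotimes_R\mathrm{pr}^R_n$ spells out the identification the paper leaves to the reader. The reductions in your first paragraph (the comma category is filtered; closed Banach quotients coequalize; the bookkeeping from discrete families and towers to general countable diagrams) are routine and correct.
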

\proof 
We will prove that if for some $V\in \ttInd(\ttBan_{R})$, that the natural morphism \[V \wotimes_{R} (\underset{\mathbb{Z}}\prod R) \longrightarrow \underset{\mathbb{Z}}\prod V\] is an isomorphism, then $V$ is metrizable. Consider the category $J$ of all objects of $\ttBan_{R}$ mapping to $V$.  Then of course $V \cong \underset{j\in J}\colim V_{j}$. The above isomorphism combined with Lemma \ref{lem:prodtens} which tells us \[V_{j} \wotimes_{R} (\underset{\mathbb{Z}}\prod R ) \cong \underset{\mathbb{Z}}\prod V_{j}\] immediately implies that the natural morphism 
\[\underset{j\in J}\colim (\underset{\mathbb{Z}}\prod V_{j}) \longrightarrow \underset{\mathbb{Z}}\prod \underset{j\in J}\colim V_{j} 
\]
is an isomorphism. Suppose we are given a chain $V_{j_1}\to V_{j_2}\to V_{j_3}\to V_{j_4}\to \cdots$ in $J$. We can lift the natural morphism $\underset{k \in \mathbb{Z}}\prod  V_{j_k}\to \underset{k \in \mathbb{Z}}\prod \underset{j\in J}\colim V_{j} $ to a morphism  $\underset{k \in \mathbb{Z}}\prod  V_{j_k}\longrightarrow \underset{j\in J}\colim  \underset{k \in \mathbb{Z}}\prod V_{j}$. Therefore there exists some $j\in J$ such that all morphisms $V_{j_k} \to V$ factor through some $V_{j} \to V$. Therefore $J$ is $\aleph_{1}$-filtered and so $V$ is metrizable.
\endproof
\begin{rem}This result is surprising since the analogous result in the purely algebraic case is not true. In fact, in the algebraic case the tensor product of a module will commute with all products of other modules if and only if the first module is finitely presented \cite{Lenzing}. However, there is no contradiction here because if we take a ring, endow it with the discrete Banach structure, then we can consider the category of discrete modules over the ring but this category is not closed under the operation $\wotimes_{R}$. 

\end{rem}
\begin{lem}Say we fix $A \in \ttComm(\ttInd(\ttBan_R))$. Let $K$ be a countable set, For each $k\in K$ suppose we are given an inductive system $I_{k}\to \ttBan_{R}$ given by the system of Banach modules $W^{(k)}_{i}$. Suppose we are given objects  $W^{(k)} \in \ttMod(A)$ with underlying object $ ``\underset{i \in I_k}\colim" W^{(k)}_{i} \in \ttInd(\ttBan_R)$. 
Let $V\in \ttMod(A)$. Suppose the objects underlying $A$ and $V$ in $\ttInd(\ttBan_R)$ are metrizable. Then the natural morphism
\[V\wotimes_{A} \left(\prod_{k\in K} W^{(k)}\right) \to \prod_{k\in K} \left(V\wotimes_{A} W^{(k)}\right)
\]
is an isomorphism. 
\end{lem}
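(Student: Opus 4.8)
The plan is to reduce the statement to a computation inside $\ttInd(\ttBan_{R})$ and to invoke the already-established Lemma \ref{lem:tensprod} (equivalently Lemma \ref{lem:surprise}). First I would observe that the forgetful functor $\ttMod(A)\to\ttInd(\ttBan_{R})$ preserves limits, being the right adjoint of $A\wotimes_{R}(-)$, and also preserves reflexive coequalizers, since $\wotimes_{R}$ preserves all colimits in each variable and the forgetful functor from modules over a monoid creates reflexive coequalizers in that situation. Hence the product $\prod_{k\in K}W^{(k)}$ in $\ttMod(A)$ and the coequalizer (\ref{eqn:Atens}) computing $\wotimes_{A}$ are both computed on underlying objects in $\ttInd(\ttBan_{R})$, so that for any $N\in\ttMod(A)$ one has, as objects of $\ttInd(\ttBan_{R})$,
\[V\wotimes_{A}N \;=\; \coker\bigl(V\wotimes_{R}A\wotimes_{R}N \;\rightrightarrows\; V\wotimes_{R}N\bigr),\]
the two parallel maps being $\mu_{V}\wotimes\id_{N}$ and $\id_{V}\wotimes\mu_{N}$, built respectively from the $A$-action on $V$ and the $A$-action on $N$.

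The two ingredients I would then use are: (i) direct products in $\ttInd(\ttBan_{R})$ commute with cokernels --- this holds by the same argument as for $\ttCBorn_{R}$ above, since $\ttInd(\ttBan_{R})$ is elementary quasi-abelian, hence has enough projectives and therefore exact products (Proposition 1.4.5 of \cite{SchneidersQA}); and (ii) Lemma \ref{lem:tensprod}/Lemma \ref{lem:surprise}, applied both to the metrizable object $V$ and to the metrizable object $A$, which tells us that $V\wotimes_{R}(-)$ and $A\wotimes_{R}(-)$ commute with countable products. Taking $N=\prod_{k}W^{(k)}$, ingredient (ii) applied to $V$ gives $V\wotimes_{R}\prod_{k}W^{(k)}\cong\prod_{k}(V\wotimes_{R}W^{(k)})$, while for the other object in the coequalizer diagram I would chain the two hypotheses: first $A$ metrizable gives $A\wotimes_{R}\prod_{k}W^{(k)}\cong\prod_{k}(A\wotimes_{R}W^{(k)})$, then $V$ metrizable gives $V\wotimes_{R}\prod_{k}(A\wotimes_{R}W^{(k)})\cong\prod_{k}(V\wotimes_{R}A\wotimes_{R}W^{(k)})$, so altogether $V\wotimes_{R}A\wotimes_{R}\prod_{k}W^{(k)}\cong\prod_{k}(V\wotimes_{R}A\wotimes_{R}W^{(k)})$.

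Finally I would assemble the pieces: the canonical morphism $V\wotimes_{A}\prod_{k}W^{(k)}\to\prod_{k}(V\wotimes_{A}W^{(k)})$ is the map induced on cokernels by a morphism between the two coequalizer diagrams whose vertical arrows are the two isomorphisms just obtained; since products commute with cokernels one has $\prod_{k}(V\wotimes_{A}W^{(k)})\cong\coker\bigl(\prod_{k}(V\wotimes_{R}A\wotimes_{R}W^{(k)})\rightrightarrows\prod_{k}(V\wotimes_{R}W^{(k)})\bigr)$, and the comparison morphism is then an isomorphism by the universal property of the cokernel (or by a quasi-abelian five-lemma, using strictness of the maps involved). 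The genuine obstacle I anticipate is the verification that the isomorphisms supplied by Lemma \ref{lem:tensprod} intertwine \emph{both} parallel maps $\mu_{V}\wotimes\id$ and $\id_{V}\wotimes\mu$ with the corresponding maps for the product --- i.e.\ the commutativity of the two relevant naturality squares --- which should follow from naturality in the module variable of the comparison morphism of Lemma \ref{lem:tensprod} together with functoriality of $\wotimes_{R}$, but which has to be written out carefully; a secondary point to confirm is that $\ttInd(\ttBan_{R})$ genuinely has exact products, so that the cokernel/product interchange in step (i) is legitimate.
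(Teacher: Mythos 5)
Your proof is correct and follows essentially the same route as the paper, which gives only a one-sentence sketch citing Lemma \ref{lem:tensprod}, the coequalizer formula (\ref{eqn:Atens}), and right exactness of $\wotimes_R$; you have filled in the details (forgetful functor preserving the relevant limits and reflexive coequalizers, exactness of products from Proposition 1.4.5 of \cite{SchneidersQA}, and the two applications of the metrizability hypotheses) that the paper leaves implicit. Your closing worry about naturality is resolved by observing that the isomorphisms supplied by Lemma \ref{lem:tensprod}/\ref{lem:prodtens} are precisely the canonical comparison morphisms, which are natural in the module variable by construction.
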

\proof  This follows from the case of $A=R$ which was proven in Lemma \ref{lem:tensprod} and the description of $\wotimes_A$ as a coequalizer in Equation \ref{eqn:Atens}  together with the fact that  $\wotimes_R$ is right exact in each variable as discussed in subsection \ref{BRBM}.
\endproof
\begin{cor}\label{cor:product_sum_nuc} As a corollary of Remark \ref{rem:Fine} (or of Lemma \ref{lem:prodtens}) and Lemma \ref{lem:surprise}, we see that if we have a countable collection $V_i$ of nuclear objects of $\ttInd(\ttBan_{R})$, their product is nuclear. A coproduct of any collection of nuclear objects is also nuclear.
\end{cor}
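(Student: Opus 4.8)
The plan is to verify the defining property of nuclearity from Definition \ref{defn:IndBanNuc} in each case, treating the countable product and the arbitrary coproduct by separate arguments.

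For the product, I would fix a nuclear family $(V_i)_{i}$ indexed by a countable set, put $V=\prod_i V_i$, and check for each $W\in\ttBan_R$ that the canonical map $W^\vee\wotimes_R V\to\uHom(W,V)$ is an isomorphism. On the target side, since $\uHom(W,-)$ is right adjoint to $W\wotimes_R(-)$ it commutes with products, so using nuclearity of the $V_i$ one gets $\uHom(W,V)\cong\prod_i\uHom(W,V_i)\cong\prod_i(W^\vee\wotimes_R V_i)$. On the source side, $W^\vee$ is itself a Banach module, hence metrizable by Remark \ref{rem:Fine}, so Lemma \ref{lem:surprise} --- or, more directly, Lemma \ref{lem:prodtens} with $U=W^\vee$ after writing each $V_i$ as a filtered colimit of objects of $\ttBan_R$ --- shows that $W^\vee\wotimes_R(-)$ commutes with countable products, giving $W^\vee\wotimes_R V\cong\prod_i(W^\vee\wotimes_R V_i)$. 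Composing these and checking the composite is the canonical comparison map (a routine unravelling) shows $V$ is nuclear.

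For the coproduct I would first record that a \emph{finite} direct sum of nuclear objects is nuclear: the split short exact sequence $0\to V_1\to V_1\oplus V_2\to V_2\to 0$ is strictly exact in $\ttInd(\ttBan_R)$, so Lemma \ref{lem:twothreenuc} applies, and induction handles finitely many summands. Then, given a nuclear family $(V_i)_{i\in I}$ indexed by an arbitrary set, I would write the coproduct (which exists since $\ttInd(\ttBan_R)$ is cocomplete) as the filtered colimit $V=\colim_{F}\bigoplus_{i\in F}V_i$ over the directed poset of finite subsets $F\subseteq I$, each term being nuclear by the previous sentence. For $W\in\ttBan_R$, the functor $W^\vee\wotimes_R(-)$ preserves all colimits, being a left adjoint, and $\uHom(W,-)$ preserves filtered colimits because $W$ is an object of $\ttBan_R$ (this is part of the monoidal structure on $\ttInd(\ttBan_R)$ from Proposition \ref{prop:ScnMain}), so
\[
W^\vee\wotimes_R V\;\cong\;\colim_F\bigl(W^\vee\wotimes_R\textstyle\bigoplus_{i\in F}V_i\bigr)\;\cong\;\colim_F\uHom\bigl(W,\textstyle\bigoplus_{i\in F}V_i\bigr)\;\cong\;\uHom(W,V),
\]
again via the canonical map; hence $V$ is nuclear.

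Most of the substance is already contained in the quoted lemmas, so I do not expect a serious obstacle; the two things to be careful about are (i) confirming that the composite isomorphisms in each case really are the canonical comparison maps of Definition \ref{defn:IndBanNuc}, which is a diagram chase, and (ii) keeping the two arguments distinct --- the product case runs on the metrizability/countable-product machinery of Lemmas \ref{lem:prodtens} and \ref{lem:surprise}, while the coproduct case runs instead on the compactness of objects of $\ttBan_R$ in $\ttInd(\ttBan_R)$, which is exactly why products must be restricted to countable families but coproducts need not be.
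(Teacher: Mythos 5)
Your argument is essentially the same as the paper's: for the countable product you identify both sides of the comparison map with $\prod_i(W^\vee\wotimes_R V_i)$ using that $W^\vee\wotimes_R(-)$ commutes with countable products (because $W^\vee\in\ttBan_R$ is metrizable) and that $\uHom(W,-)$ is a right adjoint; for the coproduct you write it as a filtered colimit of finite coproducts, observe those are nuclear, and use that both $W^\vee\wotimes_R(-)$ and $\uHom(W,-)$ (for $W$ a Banach, hence compact, object) commute with filtered colimits. You merely spell out the steps the paper leaves implicit --- the appeal to Lemma~\ref{lem:twothreenuc} for finite coproducts, and the checks that the composites are the canonical maps --- so the substance coincides.
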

\proof  Suppose that we have a countable collection of nuclear objects $V_i$ indexed by a countable set $I$.  For any Banach module $W$ the map
\[(\underset{i\in I}\prod V_i) \wotimes_R W^\vee \longrightarrow \uHom(W,\underset{i\in I}\prod V_i )
\]
breaks up as a product of maps $V_i \wotimes_R W^\vee \to \uHom(W, V_i )$. Write the coproduct of nuclear objects $V_i$ over a countable set $I$ as a filtered colimit of coproducts over finite subsets. The finite coproducts of $V_i$ are clearly nuclear. Then notice that both sides of the needed equation $V\wotimes_{R}W^{\vee}=\uHom(W, V)$ are filtered colimits of true equations.
\endproof
\section{Spaces of Functions}\label{Spaces}

In this section, we use the previous results to study rings of analytic functions and their modules on Stein spaces over Banach rings $R$. As usual, ``affine" spaces are considered as the opposite category of commutative, associative, unital ring objects over $\ttInd(\ttBan_{R})$. As these form a huge category, one often wants to do geometry with a more manageable class of objects. One can define define Stein algebras over $R$ as limits of a sequence $\cdots \to A_3 \to A_2\to A_1$ where the $A_i$ are quotients of Banach disk algebras by finitely generated closed ideals such that $A_i$ are flat over $R$, and the morphisms are ring homomorphisms over $R$ in $\ttInd(\ttBan_{R})$ which are injective, homotopy epimorphisms, nuclear, non-expanding, and dense. These requirements were chosen based on the properties of the natural maps on poly-disk or Tate algebras thought of as restrictions of functions from bigger radius to smaller. This definition is motivated by local models in complex analytic geometry together with properties of functions on open polydisks along with their usual structure as Fr\'{e}chet algebras. Similarly, dagger algebras over $R$ are defined as colimits of systems made up of the same type of morphisms $A_1 \to A_2 \to A_3 \to \cdots$. These limits and colimits take place in $\ttInd(\ttBan_{R})$. In this section we focus on limits of colimits of canonical maps of disk algebras, without quotienting by any ideals. It may be interesting to extend these results to more general contexts to develop a complete theory.

\begin{defn}
The analytic functions on $n$-dimensional affine space over $R$ are defined by  
\[\mathcal{O}(\mathbb{A}^{n}_{R}) = \underset{r\in \mathbb{Z}_{>0}}\lim R\{\frac{x_1}{r}, \dots, \frac{x_n}{r}\}.\] 
Similarly, given an $n$-tuple of positive real numbers $r=(r_1, \dots, r_n)$ the $n$-dimensional open disk with multi-radius $r$ is defined by 
\[\mathcal{O}(D^{n}_{<r, R})=\underset{\rho<r}\lim R\{\frac{x_1}{\rho_1}, \dots, \frac{x_n}{\rho_n}\}.
\]
\end{defn}
The ring 
$\mathcal{O}(\mathbb{A}^{n}_{R})$ is actually a bornological (in fact Fr\'{e}chet) ring over $R$ isomorphic to 
\[\{ \underset{I \in \mathbb{Z}^{n}_{\geq0}}\sum a_{I}x^{I} \in R[[x_1, \dots, x_n]] \ \ | \text{  for each}\ \  r \in  \mathbb{Z}_{>0}  , \underset{I \in \mathbb{Z}^{n}_{\geq0}}\sum |a_{I}|r^{I}
<\infty\}.
\] The bornology is induced by the family of semi-norms 
$||f||_{r} = \underset{I \in \mathbb{Z}^{n}_{\geq0}}\sum |a_{I}|r^{I}$ in the sense that a subset is bounded if it is simultaneously bounded for all the metrics induced by this collection of semi-norms.  It is easy to see that the limit of algebras gives the standard algebra of global analytic functions in both the $\mathbb{R}$ case and the $\mathbb{Q}_{p}$ case (with the standard non-archimedean adaptations of using the Tate algebras instead of the $\ell^1$ disk algebras).
The ring $\mathcal{O}(D^{n}_{<r, R})$ thought of as functions on the open disk of multi-radius $r$ is isomorphic to 
\[\{\underset{I \in \mathbb{Z}^{n}_{\geq0}}\sum a_{I}x^{I} \in R[[x_1, \dots, x_n]] \ \ | \ \  \text{ for each} \ \ \rho<r, \underset{I \in \mathbb{Z}^{n}_{\geq0}}\sum |a_{I}|\rho^{I}
<\infty\}.\] Here, the bornology is induced by the family of seminorms $||f||_{m} = \underset{I \in \mathbb{Z}^{n}_{\geq0}}\sum |a_{I}|r_{(m)}^{I}$ where $r_{(m)} = (r_1 - \frac{1}{m}, \dots, r_n-\frac{1}{m})$. We could even consider $\mathbb{A}^{n}_{R}$ as the open disk of multi-radius $(\infty, \dots, \infty)$.
\begin{obs}Recall the definition of the poset $\Upsilon=\underset{i \in I}\prod\mathbb{Z}_{>0}$ 
from Definition \ref{defn:PsiUpsilon}. We can give an explicit description of the algebras of functions on the affine line or on an open disk. For example, in the one dimensional cases Lemma \ref{cor:TowerLim} applied in the case $V_n=R\{\frac{x}{n}\} \subset R[[x]]$ we have 
\[\mathcal{O}(\mathbb{A}^{1}_{R})=``\underset{\psi \in \Upsilon}\colim"\{f=\sum_{i=0}^{\infty} a_i x^{i} \in R[[x]] \ \ | \ \ \underset{n\in \mathbb{N}}\sup \left(\psi(n)^{-1} \sum_{i=0}^{\infty} |a_i|n^{i}\right)<\infty \}
\]
and if we apply Lemma \ref{cor:TowerLim} in the case $V_n=R\{\frac{x}{r-n^{-1}}\}\subset R[[x]]$ we get
\[\mathcal{O}(D^{1}_{<r, R})=``\underset{\psi \in \Upsilon}\colim"\{f=\sum_{i=0}^{\infty} a_i x^{i} \in R[[x]] \ \ | \ \ \underset{n\in \mathbb{N}}\sup \left(\psi(n)^{-1} \sum_{i=0}^{\infty} |a_i|(r-n^{-1})^{i}\right)<\infty \}
.\]
These Stein rings are written here as formal $\aleph_1$-filtered colimits of Banach modules. But the Banach modules themselves indexed by $\Upsilon$ and appearing within this colimit are not Banach rings because they are not closed under the multiplication of $R[[x]]$. In fact the same description can be done for any type of categorical limit $\lim V_{i}$ of Banach modules \[\cdots \subset V_3 \subset V_2 \subset V_1\] as submodules of a given algebraic module where $V_n$ are the elements $f$ in $V_1$ with $||f||_{V_n}<\infty$:
\[\underset{i \in \mathbb{N}}\lim V_i = ``\underset{\psi \in \Upsilon}\colim" \{f \in V_1 \ \ | \ \ \underset{n\in \mathbb{N}}\sup \left(\psi(n)^{-1} ||f||_{V_n}\right) < \infty \}.
\]
\end{obs}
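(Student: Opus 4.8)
The plan is to recognise that all three displayed identities are instances of the last one, and to derive that last identity by spelling out the kernel that appears in Corollary \ref{cor:TowerLim}. First I would fix the data: a tower $\cdots \subset V_3 \subset V_2 \subset V_1$ of Banach $R$-modules realised as submodules of one algebraic $R$-module $M$, with $V_n = \{f \in V_1 \mid \|f\|_{V_n} < \infty\}$. Then I would note that we may assume all inclusions $V_{n+1}\hookrightarrow V_n$ are non-expanding: if not, replace $\|\cdot\|_{V_n}$ by the equivalent Banach norm $f \mapsto \max_{k\le n}\|f\|_{V_k}$, which changes neither $\lim_n V_n$ nor, since this modification is non-decreasing in $n$, the right-hand side once one passes to the colimit over the non-decreasing weight functions of Corollary \ref{cor:TowerLim}. (In the two named examples the inclusions are already non-expanding, so this step is empty there.) With this reduction, Corollary \ref{cor:TowerLim} applies to $\cdots \to V_3 \to V_2 \to V_1$ and, after the passage from the poset of non-decreasing weights to the $\aleph_1$-filtered poset $\Upsilon$ (with $I = \mathbb{N}$) carried out in its proof, gives an isomorphism
\[
\underset{n}\lim V_n \;\cong\; ``\underset{\psi \in \Upsilon}\colim"\; \ker\!\left[\, \prod_{n}{}^{\leq 1}((V_n)_{\psi(n)^{-1}}) \stackrel{\id - s}\longrightarrow \prod_{n}{}^{\leq 1}((V_n)_{2^{-1}\psi(n+1)^{-1}}) \,\right].
\]

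The core step is to identify the Banach module on the right for each fixed $\psi \in \Upsilon$. An element of $\prod_{n}{}^{\leq 1}((V_n)_{\psi(n)^{-1}})$ is a sequence $(v_n)_{n\ge 1}$ with $v_n\in V_n$ and norm $\sup_n \psi(n)^{-1}\|v_n\|_{V_n}$, and it is annihilated by $\id - s$ precisely when each $v_n$ is the image of $v_{n+1}$ under the inclusion $V_{n+1}\hookrightarrow V_n$. Since those are genuine inclusions of submodules of $M$, this forces all the $v_n$ to equal one element $f := v_1 \in V_1$ with $f \in V_n$ for all $n$, and the inherited norm of the sequence is exactly $\sup_n \psi(n)^{-1}\|f\|_{V_n}$. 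Adopting the convention $\|f\|_{V_n} = +\infty$ when $f\notin V_n$, finiteness of this supremum already forces $f \in V_n$ for every $n$ (each $\psi(n)$ being finite) and in particular $f \in V_1$; so the kernel is isometrically the Banach module $\{f \in V_1 \mid \sup_n \psi(n)^{-1}\|f\|_{V_n} < \infty\}$, with the evident inclusions as transition maps. Substituting this into the displayed isomorphism yields the general formula.

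Finally I would record the two named cases as specialisations. Taking $V_n = R\{\frac{x}{n}\} \subset R[[x]]$ gives $\|f\|_{V_n} = \sum_i |a_i| n^i$, the inclusions $V_{n+1}\hookrightarrow V_n$ are non-expanding because $n \le n+1$, and $\lim_n V_n = \mathcal{O}(\mathbb{A}^{1}_{R})$; taking $V_n = R\{\frac{x}{r - n^{-1}}\} \subset R[[x]]$ gives $\|f\|_{V_n} = \sum_i |a_i| (r - n^{-1})^i$, the inclusions are non-expanding since $r - n^{-1} \le r - (n+1)^{-1}$, and $\lim_n V_n = \mathcal{O}(D^{1}_{<r, R})$ because $\{r - n^{-1}\}_{n\ge 1}$ is cofinal among the radii $\rho < r$. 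There is no serious obstacle in this argument: Corollary \ref{cor:TowerLim} already did the weight-lifting, and the only points needing a little care are the reduction to non-expanding inclusions and the $\|f\|_{V_n} = +\infty$ bookkeeping, which is what makes the explicit kernel match the stated formulas verbatim.
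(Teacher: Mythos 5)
Your proof is correct and follows the route the paper clearly intends: the Observation has no written proof of its own, but it explicitly cites Corollary~\ref{cor:TowerLim}, and the substance of the verification is exactly what you do — identify the kernel of $\id - s$ inside $\prod_n^{\leq 1}\bigl((V_n)_{\psi(n)^{-1}}\bigr)$ with $\{f \in V_1 : \sup_n \psi(n)^{-1}\|f\|_{V_n} < \infty\}$ equipped with that sup norm, then pass from $\Psi$ to $\Upsilon$ as in the proof of the Corollary, then specialize to the two disk-algebra towers. Your identification of the kernel is right: $(v_n)$ lies in the kernel iff $v_n = v_{n+1}$ in $V_n$ for every $n$, which (since the $V_n$ are genuine submodules of a common algebraic module) collapses the sequence to a single $f\in \bigcap_n V_n$, and the subspace norm from $\prod_n^{\leq 1}\bigl((V_n)_{\psi(n)^{-1}}\bigr)$ is precisely $\sup_n \psi(n)^{-1}\|f\|_{V_n}$. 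One small remark: Corollary~\ref{cor:TowerLim} requires the transition maps to be in $\ttBan_R^{\leq 1}$, and the paper's Observation is silent about this for the abstract tower $\cdots\subset V_2\subset V_1$; your reduction (pass to $\|f\|'_{V_n}=\max_{k\le n}\|f\|_{V_k}$, which is an equivalent norm because the inclusions are bounded, and check the colimit over $\Upsilon$ is unchanged by taking $\psi'(n)=1+\max_{k\le n}\psi(k)\geq \psi(n)$) is a sensible extra bit of rigor that the paper does not spell out, though in the two named examples the inclusions are already non-expanding so nothing extra is needed there.
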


\begin{lem}For each $\rho<\tau$ the restriction map
\[R\{\frac{x_1}{\tau_1}, \dots, \frac{x_n}{\tau_n}\} \longrightarrow R\{\frac{x_1}{\rho_1}, \dots, \frac{x_n}{\rho_n}\}
\]
is nuclear.
\end{lem}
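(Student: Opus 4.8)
The plan is to write the restriction map explicitly as (the image of) an element of the completed projective tensor product $V^{\vee}\wotimes_{R}W$, where $V=R\{\frac{x_1}{\tau_1},\dots,\frac{x_n}{\tau_n}\}$ is the source and $W=R\{\frac{x_1}{\rho_1},\dots,\frac{x_n}{\rho_n}\}$ the target, and then to verify that this element maps to the restriction map under the canonical morphism $V^{\vee}\wotimes_{R}W\to\uHom(V,W)$. This is exactly what the definition of a nuclear morphism (Subsection \ref{NucBanMod}) asks for.

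First I would fix notation. By Definition \ref{defn:Tate} the underlying Banach module of $V$ is $\coprod^{\leq 1}_{J\in\mathbb{Z}^n_{\geq 0}}R_{\tau^J}$ and that of $W$ is $\coprod^{\leq 1}_{J\in\mathbb{Z}^n_{\geq 0}}R_{\rho^J}$, where $\tau^J=\tau_1^{j_1}\cdots\tau_n^{j_n}$ for $J=(j_1,\dots,j_n)$; thus the monomials $x^J$ form a basis with $\|x^J\|_V=\tau^J$ and $\|x^J\|_W=\rho^J$, and the restriction map is the identity on the underlying algebraic module (well defined and non-expanding since $\rho^J\leq\tau^J$ for all $J$, because $\rho<\tau$). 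For each $J$ let $\alpha_J\in V^{\vee}$ be the coordinate functional $\sum_K a_K x^K\mapsto a_J$; from $|a_J|_R\leq \tau^{-J}\sum_K |a_K|\tau^K$ one gets $\|\alpha_J\|_{V^{\vee}}\leq\tau^{-J}$.

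Next I would form the series $P=\sum_{J\in\mathbb{Z}^n_{\geq 0}}\alpha_J\otimes x^J$ in $V^{\vee}\otimes_R W$ and estimate its projective tensor seminorm:
\[
\sum_{J}\|\alpha_J\|_{V^{\vee}}\,\|x^J\|_W \;\leq\; \sum_{J\in\mathbb{Z}^n_{\geq 0}}\tau^{-J}\rho^J \;=\; \prod_{i=1}^{n}\frac{1}{1-\rho_i/\tau_i}\;<\;\infty,
\]
the finiteness being exactly the point where the strict inequalities $\rho_i<\tau_i$ enter. Hence $P$ converges in the completion $V^{\vee}\wotimes_{R}W$ and so defines an element of $\Hom(e,V^{\vee}\wotimes_{R}W)$. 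Under the canonical map $V^{\vee}\wotimes_{R}W\to\uHom(V,W)$, which sends $\alpha\otimes w$ to the operator $v\mapsto\alpha(v)w$, the element $P$ is sent to $\sum_K a_K x^K\mapsto\sum_J a_J x^J$, that is, to the restriction map. Therefore the restriction map lies in the image of $\Hom(e,V^{\vee}\wotimes_{R}W)\to\Hom(V,W)$ and is nuclear.

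I do not expect a genuine obstacle. The two points needing a little care are: (i) the convergence estimate, which is a product of geometric series and is finite precisely because each $\rho_i/\tau_i<1$ (in the non-archimedean reading one instead checks that the terms $(\rho/\tau)^J$ tend to $0$ as $|J|\to\infty$, which holds for the same reason, so the corresponding series converges in the non-archimedean projective tensor product); and (ii) checking that the canonical map $V^{\vee}\wotimes_{R}W\to\uHom(V,W)$ really sends $P$ to the restriction map, which amounts to summing the convergent series of rank-one operators $\alpha_J\otimes x^J$ and using that its partial sums converge in operator norm by the same estimate.
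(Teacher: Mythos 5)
Your proof is correct and reaches the same conclusion, but via a somewhat different organization than the paper's. The paper first reduces to one variable by writing the $n$-variable restriction map as a completed tensor product of the one-variable restriction maps $R\{\frac{x_i}{\tau_i}\}\to R\{\frac{x_i}{\rho_i}\}$ and invoking Lemma \ref{lem:NuclearProps} (the monoidal product of nuclear morphisms is nuclear); it then identifies $R\{\frac{x}{\tau}\}^{\vee}\wotimes_{R}R\{\frac{x}{\rho}\}$ explicitly as $\coprod^{\leq 1}_{i}\prod^{\leq 1}_{j}R_{\rho^{i}\tau^{-j}}$ and checks that $(a_{ij})=(\delta_{ij})$ satisfies the defining summability condition $\sum_{i}(\rho/\tau)^{i}<\infty$. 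You instead stay in $n$ variables and bound the projective tensor seminorm of the series $P=\sum_J\alpha_J\otimes x^J$ directly by $\sum_J\tau^{-J}\rho^J=\prod_i(1-\rho_i/\tau_i)^{-1}$, obtaining a multi-index geometric series rather than a one-variable one. Both methods ultimately hinge on exactly the same geometric-series convergence driven by $\rho_i<\tau_i$; the paper's version buys a precise description of $V^{\vee}\wotimes_R W$ (useful elsewhere in the article, e.g.\ Lemma \ref{lem:PsySysNuc}), while yours is more self-contained and avoids the appeal to the tensor-stability of nuclearity. Your aside on the non-archimedean reading is also correct: there the requirement becomes $(\rho/\tau)^J\to 0$, again automatic when $\rho<\tau$.
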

\proof  This map clearly decomposes into the completed tensor product of its factors $R\{\frac{x_i}{\tau_i}\} \to R\{\frac{x_i}{\rho_i}\}$. By the compatibility of nuclearity with the completed tensor product discussed in Lemma \ref{lem:NuclearProps}, it is enough to treat the one dimensional case and to show that for $\rho<\tau$ that the map $R\{\frac{x}{\tau}\}\to R\{\frac{x}{\rho}\}$ is nuclear. For any $\tau$ and $\rho$ the Banach module $R\{\frac{x}{\tau}\}^{\vee}\wotimes_{R}R\{\frac{x}{\rho}\}$ can be described as 
\[\left(\underset{j\in \mathbb{Z}_{\geq 0}}\prod{}^{\leq 1} R_{\tau^{-j}}\right)\wotimes_{R}\left(\underset{i\in \mathbb{Z}_{\geq 0}}\coprod{}^{\leq 1} R_{\rho^{i}}\right) \cong \underset{i\in \mathbb{Z}_{\geq 0}}\coprod{}^{\leq 1} \left(\underset{j\in \mathbb{Z}_{\geq 0}}\prod{}^{\leq 1} R_{\rho^{i}\tau^{-j}} \right)
\]
The right hand side consists of elements $(a_{ij})_{i,j\in \mathbb{Z}_{\geq 0}}$ such that first of all \[\sup_{j \in \mathbb{Z}_{\geq 0}}|a_{ij}|\rho^{i}\tau^{-j}<\infty\] for any $i\in \mathbb{Z}_{\geq 0}$ and that furthermore, \[\sum_{i \in \mathbb{Z}_{\geq 0}}\sup_{j \in \mathbb{Z}_{\geq 0}}|a_{ij}|\rho^{i}\tau^{-j}<\infty.\] In the case that $a_{ij}=\delta_{ij}$ which gives the restriction map, the first condition is vacuous because it just says that $(\frac{\rho}{\tau})^{i}<\infty$ for any $i\in \mathbb{Z}_{\geq 0}$. The second condition gives $\sum_{i \in \mathbb{Z}_{\geq 0}}(\frac{\rho}{\tau})^{i}<\infty$ which holds precisely when $\rho<\tau$.
\endproof
Let $\psi$ be a non-decreasing sequence $\mathbb{Z}_{\geq 0} \to \mathbb{Z}_{\geq 1}$. Define a Banach ring over $R$ by 
\[R\{\frac{x}{r}\}^{\psi}= \{\underset{j\in \mathbb{Z}_{\geq 0}}\sum a_j x^{j} \in R[[x]] \ \ | \ \ \underset{j\in \mathbb{Z}_{\geq 0}}\sum |a_j| r^{j} \psi(j) <\infty \}.
\]
equipped with the norm 
\[||\underset{j\in \mathbb{Z}_{\geq 0}}\sum a_j x^{j} ||=  \underset{j\in \mathbb{Z}_{\geq 0}}\sum |a_j| r^{j} \psi(j) .
\]
 
Consider the morphisms
\begin{equation}\label{eqn:PsiSeq} \underset{i\in \mathbb{Z}_{\geq 1}}\coprod^{\leq 1}R\{\frac{x}{r+i^{-1}}\}_{2\psi(i+1)}\stackrel{\id -s}\longrightarrow\underset{i\in \mathbb{Z}_{\geq 1}}\coprod^{\leq 1}R\{\frac{x}{r+i^{-1}}\}_{\psi(i)}  \stackrel{\sigma}\longrightarrow R\{\frac{x}{r}\}^{\psi}
\end{equation}
given by 
\[(\id -s)(f_1, f_2, f_3, \dots) = (f_1, f_2-f_1, f_3-f_2, \dots)
\]
and where the map $\sigma$ is defined by summation.
Here,  $s(f)_{i}= f_{i-1}$ for $i\geq 1$ and $s(f)_{0}=0$. The map $s$ is contracting because it comes from the contracting maps $R\{\frac{x}{r+(i-1)^{-1}}\}_{\psi(i)} \to R\{\frac{x}{r+i^{-1}}\}_{\psi(i)} $ and similarly the map $id$ is contracting because it comes from the contracting maps $R\{\frac{x}{r+i^{-1}}\}_{\psi(i+1)} \to R\{\frac{x}{r+i^{-1}}\}_{\psi(i)}$.
For $f \in R\{\frac{x}{r+i^{-1}}\}_{2\psi(i+1)}$ 
\[||(\text{id} - s)(f)|| = ||f||_{R\{\frac{x}{r+i^{-1}}\}_{\psi(i)}} + ||f||_{R\{\frac{x}{r+(i+1)^{-1}}\}_{\psi(i+1)} }\leq 2||f||_{R\{\frac{x}{r+i^{-1}}\}_{\psi(i+1)}}=||f||_{R\{\frac{x}{r+i^{-1}}\}_{2\psi(i+1)}}
\]

The map $\sigma$ is non-expanding as it is induced by the obvious non-expanding morphisms \[R\{\frac{x}{r+i^{-1}}\}_{\psi(i)} \to  R\{\frac{x}{r}\}^{\psi}.\]

Define $\delta:R\{\frac{x}{r}\}^{\psi}\to \underset{i\in \mathbb{Z}_{\geq 1}}\coprod^{\leq 1}R\{\frac{x}{r+i^{-1}}\}_{\psi(i)}$ by 
\[\delta(\underset{j}\sum a_j x^{j} )=(a_i x^{i} )_{i}.
\]
The fact that $\delta$ is bounded follows from the estimate:
\[\underset{i\in \mathbb{Z}_{\geq 1}}\sum |a_i|(r+i^{-1})^{i}\psi(i)\leq (\sup_{j\in \mathbb{Z}_{\geq 1}}(1+\frac{1}{jr})^{j}) \underset{i\in \mathbb{Z}_{\geq 1}}\sum |a_i| r^{i} \psi(i) =e^{(r^{-1})} \underset{i\in \mathbb{Z}_{\geq 1}}\sum |a_i| r^{i} \psi(i) .
\]
Then we have $\sigma \circ (id-s)=0$. Indeed the norm of $\sum_1^{N} (id-s)f$ is smaller than or equal to $e^{(r^{-1})}||\id - s|| ||f_{N}||,$ where $f_{N}$ is the component of $f$ in $R\{\frac{x}{r+N^{-1}}\}_{\psi(N+1)}$. Because the terms of $f$ are summable, $||f_{N}||\to 0$ as $N$ goes to infinity and so the norm of  $\sigma \circ (\id-s)f$ is zero for any $f$.
Also notice that $\sigma \circ \delta=\text{id}_{R\{\frac{x}{r}\}^{\psi}}$. We conclude that (\ref{eqn:PsiSeq}) splits on the right. By dualizing it and using Lemma \ref{lem:DualColimIsLim} we get the identification
\begin{equation}\label{eqn:PsiKer} (R\{\frac{x}{r}\}^{\psi})^{\vee} =\ker[ \underset{i\in \mathbb{Z}_{\geq 1}}\prod^{\leq 1}(R\{\frac{x}{r+i^{-1}}\}^{\vee})_{\psi(i)^{-1}} \longrightarrow \underset{i\in \mathbb{Z}_{\geq 1}}\prod^{\leq 1}(R\{\frac{x}{r+i^{-1}}\}^{\vee})_{2^{-1}\psi(i+1)^{-1}} ]
\end{equation}
where the morphism on the right is given by 
\[(f_1, f_2, \dots) \mapsto (f_{1} -f_{2}, f_2 - f_3, \dots).
\]
\begin{lem}\label{lem:PsySysNuc} For two sequences $\phi$, $\psi$ such that $\psi(i)>2^{i}\phi(i)$ for all $i$ the natural morphism 
\[R\{\frac{x}{r}\}^{\psi}\to R\{\frac{x}{r}\}^{\phi}
\]
is nuclear.
\end{lem}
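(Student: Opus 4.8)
The plan is to write down an explicit nuclear representative for the map, in the spirit of the computation of $R\{\frac{x}{\tau}\}^{\vee}\wotimes_{R}R\{\frac{x}{\rho}\}$ carried out just above. First I would note that, exactly as in Definition \ref{defn:Tate}, the Banach ring $R\{\frac{x}{r}\}^{\psi}$ is isometrically the weighted coproduct $\underset{j\in \mathbb{Z}_{\geq 0}}\coprod^{\leq 1}R_{r^{j}\psi(j)}$ in $\ttBan^{\leq 1}_{R}$, with the $j$-th summand spanned by $x^{j}$, and similarly $R\{\frac{x}{r}\}^{\phi}\cong \underset{i\in \mathbb{Z}_{\geq 0}}\coprod^{\leq 1}R_{r^{i}\phi(i)}$. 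The hypothesis $\psi(i)>2^{i}\phi(i)$ in particular gives $\phi(i)<\psi(i)$ for every $i$, so the natural morphism is just the inclusion of power series, which is non-expanding because $\sum_{j}|a_{j}|r^{j}\phi(j)\le \sum_{j}|a_{j}|r^{j}\psi(j)$. Dualizing the coproduct description and using $(M_{r})^{\vee}\cong (M^{\vee})_{r^{-1}}$, I would identify $(R\{\frac{x}{r}\}^{\psi})^{\vee}\cong \underset{j\in \mathbb{Z}_{\geq 0}}\prod^{\leq 1}R_{(r^{j}\psi(j))^{-1}}$, so that the coordinate functional $\delta^{j}\colon \sum a_{k}x^{k}\mapsto a_{j}$ has norm $(r^{j}\psi(j))^{-1}$.

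Next, exactly as in the earlier computation, I would describe the completed tensor product
\[(R\{\tfrac{x}{r}\}^{\psi})^{\vee}\wotimes_{R}R\{\tfrac{x}{r}\}^{\phi}\;\cong\;\underset{i\in \mathbb{Z}_{\geq 0}}\coprod{}^{\leq 1}\Bigl(\underset{j\in \mathbb{Z}_{\geq 0}}\prod{}^{\leq 1}R_{r^{i-j}\phi(i)\psi(j)^{-1}}\Bigr),\]
whose elements are families $(a_{ij})_{i,j\in \mathbb{Z}_{\geq 0}}$ with $\sup_{j}|a_{ij}|r^{i-j}\phi(i)\psi(j)^{-1}<\infty$ for each fixed $i$ and $\sum_{i}\sup_{j}|a_{ij}|r^{i-j}\phi(i)\psi(j)^{-1}<\infty$. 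The diagonal family $a_{ij}=\delta_{ij}$ is the element $\sum_{j}\delta^{j}\wotimes x^{j}$, which under the canonical map to $\uHom_{R}(R\{\frac{x}{r}\}^{\psi},R\{\frac{x}{r}\}^{\phi})$ sends $\sum a_{k}x^{k}$ to $\sum a_{j}x^{j}$, i.e.\ it is precisely the natural inclusion. For this diagonal family the first condition is vacuous, while the second reads $\sum_{i}\phi(i)\psi(i)^{-1}$, which by hypothesis is at most $\sum_{i}2^{-i}<\infty$. Hence $(\delta_{ij})$ genuinely lies in $(R\{\frac{x}{r}\}^{\psi})^{\vee}\wotimes_{R}R\{\frac{x}{r}\}^{\phi}$, and this is exactly the statement that the inclusion is nuclear.

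The only steps requiring any care are the identifications of $R\{\frac{x}{r}\}^{\psi}$, of its dual, and of the tensor product as the displayed weighted coproduct, together with the verification that the partial sums of $\sum_{j}\delta^{j}\wotimes x^{j}$ are Cauchy for the projective tensor seminorm; but these are completely parallel to the preceding lemma, the role of the factor $2^{i}$ in the hypothesis being precisely to turn the needed summability of $\phi(i)/\psi(i)$ into a comparison with the convergent geometric series $\sum 2^{-i}$. One could instead try to route the argument through Lemma \ref{lem:DecompNuc}, but since that requires nuclearity as input rather than as output, the direct construction above is the cleanest available.
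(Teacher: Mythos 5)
Your proof is correct and follows essentially the same route as the paper: identify $R\{\frac{x}{r}\}^{\psi}$ and its dual as weighted coproducts/products of copies of $R$, write out $(R\{\frac{x}{r}\}^{\psi})^{\vee}\wotimes_{R}R\{\frac{x}{r}\}^{\phi}$ explicitly as $\coprod^{\leq 1}_i\prod^{\leq 1}_j R_{r^{i-j}\phi(i)\psi(j)^{-1}}$, and check that the diagonal family $\delta_{ij}$, which represents the inclusion, satisfies the two norm conditions, the decisive one being $\sum_i\phi(i)\psi(i)^{-1}\le\sum_i 2^{-i}<\infty$. The only cosmetic difference is that you use the isometric identification with weights $r^{j}\psi(j)$ coming directly from the defining norm, whereas the paper uses the equivalent (but only bounded) weights $(r+j^{-1})^{j}\psi(j)$ carried over from Equation (\ref{eqn:PsiKer}); this does not affect the argument.
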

\proof 
As a module, we can identify $R\{\frac{x}{r}\}^{\phi}=\underset{j\in \mathbb{Z}_{\geq 0}}\coprod^{\leq 1}R_{(r+j^{-1})^{j} \phi(j)}$ and so 
\[(R\{\frac{x}{r}\}^{\psi})^{\vee}=\underset{j\in \mathbb{Z}_{\geq 0}}\prod^{\leq 1}R_{(r+j^{-1})^{-j} \psi(j)^{-1}}.\]
We have 
\[R\{\frac{x}{r}\}^{\phi}\wotimes_{R}(R\{\frac{x}{r}\}^{\psi})^{\vee}=\underset{j\in \mathbb{Z}_{\geq 0}}\coprod^{\leq 1}\underset{l\in \mathbb{Z}_{\geq 0}}\prod^{\leq 1}R_{(r+l^{-1})^{-l} \psi(l)^{-1}(r+j^{-1})^{j} \phi(j)}.
\]
 An element of this space is just a collection $(a_{j,l})_{j,l}$ such that \[\underset{l\in \mathbb{Z}_{\geq 0}}\sup |a_{j,l}|(r+l^{-1})^{-l} \psi(l)^{-1}(r+j^{-1})^{j} \phi(j)<\infty\] for each $j$ and \[\underset{j\in \mathbb{Z}_{\geq 0}}\sum \underset{l\in \mathbb{Z}_{\geq 0}}\sup |a_{j,l}|(r+l^{-1})^{-l} \psi(l)^{-1}(r+j^{-1})^{j} \phi(j)<\infty.\] The morphism we care about is nuclear if and only if $a_{j,l}=\delta_{j,l}$ satisfies these conditions. The first condition is obvious and the second reduces to checking that $\underset{j\in \mathbb{Z}_{\geq 0}}\sum \psi(j)^{-1}\phi(j) $ is finite, but this finiteness follows from our assumptions.
\endproof 

\begin{defn} The dagger algebra \cite{Bam} of overconvergent functions on the polydisk of polyradius $(r_1, \dots, r_n)\in \mathbb{R}_{\geq 0}^{n}$ is the colimit of the monomorphic restrictions of the functions on closed polydisks in $\ttComm(\ttInd(\ttBan_{R}))$:
\[R\{\frac{x_1}{r_1}, \dots, \frac{x_n}{r_n}\} ^\dagger= ``\underset{\rho>r}\colim" R\{\frac{x_1}{\rho_1}, \dots, \frac{x_n}{\rho_n}\}.
\]
This also makes sense when $r=(0, \dots, 0)$. This bornological ring can be realized as the subring of elements $f= \underset{I \in \mathbb{Z}^{n}_{\geq0}}\sum a_{I}x^{I}$ of $R[[x_1, \dots, x_n]]$ such that for some $\rho>r$ we have that  $\underset{I \in \mathbb{Z}^{n}_{\geq0}}\sum |a_{I}|\rho^{I}
<\infty$. A subset is bounded precisely when it is bounded in one of the Banach rings $R\{\frac{x_1}{\rho_1}, \dots, \frac{x_n}{\rho_n}\}$.
\end{defn}
\begin{lem}There are canonical injective morphisms 
\[\left(R\{\frac{x_1}{r_1}, \dots, \frac{x_n}{r_n}\} ^\dagger\right)^{\vee} \longrightarrow \mathcal{O}(D^{n}_{<r^{-1}, R})
\]
\[  \mathcal{O}(D^{n}_{<r^{-1}, R})^{\vee} \longrightarrow  R\{\frac{x_1}{r_1}, \dots, \frac{x_n}{r_n}\} ^\dagger
\]
where $r^{-1}= (r^{-1}_{1}, \dots, r^{-1}_{n})$.

\end{lem}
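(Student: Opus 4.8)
Both morphisms will be assembled from three ingredients. First, the identification $R\{\tfrac{x_1}{\rho_1},\dots,\tfrac{x_n}{\rho_n}\}^{\vee}\cong\prod_{I}^{\leq 1}R_{\rho^{-I}}$ of the dual of the $\ell^{1}$-polydisk algebra (the same computation used in the proof that the restriction maps are nuclear and in \eqref{eqn:PsiKer}), where $I$ runs over $\Zb^{n}_{\geq 0}$; this uses $R\{\tfrac{x_1}{\rho_1},\dots,\tfrac{x_n}{\rho_n}\}=\coprod_{I}^{\leq 1}R_{\rho^{I}}$, the formula $(M_{r})^{\vee}\cong(M^{\vee})_{r^{-1}}$, and $(\coprod^{\leq 1})^{\vee}\cong\prod^{\leq 1}$. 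Second, the fact, immediate from the closed monoidal structure on $\ttInd(\ttBan_{R})$ (Proposition \ref{prop:ScnMain}, and Lemma \ref{lem:DualColimIsLim}), that the dual of a formal filtered colimit is the corresponding limit of duals. Third, the elementary geometric-series estimate: whenever $\rho\sigma<1$ componentwise, the coefficient-preserving map $\prod_{I}^{\leq 1}R_{\rho^{-I}}\to R\{\tfrac{x_1}{\sigma_1},\dots,\tfrac{x_n}{\sigma_n}\}$, $(b_{I})_{I}\mapsto\sum_{I}b_{I}x^{I}$, is bounded of operator norm at most $\prod_{i}(1-\rho_{i}\sigma_{i})^{-1}$, and is injective since it is the restriction of the Taylor-coefficient identification into $R[[x_{1},\dots,x_{n}]]$. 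Throughout, morphisms of formal colimits are specified, and shown to be injective on underlying modules, via Remark \ref{rem:DescribeHoms}.

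\textbf{Construction of the two maps.} Write $R\{\tfrac{x_1}{r_1},\dots,\tfrac{x_n}{r_n}\}^{\dagger}=``\underset{\rho>r}{\colim}"\,R\{\tfrac{x_1}{\rho_1},\dots,\tfrac{x_n}{\rho_n}\}$, a formal filtered colimit over polyradii $\rho\in\prod_{i}(r_{i},\infty)$. By the first two ingredients,
\[
\left(R\{\tfrac{x_1}{r_1},\dots,\tfrac{x_n}{r_n}\}^{\dagger}\right)^{\vee}\;\cong\;\underset{\rho>r}{\lim}\;\prod_{I}{}^{\leq 1}R_{\rho^{-I}}.
\]
Given a polyradius $\sigma<r^{-1}$, choose $\rho(\sigma)$ with $r<\rho(\sigma)<\sigma^{-1}$ componentwise and monotone in $\sigma$; composing the projection to level $\rho(\sigma)$ with the bounded map of the third ingredient yields a bounded morphism to $R\{\tfrac{x_1}{\sigma_1},\dots,\tfrac{x_n}{\sigma_n}\}$, compatibly as $\sigma$ varies, and these assemble to the first claimed morphism into $\underset{\sigma<r^{-1}}{\lim}R\{\tfrac{x_1}{\sigma_1},\dots,\tfrac{x_n}{\sigma_n}\}=\mathcal{O}(D^{n}_{<r^{-1},R})$, injective by the third ingredient. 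For the second morphism one first needs a colimit presentation of the dual of the Fr\'{e}chet module $\mathcal{O}(D^{n}_{<r^{-1},R})=\underset{\sigma<r^{-1}}{\lim}R\{\tfrac{x_1}{\sigma_1},\dots,\tfrac{x_n}{\sigma_n}\}$: writing this limit as a formal $\aleph_{1}$-filtered colimit by Corollary \ref{cor:TowerLim} (equivalently, by the explicit $``\colim_{\psi}"$-description of disk function rings in the Observation above), applying Lemma \ref{lem:DualColimIsLim}, and identifying the duals of the kernel-of-product objects appearing there, one gets
\[
\mathcal{O}(D^{n}_{<r^{-1},R})^{\vee}\;\cong\;``\underset{\sigma<r^{-1}}{\colim}"\,R\{\tfrac{x_1}{\sigma_1},\dots,\tfrac{x_n}{\sigma_n}\}^{\vee}\;\cong\;``\underset{\sigma<r^{-1}}{\colim}"\,\prod_{I}{}^{\leq 1}R_{\sigma^{-I}},
\]
which concretely expresses that a bounded functional on a Fr\'{e}chet module factors boundedly through one of its defining Banach quotients. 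Now for each $\sigma<r^{-1}$, with $\rho(\sigma)$ as above, the third ingredient supplies a bounded coefficient-preserving map $\prod_{I}^{\leq 1}R_{\sigma^{-I}}\to R\{\tfrac{x_1}{\rho_1(\sigma)},\dots,\tfrac{x_n}{\rho_n(\sigma)}\}$; with $\rho(\sigma)$ monotone these give a morphism of formal colimits $``\underset{\sigma<r^{-1}}{\colim}"\prod_{I}^{\leq 1}R_{\sigma^{-I}}\to``\underset{\rho>r}{\colim}"R\{\tfrac{x_1}{\rho_1},\dots,\tfrac{x_n}{\rho_n}\}=R\{\tfrac{x_1}{r_1},\dots,\tfrac{x_n}{r_n}\}^{\dagger}$, again injective by the third ingredient. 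Both constructed maps are canonical because on underlying modules each is simply the identity on Taylor coefficients $\sum_{I}a_{I}x^{I}$; the same geometric-series estimates in fact show both are bijective on underlying modules, but only the morphism property in $\ttInd(\ttBan_{R})$ is asserted.

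\textbf{Main obstacle.} The one non-routine point is the second displayed identification, i.e. the computation of $\mathcal{O}(D^{n}_{<r^{-1},R})^{\vee}$. The first map needs only ``the dual of a colimit is a limit of duals,'' which is formal; the second must go the other way, and requires either pushing the $``\colim_{\psi}"$-presentation of Corollary \ref{cor:TowerLim} through Lemma \ref{lem:DualColimIsLim} and computing the duals of the $\prod^{\leq 1}$-objects occurring there (exploiting that the projective spectrum is reduced/Mittag--Leffler, in the spirit of the Roos-complex material recalled earlier), or a dedicated statement on duals of Fr\'{e}chet modules of classical K\"{o}the-duality type. Everything else reduces to the geometric-series estimates above and to assembling compatible families of bounded maps between formal colimits via Remark \ref{rem:DescribeHoms}.
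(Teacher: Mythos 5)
Your construction of the \emph{first} morphism is correct and in fact mirrors (and partly anticipates) the paper's later explicit proof that $\bigl(R\{\frac{x_1}{r_1},\dots,\frac{x_n}{r_n}\}^{\dagger}\bigr)^{\vee}\cong\mathcal{O}(D^{n}_{<r^{-1},R})$: there the same two families of bounded coefficient-preserving maps (your ``third ingredient'' in both directions) are produced and shown to define inverse morphisms of pro-systems. For the lemma at hand, though, the paper takes a much more economical route and \emph{does not} attempt to compute either dual as an object: it introduces the coefficient pairing $(\sum a_{I}x^{I},\sum b_{J}x^{J})\mapsto\sum a_{I}b_{I}$, checks by the Cauchy--Schwarz-type estimate $\lvert\sum a_{I}b_{I}\rvert\le(\sum\lvert a_{I}\rvert\rho^{-I})(\sum\lvert b_{J}\rvert\rho^{J})$ that it is bounded on $R\{\frac{x_1}{r_1},\dots,\frac{x_n}{r_n}\}^{\dagger}\times\mathcal{O}(D^{n}_{<r^{-1},R})$ (bounded-set by bounded-set in $\ttCBorn_{R}$), and uses non-degeneracy in each variable. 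This avoids any identification of $\mathcal{O}(D^{n}_{<r^{-1},R})^{\vee}$ altogether, which your proposal cannot do.

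This is exactly where you have a genuine gap, and you have flagged it yourself, but the route you sketch to close it does not go through. The problem is your second displayed isomorphism
\[
\mathcal{O}(D^{n}_{<r^{-1},R})^{\vee}\;\overset{?}{\cong}\;``\underset{\sigma<r^{-1}}{\colim}"\,R\{\tfrac{x_1}{\sigma_1},\dots,\tfrac{x_n}{\sigma_n}\}^{\vee}.
\]
As an object of $\ttInd(\ttBan_{R})$ the left-hand side is, by the $\uHom$-formula for ind-objects, a \emph{categorical limit} $\lim_{\psi}M_{\psi}^{\vee}$ of Banach objects (taking the $\aleph_{1}$-filtered presentation of Corollary \ref{cor:TowerLim}), whereas the right-hand side is a \emph{formal colimit}; these are not formally interchangeable. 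Your suggested remedy -- push the $``\colim_{\psi}"$-presentation of Corollary \ref{cor:TowerLim} through Lemma \ref{lem:DualColimIsLim} and ``identify the duals of the $\prod^{\leq 1}$-objects appearing there'' -- founders because $\bigl(\prod^{\leq 1}_{i}V_{i}\bigr)^{\vee}$ is \emph{not} $\coprod^{\leq 1}_{i}V_{i}^{\vee}$ (the $\ell^{\infty}$-type product has a strictly larger dual than the $\ell^{1}$-type coproduct of duals), so the duals of the kernel-of-product objects in Corollary \ref{cor:TowerLim} do not reduce to the disk algebras you need. A correct elementary argument here either proceeds as the paper does via the pairing, or proves the stronger isomorphism $(R\{\dots\}^{\dagger})^{\vee}\cong\mathcal{O}(D^{n}_{<r^{-1},R})$ first (as the paper does two lemmas later, essentially by your first-map construction and its explicit inverse) and then appeals to reflexivity of $R\{\dots\}^{\dagger}$ -- which must itself be justified, e.g.\ from nuclearity of the presenting system via Lemma \ref{cor:DualIsNuc}. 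As written, your ``dedicated statement on duals of Fr\'{e}chet modules of K\"othe-duality type'' is precisely the content that is missing and is not available in the cited material.
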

\proof  For the second, we first define a bounded pairing between $ \mathcal{O}(D^{n}_{<r^{-1}, R})$  and $ R\{\frac{x_1}{r_1}, \dots, \frac{x_n}{r_n}\} ^\dagger$ which is $R$-linear and non-degenerate in each variable.
Consider the partially defined map 
\[f: R[[x_1, \dots, x_n]] \times R[[x_1, \dots, x_n]] \mathrel{-\,}\rightarrow R
\]
given by 
\[(\sum_{I \in \mathbb{Z}^{n}_{\geq 0}}a_{I}x^{I}, \sum_{J\in \mathbb{Z}^{n}_{\geq 0}}b_{J}x^{J})\mapsto \sum_{I \in \mathbb{Z}^{n}_{\geq 0}}a_{I}b_{I}.
\]
It suffices to show that it restricts to a well defined, bounded non-degenerate morphism on the product 
\[R\{\frac{x_1}{r_1}, \dots, \frac{x_n}{r_n}\} ^\dagger \times \mathcal{O}(D^{n}_{<r^{-1}, R}).
\]
The non-degeneracy is obvious. The fact that it is well defined follows from the estimate
\begin{equation}\label{eqn:CSinequality}|\sum_{I \in \mathbb{Z}^{n}_{\geq 0}}a_{I}b_{I}| \leq \sum_{I \in \mathbb{Z}^{n}_{\geq 0}}|a_{I}b_{I}|= \sum_{I \in \mathbb{Z}^{n}_{\geq 0}}|a_{I}\rho^{-I}||b_{I}\rho^{I}| \leq \left(\sum_{I \in \mathbb{Z}^{n}_{\geq 0}}|a_{I}\rho^{-I}|\right)\left( \sum_{J \in \mathbb{Z}^{n}_{\geq 0}}|b_{J}\rho^{J}|\right)
\end{equation}
assuming that $\underset{J\in \mathbb{Z}^{n}_{\geq 0}}\sum b_{J}x^{J} \in  R\{\frac{x_1}{\rho_1}, \dots, \frac{x_n}{\rho_n}\}$ and $\rho>r$. In order to show that it is bounded we need to take a bounded subset $B_1 \subset R\{\frac{x_1}{\rho_1}, \dots, \frac{x_n}{\rho_n}\}$ for $\rho>r$ and another bounded subset $B_2\subset \mathcal{O}(D^{n}_{<r^{-1}, R})$ and show that $f(B_1 \times B_2)$ is bounded. Let $ \mathcal{O}(D^{n}_{<r^{-1}, R})_{\rho^{-1}}$ denote the space $ \mathcal{O}(D^{n}_{<r^{-1}, R})$ equipped with the norm coming from $R\{\rho_1 x_1, \dots, \rho_n x_n\}$. The map $f:R\{\frac{x_1}{\rho_1}, \dots, \frac{x_n}{\rho_n}\}\times \mathcal{O}(D^{n}_{<r^{-1}, R})\to R$ factorizes as 
\[R\{\frac{x_1}{\rho_1}, \dots, \frac{x_n}{\rho_n}\}\times \mathcal{O}(D^{n}_{<r^{-1}, R}) \to R\{\frac{x_1}{\rho_1}, \dots, \frac{x_n}{\rho_n}\}\times \mathcal{O}(D^{n}_{<r^{-1}, R})_{\rho^{-1}} \to R.
\] 
The first map is clearly bounded and so $B_1\times B_2$ is still bounded in $R\{\frac{x_1}{\rho_1}, \dots, \frac{x_n}{\rho_n}\}\times \mathcal{O}(D^{n}_{<r^{-1}, R})_\rho$ and so lands inside  the product of disks $D_1 \times D_2$ where $D_i$ consists of elements of norm less than $d_i$. The estimate  (\ref{eqn:CSinequality}) again shows the image in $R$ is bounded.
\endproof

\begin{lem}
\[\left(R\{\frac{x_1}{r}\} ^\dagger\right)^{\vee} \cong \mathcal{O}(D^{1}_{<r^{-1}, R})
\]
\end{lem}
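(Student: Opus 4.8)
The plan is to compute both sides as limits of explicit countable towers of Banach modules and then prove those towers are isomorphic as pro-objects. Write $\rho_n=r+\tfrac1n$, so that $\{\rho_n\}$ is cofinal among the radii $\rho>r$ and $\{\sigma_n:=\rho_n^{-1}\}$ increases to $r^{-1}$ and is cofinal among the radii $\sigma<r^{-1}$. Passing to these cofinal subsystems gives
\[
R\{\tfrac{x}{r}\}^{\dagger}\;=\;``\underset{n}{\colim}"\,R\{\tfrac{x}{\rho_n}\},\qquad
\mathcal{O}(D^{1}_{<r^{-1},R})\;=\;\underset{n}{\lim}\,R\{\tfrac{x}{\rho_n^{-1}}\},
\]
the transition maps in the first colimit being non-expanding (they decrease the $\ell^{1}$-norm). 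By the one–variable case of the symmetric-algebra description in Definition \ref{defn:Tate}, as a Banach module $R\{\tfrac{x}{\rho}\}\cong\underset{j\geq0}{\coprod}{}^{\leq1}R_{\rho^{j}}$, so the tower presenting $\mathcal{O}(D^{1}_{<r^{-1},R})$ is the tower of $\underset{j\geq0}{\coprod}{}^{\leq1}R_{\rho_n^{-j}}$ with ``inclusion of sequences'' transition maps.

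Next I would dualise the colimit presenting the dagger algebra. By the internal-Hom formula for Ind-categories (Proposition \ref{prop:ScnMain}), $\uHom(``\underset{n}{\colim}"R\{\tfrac{x}{\rho_n}\},R)\cong\underset{n}{\lim}\,R\{\tfrac{x}{\rho_n}\}^{\vee}$; and by Lemma \ref{lem:DualColimIsLim} (the dual of a countable coproduct is the product of the duals) together with the compatibility of $(-)^{\vee}$ with the scaling construction, $R\{\tfrac{x}{\rho}\}^{\vee}\cong\underset{j\geq0}{\prod}{}^{\leq1}R_{\rho^{-j}}$ — the same computation already used in the proof that the restriction maps of disk algebras are nuclear. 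Hence
\[
\bigl(R\{\tfrac{x}{r}\}^{\dagger}\bigr)^{\vee}\;\cong\;\underset{n}{\lim}\,\underset{j\geq0}{\prod}{}^{\leq1}R_{\rho_n^{-j}},
\]
again with ``inclusion of sequences'' as transition maps. The statement thus reduces to showing that the towers $\bigl(\underset{j}{\prod}{}^{\leq1}R_{\rho_n^{-j}}\bigr)_n$ and $\bigl(\underset{j}{\coprod}{}^{\leq1}R_{\rho_n^{-j}}\bigr)_n$ have isomorphic limits in $\ttInd(\ttBan_R)$, compatibly with the canonical comparison morphism of the previous lemma.

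To finish I would exhibit a pro-isomorphism between these two towers. In one direction there is the tautological non-expanding inclusion $u_n:\underset{j}{\coprod}{}^{\leq1}R_{\rho_n^{-j}}\hookrightarrow\underset{j}{\prod}{}^{\leq1}R_{\rho_n^{-j}}$, since $\sup_j|b_j|\rho_n^{-j}\leq\sum_j|b_j|\rho_n^{-j}$, and these commute with the transitions. In the other direction, because $\rho_{n+1}<\rho_n$, a sequence $(b_j)$ with $\sup_j|b_j|\rho_{n+1}^{-j}=M<\infty$ satisfies $\sum_{j\geq0}|b_j|\rho_n^{-j}\leq M\sum_{j\geq0}(\rho_{n+1}/\rho_n)^{j}=M(1-\rho_{n+1}/\rho_n)^{-1}<\infty$, which gives bounded maps $v_n:\underset{j}{\prod}{}^{\leq1}R_{\rho_{n+1}^{-j}}\to\underset{j}{\coprod}{}^{\leq1}R_{\rho_n^{-j}}$ forming, after the evident reindexing, a morphism of towers. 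Both composites $u_n v_n$ and $v_n u_{n+1}$ are visibly the ``inclusion of sequences'', i.e. the transition maps of the two towers, so the two pro-objects are isomorphic, their limits in $\ttInd(\ttBan_R)$ agree, the isomorphism being induced by the $u_n$ and its inverse by the $v_n$, which one then identifies with the canonical morphism of the previous lemma. The main obstacle is exactly this step: the ``$\coprod$'' ($\ell^{1}$) and ``$\prod$'' ($\ell^{\infty}$) completions genuinely differ at every finite level, and one must see that the geometric-series gain coming from $\rho_{n+1}<\rho_n$ is precisely what collapses the difference in the limit; everything else is bookkeeping of radii and directions of arrows.
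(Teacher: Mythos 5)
Your argument is correct and is essentially the same as the paper's: after reducing to cofinal countable towers, dualize the formal colimit defining the dagger algebra to get a tower of $\ell^\infty$-type modules $\prod^{\leq 1}_j R_{\rho_n^{-j}}$, present $\mathcal{O}(D^1_{<r^{-1},R})$ as the limit of the $\ell^1$-type modules $\coprod^{\leq 1}_j R_{\rho_n^{-j}}$, and then interleave the two towers via the tautological $\ell^1\hookrightarrow\ell^\infty$ inclusion in one direction and the geometric-series estimate (made available by the strict decrease of $\rho_n$) in the other. The paper phrases the interleaving with two radii $\eta<\rho^{-1}$ rather than an explicit index shift $n\mapsto n+1$, but the maps and the estimate $\sum_j|a_j|\eta^j\leq(\sup_i|a_i|\rho^{-i})\sum_k(\eta\rho)^k$ are the same as yours, so this is a cosmetic difference only.
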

\proof 
We have 
\begin{equation}\begin{split}
\left(R\{\frac{x}{r}\} ^\dagger\right)^{\vee}  \cong \underset{\rho>r}\lim \left(R\{\frac{x}{\rho}\}^{\vee}\right) & = \underset{\rho>r}\lim \left((\underset{j\in \mathbb{Z}_{\geq 0}}\coprod^{\leq 1} (R_{\rho^j}))^{\vee}\right)\\ 
& = \underset{\rho>r}\lim \left((\underset{j\in \mathbb{Z}_{\geq 0}}\prod^{\leq 1} \left((R_{\rho^j}\right)^{\vee}))\right) \\
& \cong \underset{\rho>r}\lim \left(\underset{j\in \mathbb{Z}_{\geq 0}}\prod^{\leq 1} \left(R_{\rho^{-j}}\right)\right).
\end{split}
\end{equation}
On the other hand,  \[\mathcal{O}(D^{1}_{<r^{-1}, R})=\underset{\tau<r^{-1}}\lim R\{\frac{x}{\tau}\}\cong \underset{\rho>r}\lim R\{\rho x\}.\]
Now notice that $\underset{j\in \mathbb{Z}_{\geq 0}}\prod^{\leq 1} \left(R_{\rho^{-j}}\right)=\{\sum_{j\in \mathbb{Z}_{\geq 0}}a_jx^j | \sup_{j\in \mathbb{Z}_{\geq 0}}|a_j|\rho^{-j}<\infty\}$ and 
 \[R\{\rho x\}=\{\sum_{j\in \mathbb{Z}_{\geq 0}}a_jx^j | \sum_{j\in \mathbb{Z}_{\geq 0}}|a_j|\rho^{-j}<\infty\}\] and \[R\{\frac{ x}{\tau}\}= \{\sum_{j\in \mathbb{Z}_{\geq 0}}a_jx^j | \sum_{j\in \mathbb{Z}_{\geq 0}}|a_j|\tau^{j}<\infty\}\]
 and that the inclusion map for $\tau=\rho^{-1}$
 \[R\{\frac{ x}{\tau}\} \longrightarrow \underset{j\in \mathbb{Z}_{\geq 0}}\prod^{\leq 1} \left(R_{\rho^{-j}}\right) 
 \]
 or in other words
\[R\{\rho x\} \longrightarrow \underset{j\in \mathbb{Z}_{\geq 0}}\prod^{\leq 1} \left(R_{\rho^{-j}}\right) 
\]
is bounded. Also if $\eta$ satisfies $\eta<\rho^{-1}<r^{-1}$ we have a bounded inclusion $ \underset{j\in \mathbb{Z}_{\geq 0}}\prod^{\leq 1} \left(R_{\rho^{-j}}\right) \longrightarrow R\{\frac{x}{\eta}\}$ because of the inequality
\[\sum_{j\in \mathbb{Z}_{\geq 0}}|a_j|\eta^{j}= \sum_{j\in \mathbb{Z}_{\geq 0}}|a_j|\rho^{-j}(\eta \rho)^{j}\leq (\sup_{i\in \mathbb{Z}_{\geq 0}}|a_i|\rho^{-i})(\sum_{k\in \mathbb{Z}_{\geq 0}}(\eta\rho)^{k}).
\]
Together these inclusions give the desired isomorphisms.
\endproof
\begin{lem}
\[\left(R\{\frac{x_1}{r_1}, \dots, \frac{x_n}{r_n}\} ^\dagger\right)^{\vee} \cong \mathcal{O}(D^{n}_{<r^{-1}, R})
\]
\end{lem}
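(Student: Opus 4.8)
The plan is to repeat the argument from the one‑dimensional lemma essentially word for word, with the index set $\mathbb{Z}_{\geq0}$ replaced by the set of multi‑indices $\mathbb{Z}^{n}_{\geq0}$; the only genuinely new ingredient is that the single geometric series used there becomes an $n$‑fold product of geometric series, which still converges.

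First I would rewrite both sides as cofiltered limits of Banach modules indexed by $\{\rho:\rho>r\}$. Since $R\{\frac{x_1}{r_1},\dots,\frac{x_n}{r_n}\}^{\dagger}=``\underset{\rho>r}{\colim}"\,R\{\frac{x_1}{\rho_1},\dots,\frac{x_n}{\rho_n}\}$, the internal‑Hom formula for $\ttInd(\ttBan_R)$ (Proposition \ref{prop:ScnMain}) gives
\[
\left(R\{\tfrac{x_1}{r_1},\dots,\tfrac{x_n}{r_n}\}^{\dagger}\right)^{\vee}\;\cong\;\underset{\rho>r}{\lim}\,R\{\tfrac{x_1}{\rho_1},\dots,\tfrac{x_n}{\rho_n}\}^{\vee}.
\]
As a Banach module $R\{\frac{x_1}{\rho_1},\dots,\frac{x_n}{\rho_n}\}\cong\underset{I\in\mathbb{Z}^{n}_{\geq0}}\coprod^{\leq1}R_{\rho^{I}}$, so, exactly as in the one‑dimensional case (the dual of a contracting coproduct is the contracting product, together with $(M_{a})^{\vee}\cong(M^{\vee})_{a^{-1}}$), its dual is $\underset{I\in\mathbb{Z}^{n}_{\geq0}}\prod^{\leq1}R_{\rho^{-I}}$, the space of power series $\sum_{I}a_{I}x^{I}$ with $\sup_{I}|a_{I}|\rho^{-I}<\infty$. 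On the other side, reparametrizing the defining limit of $\mathcal{O}(D^{n}_{<r^{-1},R})$ by $\tau=\rho^{-1}$ presents it as $\underset{\rho>r}{\lim}\,R\{\rho_{1}x_{1},\dots,\rho_{n}x_{n}\}$, where $R\{\rho_{1}x_{1},\dots,\rho_{n}x_{n}\}$ is the space of $\sum_{I}a_{I}x^{I}$ with $\sum_{I}|a_{I}|\rho^{-I}<\infty$.

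It then suffices to identify these two cofiltered systems, which I would do by interleaving them with bounded inclusions, exactly as in the one‑dimensional proof. For each $\rho>r$ the identity on power series is a non‑expanding inclusion $R\{\rho_{1}x_{1},\dots,\rho_{n}x_{n}\}\hookrightarrow\prod^{\leq1}_{I}R_{\rho^{-I}}$, since $\sup_{I}|a_{I}|\rho^{-I}\leq\sum_{I}|a_{I}|\rho^{-I}$. Conversely, whenever $\eta<\rho^{-1}$ (so $\eta_{i}\rho_{i}<1$ for every $i$, and automatically $\eta<r^{-1}$) the identity on power series is a bounded inclusion $\prod^{\leq1}_{I}R_{\rho^{-I}}\hookrightarrow R\{\frac{x_{1}}{\eta_{1}},\dots,\frac{x_{n}}{\eta_{n}}\}$, because
\[
\sum_{I\in\mathbb{Z}^{n}_{\geq0}}|a_{I}|\eta^{I}=\sum_{I}|a_{I}|\rho^{-I}(\eta\rho)^{I}\leq\Big(\sup_{J}|a_{J}|\rho^{-J}\Big)\sum_{I}(\eta\rho)^{I}=\Big(\sup_{J}|a_{J}|\rho^{-J}\Big)\prod_{i=1}^{n}\frac{1}{1-\eta_{i}\rho_{i}}<\infty .
\]
These two families of inclusions compose, in either order, to the structure maps of the two systems, so the systems are isomorphic as pro‑objects and hence their limits in $\ttInd(\ttBan_R)$ are canonically isomorphic, which is the asserted isomorphism.

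I expect the only point needing a little care to be the bookkeeping that these interleaving inclusions are compatible with the transition maps of both diagrams, so that they genuinely induce mutually inverse morphisms of the limits which agree with the canonical maps of the previous lemma; this is routine and identical to the one‑variable situation, and everything else is a transcription with $j\in\mathbb{Z}_{\geq0}$ replaced by $I\in\mathbb{Z}^{n}_{\geq0}$ and the single geometric series replaced by the product $\prod_{i=1}^{n}(1-\eta_{i}\rho_{i})^{-1}$. (One could instead reduce to the one‑dimensional lemma via $R\{\frac{x_1}{r_1},\dots,\frac{x_n}{r_n}\}^{\dagger}\cong R\{\frac{x_1}{r_1}\}^{\dagger}\wotimes_R\cdots\wotimes_R R\{\frac{x_n}{r_n}\}^{\dagger}$, but that route requires compatibility of $\wotimes_R$ with the relevant duals and limits, which is less immediate than the direct computation above.)
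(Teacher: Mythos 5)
Your proof is correct and takes essentially the same route as the paper: both compute the dual of the dagger algebra as a limit of Banach duals, identify $\mathcal{O}(D^{n}_{<r^{-1},R})$ as a limit of disk algebras $R\{\rho_1 x_1,\dots,\rho_n x_n\}$ with $\rho>r$, and then establish the isomorphism by interleaving the two cofiltered systems with the bounded inclusions you exhibit (the paper calls them $f_\rho$ and $g_{\rho,\eta}$, using $\eta > \rho$ in place of your $\eta < \rho^{-1}$). The only cosmetic difference is that the paper writes the disk algebra as $\coprod^{\leq 1}_{j}(V_\rho^{\otimes j}/\Sigma_j)$ via the symmetric-power decomposition of $V_\rho = \bigoplus_i R_{\rho_i}$ before translating back to multi-indexed power series, whereas you work directly with the flattened multi-index coproduct $\coprod^{\leq 1}_{I} R_{\rho^I}$; your estimate $\sum_I(\eta\rho)^I=\prod_{i=1}^{n}(1-\eta_i\rho_i)^{-1}$ is the same $n$-fold geometric-series bound the paper uses implicitly in $\sum_I(\rho/\eta)^I$.
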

\proof 
Let $V_\rho= R_{\rho_1} \oplus \cdots \oplus R_{\rho_n}$.
We have 
\begin{equation}\begin{split}
\left(R\{\frac{x_1}{r_1}, \dots, \frac{x_n}{r_n}\} ^\dagger\right)^{\vee}  \cong \underset{\rho>r}\lim \left(R\{\frac{x_1}{\rho_1}, \dots, \frac{x_n}{\rho_n}\}^{\vee}\right) & = \underset{\rho>r}\lim \left((\underset{j\in \mathbb{Z}_{\geq 0}}\coprod^{\leq 1} (V_\rho^{\otimes j}/\Sigma_j))^{\vee}\right)\\ 
& = \underset{\rho>r}\lim \left(\underset{j\in \mathbb{Z}_{\geq 0}}\prod^{\leq 1} \left((V_\rho^{\otimes j}/\Sigma_j\right)^{\vee})\right)\\ 
& = \underset{\rho>r}\lim \left(\underset{j\in \mathbb{Z}_{\geq 0}}\prod^{\leq 1} \left(({V_{\rho^{-1}}^{\otimes j}})^{\Sigma_j}\right)\right)
\end{split}
\end{equation}
On the other hand,  \[\mathcal{O}(D^{n}_{<r^{-1}, R})=\underset{\tau<r^{-1}}\lim R\{\frac{x_1}{\tau_1}, \dots, \frac{x_n}{\tau_n}\}\cong \underset{\rho>r}\lim R\{\rho_1 x_1, \dots, \rho_n x_n\}= \underset{\rho>r}\lim\left( \underset{j\in \mathbb{Z}_{\geq 0}}\coprod^{\leq 1} (V_{\rho^{-1}}^{\otimes j}/\Sigma_j) \right)\]

Notice that for each $\rho>r$ we have injective bounded maps
\[f_{\rho}: R\{\rho_1 x_1, \dots, \rho_n x_n\} \longrightarrow R\{\frac{x_1}{\rho_1}, \dots, \frac{x_n}{\rho_n}\}^{\vee}
\]
sending $\underset{I}\sum a_{I}x^I$ to the map sending $\underset{I}\sum b_{I}x^I$ to $\underset{I}\sum a_{I}b_{I}$. The later is bounded by $\underset{J}\sum ||a_{J}||\rho^{-J}$
because 
\[||\underset{I}\sum a_{I} b_{I}||\leq \underset{I}\sum ||a_{I}|| ||b_{I}||= \underset{I}\sum ||a_{I}||\rho^{I} ||b_{I}||\rho^{-I}\leq (\underset{J}\sum ||a_{J}||\rho^{-J})(\underset{K}\sum ||b_{K}||\rho^{K}).
\]
This then shows that $f_\rho$ has norm less than or equal to one.
For any $\eta>\rho>r$ we have an injective bounded map
\[ g_{\rho,\eta}: R\{\frac{x_1}{\rho_1}, \dots, \frac{x_n}{\rho_n}\}^{\vee} \longrightarrow R\{\eta_1 x_1, \dots, \eta_n x_n\} 
\]
given by sending any $\alpha$ to $\underset{I}\sum \alpha(x^{I}) x^{I}$ which can be seen to be well defined and bounded by the estimate
\[||\underset{I}\sum \alpha(x^{I}) x^{I}||= \underset{I}\sum |\alpha(x^{I})|\eta^{-I}\leq \underset{I}\sum ||\alpha||||x^{I}||\eta^{-I}= \left(\underset{I}\sum (\rho/\eta)^{I}\right)||\alpha||.
\]
Notice that the composition $g_{\rho,\eta}\circ f_{\rho}$ is simply restriction from the disk of radius $\rho^{-1}$ to $\eta^{-1}$. The composition $f_{\eta}\circ g_{\rho,\eta}$ is the identity. Therefore these maps give maps of systems which give the required isomorphisms:
 \begin{equation}\label{eqn:PresOpenDisk}\mathcal{O}(D^{n}_{<r^{-1}, R})= \underset{\rho>r}\lim R\{\rho_1 x_1, \dots, \rho_n x_n\} \cong \underset{\rho>r}\lim \left(R\{\frac{x_1}{\rho_1}, \dots, \frac{x_n}{\rho_n}\}^{\vee}\right)\cong \left(R\{\frac{x_1}{r_1}, \dots, \frac{x_n}{r_n}\} ^\dagger\right)^{\vee}.
 \end{equation}
 \endproof
 We can now finish showing that open disks and affine spaces over $R$ are flat over $R$ in any dimension.
 \begin{cor}\label{cor:NuclearityOfFunctions}$\mathcal{O}(D^{n}_{<r^{-1}, R})$ is nuclear (and hence flat over $R$) for any $r=(r_1, \dots, r_n)$ with $r_i\geq 0$.
 \end{cor}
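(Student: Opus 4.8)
The plan is to deduce nuclearity of $\mathcal{O}(D^{n}_{<r^{-1},R})$ directly from the presentation established in the preceding lemma, namely
\[
\mathcal{O}(D^{n}_{<r^{-1}, R}) \cong \underset{\rho>r}\lim \left(R\{\tfrac{x_1}{\rho_1}, \dots, \tfrac{x_n}{\rho_n}\}^{\vee}\right) \cong \left(R\{\tfrac{x_1}{r_1}, \dots, \tfrac{x_n}{r_n}\}^{\dagger}\right)^{\vee},
\]
and the identification of $R\{\tfrac{x_1}{r_1}, \dots, \tfrac{x_n}{r_n}\}^{\dagger}$ as the formal colimit $``\underset{\rho>r}\colim" R\{\tfrac{x_1}{\rho_1}, \dots, \tfrac{x_n}{\rho_n}\}$ over the $\aleph_1$-filtered (in fact, we only need filtered) poset of multi-radii $\rho>r$. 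The key observation is that each restriction map $R\{\tfrac{x_1}{\tau_1}, \dots, \tfrac{x_n}{\tau_n}\} \to R\{\tfrac{x_1}{\rho_1}, \dots, \tfrac{x_n}{\rho_n}\}$ appearing as a transition morphism in this colimit (for $\rho<\tau$, both $>r$) is nuclear, as was just proven. So the dagger algebra is an object of $\ttInd(\ttBan_{R})$ presented by an ind-system with nuclear transition maps.

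The steps, in order, are as follows. First, I would invoke Remark \ref{rem:summary} (or directly Lemma \ref{lem:SameSame}): since $R\{\tfrac{x_1}{r_1}, \dots, \tfrac{x_n}{r_n}\}^{\dagger} = ``\underset{\rho>r}\colim" R\{\tfrac{x_1}{\rho_1}, \dots, \tfrac{x_n}{\rho_n}\}$ has all transition maps nuclear, it is a nuclear object of $\ttInd(\ttBan_{R})$ in the sense of Definition \ref{defn:IndBanNuc}. Second, I would apply Lemma \ref{cor:DualIsNuc}—more precisely its Ind-analogue, which in this setting is supplied by the combination of Remark \ref{rem:summary} and the fact (Lemma \ref{lem:NuclearProps}) that the dual of a nuclear morphism is nuclear: the dual system $(R\{\tfrac{x_1}{\rho_1}, \dots, \tfrac{x_n}{\rho_n}\}^{\vee})_{\rho>r}$ is a projective (inverse) system with nuclear transition maps, and by Lemma \ref{lem:DualColimIsLim} its limit is exactly $(R\{\tfrac{x_1}{r_1}, \dots, \tfrac{x_n}{r_n}\}^{\dagger})^{\vee}$. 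One must check that a countable limit of Banach modules with nuclear transition maps is nuclear; I would do this by the same device used in Corollary \ref{cor:TowerLim}, rewriting the limit as a formal $\aleph_1$-filtered colimit of kernels of maps between countable products of weighted copies of $R$, and verifying that the induced transition maps on this colimit presentation are nuclear (this is essentially the content of Lemma \ref{lem:PsySysNuc} in the one-dimensional graded setting, and the higher-dimensional case follows by the tensor-product compatibility of nuclearity, Lemma \ref{lem:NuclearProps}, exactly as in the proof that restriction maps of polydisk algebras are nuclear). Third, once $\mathcal{O}(D^{n}_{<r^{-1},R})$ is known to be nuclear, flatness is immediate from Lemma \ref{lem:NucImpliesFlat}. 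The case $r_i=0$ (so $r^{-1}_i=\infty$, the affine space $\mathbb{A}^n_R$) is covered by the same argument since the dagger/Stein presentations make sense there too, as noted in the definitions.

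I expect the main obstacle to be the second step: passing from ``nuclear transition maps in the ind-system defining $R\{\tfrac{x_1}{r_1}, \dots, \tfrac{x_n}{r_n}\}^{\dagger}$'' to ``nuclear transition maps in an ind-system presentation of its dual $\mathcal{O}(D^{n}_{<r^{-1},R})$.'' The subtlety is that $\mathcal{O}(D^{n}_{<r^{-1},R})$ is naturally a \emph{limit} (a Fréchet-type object), not a colimit, so to even speak of it being nuclear in the sense of Definition \ref{defn:IndBanNuc} one must first re-express it as a filtered colimit—this is precisely what the $\Upsilon$-indexed presentations from Corollary \ref{cor:TowerLim} and the observation after the definition of $\mathcal{O}(\mathbb{A}^n_R)$ provide. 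The cleanest route is probably to avoid re-deriving this by hand and instead argue as in Lemma \ref{lem:PsySysNuc}: exhibit an explicit cofinal subsystem of the colimit presentation of the dual whose transition maps are visibly nuclear (of the form $R\{\tfrac{x}{r}\}^{\psi}\to R\{\tfrac{x}{r}\}^{\phi}$ with $\psi(i)>2^i\phi(i)$), reducing to the already-proven criterion that $\sum_j \psi(j)^{-1}\phi(j)<\infty$. The multidimensional case then reduces to the one-dimensional one via the completed tensor product decomposition, using that nuclearity is preserved by $\wotimes_R$ (Lemmas \ref{lem:NuclearProps} and \ref{lem:tensnuc}).
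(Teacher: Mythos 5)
Your proposal is correct and lands on essentially the same argument as the paper: express $\mathcal{O}(D^{n}_{<r^{-1},R})$ via Corollary~\ref{cor:TowerLim} and Equation~\ref{eqn:PsiKer} as $``\underset{\psi\in\Psi}\colim"(R\{r_1x_1,\dots,r_nx_n\}^{\psi})^{\vee}$, use Lemma~\ref{lem:PsySysNuc} (with Lemma~\ref{lem:NuclearProps} to dualize and handle the $n$-variable case by tensoring) to find a cofinal subsystem with nuclear transition maps, conclude nuclearity by Lemma~\ref{lem:SameSame}, and flatness by Lemma~\ref{lem:NucImpliesFlat}. The preliminary detour — first showing the dagger algebra is nuclear and then appealing to an ``Ind-analogue'' of Lemma~\ref{cor:DualIsNuc} together with Lemma~\ref{lem:DualColimIsLim} — is not actually available as stated (the categorical limit in $\ttInd(\ttBan_R)$ is not the object produced by $\lim^{\leq 1}$, and the paper has no duality for nuclear Ind-objects), but you correctly flag this as the weak point and then route around it exactly as the paper does, so the final argument is sound.
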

 \proof 
 We have by Equation \ref{eqn:PresOpenDisk} and the description of limits from Corollary \ref{cor:TowerLim} that $\mathcal{O}(D^{n}_{<r^{-1}, R})$ is isomorphic to  
 \tiny
 \[
``\underset{\psi \in \Psi}\colim"\ker[\underset{i}\prod^{\leq 1}(R\{\frac{x_1}{r_1^{-1}+i^{-1}},\dots, \frac{x_n}{r_n^{-1}+i^{-1}}\}^{\vee})_{\psi(i)^{-1}}\to \underset{i}\prod^{\leq 1}(R\{\frac{x_1}{r_1^{-1}+i^{-1}},\dots, \frac{x_n}{r_n^{-1}+i^{-1}}\}^{\vee})_{2^{-1}\psi(i+1)^{-1}}].
 \]
 \normalsize
 which is isomorphic to 
 $``\underset{\psi\in \Psi}\colim" (R\{r_1x_1, \dots, r_nx_n\}^{\psi})^{\vee}$ by Equation \ref{eqn:PsiKer}. Lemma \ref{lem:PsySysNuc} implies that we can find a final indexing set so that all the morphisms in the system are nuclear.  Therefore, by Lemma \ref{lem:SameSame}, $\mathcal{O}(D^{n}_{<r^{-1}, R})$ is nuclear and hence flat by Lemma \ref{lem:NucImpliesFlat}.
 \endproof
\begin{lem}\label{lem:TensorOfFunctions}
For any $\tau_1, \dots, \tau_n \in (0,\infty]$ and $\rho_1, \dots, \rho_m \in (0,\infty]$ , we have 
\[\mathcal{O}(D^{n}_{<\tau}) \wotimes_{R} \mathcal{O}(D^{m}_{<\rho}) \cong \mathcal{O}(D^{n+m}_{<(\tau,\rho)}).
\]
In particular $\mathcal{O}(\mathbb{A}^{n}_{R}) \wotimes_{R}\mathcal{O}(\mathbb{A}^{m}_{R}) \cong \mathcal{O}(\mathbb{A}^{n+m}_{R})$.
\end{lem}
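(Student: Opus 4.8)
The plan is to present both sides as countable inverse limits of closed polydisk algebras, settle the closed polydisk case directly, and then commute $\wotimes_{R}$ past the limits using Lemma \ref{lem:tensprod}, whose hypotheses are supplied by Corollaries \ref{cor:NuclearityOfFunctions} and \ref{cor:TowerLim}. First I would record the closed polydisk case: for finite tuples of radii $\sigma=(\sigma_1,\dots,\sigma_n)$ and $\sigma'=(\sigma'_1,\dots,\sigma'_m)$ there is an isomorphism
\[
R\{\tfrac{x_1}{\sigma_1},\dots,\tfrac{x_n}{\sigma_n}\}\wotimes_{R}R\{\tfrac{y_1}{\sigma'_1},\dots,\tfrac{y_m}{\sigma'_m}\}\;\cong\;R\{\tfrac{x_1}{\sigma_1},\dots,\tfrac{x_n}{\sigma_n},\tfrac{y_1}{\sigma'_1},\dots,\tfrac{y_m}{\sigma'_m}\}.
\]
By Definition \ref{defn:Tate} the two factors on the left are the free objects $S^{\leq 1}_{R}(\bigoplus_{i}R_{\sigma_i})$ and $S^{\leq 1}_{R}(\bigoplus_{j}R_{\sigma'_j})$ of $\ttComm(\ttBan^{\leq 1}_{R})$ built by the symmetric ring construction of Equation \ref{eqn:SymAlgConst}; since this construction is left adjoint to the forgetful functor it preserves coproducts, and as the coproduct in $\ttComm(\ttBan^{\leq 1}_{R})$ is $\wotimes_{R}$ while the coproduct of the free modules is their direct sum, the displayed isomorphism follows (alternatively one checks directly that the multiplication map is an isometry for the $\ell^{1}$-norms).

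Next, fix countable cofinal chains below $\tau$ and below $\rho$, for instance $\tau^{(l)}_i=\tau_i-l^{-1}$ when $\tau_i<\infty$ and $\tau^{(l)}_i=l$ when $\tau_i=\infty$, and similarly $\rho^{(k)}$. Cofinality in the index posets of the defining limits gives $\mathcal{O}(D^{n}_{<\tau})\cong\underset{l\in\mathbb{Z}_{\geq 1}}\lim R\{\tfrac{x}{\tau^{(l)}}\}$ and $\mathcal{O}(D^{m}_{<\rho})\cong\underset{k\in\mathbb{Z}_{\geq 1}}\lim R\{\tfrac{y}{\rho^{(k)}}\}$, both limits over the countable poset $\mathbb{Z}_{\geq 1}$. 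By Corollary \ref{cor:NuclearityOfFunctions} the module $\mathcal{O}(D^{n}_{<\tau})$ is nuclear, hence flat, and it is metrizable since it is Fr\'echet (Corollary \ref{cor:TowerLim}); so Lemma \ref{lem:tensprod} yields
\[
\mathcal{O}(D^{n}_{<\tau})\wotimes_{R}\mathcal{O}(D^{m}_{<\rho})\;\cong\;\underset{k}\lim\Bigl(\mathcal{O}(D^{n}_{<\tau})\wotimes_{R}R\{\tfrac{y}{\rho^{(k)}}\}\Bigr).
\]
For each fixed $k$ the Banach module $R\{\tfrac{y}{\rho^{(k)}}\}$ is, as a module, a countable coproduct $\coprod^{\leq 1}$ of weighted copies of $R$, hence projective by Lemma \ref{lem:Explicit} and flat by Lemma \ref{lem:Proj2Flat}, and it is metrizable by Remark \ref{rem:Fine}; so Lemma \ref{lem:tensprod} applies again to commute $(-)\wotimes_{R}R\{\tfrac{y}{\rho^{(k)}}\}$ past $\underset{l}\lim$, and together with the base case this gives $\mathcal{O}(D^{n}_{<\tau})\wotimes_{R}R\{\tfrac{y}{\rho^{(k)}}\}\cong\underset{l}\lim R\{\tfrac{x}{\tau^{(l)}},\tfrac{y}{\rho^{(k)}}\}$. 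Rewriting the iterated limit as a limit over the product poset $\mathbb{Z}_{\geq 1}\times\mathbb{Z}_{\geq 1}$, which is cofinal in the index poset of $\mathcal{O}(D^{n+m}_{<(\tau,\rho)})$, identifies the outcome with $\mathcal{O}(D^{n+m}_{<(\tau,\rho)})$. The statement for affine space is the special case in which all $\tau_i$ and $\rho_j$ equal $\infty$, using that $\mathbb{A}^{n}_{R}$ is the open disk of multi-radius $(\infty,\dots,\infty)$.

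The main obstacle is precisely the interchange of $\wotimes_{R}$ with the two countable inverse limits: flatness alone does not suffice for this, which is exactly why the proof must route through Lemma \ref{lem:tensprod} and hence through the metrizability (Fr\'echet property) together with the nuclearity of the open disk algebras established in Corollary \ref{cor:NuclearityOfFunctions}. The remaining ingredients — the closed polydisk base case via the symmetric ring construction and the cofinality bookkeeping for the product of the two index towers — are routine.
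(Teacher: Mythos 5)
Your proof is correct and follows essentially the same strategy as the paper's: exploit that the open disk algebras are metrizable (Fr\'echet, Corollary \ref{cor:TowerLim}) and nuclear hence flat (Corollary \ref{cor:NuclearityOfFunctions}, Lemma \ref{lem:NucImpliesFlat}), apply Lemma \ref{lem:tensprod} twice to commute $\wotimes_R$ past the two countable inverse limits, and reduce to the closed polydisk base case. You go a bit further than the paper by explicitly justifying the closed polydisk isomorphism via the symmetric ring construction (which the paper leaves implicit) and by correctly routing the first interchange through Lemma \ref{lem:tensprod} --- the paper's text nominally cites Lemma \ref{lem:prodtens} there, which strictly speaking requires a Banach first factor rather than the Ind-Banach object $\mathcal{O}(D^{n}_{<\tau})$.
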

\proof First notice that both $R\{\rho_1 x_1, \dots, \rho_n x_n\}$ and $\mathcal{O}(D^{n}_{<\tau})$ are metrizable by Corollary \ref{cor:TowerLim}. Also, $\mathcal{O}(D^{n}_{<\tau})$ is nuclear by Corollary \ref{cor:NuclearityOfFunctions} and hence flat by Lemma \ref{lem:NucImpliesFlat}. Now the second statement in Lemma \ref{lem:prodtens} shows that all the limits can be pulled outside.
Explicitly 
\begin{equation}
\begin{split}
\mathcal{O}(D^{n}_{<\tau}) \wotimes_{R} \mathcal{O}(D^{m}_{<\rho})  & \cong \mathcal{O}(D^{n}_{<\tau}) \wotimes_{R} \underset{s<\rho}\lim R\{\frac{y_1}{s_1}, \dots, \frac{y_m}{s_m}\} \cong \underset{s<\rho}\lim (\mathcal{O}(D^{n}_{<\tau}) \wotimes_{R}  R\{\frac{y_1}{s_1}, \dots, \frac{y_m}{s_m}\}) \\
& \cong \underset{s<\rho}\lim (( \underset{r<\tau}\lim R\{\frac{x_1}{r_1}, \dots, \frac{x_n}{r_n}\}) \wotimes_{R}  R\{\frac{y_1}{s_1}, \dots, \frac{y_m}{s_m}\})  \\ & \cong \underset{s<\rho}\lim \underset{r<\tau}\lim (R\{\frac{x_1}{r_1}, \dots, \frac{x_n}{r_n}\} \wotimes_{R}  R\{\frac{y_1}{s_1}, \dots, \frac{y_m}{s_m}\}) \\ & \cong  \underset{(s,r)<(\tau,\rho)}\lim R\{\frac{x_1}{r_1}, \dots, \frac{x_n}{r_n}, \frac{y_1}{s_1}, \dots, \frac{y_m}{s_m}\} \\ 
& \cong \mathcal{O}(D^{n+m}_{<(\tau,\rho)}).
\end{split}
\end{equation}
\endproof
\section{Topologies and Descent}\label{sec:Des}
Different considerations of abstract topologies and descent that we know of have appeared for example in works of Orlov \cite{O} and Kontsevich/Rosenberg \cite{KR}.
We consider descent in the infinity-category version of the homotopy monomorphism, flat, and other topologies in our project on derived analytic geometry \cite{BK2}. On the other hand, in this article we try to focus on non-derived categories of modules (i.e. homotopicaly discrete modules or complexes in degree $0$) and the underived pullback functors of restriction. As this would not work for arbitrary modules concentrated in degree zero, we need to specialize to quasi-coherent modules. The Grothendieck pre-topology that we work with is not quasi-compact, it has covers consisting of a countable collection of homotopy monomorphisms $\spec(A_i)\to \spec(A)$ such that given a morphism $f: M\to N$ in $\ttMod(A)$ with $M\wotimes_{A}A_{i} \to N\wotimes_{A}A_{i}$ an isomorphism for all $i$, then $f$ is an isomorphism. This property is called being conservative. It is expected to correspond to surjectivity of the cover for the topos-theoretic notion of points. For instance, it is known that the Huber points correspond with the topos-theoretic notion of points in the rigid-analytic context with the $G$-topology (see \cite{BK} where more explanation and citations are given). In most geometric settings \cite{BK} \cite{BK2} \cite{BBK} \cite{BBK2}, conservativity on the level of quasi-coherent modules agrees with surjectivity of the map $\underset{i}\cup \max(A_i) \to \max(A)$  on the maximal ideal spectrum, which then agrees with the pullback map from quasi-coherent modules to the descent category for quasi-coherent modules being fully faithful.

\subsection{Quasi-coherent modules}
\begin{defn}Let $A$ be an object of $\ttComm(\ttInd(\ttBan_R))$. Objects $M$ and $N$ of $\ttMod(A)$ are called transverse over $A$ if $M\wotimes^{\mathbb{L}}_{A}N \cong M\wotimes_{A}N$.
\end{defn}
\begin{defn}Let $A$ be an object of $\ttComm(\ttInd(\ttBan_R))$ flat over $R$. An object $M$ of $\ttMod(A)$ is called quasi-coherent if it is flat over $R$ and for all homotopy epimorphisms $A\to B$ where $B$ is metrizable that $M$ is transverse to $B$ over $A$. The full subcategory of quasi-coherent modules is denoted by $\ttMod^{RR}(A)$.
\end{defn}
\begin{defn}Let $A$ be an object of $\ttComm(\ttInd(\ttBan_R))$ which is metrizable. An object $M$ of $\ttMod_{F}(A)$ is called {\it quasi-coherent} if for all homotopy epimorphisms $A\to B$ in $\ttComm(\ttInd(\ttBan_R))$ where $B$ is metrizable that $M$ is transverse to $B$ over $A$. The full subcategory of quasi-coherent modules is denoted by $\ttMod^{RR}_{F}(A)$.
\end{defn}
Our notation ``RR" is in credit to Ramis and Ruget who introduced a similar notion in the context of complex analysis in \cite{RR}.
\begin{example}Let $A\in \ttComm(\ttInd(\ttBan_R))$ and $V\in \ttInd(\ttBan_R)$. Assume that both $A$ and $V$ are flat over $R$ and metrizable. Then $A\wotimes_{R}V\in \ttMod^{RR}_{F}(A)$.
\end{example}
\begin{lem}Say that $C$ is a category with countably many objects and morphisms. Then given any functor $F:C\to \ttMod^{RR}_{F}(A)$ the limit computed in $\ttMod(A)$ lives in $\ttMod^{RR}_{F}(A)$.
\end{lem}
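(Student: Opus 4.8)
The plan is to verify the two conditions that define membership in $\ttMod^{RR}_{F}(A)$ for $M:=\lim_{c\in C}F(c)$, the limit being formed in $\ttMod(A)$: that $M$ is a Fr\'echet $A$-module, and that $M$ is transverse over $A$ to every homotopy epimorphism $A\to B$ in $\ttComm(\ttInd(\ttBan_R))$ with $B$ metrizable. The first condition is soft. Since each $F(c)$ is Fr\'echet, i.e.\ a countable limit of Banach modules, and $C$ has only countably many objects and morphisms, $M$ is a countable limit of Banach modules, hence Fr\'echet. Equivalently, one builds $M$ out of the two countable products $\prod_{c\in\Ob C}F(c)$ and $\prod_{(f\colon c\to c')}F(c')$ and a kernel, and countable products of Banach modules are Fr\'echet while a kernel of a morphism of Fr\'echet modules is a closed, hence Fr\'echet, submodule. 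So $M\in\ttMod_{F}(A)$.

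For transversality, fix a metrizable homotopy epimorphism $A\to B$. I would route the argument through the Roos complex: $\mathbb{R}\lim_{c\in C}F(c)$ is computed by $\mathscr{R}^{\bullet}(F)$, whose degree-$n$ term $\mathscr{R}^{n}(F)$ is a countable product of the $F(i_{0})$ over composable $n$-chains in $C$. The first input is that a countable product of objects of $\ttMod^{RR}_{F}(A)$ again lies in $\ttMod^{RR}_{F}(A)$: the underived identity $\bigl(\prod_{i}N_{i}\bigr)\wotimes_{A}B\cong\prod_{i}\bigl(N_{i}\wotimes_{A}B\bigr)$ is exactly the lemma preceding Corollary \ref{cor:product_sum_nuc} (valid since $A$ and $B$ are metrizable), and the vanishing of $\Tor^{A}_{\geq 1}\bigl(\prod_{i}N_{i},B\bigr)$ is deduced from exactness of products in $\ttMod(A)$ (which has enough projectives) together with transversality of each $N_{i}$. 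Granting this, every $\mathscr{R}^{n}(F)$ is transverse to $B$, so the bounded-below complex $\mathscr{R}^{\bullet}(F)$ consists of $\wotimes^{\mathbb{L}}_{A}B$-acyclic modules; the hyper-$\Tor$ spectral sequence then degenerates to the line $q=0$ and yields $\mathscr{R}^{\bullet}(F)\wotimes^{\mathbb{L}}_{A}B\simeq\mathscr{R}^{\bullet}(F)\wotimes_{A}B$. Applying product-commutation in each degree identifies the right-hand side with $\mathscr{R}^{\bullet}\bigl(c\mapsto F(c)\wotimes_{A}B\bigr)\simeq\mathbb{R}\lim_{c\in C}\bigl(F(c)\wotimes_{A}B\bigr)$, so $\bigl(\mathbb{R}\lim_{c\in C}F(c)\bigr)\wotimes^{\mathbb{L}}_{A}B\simeq\mathbb{R}\lim_{c\in C}\bigl(F(c)\wotimes_{A}B\bigr)$. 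The point of using the Roos complex rather than the naive presentation $0\to M\to\prod_{c}F(c)\to\prod_{f}F(c')$ is that in this way one tensors $B$ only against countable \emph{products} of quasi-coherent modules — covered by the product-commutation lemma and exactness of products — and never needs that a cokernel of a morphism of quasi-coherent modules be quasi-coherent, which fails in general.

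It remains to descend from $\mathbb{R}\lim$ to $\lim$. If the diagrams $F$ and $c\mapsto F(c)\wotimes_{A}B$ are $\lim_{C}$-acyclic, then $M\simeq\mathbb{R}\lim_{C}F$ and $\lim_{C}\bigl(F(c)\wotimes_{A}B\bigr)\simeq\mathbb{R}\lim_{C}\bigl(F(c)\wotimes_{A}B\bigr)$ in the derived category, and the previous isomorphism becomes $M\wotimes^{\mathbb{L}}_{A}B\simeq\lim_{C}\bigl(F(c)\wotimes_{A}B\bigr)$; since the right-hand side is a genuine module (a kernel of a morphism of countable products) concentrated in degree $0$, and since $H^{0}\bigl(M\wotimes^{\mathbb{L}}_{A}B\bigr)\cong M\wotimes_{A}B$ always, this forces $M\wotimes^{\mathbb{L}}_{A}B\simeq M\wotimes_{A}B$, which is the required transversality.

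I expect the main obstacle to be precisely this $\lim$ versus $\mathbb{R}\lim$ comparison, together with the compatibility of the derived tensor with the (possibly infinite) limit — equivalently, the vanishing of $\Tor^{A}_{\geq 1}(M,B)$ itself, which is what one is ultimately proving. There are two natural ways to close it, and I would try both. One is to exploit a resolution $B_{\bullet}\to B$ of finite length by metrizable modules that are flat over $A$ (such resolutions are what actually occur for the homotopy epimorphisms in the geometric applications, e.g.\ Laurent and rational localizations): then $(-)\wotimes_{A}B_{p}$ is left exact and commutes with countable products, hence with all countable limits, and the comparison can be pushed through the finite complex $B_{\bullet}$ degree by degree. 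The other is to invoke a Mittag-Leffler / Roos-acyclicity input on the transition maps of $F$, of the kind established in Lemma \ref{lem:UseML}, which is available whenever those maps are dense, so that $\lim_{C}F\simeq\mathbb{R}\lim_{C}F$. Making one of these inputs precise in the stated generality — or isolating the minimal hypothesis under which the $\lim_{C}$-acyclicity holds — is the delicate step; the rest of the argument is formal manipulation with the Roos complex, the product-commutation lemma, and transversality of the $F(c)$.
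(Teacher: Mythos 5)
The paper states this lemma without supplying a proof, so there is no argument in the source to compare against; what follows is an assessment of your attempt on its own terms. Your Fr\'{e}chet argument is correct, and the closure of $\ttMod^{RR}_{F}(A)$ under countable products is also correctly handled: one tensors the Bar complex $\mathscr{L}^{\bullet}_{A}(B)$ against $\prod_{i}N_{i}$, commutes each metrizable term $\mathscr{L}^{n}_{A}(B)$ past the countable product using the metrizability of $A$ and $B$ (the lemma just before Corollary \ref{cor:product_sum_nuc}), and uses exactness of products together with transversality of each $N_{i}$ to see that the resulting complex is strictly exact away from degree zero. That part is solid, and it is also the right reason to route the argument through the Roos complex rather than through a two-term kernel presentation.

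The obstruction you flag at the end is, however, a genuine gap and not a removable technicality. Your Roos-complex manipulation yields $(\mathbb{R}\lim_{C}F)\wotimes^{\mathbb{L}}_{A}B\simeq\mathbb{R}\lim_{C}(F\wotimes_{A}B)$; passing from $\mathbb{R}\lim$ to $\lim$ on the left, which is what the definition of transversality actually requires, needs $\lim_{C}$-acyclicity of $F$, a hypothesis the lemma as stated does not grant. The naive kernel presentation $0\to M\to\prod_{c}F(c)\to\prod_{f}F(c')$ does not repair this, for exactly the reason you name: one cannot control the $\Tor$ of the image or cokernel, since quasi-coherence is not stable under taking submodules or quotients. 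It is telling that the very next statements in the paper — Lemma \ref{lem:EF}, Corollary \ref{cor:AlimCons}, and Theorem \ref{thm:Big} — all explicitly assume $\lim$-acyclicity of the relevant projective system before drawing conclusions of this type, and that Lemma \ref{lem:UseML} is where the density/Mittag-Leffler input is supplied in the Stein case. So either the present lemma is implicitly stated under such an acyclicity or density hypothesis, in which case your proof closes exactly as you describe, or there is an argument circumventing the $\lim$ versus $\mathbb{R}\lim$ comparison that neither of us sees. You have correctly isolated the one step that does not follow from the stated hypotheses.
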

\subsection{General Results on Descent}
\begin{lem}\label{lem:EF}Let $A\in \ttComm(\ttInd(\ttBan_R))$, and let  $\{E_i\}_{i\in I}$ be a projective system in $\ttMod(A)$ indexed by the countable poset $I$. Let $F$ be an object in $\ttMod(A)$. Suppose that the underlying objects of $A$ and $F$ in $\ttInd(\ttBan_R)$ are metrizable and  flat over $R$.  
Suppose in addition $F$ is transverse to $E_i$ over $A$ for each $i$ and the system $\{E_i\}_{i\in I}$ is $\underset{i\in I}\lim$-acyclic. Then 
 $\{F\wotimes_{A}E_i\}_{i\in I}$ is a $\underset{i\in I}\lim$-acyclic projective system, $F$ is transverse to $\underset{i\in I}\lim E_i$ over $A$ and the natural morphism \[F\wotimes_{A}(\underset{i\in I}\lim E_{i}) \to  \underset{i\in I}\lim (F\wotimes_{A} E_{i} )\] is an isomorphism. If instead of the condition that $F$ is flat over $R$ we have that both the $E_i$ and $\underset{i\in I}\lim E_{i}$ are flat over $R$ then the same conclusion holds.
\end{lem}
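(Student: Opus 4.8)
The plan is to reduce everything to the Roos complex formalism recalled earlier and to the commutation result of Lemma \ref{lem:tensprod}. First I would recall that the system $\{E_i\}_{i\in I}$ being $\underset{i\in I}\lim$-acyclic means precisely that its Roos complex $\mathscr{R}^\bullet(E)$ has strict differentials and cohomology concentrated in degree zero, equal to $\underset{i\in I}\lim E_i$ (this is the content of the Prosmans proposition cited just before the Corollary on derived limits and tensor products). Since $I$ is a countable poset, every term $\mathscr{R}^n(E)=\prod_{i_0\to\cdots\to i_n} E_{i_0}$ is a \emph{countable} product of the $E_{i_0}$. The key point is then to apply $F\wotimes_A(-)$ termwise to $\mathscr{R}^\bullet(E)$ and check that (a) we get the Roos complex of $\{F\wotimes_A E_i\}$, and (b) it still has strict differentials with cohomology in degree zero only.

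For step (a): I would invoke Lemma \ref{lem:tensprod} (the countable-product case), using that $A$ and $F$ are metrizable with $F$ flat over $R$, to get $F\wotimes_A \mathscr{R}^n(E)\cong \prod_{i_0\to\cdots\to i_n}(F\wotimes_A E_{i_0})=\mathscr{R}^n(F\wotimes_A E)$, naturally in $n$ so that differentials match. (In the variant hypothesis, where instead the $E_i$ and $\underset{i\in I}\lim E_i$ are flat over $R$: here each $\mathscr R^n(E)$ is a countable product of flat-over-$R$ objects, hence — by Lemma \ref{lem:prodtens} or rather its metrizable analogue — the needed commutation of $F\wotimes_A(-)$ with this product still holds; I should double-check which of the two forms of the product lemma is actually needed and cite it precisely.) For step (b): since $F$ is transverse to each $E_i$ over $A$, we have $F\wotimes_A^{\mathbb L}E_i\cong F\wotimes_A E_i$; combined with the observation that $F\wotimes_A(-)$ of a strictly exact complex whose terms are transverse to $F$ remains strictly exact — here one uses that the Roos complex of an acyclic system is strictly exact in positive degrees with $H^0=\underset{i}\lim E_i$, and that $F\wotimes_A(-)$ is right exact and preserves the strictness via the transversality/flatness hypotheses — we conclude $F\wotimes_A\mathscr{R}^\bullet(E)$ has strict differentials and cohomology concentrated in degree zero equal to $F\wotimes_A(\underset{i}\lim E_i)$. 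That simultaneously says $\{F\wotimes_A E_i\}$ is $\underset{i}\lim$-acyclic and identifies $\underset{i}\lim(F\wotimes_A E_i)$ with $F\wotimes_A(\underset{i}\lim E_i)$, which is the asserted isomorphism; transversality of $F$ to $\underset{i}\lim E_i$ then follows because $F\wotimes_A^{\mathbb L}(\underset{i}\lim E_i)\cong F\wotimes_A^{\mathbb L}\mathbb R\underset{i}\lim E_i\cong \mathbb R\underset{i}\lim(F\wotimes_A^{\mathbb L}E_i)\cong \mathbb R\underset{i}\lim(F\wotimes_A E_i)\cong \underset{i}\lim(F\wotimes_A E_i)\cong F\wotimes_A(\underset{i}\lim E_i)$, using the derived-limit/tensor Corollary from the excerpt together with steps (a)–(b).

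The main obstacle I anticipate is the bookkeeping in step (b): turning ``$F$ transverse to each $E_i$'' plus right-exactness of $\wotimes_A$ into the statement that $F\wotimes_A(-)$ preserves \emph{strictness} of the Roos differentials, not merely exactness of the underlying complex. One has to be careful that strict exactness in a quasi-abelian category is not automatically preserved by right-exact functors; the transversality hypothesis ($F\wotimes_A^{\mathbb L}E_i\cong F\wotimes_A E_i$, i.e. higher $\Tor^A$ vanish) is exactly what rescues this, but stitching it together over the whole complex — as opposed to a single short exact sequence — needs either an induction on the (cohomological) truncations of $\mathscr R^\bullet(E)$ or a spectral-sequence argument. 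A secondary, more routine obstacle is confirming that the commutation lemmas \ref{lem:prodtens} and \ref{lem:tensprod} apply verbatim with $\wotimes_A$ in place of $\wotimes_R$ under the stated metrizability of $A$; the excerpt's lemma immediately after Lemma \ref{lem:surprise} handles exactly this, via the coequalizer description \eqref{eqn:Atens}, so I would cite that.
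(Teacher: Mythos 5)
The endgame you describe — reducing to the quasi-isomorphism $F\wotimes_A^{\mathbb L}\bigl(\mathbb R\lim_i E_i\bigr)\cong \mathbb R\lim_i\bigl(F\wotimes_A^{\mathbb L}E_i\bigr)$ and then observing that, after transversality and $\lim$-acyclicity collapse the two sides, one lives in non-positive and the other in non-negative cohomological degrees — is exactly the paper's endgame. But the mechanism you offer for the derived exchange does not close. You cite ``the derived-limit/tensor Corollary'' (the one following Prosmans' proposition on Roos complexes), yet that corollary is a statement about $W\ootimes(-)$ where $W$ is \emph{flat for the monoidal product in question and $W\ootimes(-)$ commutes with products}. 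Over $A$ that would require $F$ to be flat in $\ttMod(A)$, which is not among the hypotheses; $F$ is only assumed flat over $R$ and transverse to the \emph{individual} $E_i$. Transversality to each $E_i$ gives $F\wotimes^{\mathbb L}_A E_i\cong F\wotimes_A E_i$ but does \emph{not} tell you that $F\wotimes^{\mathbb L}_A(-)$ is concentrated in degree $0$ on a countable product $\prod E_{i_0}$, which is what a termwise application to $\mathscr R^\bullet(E)$ would need, nor that $F\wotimes_A(-)$ preserves strictness of the Roos differentials. You flag this last difficulty yourself, but neither the induction nor the spectral-sequence route you gesture at is carried out, so the worry remains a gap.

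What actually makes the exchange legitimate in the paper is the Bar resolution $\mathscr L^\bullet_A(F)$: since $A$ and $F$ are metrizable and $R$-flat, every term $F\wotimes_R A^{\wotimes_R n}$ is metrizable and $R$-flat, hence commutes with countable products by Lemma \ref{lem:tensprod} and preserves strict exactness. The paper then forms the double complex $\mathscr L^\bullet_A(F)\wotimes_R\mathscr R^\bullet(\{E_i\})$ and checks that it equals $\mathscr R^\bullet(\{\mathscr L^\bullet_A(F)\wotimes_R E_i\})$; totalizing in the two orders yields precisely the desired strict quasi-isomorphism between $F\wotimes^{\mathbb L}_A(\mathbb R\lim E_i)$ and $\mathbb R\lim(F\wotimes^{\mathbb L}_A E_i)$ without ever needing $F$ to be $A$-flat or needing $F\wotimes_A(-)$ to preserve strictness directly. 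This one device both dissolves your step~(b) worry and replaces the misapplied corollary in your final chain. Your step~(a) — commuting $F\wotimes_A(-)$ with the countable products via the post-\ref{lem:surprise} lemma — is fine as far as it goes, but it is not strong enough on its own: to conclude you must work with $\mathscr L^\bullet_A(F)\wotimes_R(-)$, not $F\wotimes_A(-)$, termwise on the Roos complex. The flatness-of-$E_i$ variant at the end is then handled symmetrically, since in that case the Roos terms themselves are $R$-flat and the same double-complex argument applies.
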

\proof  Recall that the Bar complex $\mathscr{L}^{\bullet}_{A}(F)$ is strictly quasi-isomorphic to $F$ and that $F\wotimes^{\mathbb{L}}_{A}(-)$ is computed by $\mathscr{L}^{\bullet}_{A}(F)\wotimes_{R}(-)$. Using the explicit form of this complex together with the fact that $F\wotimes_{R}(-)$ and $A\wotimes_{R}(-)$ commute with products by Lemma \ref{lem:tensprod}, we can see that it interacts well with the Roos complex of $\{E_i\}_{i\in I}$ in the sense that there is a strict quasi-isomorphism
\[\Tot(\mathscr{L}^{\bullet}_{A}(F)\wotimes_{R}\mathscr{R}^{\bullet}(\{E_i\}_{i\in I})) \cong \Tot( \mathscr{R}^{\bullet} (\{\mathscr{L}^{\bullet}_{A}(F)\wotimes_{R}E_i\}_{i\in I})).
\]
The left hand side computes $F\wotimes^{\mathbb{L}}_{A} (\mathbb{R}\underset{i\in I}\lim E_i)$ and the right hand side computes $\mathbb{R}\underset{i\in I}\lim (F\wotimes^{\mathbb{L}}_{A} E_i)$. Using that $F$ is transverse to $E_i$ over $A$ for each $i$ and the system $\{E_i\}_{i\in I}$ is $\underset{i\in I}\lim$-acyclic the above equation simplifies to a quasi-isomorphism 
\[F\wotimes^{\mathbb{L}}_{A} (\underset{i\in I}\lim E_i) \cong \mathbb{R}\underset{i\in I}\lim (F\wotimes_{A} E_i).
\] From which the rest of the claims follow immediately as one side is in non-negative degrees and the other is in non-positive degrees and so both are concentrated in degree zero.
\endproof

\begin{cor}\label{cor:AlimCons}Let $A\in \ttComm(\ttInd(\ttBan_R))$ presented by a system $A_i$ indexed by $i$ in a countable poset $I$ in $\ttComm(\ttInd(\ttBan_R))_{/A}$. Suppose that the  system $\{A_i\}_{i\in I}$ is $\underset{i\in I}\lim$-acyclic and that the objects of $\ttInd(\ttBan_R)$ underlying  $A_i$ are metrizable and flat over $R$ and transverse to one another over $A$ and in $\ttMod^{RR}_{F}(A)$. The natural functor $ \ttMod(A) \to \underset{i\in I}\lim \ttMod(A_i)$ induces a functor $ \ttMod^{RR}_{F}(A) \to \underset{i\in I}\lim \ttMod^{RR}_{F}(A_i)$. Then $A=\underset{i\in I}\lim A_i$ if and only if the collection of functors $(-) \wotimes_{A} A_i$ is conservative. When this holds the natural functor \[\underset{i\in I}\lim \ttMod^{RR}_{F}(A_i) \to \ttMod^{RR}_{F}(A)
\]
\[\{N_i\}_{i\in I}\mapsto \underset{i\in I}\lim N_i
\]
is essentially surjective.
\end{cor}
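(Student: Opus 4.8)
The plan is to derive the whole statement from Lemma \ref{lem:EF}, from the lemma immediately preceding the corollary (a countable limit of quasi-coherent Fréchet $A$-modules is again quasi-coherent), and from some \v{C}ech-type bookkeeping inside the poset $I$. First I would check that $M\mapsto\{M\wotimes_A A_i\}_i$ does take values in $\underset{i\in I}\lim\ttMod^{RR}_F(A_i)$: given $M\in\ttMod^{RR}_F(A)$ and a homotopy epimorphism $A_i\to B$ with $B$ metrizable, the composite $A\to A_i\to B$ is again a homotopy epimorphism onto a metrizable algebra, so $M$ is transverse to $B$ over $A$; combining this with transversality of $M$ to $A_i$ over $A$ and the base-change identity $(M\wotimes_A A_i)\wotimes_{A_i}B\cong M\wotimes_A B$ shows $M\wotimes_A A_i$ is transverse to $B$ over $A_i$, i.e. lies in $\ttMod^{RR}_F(A_i)$; the canonical isomorphisms $(M\wotimes_A A_i)\wotimes_{A_i}A_j\cong M\wotimes_A A_j$ then assemble $\{M\wotimes_A A_i\}_i$ into a descent datum, functorially in $M$.

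For the equivalence, suppose first $A=\underset{i}\lim A_i$. Applying Lemma \ref{lem:EF} with $F=M$ and $E_i=A_i$ — the metrizability, flatness over $R$, transversality and $\underset{i}\lim$-acyclicity hypotheses being exactly those imposed on $A$, $M$ and $\{A_i\}$ — yields a natural isomorphism $M\cong M\wotimes_A A\cong\underset{i}\lim(M\wotimes_A A_i)$; hence a morphism $f$ that every $(-)\wotimes_A A_i$ inverts is itself $\underset{i}\lim(f\wotimes_A A_i)$, an isomorphism, so the family is conservative. Conversely, assume the family conservative. The canonical map $\eta\colon A\to\underset{i}\lim A_i$ has both source and target in $\ttMod^{RR}_F(A)$ (the target by the preceding lemma), so it suffices to see $\eta\wotimes_A A_j$ is an isomorphism for each $j$. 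By Lemma \ref{lem:EF} the target of $\eta\wotimes_A A_j$ is $\underset{i}\lim(A_i\wotimes_A A_j)$; using that each $A\to A_i$ is a homotopy epimorphism and the $A_i$ are pairwise transverse over $A$, I would identify $A_i\wotimes_A A_j$ with $A_i$ when $i\le j$ and with $A_j$ when $i\ge j$, so that the transition maps stabilise past the index $j$ and the limit of this inverse system is $A_j$, with $\eta\wotimes_A A_j$ the identification. Conservativity then forces $A\cong\underset{i}\lim A_i$.

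Finally, assuming the equivalent conditions, I would prove essential surjectivity of $\underset{i}\lim\ttMod^{RR}_F(A_i)\to\ttMod^{RR}_F(A)$. Given a descent datum $\{N_i\}$, each $N_i$ viewed through $A\to A_i$ lies in $\ttMod^{RR}_F(A)$ (using transversality of $A_i$ to metrizable homotopy-epimorphic extensions $A\to B$, that $A_i\wotimes_A B$ is again such an extension of $A_i$, quasi-coherence of the metrizable module $N_i$ over $A_i$, and (\ref{eqn:Atens}) together with right-exactness of $\wotimes_R$); hence $N:=\underset{i}\lim N_i$, computed in $\ttMod(A)$, is quasi-coherent by the preceding lemma. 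Propagating $\underset{i}\lim$-acyclicity from $\{A_i\}$ to $\{N_i\}$ through the relations $N_i\wotimes_{A_i}A_{i'}\cong N_{i'}$ exactly as in the proof of Lemma \ref{lem:EF}, one may apply Lemma \ref{lem:EF} with $F=A_j$, $E_i=N_i$ to get $N\wotimes_A A_j\cong\underset{i}\lim(A_j\wotimes_A N_i)$; and $A_j\wotimes_A N_i=N_i\wotimes_{A_i}(A_i\wotimes_A A_j)$ is $\cong N_i$ for $i\le j$ and $\cong N_j$ with invertible transitions for $i\ge j$ by the overlap computation above and the cocycle compatibility of the $N_i$, so this limit is $N_j$. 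Checking that these isomorphisms respect the descent structure then gives $\{N_i\}\cong\{N\wotimes_A A_i\}$, proving essential surjectivity.

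I expect the genuine difficulty to be concentrated in the two steps I have glossed as ``bookkeeping'': verifying that the base-changed systems $\{A_i\wotimes_A A_j\}_i$ and $\{A_j\wotimes_A N_i\}_i$ stay $\underset{i}\lim$-acyclic and suitably transverse so that Lemma \ref{lem:EF} applies to them, and computing their limits. The latter is exactly the point at which one must use what it means for $\{A_i\}$ to \emph{present} $A$ — namely that the structure maps $A\to A_i$ are homotopy epimorphisms onto metrizable algebras, and that $I$ is directed enough for the overlaps $A_i\wotimes_A A_j$ to collapse onto the smaller factor, so that the relevant inverse systems stabilise.
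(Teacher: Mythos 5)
Your proof follows the same essential strategy as the paper's: both directions of the equivalence, and essential surjectivity, are all deduced from Lemma \ref{lem:EF} applied with suitable choices of $F$ and $\{E_i\}$, together with the quasi-coherence verification for $M\wotimes_A A_i$.  Two points worth comparing.  First, for the step $\underset{i}\lim(A_j\wotimes_A A_i)\cong A_j$ in the ``conservative $\Rightarrow A\cong\lim A_i$'' direction, the paper simply asserts it after invoking Lemma \ref{lem:EF}; you offer a concrete explanation via the overlap computation $A_i\wotimes_A A_j\cong A_{\min(i,j)}$ and stabilisation of the tail of the inverse system.  Be aware that this explanation introduces assumptions not literally present in the corollary's hypotheses — namely that $I$ is totally ordered (or at least directed downward from $j$) and that the pairwise products collapse to the smaller factor, which requires the $A\to A_i$ and the transition maps $A_j\to A_i$ to be homotopy epimorphisms of a localization-like type.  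You correctly flag this as the genuine difficulty; the paper passes over it silently.  Second, for essential surjectivity the paper takes a quicker route than you do: given $M\in\ttMod^{RR}_F(A)$, apply Lemma \ref{lem:EF} with $F=M$, $E_i=A_i$ to get $M\cong M\wotimes_A(\lim A_i)\cong\lim(M\wotimes_A A_i)$, which immediately exhibits $M$ as the image of the descent datum $\{M\wotimes_A A_i\}$.  Your argument instead starts from an arbitrary descent datum $\{N_i\}$ and shows $\lim N_i$ restricts back to it — this is the stronger ``$D\circ R\cong\mathrm{id}$'' statement the paper reserves for the next theorem, and it requires propagating $\lim$-acyclicity from $\{A_i\}$ to $\{N_i\}$, a step you acknowledge glossing.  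For the corollary as stated, the paper's more economical argument is preferable.
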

\proof Given an object $M \in  \ttMod^{RR}_{F}(A)$ and $A_i\to B$ is a homotopy epimorphism we have 
\[(M \wotimes_{A} A_i) \wotimes^{\mathbb{L}}_{A_i} B \cong (M \wotimes^{\mathbb{L}}_{A} A_i) \wotimes^{\mathbb{L}}_{A_i} B \cong  M \wotimes^{\mathbb{L}}_{A} (A_i \wotimes^{\mathbb{L}}_{A_i} B ) \cong  M \wotimes^{\mathbb{L}}_{A} B \cong M \wotimes_{A} B \cong  (M \wotimes_{A} A_i )\wotimes_{A_i} B
\] so $M \wotimes_{A} A_i \in \ttMod^{RR}(A_i)$ and each $M \wotimes_{A} A_i$ is metrizable. 

If $A\cong \underset{i\in I}\lim A_i$, give a morphism $f:M\to N$, we can rewrite $f$ using Lemma \ref{lem:EF} as $\underset{i\in I}\lim f_i$ where $f_i:M\wotimes_{A} A_i\to N\wotimes_{A} A_i$. Therefore, the collection is conservative. Conversely, if the collection is conservative let $\pi:A \to \underset{i\in I}\lim A_i$ be the canonical morphism. To show it is an isomorphism it is enough to know that it becomes so after applying the $A_j \wotimes_{A}(-)$. But after doing this we get using Lemma \ref{lem:EF}
\[A_j \to A_{j}\wotimes_{A}(\underset{i\in I}\lim A_i)\cong \underset{i\in I}\lim (A_{j}\wotimes_{A}A_i) \cong A_j.
\]
which is an isomorphism. The essential surjectivity holds because again using Lemma \ref{lem:EF} we have \[M=M\wotimes_{A}A\cong M\wotimes_{A}(\underset{i\in I}\lim A_i) \cong \underset{i\in I}\lim ( M\wotimes_{A} A_i)
\]
for any $M \in  \ttMod^{RR}_{F}(A)$.
\endproof
\begin{lem}\label{lem:epis}Let $A\in \ttComm(\ttInd(\ttBan_R))$ and say we are given a countable poset $A\to A_i$ of epimorphisms of $\ttComm(\ttInd(\ttBan_R))_{/A}$. The functor 
\[\underset{i\in I}\lim \ttMod(A_i) \to \ttMod(A)
\]
is fully faithful.
\end{lem}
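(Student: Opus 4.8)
\emph{Plan.} The whole statement will be reduced to one elementary fact — that restriction of scalars along an epimorphism of rings is fully faithful — together with the observation that full faithfulness passes to the functor induced on the limit of the diagram of module categories (whose transition functors, and whose functor to $\ttMod(A)$, are exactly such restriction functors).

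\emph{Step 1: the base case.} Let $\phi\colon A\to B$ be an epimorphism in $\ttComm(\ttInd(\ttBan_R))$. As recalled in Remark \ref{rem:flatlocalization}, this is equivalent to the multiplication map $B\wotimes_{A}B\to B$ being an isomorphism. Tensoring this isomorphism on the right over $B$ with an arbitrary $B$-module $M$, and using associativity and unitality of $\wotimes$, one finds that the action map $B\wotimes_{A}\phi^{*}M\to M$ is an isomorphism, where $\phi^{*}\colon\ttMod(B)\to\ttMod(A)$ is restriction of scalars. Using the adjunction $(B\wotimes_{A}-)\dashv\phi^{*}$ we then get, for $B$-modules $M,N$, natural isomorphisms
\[
\Hom_{\ttMod(B)}(M,N)\;\cong\;\Hom_{\ttMod(B)}\bigl(B\wotimes_{A}\phi^{*}M,\,N\bigr)\;\cong\;\Hom_{\ttMod(A)}\bigl(\phi^{*}M,\phi^{*}N\bigr),
\]
and chasing the adjunction (the composite is precomposition with the action isomorphism followed by composition with the unit, which cancel) shows this composite is precisely the map $f\mapsto\phi^{*}(f)$. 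Hence $\phi^{*}$ is fully faithful.

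\emph{Step 2: the diagram.} Each structure map $A\to A_{i}$ is an epimorphism by hypothesis, and for $i\le j$ the transition map $A_{i}\to A_{j}$ is also an epimorphism, since $A\to A_{j}$ is one and factors through it (if $g\circ f$ is epi then $g$ is epi). Therefore every restriction functor $\ttMod(A_{i})\to\ttMod(A)$ and every transition functor $\ttMod(A_{j})\to\ttMod(A_{i})$ occurring in $\underset{i\in I}\lim\ttMod(A_{i})$ is fully faithful by Step 1, and the functors $\ttMod(A_{i})\to\ttMod(A)$ are compatible with the transition functors because restriction of scalars is transitive. This compatible family produces the functor $\Phi\colon\underset{i\in I}\lim\ttMod(A_{i})\to\ttMod(A)$ of the statement, which carries a compatible system $\{M_{i}\}_{i\in I}$ to its common restriction $M_{i}|_{A}$ (well defined up to the canonical isomorphisms carried by the limit data). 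For objects $X=\{M_{i}\}$, $Y=\{N_{i}\}$ a morphism is the same as a compatible family of morphisms $M_{i}\to N_{i}$, so $\Hom(X,Y)\cong\underset{i\in I}\lim\Hom_{\ttMod(A_{i})}(M_{i},N_{i})$. By Step 1 each term is canonically identified, via $(-)|_{A}$, with $\Hom_{\ttMod(A)}(M_{i}|_{A},N_{i}|_{A})$; and because that identification is literally ``restrict the morphism'', after the canonical isomorphisms $M_{i}|_{A}\cong M_{j}|_{A}$, $N_{i}|_{A}\cong N_{j}|_{A}$ supplied by $X$ and $Y$ the transition maps of this system become the identity maps of $\Hom_{\ttMod(A)}(\Phi X,\Phi Y)$. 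Hence the limit collapses to $\Hom_{\ttMod(A)}(\Phi X,\Phi Y)$, and tracing through, the resulting bijection is the map induced by $\Phi$; thus $\Phi$ is fully faithful.

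\emph{Main obstacle.} The only genuine input is Step 1, which is classical; the delicate point is the naturality check at the end of Step 2, namely verifying that the transition maps in the limit of $\Hom$-groups really do collapse to identities after the base-case identifications. This holds precisely because the full-faithfulness isomorphism of Step 1 is induced by $\phi^{*}$ itself, hence is compatible with the further restrictions of scalars appearing among the $A_{i}$.
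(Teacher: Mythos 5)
Your Step 1 is correct and is exactly the input the paper uses (restriction of scalars along a ring epimorphism is fully faithful). The trouble is in Step 2, where you try to fill in what the paper leaves implicit, and the filling is wrong. You assert that the transition functors appearing in $\underset{i\in I}\lim \ttMod(A_i)$ are the restriction-of-scalars functors $\ttMod(A_j)\to\ttMod(A_i)$ and, consequently, that the limit data furnishes isomorphisms $M_i|_A \cong M_j|_A$ of $A$-modules. Neither is the case. The descent category here (the one used in Corollary \ref{cor:AlimCons} and Theorem \ref{thm:Big}, where $\ttMod(A)\to\underset{i\in I}\lim\ttMod(A_i)$ is $M\mapsto\{M\wotimes_A A_i\}$) has base-change transition functors $\ttMod(A_i)\to\ttMod(A_j)$, $N\mapsto N\wotimes_{A_i}A_j$; an object of the limit is a family $\{M_i\}$ with isomorphisms $M_i\wotimes_{A_i}A_j\cong M_j$, which after restriction to $A$ give only the non-invertible structure maps of a genuine projective system of $A$-modules, not isomorphisms. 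So there is no ``common restriction'': the functor in the statement is $\{M_i\}\mapsto \underset{i\in I}\lim\, (M_i|_A)$, a limit computed in $\ttMod(A)$.

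Because of this, the final collapsing argument does not go through. After applying Step 1 one indeed has $\Hom_{\lim\ttMod(A_i)}(X,Y)\cong\underset{i\in I}\lim\Hom_{\ttMod(A)}(M_i|_A,N_i|_A)$, but this is a limit with nontrivial transition maps, and it is not identified with $\Hom_{\ttMod(A)}(\underset{i}\lim M_i,\underset{i}\lim N_i)$ by ``the transition maps becoming identities.'' Passing from $\underset{i}\lim\Hom_A(M_i,N_i)$ to $\Hom_A(\underset{i}\lim M_i,\underset{i}\lim N_i)$ is precisely the content that still needs an argument; your proof, as written, has no purchase on it once the (nonexistent) isomorphisms $M_i|_A\cong M_j|_A$ are removed. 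This is the genuine gap. (You can see the problem concretely already in the two-object discrete poset: there the limit category is a product and $\Phi(\{M_1,M_2\})=M_1\oplus M_2$, yet your collapsing step would still claim the $\Hom$-limit reduces to a single factor.)
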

\proof 
The natural ``pushforward" functors $\ttMod(A_i) \to \ttMod(A)$ are fully faithful. The limit of these functors is therefore fully faithful. 

\endproof
By combining Corollary \ref{cor:AlimCons} and Lemma \ref{lem:epis} we have 
\begin{thm}\label{thm:Big}Let $A\in \ttComm(\ttInd(\ttBan_R))$ and $A\to A_i$ are homotopy epimorphisms indexed by $i$ in a countable poset $I$ in $\ttComm(\ttInd(\ttBan_R))_{/A}$ whose underlying modules are in $\ttMod^{RR}_{F}(A)$. Suppose that the  system $\{A_i\}_{i\in I}$ is $\underset{i\in I}\lim$-acyclic and that the objects of $\ttInd(\ttBan_R)$ underlying  $A_i$ are   metrizable and flat over $R$. Assume the collection of functors $(-) \wotimes_{A} A_i$ is conservative. When this holds the natural functor \[\underset{i\in I}\lim \ttMod^{RR}_{F}(A_i) \to \ttMod^{RR}_{F}(A)
\]
is an equivalence of categories.
\end{thm}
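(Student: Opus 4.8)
The plan is to derive the theorem directly from Corollary \ref{cor:AlimCons} and Lemma \ref{lem:epis}, so that essentially all the work consists in checking that the hypotheses of those two results are satisfied and that together they furnish, for one and the same functor, the two halves of an equivalence of categories.

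First I would check the hypotheses of Corollary \ref{cor:AlimCons}. By assumption the objects $A_i$ are metrizable, flat over $R$, indexed by a countable poset $I$, and their underlying modules lie in $\ttMod^{RR}_{F}(A)$. The only remaining input required by Corollary \ref{cor:AlimCons} is that the $A_i$ be transverse to one another over $A$; this is immediate from the definition of quasi-coherence, since for each fixed $j$ the morphism $A\to A_j$ is a homotopy epimorphism with $A_j$ metrizable and $A_i\in\ttMod^{RR}_{F}(A)$, so $A_i\wotimes^{\mathbb{L}}_{A}A_j\cong A_i\wotimes_{A}A_j$. As the collection of functors $(-)\wotimes_{A}A_i$ is assumed conservative, Corollary \ref{cor:AlimCons} then gives $A\cong\underset{i\in I}\lim A_i$ together with the fact that the functor
\[\Phi:\underset{i\in I}\lim\ttMod^{RR}_{F}(A_i)\longrightarrow\ttMod^{RR}_{F}(A),\qquad \{N_i\}_{i\in I}\mapsto\underset{i\in I}\lim N_i,\]
is essentially surjective.

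Next I would observe that every homotopy epimorphism $A\to A_i$ is in particular an epimorphism in $\ttComm(\ttInd(\ttBan_R))$, since $A_i\wotimes^{\mathbb{L}}_{A}A_i\cong A_i$ yields $A_i\wotimes_{A}A_i\cong A_i$ on passing to degree zero cohomology. Hence Lemma \ref{lem:epis} applies to the countable poset $\{A\to A_i\}$ and shows that $\underset{i\in I}\lim\ttMod(A_i)\to\ttMod(A)$ is fully faithful; restricting to the full subcategories of quasi-coherent modules on both sides, $\Phi$ is fully faithful as well. A functor which is both fully faithful and essentially surjective is an equivalence, and this completes the argument.

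The one step requiring genuine care --- and the only place I anticipate any friction --- is the identification of the functor $\Phi$ produced by Corollary \ref{cor:AlimCons} with the functor appearing in Lemma \ref{lem:epis}. Corollary \ref{cor:AlimCons} describes $\Phi$ as sending a compatible system $\{N_i\}$ to $\underset{i\in I}\lim N_i$ computed inside $\ttMod(A)$, whereas Lemma \ref{lem:epis} works with the limit of the pushforward (restriction of scalars) functors $\ttMod(A_i)\to\ttMod(A)$; these agree because the $A$-module underlying $N_i$ is precisely its pushforward along $A\to A_i$, and taking the limit over $I$ commutes with pushforward. I would spell this compatibility out carefully so that ``essentially surjective'' and ``fully faithful'' are unambiguously asserted of the same functor; everything else in the argument is formal.
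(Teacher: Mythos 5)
Your proposal is correct and follows precisely the same route as the paper, which proves Theorem \ref{thm:Big} by citing Corollary \ref{cor:AlimCons} for essential surjectivity and Lemma \ref{lem:epis} for full faithfulness. You have in fact supplied more of the details than the paper does, in particular verifying the transversality hypothesis of the Corollary from quasi-coherence, noting that homotopy epimorphisms are epimorphisms, and reconciling the limit functor of the Corollary with the limit of pushforwards appearing in the Lemma.
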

This theorem can be used in the case of hypercovers. We now give a  more explicit proof in the case of covers which can be easily adapted to general posets. 
\begin{thm}Let $A\in \ttComm(\ttInd(\ttBan_R))$ and say we are given a countable collection $A\to A_i$ of objects of $\ttComm(\ttInd(\ttBan_R))_{/A}$ indexed by $i\in S$. Suppose that $A$ and $A_i$ are metrizable objects which are flat over $R$, in $\ttMod^{RR}_{F}(A)$. Suppose that each morphism $A\to A_i$ is a homotopy epimorphism and the collection of functors 
 $\ttMod^{RR}_{F}(A)\to \ttMod^{RR}_{F}(A_i)$
is conservative. Suppose that the corresponding system $A_w=A_{i_1}\wotimes_{A}\cdots \wotimes_{A}A_{i_m}$ for words $w$ in $S$ is a $\underset{w\in P}\lim$-acyclic projective system as above.  Then the canonical functor 
\[D:\ttMod^{RR}_{F}(A)\longrightarrow \underset{w\in P}\lim \ttMod^{RR}_{F}(A_w)
\]
is an equivalence of categories. 
\end{thm}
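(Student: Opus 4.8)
The plan is to produce an explicit quasi‑inverse to $D$ by totalising the Čech diagram, running the argument of Corollary \ref{cor:AlimCons} and Lemma \ref{lem:EF} with the poset $P$ of finite words $w=(i_1,\dots,i_m)$ in $S$ in place of an abstract poset. Write $A_w=A_{i_1}\wotimes_A\cdots\wotimes_A A_{i_m}$. The first step is to record the stability properties of the cover by induction on word length: since each $A\to A_i$ is a homotopy epimorphism and each $A_i$ lies in $\ttMod^{RR}_F(A)$ — hence is transverse over $A$ to every metrizable homotopy epimorphism, in particular to the $A_{i_j}$ — one gets that $A\to A_w$ is again a homotopy epimorphism, that the underlying object of $A_w$ is metrizable (a projective tensor product of metrizable objects is metrizable, because a finite product of $\aleph_1$‑filtered posets is $\aleph_1$‑filtered by Remark \ref{rem:ProdFilt}) and flat over $R$, that $A_w\in\ttMod^{RR}_F(A)$, and that any two $A_w,A_{w'}$ are transverse over $A$. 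Because the functors $(-)\wotimes_A A_i$ are conservative and factor the functors $(-)\wotimes_A A_w$, the whole family $\{(-)\wotimes_A A_w\}_{w\in P}$ is conservative.

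Next I would check that $D$ is well defined, i.e. that $M\wotimes_A A_w\in\ttMod^{RR}_F(A_w)$ for $M\in\ttMod^{RR}_F(A)$. This is precisely the computation in the proof of Corollary \ref{cor:AlimCons}: for $A_w\to B$ a metrizable homotopy epimorphism, $(M\wotimes_A A_w)\wotimes^{\mathbb{L}}_{A_w}B\cong M\wotimes^{\mathbb{L}}_A B\cong M\wotimes_A B\cong(M\wotimes_A A_w)\wotimes_{A_w}B$, together with metrizability and $R$‑flatness of $M\wotimes_A A_w$ (Lemma \ref{lem:tensprod}). The candidate quasi‑inverse is $G(\{N_w\})=\underset{w\in P}\lim N_w$, the limit being taken in $\ttMod(A)$ after pushing each $N_w$ forward along $A\to A_w$; since $P$ is countable, the lemma that countable limits of objects of $\ttMod^{RR}_F(A)$ again lie in $\ttMod^{RR}_F(A)$ guarantees that $G$ lands in the right category.

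Then I would verify the two composites. For $G\circ D\cong\id$: conservativity together with the $\underset{w\in P}\lim$‑acyclicity of $\{A_w\}$ gives $A\cong\underset{w\in P}\lim A_w$ by Corollary \ref{cor:AlimCons}, and Lemma \ref{lem:EF} applied with $F=M$ to the system $\{A_w\}$ yields $\underset{w\in P}\lim(M\wotimes_A A_w)\cong M\wotimes_A(\underset{w\in P}\lim A_w)\cong M\wotimes_A A\cong M$, naturally in $M$. For $D\circ G\cong\id$: fix a word $v$ and apply Lemma \ref{lem:EF} with $F=A_v$ to the system $\{N_w\}$ to get $(\underset{w\in P}\lim N_w)\wotimes_A A_v\cong\underset{w\in P}\lim(N_w\wotimes_A A_v)$; the remaining, genuinely combinatorial point is to identify the right‑hand side with $N_v$. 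Using the descent‑compatibility isomorphisms $N_w\wotimes_{A_w}(A_w\wotimes_A A_v)\cong N_{wv}$ one rewrites the system as $\{N_{wv}\}_{w\in P}$, and an extra‑degeneracy (cofinality) argument in the poset of words — the subposet of words containing the block $v$ is initial, and $w\mapsto wv$ splits the relevant projections — collapses this limit to $N_v$. This is the step the statement refers to when it says the argument "can be easily adapted to general posets."

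The main obstacle I anticipate is the bookkeeping of $\underset{w\in P}\lim$‑acyclicity: Lemma \ref{lem:EF} is invoked several times, on the auxiliary systems $\{M\wotimes_A A_w\}$, $\{N_w\wotimes_A A_v\}$ and $\{A_{wv}\}$, and each time one must check that $\lim$‑acyclicity and the requisite transversality are inherited from the hypothesis on $\{A_w\}$, since otherwise the interchange of $\wotimes_A$ with $\underset{w\in P}\lim$ is not licensed; everything else (the homotopy‑epimorphism stability, metrizability, $R$‑flatness, conservativity) is a routine induction on word length powered by the transversality built into the definition of $\ttMod^{RR}_F$. Alternatively, once the stability properties of the $A_w$ are in hand, the statement is an immediate instance of Theorem \ref{thm:Big} applied to the poset $P$; the explicit construction above is worth carrying out only because it exhibits the quasi‑inverse — totalisation of the Čech diagram — concretely.
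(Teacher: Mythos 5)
Your proposal follows the paper's proof essentially step for step: take the limit functor $R(\{N_w\})=\underset{w\in P}\lim N_w$ as the candidate quasi-inverse, establish $A\cong\underset{w\in P}\lim A_w$ from conservativity and $\lim$-acyclicity via Corollary \ref{cor:AlimCons}, then use Lemma \ref{lem:EF} twice — once with $F=M$ to get $R\circ D\cong\id$ and once with $F=A_v$ to get $D\circ R\cong\id$, the latter requiring the combinatorial identification $\underset{w}\lim(A_v\wotimes_A N_w)\cong N_v$ which the paper states without elaboration and you sketch via cofinality. The one substantive computation you skip is showing that each $N_w\in\ttMod^{RR}_F(A_w)$, pushed forward along $A\to A_w$, is itself quasi-coherent \emph{over $A$}: this is needed before you may invoke the countable-limit lemma (or, as the paper does, combine Lemmas \ref{lem:EF} and \ref{lem:Wprod}) to conclude $\underset{w}\lim N_w\in\ttMod^{RR}_F(A)$. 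The paper does it up front with the chain $B\wotimes^{\mathbb{L}}_A N_w\cong(B\wotimes_A A_w)\wotimes^{\mathbb{L}}_{A_w}N_w\cong(B\wotimes_A A_w)\wotimes_{A_w}N_w\cong B\wotimes_A N_w$ for any metrizable homotopy epimorphism $A\to B$, using that $A_w$ is quasi-coherent over $A$, that $A_w\to B\wotimes_A A_w$ is again a homotopy epimorphism, and that $N_w$ is quasi-coherent over $A_w$; your step 2 only treats the other direction ($D$ being well defined), which is not the same claim. Adding this verification would make your proposal match the paper's proof completely.
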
 
\proof 
Given $N_w\in \ttMod^{RR}_{F}(A_w)$ and suppose that $A\to B$ is a homotopy epimorphism. 
\begin{equation}
\begin{split}
B\wotimes_{A}^{\mathbb{L}}N_w & \cong B\wotimes_{A}^{\mathbb{L}}(A_w\wotimes_{A_w}^{\mathbb{L}}N_w)\cong (B\wotimes_{A}^{\mathbb{L}}A_w)\wotimes_{A_w}^{\mathbb{L}}N_w\cong  (B\wotimes_{A}A_w)\wotimes_{A_w}^{\mathbb{L}}N_w \cong  (B\wotimes_{A}A_w)\wotimes_{A_w}N_w \\ & \cong B\wotimes_{A}N_w  
\end{split}
\end{equation}
since $A_w$ is quasi-coherent and $A_w \to B\wotimes_{A}A_w$ is a homotopy epimorphism and $N_w$ is a quasi-coherent $A_w$-module.
Since $B$ is transverse to $N_w$ for each $w$ over $A$ we have that $\underset{w\in P}\lim N_w$ is transverse to $B$ over $A$ by Lemma \ref{lem:EF}. Hence using Lemma \ref{lem:Wprod}, $\underset{w\in P}\lim N_w \in \ttMod^{RR}_{F}(A)$.

Consider the functor 
\[\ttMod^{RR}_{F}(A)\longleftarrow \underset{w\in P}\lim \ttMod^{RR}_{F}(A_w):R
\]
in the other direction defined by taking the limit. We have by Lemma \ref{lem:EF}
\[A_{v}\wotimes_{A}(\underset{w\in P}\lim N_w)\cong \underset{w\in P}\lim (A_{v}\wotimes_{A}N_w) \cong N_{v}
\]
showing that $D\circ R$ is naturally equivalent to the identity.
Using Lemma \ref{lem:EF} and Corrollary \ref{cor:AlimCons} we have
\[ \underset{w\in P}\lim(A_{w}\wotimes_{A} M) \cong (\underset{w\in P}\lim A_{w})\wotimes_{A} M \cong A\wotimes_{A}M\cong M
\]
showing that $R\circ D$ is naturally equivalent to the identity.
\endproof
\subsection{Examples of Descent}
\begin{lem}\label{lem:TateAcy}Recall that for a countable collection $\{A \to A_{i}\}_{i\in I }$ and $M \in \ttMod(A)$ we can form the usual complex 
\[\mathcal{C}^{\bullet}(M, \{A_{i}\}) = [\underset{i \in I}\prod (M \wotimes_{A} A_{i}) \longrightarrow \underset{i,j \in I}\prod (M \wotimes_{A} A_{i}\wotimes_{A} A_{j}) \longrightarrow \cdots]
\]
Suppose that the underlying objects of $A$ and $M$ in $\ttInd(\ttBan_R)$ are metrizable and flat over $R$. Suppose $M$ is transverse to all $A_{i_1}\wotimes_{A}A_{i_2}\wotimes_{A} \cdots \wotimes_{A} A_{i_n}$ in $\ttMod(A)$ and the natural morphism $A\to \mathcal{C}^{\bullet}(A, \{A_{i}\})$ is a quasi-isomorphism, then the natural morphism $M\to \mathcal{C}^{\bullet}(M, \{A_{i}\})$ is a quasi-isomorphism. If instead of the condition that $M$ is flat over $R$ we have that all the $A_{i_1}\wotimes_{A}A_{i_2}\wotimes_{A} \cdots \wotimes_{A} A_{i_n}$ are flat over $R$ then the same conclusion holds.
\end{lem}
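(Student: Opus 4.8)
The strategy is to run the argument of Lemma \ref{lem:EF} with the Roos complex replaced by the \v{C}ech complex $\mathcal{C}^{\bullet}(-,\{A_i\})$ and the $\underset{i}{\lim}$-acyclicity hypothesis replaced by the descent hypothesis that $A\to\mathcal{C}^{\bullet}(A,\{A_i\})$ is a quasi-isomorphism. First I would recall that the Bar complex $\mathscr{L}^{\bullet}_{A}(M)$ is strictly quasi-isomorphic to $M$ and computes $M\wotimes^{\mathbb{L}}_{A}(-)$; the flatness-over-$R$ hypothesis --- on $M$, or in the stated variant on all the iterated products $A_w=A_{i_1}\wotimes_{A}\cdots\wotimes_{A}A_{i_n}$ --- is precisely what ensures, exactly as in Lemma \ref{lem:EF}, that the terms appearing in the relevant Bar resolutions are acyclic for the tensor products in play, the two variants corresponding to resolving on one side or on the other.

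The heart of the matter is to establish, mimicking the corresponding step in the proof of Lemma \ref{lem:EF}, a strict quasi-isomorphism between $\Tot\bigl(\mathscr{L}^{\bullet}_{A}(M)\wotimes_{R}\mathcal{C}^{\bullet}(A,\{A_i\})\bigr)$ and the totalization of the double complex whose entry in \v{C}ech degree $n$ is the countable product, over tuples $w$ of indices, of the Bar complexes $\mathscr{L}^{\bullet}_{A}(M)\wotimes_{R}A_w$ --- where $A_w$ denotes the corresponding iterated $\wotimes_{A}$-product of the $A_i$ --- with the products kept on the outside. In other words, the operations of totalizing, forming the Bar complex, and forming the \v{C}ech products can be interchanged. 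This is exactly where metrizability enters: since $A$ and $M$ are metrizable, Lemma \ref{lem:tensprod} shows that $M\wotimes_{R}(-)$ and $A\wotimes_{R}(-)$, and hence every term $M\wotimes_{R}A^{\wotimes_{R}p}\wotimes_{R}(-)$ of the Bar complex, commutes with countable products; and since $\ttInd(\ttBan_R)$ has enough projectives its countable products are exact, so they may be moved past cohomology and past the (degreewise finite) totalization.

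Granting the interchange, the left-hand side is by construction $M\wotimes^{\mathbb{L}}_{A}\mathcal{C}^{\bullet}(A,\{A_i\})$; since $M\wotimes^{\mathbb{L}}_{A}(-)$ is a functor on the derived category and $A\simeq\mathcal{C}^{\bullet}(A,\{A_i\})$ there by hypothesis, this is $M\wotimes^{\mathbb{L}}_{A}A\cong M$. On the right-hand side, each \v{C}ech entry computes $M\wotimes^{\mathbb{L}}_{A}A_w$, which by the transversality hypothesis is concentrated in degree zero and equal to $M\wotimes_{A}A_w$; hence the double complex collapses in the Bar direction and its totalization is quasi-isomorphic to $\mathcal{C}^{\bullet}(M,\{A_i\})$. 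Comparing the two descriptions and checking compatibility with the evident augmentations yields the desired quasi-isomorphism $M\to\mathcal{C}^{\bullet}(M,\{A_i\})$.

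I expect the main obstacle to be the careful bookkeeping behind that interchange together with the convergence and degeneration of the spectral sequences of the double complex: one must use transversality of $M$ with \emph{every} iterated product $A_w$ (not merely finitely many) to conclude that the vertical cohomology of the double complex is concentrated in a single degree, and one must genuinely invoke exactness of countable products in $\ttInd(\ttBan_R)$ to commute the product over the countable index set past the Bar differentials. Once these points are in hand, the variant with ``$M$ flat over $R$'' replaced by ``all $A_w$ flat over $R$'' follows by the same modification as in Lemma \ref{lem:EF}.
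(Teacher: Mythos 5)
Your proposal follows essentially the same route as the paper's proof: identify $M$ with $M\wotimes^{\mathbb{L}}_{A}\mathcal{C}^{\bullet}(A,\{A_i\})$, resolve by the Bar complex, invoke metrizability via Lemma \ref{lem:tensprod} to pull the countable products past $\mathscr{L}^{\bullet}_{A}(M)\wotimes_{R}(-)$, and then use transversality of $M$ with each $A_w$ to collapse the Bar direction and recover $\mathcal{C}^{\bullet}(M,\{A_i\})$. The paper states this more tersely as the isomorphism $\Tot(\mathscr{L}^{\bullet}_{A}(M)\wotimes_{R}\mathcal{C}^{\bullet}(A,\{A_i\}))\cong\Tot(\mathcal{C}^{\bullet}(\mathscr{L}^{\bullet}_{A}(M),\{A_i\}))$ together with exactness of $\mathcal{C}^{\bullet}(-,\{A_i\})$, but this is precisely the interchange-and-collapse you describe, so the two arguments are the same.
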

\proof $M$ is quasi-isomorphic to $M \wotimes^{\mathbb{L}}_{A}\mathcal{C}^{\bullet}(A, \{A_{i}\})$. Using Lemma \ref{lem:tensprod}, each term \[\underset{i_1, \dots, i_n}\prod A_{i_1} \wotimes_{A}A_{i_2}\wotimes_{A} \cdots \wotimes_{A} A_{i_n}\] is transverse to $M$ over $A$.
Therefore, there is a quasi-isomorphism 
\[\Tot(\mathscr{L}^{\bullet}_{A}(M)\wotimes_{R}\mathcal{C}^{\bullet}(A, \{A_{i}\}))\cong \Tot(\mathcal{C}^{\bullet}(\mathscr{L}^{\bullet}_{A}(M), \{A_{i}\}).
\]
As our conditions guarantee that $\mathcal{C}^{\bullet}(-, \{A_{i}\})$ is an exact functor the right hand side is quasi-isomorphic to $\mathcal{C}^{\bullet}(M, \{A_{i}\})$ and $M= M \wotimes^{\mathbb{L}}_{A}A= M \wotimes^{\mathbb{L}}_{A}\mathcal{C}^{\bullet}(A, \{A_{i}\})$ is computed by the left hand side so we are done.
\endproof
\begin{rem}The non-archimedean version of this (the proof is the same) can give new settings for Tate's acyclicty theorem. We expect that the hypothesis of Lemma \ref{lem:TateAcy} will be satisfied whenever $A\to A_i$ are homotopy epimorphisms and the topological spaces associated to the $A_i$ form a cover of the topological space associated to $A$.
\end{rem}

\begin{lem}\label{lem:SteinExp}
Suppose $A \in \ttComm(\ttInd(\ttBan_R))$, $M$ is a metrizable $A$-module in $\ttInd(\ttBan_R)$ and both are flat over $R$. Say that we have 
\[
A \to \cdots \to A_3 \to A_2 \to A_1
\]
for $A_i \in \ttComm(\ttBan_{R})$. Suppose that $\underset{i}\prod A_i \longrightarrow \underset{i}\prod A_i$ defined by \[(a_1, a_2, \dots) \mapsto (a_2 - a_1, a_3 - a_2, \dots)\] is a strict epimorphism with kernel (with the induced subspace structure) isomorphic to $A$ and $M$ is transverse to each $A_i$ over $A$. Then we can conclude that $\underset{i}\prod M\wotimes_{A} A_i \longrightarrow \underset{i}\prod M\wotimes_{A}A_i$ is a strict epimorphism with kernel (with the induced structure) isomorphic to $M$.
\end{lem}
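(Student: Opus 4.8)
The plan is to deduce this statement from Lemma \ref{lem:EF}, applied to the sequential projective system $\{A_i\}_{i\in\mathbb{N}}$, after two essentially bookkeeping translations (alternatively, one may re-run the bar-complex/Roos-complex computation in the proof of Lemma \ref{lem:EF} directly for the two-term complex below). First I would observe that $A$ is metrizable: by hypothesis $A$ is isomorphic to the kernel of the displayed map, and that kernel is exactly $\underset{i}\lim A_i$, a countable limit of a diagram of Banach modules; hence $A$ is a Fr\'{e}chet module, and therefore metrizable by Corollary \ref{cor:TowerLim}. Thus $A$ and $M$ are both metrizable and flat over $R$, which is the running hypothesis of Lemma \ref{lem:EF}.

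Next I would record the standard dictionary for sequential systems. For a tower $\{V_i\}_{i\in\mathbb{N}}$ in $\ttInd(\ttBan_R)$, the object $\mathbb{R}\underset{i}\lim V_i$ is computed by the two-term complex $[\,\underset{i}\prod V_i \mto{\id-s} \underset{i}\prod V_i\,]$ concentrated in degrees $0$ and $1$, with $s$ induced by the transition maps; hence $\{V_i\}$ is $\underset{i}\lim$-acyclic with $\underset{i}\lim V_i\cong V$ if and only if $\id-s$ is a strict epimorphism with kernel isomorphic to $V$. Therefore the hypothesis of the present lemma says precisely that, regarded as a projective system in $\ttMod(A)$ via the structure maps $A\to A_i$, the system $\{A_i\}$ is $\underset{i}\lim$-acyclic and $\underset{i}\lim A_i\cong A$. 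Together with the hypothesis that $M$ is transverse to each $A_i$ over $A$, all the assumptions of Lemma \ref{lem:EF} hold with $F=M$ and $E_i=A_i$, so I would conclude that $\{M\wotimes_A A_i\}$ is $\underset{i}\lim$-acyclic and that the natural morphism
\[
M\wotimes_A\Bigl(\underset{i}\lim A_i\Bigr)\longrightarrow \underset{i}\lim\bigl(M\wotimes_A A_i\bigr)
\]
is an isomorphism; since $\underset{i}\lim A_i\cong A$ and $M\wotimes_A A\cong M$, this reads $M\cong\underset{i}\lim(M\wotimes_A A_i)$.

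Finally I would translate back via the same dictionary: as $\{M\wotimes_A A_i\}$ is a $\underset{i}\lim$-acyclic tower, the complex $[\,\underset{i}\prod(M\wotimes_A A_i)\mto{\id-s}\underset{i}\prod(M\wotimes_A A_i)\,]$ is strictly exact outside degree $0$, i.e.\ $\id-s$ is a strict epimorphism, and its kernel, carrying the induced subobject structure, equals $\underset{i}\lim(M\wotimes_A A_i)\cong M$ — which is exactly the asserted conclusion. The only point I expect to require genuine care, as opposed to routine bookkeeping, is the identification of $\mathbb{R}\underset{i}\lim$ of a sequential system with the $\id-s$ complex together with the strictness half of the resulting dictionary (that a $\underset{i}\lim$-acyclic tower has $\id-s$ a \emph{strict} epimorphism, not merely an epimorphism); granting this, the statement follows at once from Lemma \ref{lem:EF}.
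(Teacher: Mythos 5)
Your proof is correct and follows essentially the same approach as the paper's. Both arguments reduce to tensoring $M$ against the two-term complex $[\,\prod_i A_i \to \prod_i A_i\,]$ representing $\mathbb{R}\lim A_i$, pulling the countable products through $\wotimes_R$ via metrizability, and concluding from the resulting identification $M \cong \mathbb{R}\lim(M\wotimes_A A_i)$; the only difference is that you invoke Lemma~\ref{lem:EF} as a black box (which internally runs this very double-complex argument against the Roos complex), whereas the paper repeats the computation directly for the sequential two-term case. Two remarks in your favor: you correctly make explicit that $A$ must itself be metrizable and supply the argument (it is a Fr\'echet limit, hence metrizable by Corollary~\ref{cor:TowerLim}), a point the paper's proof uses but does not state; and you rightly flag that the one genuinely substantive point — the identification of $\mathbb{R}\lim$ of a countable tower with the $\id - s$ complex, and the equivalence of $\lim$-acyclicity with $\id - s$ being a \emph{strict} epimorphism — is exactly what the paper also takes as standard (with amplitude $[0,1]$ cited from Prosmans).
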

\proof 
The derived limit $\mathbb{R}\underset{i} \lim A_i$ is represented by the two term complex $[\underset{i}\prod A_i \longrightarrow \underset{i}\prod A_i]$. Therefore $A \cong \mathbb{R}\underset{i} \lim A_i$ and so $M \cong M \wotimes^{\mathbb{L}}_{A} \mathbb{R}\underset{i} \lim A_i$ which is represented by \begin{equation}
\begin{split}
\Tot(\mathscr{L}^{j}_{A}(M)\wotimes_{R}\underset{i}\prod A_i \to \mathscr{L}^{j}_{A}(M)\wotimes_{R}\underset{i}\prod A_i) & \cong \Tot(\underset{i}\prod(\mathscr{L}^{j}_{A}(M)\wotimes_{R}A_i ) \to \underset{i}\prod(\mathscr{L}^{j}_{A}(M)\wotimes_{R}A_i ) )\\ & \cong [\underset{i}\prod M\wotimes_{A} A_i \longrightarrow \underset{i}\prod M\wotimes_{A}A_i] \end{split}
\end{equation}
As we have proven that the last complex (representing $\mathbb{R}\underset{i} \lim (M \wotimes_{A} A_i)$) is isomorphic to $M$ in the derived category, we are done.
\endproof
\begin{rem}A situation where Lemma \ref{lem:SteinExp} can be used is the definition of a Stein by its defining affinoid cover. In fact, the category of modules over a Stein with an exhaustive affinoid cover which we define includes fully faithfully the category of co-admissible modules of Schneider and Teitelbaum.
\end{rem}\section{The Fargues-Fontaine Curve}
A thorough treatment of the Fargues-Fontaine curve from the point of view of Banach algebraic geometry appears in \cite{BBK2}. Therefore, we only focus on the aspects here which are relevant to the current article. Let
\[\mathbb{Z}\{(\frac{x}{r})^{\frac{1}{n}}\}= \mathbb{Z}\{\frac{x}{r}, \frac{y}{r^{\frac{1}{n}}} \}/(y^{n}-x)\cong\mathbb{Z}\{\frac{x}{r}\}\oplus  \mathbb{Z}\{\frac{x}{r}\}_{r^{-\frac{1}{n}}}\oplus \cdots \oplus \mathbb{Z}\{\frac{x}{r}\}_{r^{-\frac{(n-1)}{n}}}.
\]
For $r_2<r_1<1$, the non-expanding morphism $\mathbb{Z}\{(\frac{x}{r_1})^{\frac{1}{n}}\}\longrightarrow \mathbb{Z}\{(\frac{x}{r_2})^{\frac{1}{n}}\}$ is nuclear, being a sum of nuclear morphisms. Using the non-expanding morphisms \[\alpha_{n,m}: \mathbb{Z}\{(\frac{x}{r})^{\frac{1}{n}}\}\longrightarrow \mathbb{Z}\{(\frac{x}{r})^{\frac{1}{nm}}\}\]
we have the Banach ring $\underset{ n}\colim^{\leq 1} \mathbb{Z}\{(\frac{x}{r})^{\frac{1}{n}}\}$.
In order to study dagger or Stein versions which we expect to have better properties, we have:
\begin{con}The induced morphisms 
\[\underset{ n}\colim^{\leq 1} \mathbb{Z}\{(\frac{x}{r_1})^{\frac{1}{n}}\}\longrightarrow \underset{ n}\colim^{\leq 1} \mathbb{Z}\{(\frac{x}{r_2})^{\frac{1}{n}}\}
\]
are nuclear for all $r_2<r_1<1$.
\end{con}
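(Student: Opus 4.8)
The plan is to reduce the conjecture to a concrete statement about Banach $\mathbb{Z}$-modules and then to glue together the levelwise nuclearity that is already available. As a Banach $\mathbb{Z}$-module $\mathbb{Z}\{(\tfrac{x}{r})^{1/n}\}$ consists of the series $\sum_{q\in\frac1n\mathbb{Z}_{\ge 0}}a_qx^q$ ($a_q\in\mathbb{Z}$) with norm $\sum_q|a_q|\,r^q$, and each $\alpha_{n,m}$ is the isometric embedding onto the closed span of those monomials $x^q$ with $q\in\tfrac1n\mathbb{Z}_{\ge 0}$; hence the colimit $B_r:=\colim^{\leq 1}_n\mathbb{Z}\{(\tfrac{x}{r})^{1/n}\}$ is the Banach ring of series $\sum_{q\in\mathbb{Q}_{\ge 0}}a_qx^q$ ($a_q\in\mathbb{Z}$) with $\sum_q|a_q|\,r^q<\infty$ and norm $\sum_q|a_q|\,r^q$. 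The morphism in the conjecture is then the restriction $\rho\colon B_{r_1}\to B_{r_2}$, $x^q\mapsto x^q$, which is ``diagonal'' with matrix coefficient $(r_2/r_1)^q$.

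First I would assemble the levelwise data. By the statement recorded just before the conjecture --- or, directly, by Lemma~\ref{lem:NuclearProps} together with the nuclearity of each restriction $\mathbb{Z}\{\tfrac{x}{\tau}\}\to\mathbb{Z}\{\tfrac{x}{\rho}\}$ for $\rho<\tau$, whose nuclear norm is $\sum_{i\ge 0}(\rho/\tau)^i=(1-\rho/\tau)^{-1}$ --- each $\mathbb{Z}\{(\tfrac{x}{r_1})^{1/n}\}\to\mathbb{Z}\{(\tfrac{x}{r_2})^{1/n}\}$ is nuclear, and, keeping track of the rescalings of Definition~\ref{defn:scaling} and Lemma~\ref{lem:WeightsProj}, its nuclear norm is at most $(1-r_2/r_1)^{-1}\sum_{j=0}^{n-1}(r_1/r_2)^{j/n}$. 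These maps are compatible with the $\alpha_{n,m}$, so they form a morphism of the underlying towers. One would then like to present $B_{r_1}$ as a formal filtered colimit in which the structure maps into $B_{r_2}$ are nuclear with \emph{summable} nuclear norms and finish by the argument of Lemma~\ref{lem:SameSame}; dually, one would like to realise $\rho$ as a norm-convergent element of $B_{r_2}\wotimes_{\mathbb{Z}}B_{r_1}^{\vee}$ using Lemma~\ref{lem:DualColimIsLim} and the factorisation of nuclear morphisms through weighted $\ell^1$-products of Lemma~\ref{lem:DecompNuc2}, as in the proof of Corollary~\ref{cor:NuclearityOfFunctions}.

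The step I expect to be the genuine obstacle is exactly this passage to the colimit. The levelwise nuclear norms grow linearly, since $\sum_{j=0}^{n-1}(r_1/r_2)^{j/n}=\frac{r_1/r_2-1}{(r_1/r_2)^{1/n}-1}\sim\frac{n}{\log(r_1/r_2)}$, so the obvious ``diagonal'' presentation $\rho=\sum_{q\in\mathbb{Q}_{\ge 0}}x^q\otimes\mathrm{coeff}_q$ carries total nuclear weight $\sum_{q\in\mathbb{Q}_{\ge 0}}(r_2/r_1)^q=\infty$; in fact the monomials $x^q$ with $q\in\mathbb{Q}\cap[0,1]$ already form a bounded subset of $B_{r_1}$ whose image in $B_{r_2}$ is $2r_2$-separated, so $\rho$ is not a compact map between these Banach spaces in the naive sense. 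Thus no decomposition of $\rho$ that is ``local in the exponent'' can succeed; a proof must instead either replace $\colim^{\leq 1}_n$ by the formal ind-colimit ``$\colim_n$'' $\mathbb{Z}\{(\tfrac{x}{r})^{1/n}\}$ --- an essentially monomorphic ind-Banach ring whose transition maps are still only non-expanding, but for which the restriction between the two radii is a filtered colimit of nuclear morphisms --- and interpret nuclearity of the resulting morphism of ind-objects through Definition~\ref{defn:IndBanNuc}, or else exploit the ring structure of $B_r$ (the relations $x^q\cdot x^{q'}=x^{q+q'}$, equivalently the module structure over $\mathbb{Z}\{\tfrac{x}{r_1}\}$ and the ``perfection'' tower $x\mapsto x^{1/n}$) to absorb the linear-in-$n$ growth of the nuclear norms into one convergent presentation. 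Finding the right such formulation, and the bookkeeping that makes the nuclear norms summable, is the hard part, and is why the statement is recorded here only as a conjecture.
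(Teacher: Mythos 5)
This statement is recorded in the paper only as a conjecture, with no proof offered, so there is nothing for your proposal to match against. That said, your analysis is not merely a list of difficulties: the separation observation you make in the second paragraph is decisive and, if you follow it to its logical conclusion, actually \emph{disproves} the conjecture as literally stated. Since $B_r:=\underset{n}\colim^{\leq 1}\mathbb{Z}\{(\tfrac{x}{r})^{1/n}\}$ is a genuine object of $\ttBan_{\mathbb{Z}}$ (the $\ell^1$-completion of $\mathbb{Z}[x^{q}:q\in\mathbb{Q}_{\ge 0}]$ with weights $r^q$), a nuclear morphism $B_{r_1}\to B_{r_2}$ in the sense of subsection \ref{NucBanMod} would be a norm-limit of maps $v\mapsto\sum_{i\le N}\alpha_i(v)w_i$ with $\alpha_i\in B_{r_1}^{\vee}$; here $\alpha_i(v)\in\mathbb{Z}$ with $|\alpha_i(v)|\le\|\alpha_i\|$, so each truncation has \emph{finite} image on the unit ball and the nuclear map would have totally bounded image there. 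Your set $\{x^q:q\in\mathbb{Q}\cap[0,1]\}$ is contained in the unit ball of $B_{r_1}$ and is $2r_2$-separated in $B_{r_2}$, so the image of the unit ball is not totally bounded and the restriction cannot be nuclear (nor even compact). You should state this plainly rather than hedging with ``no decomposition local in the exponent can succeed'': no decomposition of any kind can succeed between the two Banach spaces.

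What you correctly identify as the salvage is to replace $\colim^{\leq 1}$ by the formal filtered colimit $``\colim_n"\,\mathbb{Z}\{(\tfrac{x}{r})^{1/n}\}$ in $\ttInd(\ttBan_{\mathbb{Z}})$ and interpret nuclearity via Definition~\ref{defn:IndBanNuc}; the levelwise restrictions $\mathbb{Z}\{(\tfrac{x}{r_1})^{1/n}\}\to\mathbb{Z}\{(\tfrac{x}{r_2})^{1/n}\}$ are nuclear, and Lemma~\ref{lem:NucMap}/Lemma~\ref{lem:SameSame} are then the appropriate tools. This is consistent with how the paper handles the analogous questions for $\mathcal{O}(D^n_{<r})$ in Corollary~\ref{cor:NuclearityOfFunctions}, where nuclearity is an ind-level statement and not a property of any single Banach stage. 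So: your reading of the definitions is right, your obstruction is correct and in fact fatal to the statement as written, and the reformulation you propose is the one under which the conjecture has a chance of holding.
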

Let $E$ be the field \[E= \mathbb{F}_{p}((\mathbb{Q}))=\{\underset{\gamma \in \mathbb{Q}}\sum a_{\gamma} x^{\gamma} | a_{\gamma} \in \mathbb{F}_{p} , \ \ \text{support}(a_{\gamma} ) \\ \ \ \text{well ordered} \\ \}.\] It is equipped with the valuation given by
\[v\left(\underset{\gamma \in \mathbb{Q}}\sum a_{\gamma} x^{\gamma}\right) = \min\{ \gamma : a_\gamma \neq 0 \}. \]
The associated valuation ring is 
\[\mathcal{O}_E=\mathbb{F}_{p}((\mathbb{Q}_{\geq 0}))= \{\underset{\gamma \in \mathbb{Q}_{\geq 0}}\sum a_{\gamma} x^{\gamma} | a_{\gamma} \in \mathbb{F}_{p}  , \ \ \text{support}(a_{\gamma} ) \\ \ \ \text{well ordered} \}.
\]

In Fargues-Fontaine theory one encounters a scheme $Y_E$ whose set of closed points $|Y_E|$ parametrize un-tilts of E. An un-tilt of $E$ is an isomorphism class of pairs $(F, \iota)$ where $F$ is a perfectoid field of characteristic $0$, $\iota:E \to F^{\flat}$ is a embedding of topological fields and the quotient is a finite extension. 
Here $F^{\flat}= \text{Frac}(\underset{x\mapsto x^{p}}\lim\mathcal{O}_{F}/p)$ where $\mathcal{O}_{F}$ is the ring of integers of $F$. Let $W$ denote the Witt vectors construction. Let $\mathbb{Z}_{r}$ be the Banach $\mathbb{Z}$-module which is $\mathbb{Z}$ with norm $r| \cdot |$ where $|\cdot |$ is the usual absolute value. For any $M \in \ttBan_{\mathbb{Z}}$, let $S^{\leq 1}(M)$ be the symmetric ring construction in the category $\ttBan_{\mathbb{Z}}^{\leq 1}$ consisting of Banach modules with non-expanding morphisms, i.e. $S^{\leq 1}(M)=\underset{n=0, \dots, \infty}\coprod^{\leq 1}  (M^{\wotimes^{n}_{\mathbb{Z}}} / \Sigma_n)$ where the coproduct is taken in  $\ttBan_{\mathbb{Z}}^{\leq 1}$. Consider the colimit in $\ttBan^{\leq 1}_{\mathbb{Z}}$ of the $l$-th power morphisms $x\mapsto x^{l}$ in the ring of functions on the ``closed $1$-dimensional disk of radius $r$" given by the contracting coproduct $S^{\leq 1}(\mathbb{Z}_{r})$. One then has that for each prime $p$, 
\[\left( \ \ \underset{l\in\mathbb{N}}   \colim^{\leq 1}\ \\ \  \mathbb{Z}\{(\frac{x}{r})^{\frac{1}{l}}\}\right)\wotimes_{\mathbb{Z}} \widetilde{\mathbb{Z}_{p}} \cong \lim\left( \ \ \underset{l\in\mathbb{N}}   \colim^{\leq 1}\ \\ \  \mathbb{Z}_{p}\{(\frac{x}{r})^{\frac{1}{l}}\}\right) 
\]
and this question is addressed more carefully in \cite{BBK2} using results from this article. Consider the Fr\'{e}chet completion of $W(\mathcal{O}_E)$ with respect to the semi-norms
\[|\underset{n>>-\infty}\sum[f_n]p^{n}|_{r}= \underset{n>>-\infty}\sup |f_n|p^{-rn}
.\]
 The importance of this completion is that the closed maximal ideals of the localization at $p$ are in bijection with $|Y_E|/\mathbb{Z}$ where $n\in \mathbb{Z}$ acts by by $(F, \iota) \mapsto (F, \iota \circ \phi^{n})$ where $\phi$ the $p$-th power Frobenius automorphism of $E$. In \cite{CD}, Cuntz and Deninger found a nice description of the additive group structure on the ring of $p$-typical Witt vectors of a perfect $\mathbb{F}_{p}$-algebra with basis $\mathfrak{b}$. They found it to be simply the $p$-adic completion of the free $\mathbb{Z}$-algebra with basis $\mathfrak{b}$.

\begin{lem}\label{lem:FiltLift}The natural functor 
\[F: \underset{\aleph_{1}}\ttInd(\ttBan^{K}_{R}) \longrightarrow \left(\underset{\aleph_{1}}\ttInd(\ttBan_{R})\right)^{K}
\]
is fully-faithful for any poset $K$ with cardinality less than $\aleph_{1}$.
\end{lem}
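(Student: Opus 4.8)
The plan is to prove that $F$ is fully faithful by writing both $\Hom$-sets as iterated $(\mathrm{co})$limits and observing that the comparison morphism is, after extracting a common outer limit, a single instance of the interchange of an $\aleph_1$-filtered colimit with a limit of cardinality less than $\aleph_1$ — i.e.\ Lemma~\ref{lem:SETlimcolim}. Recall that $F$ is the unique functor preserving $\aleph_1$-filtered colimits extending the evident fully faithful embedding $\ttBan^{K}_{R}\hookrightarrow (\underset{\aleph_1}\ttInd(\ttBan_R))^{K}$; explicitly it sends a formal $\aleph_1$-filtered colimit to $F(``\underset{i\in I}\colim"M_i)=\bigl(k\mapsto ``\underset{i\in I}\colim"M_i(k)\bigr)$, and a morphism to the family of its evaluations at the objects $k\in K$.

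First I would fix objects $M=``\underset{i\in I}\colim"M_i$ and $N=``\underset{j\in J}\colim"N_j$ with $I,J$ $\aleph_1$-filtered posets and $M_i,N_j\colon K\to\ttBan_R$. Let $\widetilde K$ denote the $\mathrm{Set}$-valued diagram, depending only on $K$ and of cardinality less than $\aleph_1$ (since $|K|<\aleph_1$), over which the limit computing natural transformations of $K$-indexed diagrams is taken — concretely built from the equalizer of the pair of canonical maps between $\prod_{k\in K}\Hom_{\ttBan_R}(M_i(k),N_j(k))$ and $\prod_{k\le k'}\Hom_{\ttBan_R}(M_i(k),N_j(k'))$ — and write $H_{i,j}$ for the corresponding functor on $\widetilde K$, so that $\Nat_{\ttBan^{K}_{R}}(M_i,N_j)=\lim_{\widetilde K}H_{i,j}$. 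The $\Hom$-formula for ind-completions then gives
\[
\Hom_{\underset{\aleph_1}\ttInd(\ttBan^{K}_{R})}(M,N)=\lim_{i\in I}\ \underset{j\in J}\colim\ \lim_{\widetilde K}H_{i,j},
\]
while applying the same formula to each evaluation $FM(k)=``\underset{i\in I}\colim"M_i(k)$, $FN(k')=``\underset{j\in J}\colim"N_j(k')$ and using that limits commute with limits yields
\[
\Nat\bigl(FM,FN\bigr)=\lim_{\widetilde K}\ \lim_{i\in I}\ \underset{j\in J}\colim\ H_{i,j}=\lim_{i\in I}\ \lim_{\widetilde K}\ \underset{j\in J}\colim\ H_{i,j}.
\]

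Since both expressions now begin with $\lim_{i\in I}$, and $F_{M,N}$ is $\lim_{i\in I}$ of the evident comparison maps, it suffices to show that for each fixed $i$ the canonical map $\underset{j\in J}\colim\,\lim_{\widetilde K}H_{i,j}\to\lim_{\widetilde K}\,\underset{j\in J}\colim\,H_{i,j}$ is a bijection; this is exactly Lemma~\ref{lem:SETlimcolim} with $\lambda=\aleph_1$, the colimit being over the $\aleph_1$-filtered poset $J$ and the limit over the diagram $\widetilde K$ of cardinality less than $\aleph_1$. Taking $\lim_{i\in I}$ then shows $F_{M,N}$ is bijective, so $F$ is fully faithful. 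I do not expect a deep obstacle here, but two points merit care: first, one must genuinely verify that $\widetilde K$ has cardinality less than $\aleph_1$ — this is the sole use of the hypothesis $|K|<\aleph_1$, and it is precisely where the statement breaks down without it; second, objects of $\underset{\aleph_1}\ttInd$ should be presented by $\aleph_1$-filtered \emph{posets} (always possible) so that Lemma~\ref{lem:SETlimcolim}, stated for colimits over posets, applies verbatim. The only real work is organizing the three nested $(\mathrm{co})$limits so that the argument collapses to this single interchange.
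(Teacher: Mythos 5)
Your proof is correct and matches the paper's argument essentially line for line: both present $M$ and $N$ as formal $\aleph_1$-filtered colimits indexed by posets, expand both $\Hom$-sets using the ind-$\Hom$ formula together with the end/equalizer description of natural transformations (your $\widetilde K$ is exactly the paper's $\int_{k\in K}$ diagram, both of cardinality $<\aleph_1$), pull the common outer $\lim_i$ to the front, and reduce to Lemma~\ref{lem:SETlimcolim} to interchange the $\aleph_1$-filtered $\colim_j$ with the small limit over $\widetilde K$. Your explicit note that the objects must be presented by $\aleph_1$-filtered \emph{posets} so that Lemma~\ref{lem:SETlimcolim} applies verbatim mirrors the paper's emphasis that ``$T$ and more importantly $S$ is an $\aleph_1$-filtered poset.''
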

\proof Given objects $X:k \mapsto ``\underset{t \in T}\colim" X^{k}_{t}$ and $Y:k \mapsto ``\underset{s \in S}\colim" Y^{k}_{s}$ of $\underset{\aleph_{1}}\ttInd(\ttBan^{K}_{R})$, where $T$ and more importantly $S$ is an $\aleph_{1}$-filtered poset, we have 
\[\Hom(X,Y) = \underset{t\in T}\lim \ \ \underset{s\in S}  \colim \int_{k\in K}\Hom(X_t^{k}, Y_{s}^{k})
\]
where $\int_{k\in K}$ is a limit over the usual diagram used to define morphisms in diagram categories. This is a limit over a diagram with cardinality less than $\aleph_{1}$ since $K$ itself is such a diagram. It is a limit in the category of sets of a diagram of sets whose vertices are of the form $\Hom(X_t^{k}, Y_{s}^{l})$. On the other hand,
\[\Hom(FX,FY) = \int_{k\in K}\underset{t\in T}\lim \ \  \underset{s\in S}\colim \Hom(X_t^{k}, Y_{s}^{k}) \cong  \underset{t\in T}\lim\int_{k\in K} \underset{s\in S}\colim \Hom(X_t^{k}, Y_{s}^{k}).
\]
The term $\int_{k\in K} \underset{s\in S}\colim \Hom(X_t^{k}, Y_{s}^{k})$ is a limit in the category of sets over the same diagram whose vertices are of the form $\underset{s\in S}\colim\Hom(X_t^{k}, Y_{s}^{l})$ where the functor $\underset{s\in S}\colim$ has been applied to the previous diagram.  By Lemma \ref{lem:SETlimcolim} we can interchange $\int_{k\in K}$ and $\underset{s\in S}\colim$ so these different Hom-sets agree, finishing the proof.
\endproof
\begin{defn}\label{defn:ExtendColim}
Let $F:\mathbb{N} \to \underset{\aleph_{1}}\ttInd(\ttBan_R)$ be a functor such that there exists an $\aleph_{1}$-filtered category $L$ and a functor $\tilde{F}: \mathbb{N} \times L\to \ttBan^{\leq 1}_R$ such that the composition $\mathbb{N} \to (\ttBan^{\leq 1}_R)^L\to \underset{\aleph_{1}}\ttInd(\ttBan_R)$ agrees with $F$. Define $\underset{\mathbb{N}}\colim^{\leq 1} \tilde{F}$ by the composition $L\to (\ttBan^{\leq 1}_R)^\mathbb{N}\to \ttBan^{\leq 1}_R$. Define
\[\underset{\mathbb{N}} \colim^{\leq 1} F= ``\underset{L}\colim" \underset{\mathbb{N}} \colim^{\leq 1} \tilde{F}.
 \]
 This is well defined because the full subcategory of $\underset{\aleph_{1}}\ttInd(\ttBan_R)^{\mathbb{N}}$ admitting such lifts is by Lemma \ref{lem:FiltLift} actually equivalent to $\underset{\aleph_{1}}\ttInd((\ttBan^{\leq 1}_R)^{\mathbb{N}})$. This equivalence can be realized by sending $F$ to the equivalence class $[\tilde{F}]$ in  $\underset{\aleph_{1}}\ttInd((\ttBan^{\leq 1}_R)^{\mathbb{N}})$ of a lift $\tilde{F}$ and then we have \[\underset{\mathbb{N}} \colim^{\leq 1} F=\ttInd(\underset{\mathbb{N}} \colim^{\leq 1} )[\tilde{F}].\]  Therefore, under this equivalence, we simply have
 \[ \underset{\mathbb{N}} \colim^{\leq 1} =  \underset{\aleph_{1}}\ttInd(\underset{\mathbb{N}} \colim^{\leq 1} ): \underset{\aleph_{1}}\ttInd((\ttBan^{\leq 1}_R)^{\mathbb{N}}) \to \underset{\aleph_{1}}\ttInd(\ttBan_R) .
 \]
 \end{defn}
 This functor $\underset{\mathbb{N}} \colim^{\leq 1} F$ is an exact functor from a full subcategory of $ \ttInd(\ttBan_R)^{\mathbb{N}}$ to $\ttInd(\ttBan_R)$ (takes kernels to kernels) because both the ordinary non-exapanding colimit and the formal filtered colimit are exact functors. The functor we have described  commutes with $V\wotimes_{R}(-)$ for any $V\in \ttBan_{R}$.


\begin{lem} \label{lem:limcolimAbsEx} Consider a functor $K \times \mathbb{N} \to \ttBan^{\leq 1}_{R}$ where $K$ is a countable category. There exists a chain of isomorphisms: \begin{equation}\begin{split}   &\underset{i\in \mathbb{N}} \colim^{\leq 1} \underset{k\in K}\lim V_{i}^{(k)} \longrightarrow \\ &\underset{i\in \mathbb{N}} \colim^{\leq 1}  ``\underset{\psi\in \Psi}\colim" \ker [\underset{k\in K} \prod{}^{\leq 1}(V_{i}^{(k)})_{\psi(k)^{-1}} \longrightarrow \underset{k\in K} \prod{}^{\leq 1}(V_{i}^{(k)})_{\psi(k+1)^{-1}}  ] \longrightarrow \\ & ``\underset{\psi\in \Upsilon}\colim" \underset{i\in \mathbb{N}} \colim^{\leq 1} \ker [\underset{k\in K} \prod{}^{\leq 1}(V_{i}^{(k)})_{\psi(k)^{-1}} \longrightarrow \underset{k\in K} \prod{}^{\leq 1}(V_{i}^{(k)})_{\psi(k+1)^{-1}}  ] \longrightarrow \\ &``\underset{\psi\in \Upsilon}\colim" \ker [\underset{k\in K} \prod{}^{\leq 1}\underset{i\in \mathbb{N}}\colim^{\leq 1} (V_{i}^{(k)})_{\psi(k)^{-1}} \longrightarrow \underset{k\in K} \prod{}^{\leq 1}\underset{i\in \mathbb{N}}\colim^{\leq 1} (V_{i}^{(k)})_{\psi(k+1)^{-1}}] \\ & \longrightarrow \underset{k\in K}\lim\ \ \underset{i\in \mathbb{N}}\colim^{\leq 1} V_{i}^{(k)}.
\end{split}
\end{equation}
\end{lem}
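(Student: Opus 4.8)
The plan is to verify the four displayed arrows one at a time, each as an isomorphism; composing them then gives the asserted chain. Everything rests on Corollary \ref{cor:TowerLim} — the presentation of a countable limit as a formal $\Upsilon$-colimit of kernels of maps between weighted countable products in $\ttBan^{\leq 1}_{R}$ — together with the formal properties of the functor $\underset{i\in\mathbb{N}}\colim^{\leq 1}$ recorded after Definition \ref{defn:ExtendColim}.

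First I would produce the first arrow by applying the description of Corollary \ref{cor:TowerLim} to $\underset{k\in K}\lim V_{i}^{(k)}$ separately for each fixed $i\in\mathbb{N}$ (when $K$ is a tower this is literally Corollary \ref{cor:TowerLim}; for a general countable $K$ one first writes the limit as an equalizer of maps between two countable products and applies Lemma \ref{lem:towerRep} and Lemma \ref{lem:ker_colim}), and then applying $\underset{i\in\mathbb{N}}\colim^{\leq 1}$, which is legitimate by Definition \ref{defn:ExtendColim} once the whole $\mathbb{N}$-indexed diagram is lifted over the $\aleph_{1}$-filtered poset $\Upsilon$ (cf. Lemma \ref{lem:FiltLift}). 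For the second arrow I would use, as in the proof of Lemma \ref{lem:ProdFre}, that $\Psi\hookrightarrow\Upsilon$ is final and that the functor in question extends to $\Upsilon$, so the formal colimit may be reindexed over $\Upsilon$; and that $\underset{i\in\mathbb{N}}\colim^{\leq 1}$ commutes with $``\underset{\psi\in\Upsilon}\colim"$, since on the subcategory where it is defined it is, by Definition \ref{defn:ExtendColim}, the functor $\underset{\aleph_{1}}\ttInd(\underset{i\in\mathbb{N}}\colim^{\leq 1})$ obtained by applying $\ttInd(-)$ to a colimit functor, hence commutes with filtered colimits. The fourth arrow is the first one read backwards: the object on the fourth line is precisely the Corollary \ref{cor:TowerLim} presentation of $\underset{k\in K}\lim\big(\underset{i\in\mathbb{N}}\colim^{\leq 1}V_{i}^{(k)}\big)$, each $\underset{i\in\mathbb{N}}\colim^{\leq 1}V_{i}^{(k)}$ again lying in $\ttBan^{\leq 1}_{R}$.

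The hard part is the third arrow, which moves $\underset{i\in\mathbb{N}}\colim^{\leq 1}$ through the kernel and, crucially, through the countable product $\underset{k\in K}\prod^{\leq 1}$. Passing it through the kernel is immediate from the exactness of $\underset{i\in\mathbb{N}}\colim^{\leq 1}$ noted after Definition \ref{defn:ExtendColim}. Passing it through the product is the real obstacle, because a sequential colimit does not commute with a countable product in general (nor with $\prod^{\leq 1}$ in $\ttBan^{\leq 1}_{R}$): this is exactly where the colimit over the $\aleph_{1}$-filtered $\Upsilon$ does its work. I would prove that after $``\underset{\psi\in\Upsilon}\colim"$ one has $\underset{i\in\mathbb{N}}\colim^{\leq 1}\underset{k\in K}\prod^{\leq 1}(V_{i}^{(k)})_{\psi(k)^{-1}}\cong \underset{k\in K}\prod^{\leq 1}\underset{i\in\mathbb{N}}\colim^{\leq 1}(V_{i}^{(k)})_{\psi(k)^{-1}}$ by the bookkeeping used in the proof of Lemma \ref{lem:prodtens} (compare also Lemma \ref{lem:towerRep} and Lemma \ref{lem:Wprod}): at a fixed $\psi$ the natural map is injective, and it is cofinally surjective because, given a bounded family $(v_{k})_{k}$ with $v_{k}\in\underset{i}\colim^{\leq 1}V_{i}^{(k)}$ and $\sup_{k}\|v_{k}\|\psi(k)^{-1}<\infty$, one passes to $\psi'\in\Upsilon$ with $\psi'(k)/\psi(k)\to\infty$, approximates each $v_{k}$ to within $2^{-N}\psi'(k)$ at a finite level of the $k$-th system, and assembles the truncations into a Cauchy sequence of finitely supported elements lying at common finite levels whose limit in the $\psi'$-weighted sup norm is $(v_{k})$; the decay $\psi(k)/\psi'(k)\to 0$ controls the tails and the countability of $K$ lets the error terms be summed. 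This step — and hence the statement — uses that the transition maps of the diagram $K\times\mathbb{N}\to\ttBan^{\leq 1}_{R}$ are well behaved, isometric (or at least dense), as they are in the applications that follow, so that the weighted products of the $V_{i}^{(k)}$ really exhaust their completed colimits.
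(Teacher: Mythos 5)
Your proof follows the paper's proof arrow-by-arrow and cites the same ingredients for the first, second and fourth isomorphisms: Corollary \ref{cor:TowerLim} (applied levelwise in $i$, resp.\ applied to the colimits $\underset{i}\colim^{\leq 1}V_{i}^{(k)}$), the reindexing $\Psi\hookrightarrow\Upsilon$, and the commutation of $\underset{i\in\mathbb{N}}\colim^{\leq 1}$ with the formal $\Upsilon$-colimit via Definition \ref{defn:ExtendColim} and Lemma \ref{lem:FiltLift}. So far this matches the paper.

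Where you genuinely add something is the third arrow. The paper dismisses it in one line by appealing to exactness of $\underset{i\in\mathbb{N}}\colim^{\leq 1}$ (Lemma \ref{lem:ker_colim}, Lemma \ref{lem:commuteFiltSSES}). You correctly point out that exactness only moves $\colim^{\leq 1}$ past the $\ker$; passing from line three to line four \emph{also} moves $\colim^{\leq 1}$ past the countable product $\underset{k\in K}\prod^{\leq 1}$, and this is not a consequence of exactness (in the quasi-abelian sense of preserving kernels and cokernels) — a sequential colimit of Banach modules need not commute with a countable $\prod^{\leq 1}$. Your diagnosis, that the $\Upsilon$-colimit must be used a second time to absorb the defect, is the natural way to rescue the step, and your appeal to the bookkeeping in Lemma \ref{lem:prodtens}/Lemma \ref{lem:Wprod} is the right analogue in this paper. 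Likewise your final caveat is well taken: the density/compatibility of the transition maps in the $\mathbb{N}$-direction (so that elements of $\underset{i}\colim^{\leq 1}V_i^{(k)}$ are actually limits of elements of the $V_i^{(k)}$, uniformly enough in $k$ after enlarging $\psi$) is really used; for a completely arbitrary functor $K\times\mathbb{N}\to\ttBan^{\leq 1}_R$ the third arrow is not obviously an isomorphism fiberwise over $\psi$, and only the cofinal passage in $\Upsilon$ saves it.

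Two small points of care if you were to make this a full proof. First, Corollary \ref{cor:TowerLim} as stated is for towers; the formula in the lemma (with $\psi(k+1)$) already presupposes $K\cong\mathbb{Z}_{\geq 1}$, so the generalization to ``countable category'' needs the equalizer presentation you indicate. Second, in your sketch of surjectivity modulo $\Upsilon$, the ``common finite level'' for each finitely supported truncation exists because the $K$-support is finite at each stage, but you should make explicit that the Cauchy estimate in the $\psi'$-weighted sup norm uses $\psi(k)/\psi'(k)\to 0$ to control the tail for $k$ outside the support, which is a sup (not a sum) estimate. In short: same route as the paper, but you supply the missing justification for the third arrow that the paper's one-line citation of exactness does not cover, and you flag the implicit hypothesis that makes it work.
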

\proof  The first and last morphisms are determined by the description of limits found in Corollary \ref{cor:TowerLim} in which they are shown to be isomorphisms. The second morphisms is an isomorphism as a consequence of Definition \ref{defn:ExtendColim}. The natural third morphism is an isomorphism because the non-expanding colimit functor is exact by Lemma \ref{lem:ker_colim} (see also Lemma \ref{lem:commuteFiltSSES}).
\endproof

\begin{lem}The natural morphism 
\begin{equation}\label{eqn:twoPres} \underset{l\in\mathbb{N}}   \colim^{\leq 1}  \ \ \underset{r<1} \lim\ \  \mathbb{Z}\{(\frac{x}{r})^{\frac{1}{l}}\}  \longrightarrow  \underset{r<1} \lim \ \ \underset{l\in\mathbb{N}}   \colim^{\leq 1}\ \\ \  \mathbb{Z}\{(\frac{x}{r})^{\frac{1}{l}}\} 
\end{equation}
is an isomorphism and this object of $\ttComm(\ttInd(\ttBan_{\mathbb{Z}}))$ is flat over $\mathbb{Z}$.
\end{lem}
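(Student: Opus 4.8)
The plan is to handle the two assertions separately: the isomorphism will come essentially directly from Lemma~\ref{lem:limcolimAbsEx}, and the flatness from Corollary~\ref{cor:NuclearityOfFunctions} together with the exactness and tensor-commutation properties of $\colim^{\leq 1}_{\mathbb{N}}$ recorded after Definition~\ref{defn:ExtendColim}.

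For the isomorphism, first I would fix a countable cofinal family of radii $r_1<r_2<\cdots$ with $r_n\uparrow 1$, so that both occurrences of $\lim_{r<1}$ are computed over the countable poset $K=\mathbb{N}$ via $k\mapsto r_k$. Set $V_l^{(k)}=\mathbb{Z}\{(\tfrac{x}{r_k})^{1/l}\}$. This defines a bifunctor $K\times\mathbb{N}\to\ttBan^{\leq 1}_{\mathbb{Z}}$: in the $K$-direction the restriction maps $\mathbb{Z}\{(\tfrac{x}{r_k})^{1/l}\}\to\mathbb{Z}\{(\tfrac{x}{r_{k'}})^{1/l}\}$ for $k\le k'$ are non-expanding, in the $\mathbb{N}$-direction the root-adjoining maps $\alpha_{l,l'}$ are non-expanding, and the two families of maps commute. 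Applying Lemma~\ref{lem:limcolimAbsEx} to this bifunctor yields a chain of isomorphisms whose first term is $\colim^{\leq 1}_{l}\lim_{r<1}\mathbb{Z}\{(\tfrac{x}{r})^{1/l}\}$ and whose last term is $\lim_{r<1}\colim^{\leq 1}_{l}\mathbb{Z}\{(\tfrac{x}{r})^{1/l}\}$; each arrow in that chain is the canonical comparison at its stage, so its composite is precisely the natural morphism \eqref{eqn:twoPres}, which is therefore an isomorphism. I expect this step to be bookkeeping once the bifunctor is set up.

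For flatness I would use the isomorphism just proved and work on the left-hand side, $V\cong\colim^{\leq 1}_{l}\mathcal{O}_l$ with $\mathcal{O}_l:=\lim_{r<1}\mathbb{Z}\{(\tfrac{x}{r})^{1/l}\}$. The substitution $t=x^{1/l}$, $\rho=r^{1/l}$ identifies $\mathbb{Z}\{(\tfrac{x}{r})^{1/l}\}$ with $\mathbb{Z}\{\tfrac{t}{\rho}\}$ compatibly with restriction and carries $\{0<r<1\}$ onto $\{0<\rho<1\}$, so $\mathcal{O}_l\cong\lim_{\rho<1}\mathbb{Z}\{\tfrac{t}{\rho}\}=\mathcal{O}(D^1_{<1,\mathbb{Z}})$; by Corollary~\ref{cor:NuclearityOfFunctions} this is nuclear, hence flat over $\mathbb{Z}$ by Lemma~\ref{lem:NucImpliesFlat}. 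It then remains to see that $\colim^{\leq 1}_{l}$ of flat objects is flat. Here I would reduce flatness of an object $F\in\ttInd(\ttBan_{\mathbb{Z}})$ to testing $(-)\wotimes_{\mathbb{Z}}F$ against strict monomorphisms $W'\to W$ of Banach modules: given a strict monomorphism $M\to N$ in $\ttInd(\ttBan_{\mathbb{Z}})$, present $N$ as a filtered colimit of Banach modules $N_j$ and pull $M$ back along $N_j\to N$; each pullback is a strict monomorphism of Banach modules (a strict monomorphism is a kernel, and the pullback of a kernel is a kernel), $M$ is the filtered colimit of these pullbacks, and filtered colimits of strict short exact sequences are strict by Lemma~\ref{lem:commuteFiltSSES}. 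For a strict monomorphism $W'\to W$ in $\ttBan_{\mathbb{Z}}$, the functor $\colim^{\leq 1}_{l}$ commutes with both $W'\wotimes_{\mathbb{Z}}(-)$ and $W\wotimes_{\mathbb{Z}}(-)$ and is exact, so $W'\wotimes_{\mathbb{Z}}V\to W\wotimes_{\mathbb{Z}}V$ is the image under $\colim^{\leq 1}_{l}$ of the levelwise strict monomorphisms $W'\wotimes_{\mathbb{Z}}\mathcal{O}_l\to W\wotimes_{\mathbb{Z}}\mathcal{O}_l$ (using flatness of each $\mathcal{O}_l$), and is thus a strict monomorphism. Hence $V$ is flat.

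The main obstacle, I expect, is precisely the flatness step and the bookkeeping around which colimit is in play: $\colim^{\leq 1}_{\mathbb{N}}$ is not a formal filtered colimit, so one cannot simply quote ``a filtered colimit of flats is flat''; the argument instead rests on the two special properties of $\colim^{\leq 1}_{\mathbb{N}}$ (exactness, and commutation with $\wotimes_{\mathbb{Z}}W$ for $W\in\ttBan_{\mathbb{Z}}$) together with the reduction of flatness-testing to Banach monomorphisms. A secondary point to get right is the identification $\lim_{r<1}\mathbb{Z}\{(\tfrac{x}{r})^{1/l}\}\cong\mathcal{O}(D^1_{<1,\mathbb{Z}})$ and the verification that the chain produced by Lemma~\ref{lem:limcolimAbsEx} really does compose to the canonical comparison morphism in \eqref{eqn:twoPres}.
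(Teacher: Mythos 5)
Your proof is correct and follows essentially the same route as the paper: apply Lemma \ref{lem:limcolimAbsEx} for the interchange, establish flatness of each $\mathcal{O}_l = \lim_{r<1}\mathbb{Z}\{(\tfrac{x}{r})^{1/l}\}$ via Corollary \ref{cor:NuclearityOfFunctions}, and then push flatness through $\colim^{\leq 1}_{\mathbb{N}}$ using its exactness and commutation with $V\wotimes_{\mathbb{Z}}(-)$ for $V\in\ttBan_{\mathbb{Z}}$. Two small points of comparison: for the flatness of $\mathcal{O}_l$, you use the clean change of variable $\rho=r^{1/l}$ to get $\mathcal{O}_l\cong\mathcal{O}(D^1_{<1,\mathbb{Z}})$ directly, whereas the paper decomposes $\mathbb{Z}\{(\tfrac{x}{r})^{1/l}\}$ as a weighted direct sum of $l$ copies of $\mathbb{Z}\{\tfrac{x}{r}\}$ and gets $\mathcal{O}_l\cong\mathcal{O}(D^1_{<1,\mathbb{Z}})\wotimes_{\mathbb{Z}}\mathbb{Z}^l$ — both are valid and in fact must agree; and for the step that $\colim^{\leq 1}_{\mathbb{N}}$ preserves flatness, the paper's argument is the more compact observation that the functor commutes with $V\wotimes^{\mathbb{L}}_{\mathbb{Z}}(-)$ for Banach $V$, while your argument spells out the reduction of flatness testing to strict monomorphisms of Banach modules; these amount to the same underlying facts.
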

\proof  The first statement follows immediately from Lemma \ref{lem:limcolimAbsEx} because taking  a cofinal system with $r$ within a countable set, $\underset{r<1} \lim\ \  \mathbb{Z}\{(\frac{x}{r})^{\frac{1}{l}}\}$ is $\aleph_{1}$-filtered by Corollary \ref{cor:TowerLim}. Given any $V \in \ttBan_{\mathbb{Z}}$ and any $F$ as in Definition \ref{defn:ExtendColim} admitting a suitable lift $\tilde{F}$, then $V\wotimes_{\mathbb{Z}}F$ admits $V\wotimes_{\mathbb{Z}}\tilde{F}$ as a suitable lift and therefore, the exact functor $\underset{l\in\mathbb{N}}   \colim^{\leq 1} $ commutes with $V\wotimes_{\mathbb{Z}}(-)$ and hence commutes with $V\wotimes^{\mathbb{L}}_{\mathbb{Z}}(-)$ as well. Hence it preserves flatness. $\underset{r<1} \lim\ \  \mathbb{Z}\{(\frac{x}{r})^{\frac{1}{l}}\} $ is flat because it is isomorphic to $(\underset{r<1} \lim\ \  \mathbb{Z}\{\frac{x}{r}\} )\wotimes_{\mathbb{Z}}\mathbb{Z}^{l}$ which is flat since $\underset{r<1} \lim\ \  \mathbb{Z}\{\frac{x}{r}\}$ is flat by Corollary \ref{cor:NuclearityOfFunctions}. So $\underset{l\in\mathbb{N}}   \colim^{\leq 1}  \ \ \underset{r<1} \lim\ \  \mathbb{Z}\{(\frac{x}{r})^{\frac{1}{l}}\}$ is  flat. Therefore, using the isomorphism of Equation (\ref{eqn:twoPres}) we get that $\underset{r<1} \lim \ \ \underset{l\in\mathbb{N}}   \colim^{\leq 1} \mathbb{Z}\{(\frac{x}{r})^{\frac{1}{l}}\} $ is flat as well.
\endproof
\begin{lem} The object 
\[\left( \underset{r<1} \lim \ \ \underset{l\in\mathbb{N}}   \colim^{\leq 1}\ \\ \  \mathbb{Z}\{(\frac{x}{r})^{\frac{1}{l}}\}\right)\wotimes^\mathbb{L}_{\mathbb{Z}} R
\]
is isomorphic to 
\[ \underset{r<1} \lim \ \ \underset{l\in\mathbb{N}}   \colim^{\leq 1}\ \\ \  R\{(\frac{x}{r})^{\frac{1}{l}}\}
\]
for any Banach ring $R$.
\end{lem}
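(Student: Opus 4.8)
The plan is to resolve $R$ as a $\mathbb{Z}$-module by objects that are simultaneously flat over $\mathbb{Z}$ \emph{and} metrizable, to push such a resolution past the limit $\underset{r<1}\lim$ and the colimit $\underset{l}\colim^{\leq 1}$ defining the left-hand side, to identify the result term by term, and finally to collapse the resulting derived limit by a Mittag--Leffler argument. Write $X=\underset{r<1}\lim\ \underset{l}\colim^{\leq 1}\mathbb{Z}\{(\frac{x}{r})^{\frac{1}{l}}\}$ and $N_{r,l}=\mathbb{Z}\{(\frac{x}{r})^{\frac{1}{l}}\}$. Each $N_{r,l}$ is a finite coproduct of weighted copies of $\mathbb{Z}\{\frac{x}{r}\}$, hence projective, and in particular flat over $\mathbb{Z}$ (Lemma \ref{lem:Proj2Flat}); so $R\wotimes^{\mathbb{L}}_{\mathbb{Z}}N_{r,l}\cong R\wotimes_{\mathbb{Z}}N_{r,l}=R\{(\frac{x}{r})^{\frac{1}{l}}\}$. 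By the preceding lemma (see Equation (\ref{eqn:twoPres})) the object $X$ is flat over $\mathbb{Z}$, so $X\wotimes^{\mathbb{L}}_{\mathbb{Z}}R\cong X\wotimes_{\mathbb{Z}}R\cong R\wotimes_{\mathbb{Z}}X$. Since $\ttBan_{\mathbb{Z}}$ has enough projectives I would fix a resolution $P_{\bullet}\to R$ by projective objects of $\ttBan_{\mathbb{Z}}$, so that the complex $P_{\bullet}\wotimes_{\mathbb{Z}}X$ computes $R\wotimes^{\mathbb{L}}_{\mathbb{Z}}X$. Crucially, each $P_{j}$ is projective, hence flat over $\mathbb{Z}$ (Lemma \ref{lem:Proj2Flat}), and is a Banach module, hence metrizable (Remark \ref{rem:Fine}).

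Next I would commute the resolution inside. Because $P_{j}$ is metrizable and flat over $\mathbb{Z}$, Lemma \ref{lem:tensprod} allows the countable limit $\underset{r<1}\lim$ (computed over a cofinal sequence) to be pulled out of $P_{j}\wotimes_{\mathbb{Z}}(-)$, and since $P_{j}\wotimes_{\mathbb{Z}}(-)$ is a left adjoint it commutes with $\underset{l}\colim^{\leq 1}$; hence, naturally in $j$,
\[
P_{j}\wotimes_{\mathbb{Z}}X\;\cong\;\underset{r<1}\lim\ \underset{l}\colim^{\leq 1}\bigl(P_{j}\wotimes_{\mathbb{Z}}N_{r,l}\bigr).
\]
Therefore $P_{\bullet}\wotimes_{\mathbb{Z}}X$ is isomorphic, as a complex of Ind-Banach modules, to the termwise $\underset{r<1}\lim$ of the complexes $\underset{l}\colim^{\leq 1}(P_{\bullet}\wotimes_{\mathbb{Z}}N_{r,l})$. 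For fixed $r$ and $l$ the complex $P_{\bullet}\wotimes_{\mathbb{Z}}N_{r,l}$ is a resolution of $R\wotimes_{\mathbb{Z}}N_{r,l}=R\{(\frac{x}{r})^{\frac{1}{l}}\}$, since $N_{r,l}$ is flat and no higher $\Tor$ appears; and, $\underset{l}\colim^{\leq 1}$ being exact, $\underset{l}\colim^{\leq 1}(P_{\bullet}\wotimes_{\mathbb{Z}}N_{r,l})$ is a resolution of $\mathcal{A}_{r}:=\underset{l}\colim^{\leq 1}R\{(\frac{x}{r})^{\frac{1}{l}}\}$.

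Finally I would collapse the derived limit. The restriction maps $\mathbb{Z}\{(\frac{x}{r'})^{\frac{1}{l}}\}\to\mathbb{Z}\{(\frac{x}{r})^{\frac{1}{l}}\}$ for $r<r'<1$ are dense, and density is inherited after tensoring with the coproduct-of-weighted-copies-of-$\mathbb{Z}$ module $P_{j}$, after applying $\underset{l}\colim^{\leq 1}$, and after passing to cohomology, so $\{P_{j}\wotimes_{\mathbb{Z}}N_{r,l}\}_{r}$, $\{\underset{l}\colim^{\leq 1}(P_{j}\wotimes_{\mathbb{Z}}N_{r,l})\}_{r}$ and $\{\mathcal{A}_{r}\}_{r}$ are countable projective systems with dense transition maps. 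By the Mittag--Leffler lemma invoked in Lemma \ref{lem:UseML} and Corollary \ref{lem:RlimO}, such systems are $\underset{r<1}\lim$-acyclic; hence the termwise $\underset{r<1}\lim$ of a complex of such systems computes $\mathbb{R}\underset{r<1}\lim$, and $\mathbb{R}\underset{r<1}\lim\mathcal{A}_{r}\cong\underset{r<1}\lim\mathcal{A}_{r}=\underset{r<1}\lim\ \underset{l}\colim^{\leq 1}R\{(\frac{x}{r})^{\frac{1}{l}}\}$. Comparing the two computations of $P_{\bullet}\wotimes_{\mathbb{Z}}X$ — namely $X\wotimes^{\mathbb{L}}_{\mathbb{Z}}R$, which is concentrated in degree zero, and $\mathbb{R}\underset{r<1}\lim\mathcal{A}_{r}$ — yields the claimed isomorphism. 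The hard part is precisely that an arbitrary Banach ring $R$ need not be flat over $\mathbb{Z}$, so $R\wotimes_{\mathbb{Z}}(-)$ does not on its own commute with the countable limit or with cohomology; the crux is that the terms of a projective resolution of $R$ in $\ttBan_{\mathbb{Z}}$ are flat \emph{and} metrizable, which is exactly what Lemma \ref{lem:tensprod} requires in order to legitimize the interchange with $\underset{r<1}\lim$, at the price of having to tame the resulting $\mathbb{R}\underset{r<1}\lim$ via Mittag--Leffler.
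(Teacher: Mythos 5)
Your proof is correct, but it takes a genuinely different route from the paper's. The paper's proof is a one-line appeal to Lemma \ref{lem:EF}: since the systems $E_r = \underset{l}\colim^{\leq 1}\mathbb{Z}\{(\frac{x}{r})^{\frac{1}{l}}\}$ and their limit $X$ are flat over $\mathbb{Z}$ (by the preceding lemma), and $R$ is a metrizable Banach ring, the ``alternative hypothesis'' branch of Lemma \ref{lem:EF} gives the transversality and the interchange of $R\wotimes_{\mathbb{Z}}(-)$ with $\underset{r<1}\lim$ directly. Internally, Lemma \ref{lem:EF} runs on the Bar resolution $\mathscr{L}^\bullet_{\mathbb{Z}}(R)$ and a total-complex comparison with the Roos complex, so it is a packaged version of the resolve--interchange--collapse pattern. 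You instead resolve $R$ by an explicit projective resolution $P_\bullet$ in $\ttBan_{\mathbb{Z}}$, use that each $P_j$ is Banach (hence metrizable by Remark \ref{rem:Fine}) and projective (hence flat), push $P_j\wotimes_{\mathbb{Z}}(-)$ past $\underset{r<1}\lim$ via Lemma \ref{lem:tensprod} and past $\colim^{\leq 1}$, and then collapse $\mathbb{R}\underset{r<1}\lim$ via a Mittag--Leffler/dense-transition-maps argument in the spirit of Lemma \ref{lem:UseML}. What your route buys: it makes fully visible the $\lim$-acyclicity of the systems $\{C^j_r\}_r$ and $\{\mathcal{A}_r\}_r$, which Lemma \ref{lem:EF} takes as a hypothesis and which the paper's one-liner leaves implicit (the density of the restriction maps of Tate/disk algebras, and its stability under $\wotimes$ with coproducts of weighted copies of $\mathbb{Z}$ and under $\colim^{\leq 1}$, is exactly the check one would need to supply anyway). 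What the paper's route buys is economy: all the total-complex bookkeeping is absorbed into a lemma that can be reused (e.g.\ in Corollary \ref{cor:AlimCons} and Theorem \ref{thm:Big}). The one place where you should be slightly more careful is the parenthetical ``since $P_j\wotimes_{\mathbb{Z}}(-)$ is a left adjoint it commutes with $\colim^{\leq 1}$'': $\colim^{\leq 1}$ is not the colimit in $\ttInd(\ttBan_{\mathbb{Z}})$, so this is not purely formal; it is, however, exactly the content of the remark following Definition \ref{defn:ExtendColim}, which you should cite instead.
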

\proof Since $\colim^{\leq 1}  R\{(\frac{x}{r})^{\frac{1}{l}}\}$ and $\underset{r<1} \lim  \ \ \underset{l\in\mathbb{N}}   \colim^{\leq 1}  \mathbb{Z}\{(\frac{x}{r})^{\frac{1}{l}}\}$ are flat over $\mathbb{Z}$, Lemma \ref{lem:EF} gives this result immediately.
\endproof
Notice that
\[\left( \underset{l\in\mathbb{N}}   \colim^{\leq 1}\ \\ \  \mathbb{Z}_{p}\{(\frac{x}{r})^{\frac{1}{l}}\}\right)/p\left(\underset{l\in\mathbb{N}}   \colim^{\leq 1}\ \\ \  \mathbb{Z}_{p}\{(\frac{x}{r})^{\frac{1}{l}}\}\right)\cong  \underset{l\in\mathbb{N}}   \colim^{\leq 1}\ \\ \  \mathbb{F}_{p}\{(\frac{x}{r})^{\frac{1}{l}}\}.
\]
and $  \underset{l\in\mathbb{N}}   \colim^{\leq 1}\ \  \mathbb{Z}_{p}\{(\frac{x}{r})^{\frac{1}{l}}\}$ is a strict $p$-ring. So we should show  that the natural morphism
\[  \underset{l\in\mathbb{N}}   \colim^{\leq 1} \mathbb{F}_{p}\{(\frac{x}{r})^{\frac{1}{l}}\} \longrightarrow \mathcal{O}_{E}
\]
is an isomorphism where $\mathbb{F}_{p}$ carries the residue norm from $\mathbb{Z}$.  This question is addressed in \cite{BBK2}.

\section{Appendix} As remarked above, most of this article has a non-archiedean version in the case that $R$ is non-archimedean, so in this appendix, let $R$ be a non-archimedean Banach ring. For $k$ a non-archimedean field, the standard Tate algebra representing an affinoid disk is $k\{\frac{x_1}{r_1}, \dots, \frac{x_n}{r_n}\}$. In order to compare this with the ``archimedean" disk algebra we used in this article which we denote the non-archimedean version by $R\{\frac{x_1}{r_1}, \dots, \frac{x_n}{r_n}\}_{na}$. Interestingly, Stein and Dagger algebras as defined in the introduction to Section \ref{Spaces} constructed from these two versions of disk algebras actually agree as we show in this informal Appendix.
For any $r>0$ there is an injective map $\ttComm(\ttCBorn_{R})$ 
\[R\{\frac{x_1}{r_1}, \dots, \frac{x_n}{r_n}\}\to R\{\frac{x_1}{r_1}, \dots, \frac{x_n}{r_n}\}_{na}
\]
which by density is an epimorphism. Let 
\[A_i=R\{\frac{x_1}{r_1-i^{-1}}, \dots, \frac{x_n}{r_n-i^{-1}}\}\] and \[C_i=R\{\frac{x_1}{r_1-i^{-1}}, \dots, \frac{x_n}{r_n-i^{-1}}\}_{na}.\] We have not only morphisms $A_i \subset C_i$ but also $C_{i} \subset A_{i-1}$ because for any $s<t$ we have 
\[\underset{I}\sum a_{I}s^{I}= \underset{I}\sum a_{I}(s/t)^{I}t^{I}  \leq(\underset{I}\sum (s/t)^{I})(\underset{J}\sup (a_{J}t^{J}) ).
\]
Therefore, we get isomorphisms $A=\lim A_i \cong \lim C_i=C$. Similarly, this holds for general Stein or dagger algebras as defined in the introduction to Section \ref{Spaces} described in the two different ways (using archimedian or non-archimedean disk algebras or their quotients) for a non-archimedean Banach ring $R$. 

Consider the category $D$ whose objects are pairs consisting of a sequence of objects $M_i\in \ttMod(A_i)$ and a collection of compatible isomorphisms $ M_{i}\wotimes_{A_i}A_{i-1} \to M_{i-1} $ where morphisms are the obvious thing. Similarly there is the category $D^{na}$  whose objects are pairs consisting of a sequence of objects $N_i\in \ttMod^{na}(C_i)$ and a collection of compatible isomorphisms $N_{i}\wotimes^{na}_{C_i}C_{i-1} \to N_{i-1} $ where morphisms are the obvious thing. These categories are isomorphic and if we specialize to the nuclear metrizable modules and algebras flat over $\mathbb{Z}$ we get by descent (Theorem \ref{thm:Big}) an equivalence of categories for these modules on $A$ and $C$.

\bibliographystyle{amsalpha}

\end{document}